 \newtheorem{theorem}{Theorem}[section]
 \newtheorem{lemma}[theorem]{Lemma}
 \newtheorem{proposition}[theorem]{Proposition}
 \newtheorem{corollary}[theorem]{Corollary}
 \newtheorem{definition}[theorem]{Definition}
 \newenvironment{proof}{\begin{trivlist} \item[]{\em Proof.}}{\end{trivlist}}
\def\CC{{\mathbb C}}
\def\DD{{\mathbb D}}
\def\BB{{\mathbb B}}
 \def\RR{{\mathbb R}}
 \def\NN{{\mathbb N}}
 \def\SB{{\mathbb S}}
 \def\ZZ{{\mathbb Z}}
 \def\SF{{\mathscr F}}
 \title{\bf Local boundary behaviour and the area integral of generalized harmonic functions associated with root systems
 \thanks{{Supported by the National Natural
 Science Foundation of China (No. 12071295).}
 \newline
 \indent \,\,$^\dag$Corresponding author.
 \newline
 \indent\,\, E-mail: jiujiaxi78@163.com (J.-X. Jiu); lizk@shnu.edu.cn (Zh.-K. Li).}}
\author{{Jiaxi Jiu and Zhongkai Li$^{\dag}$}\\
{\small Department of Mathematics, Shanghai Normal University, Shanghai 200234, China}
}
\date{\it In memory of Professor Leetsch Charles Hsu (Lizhi Xu)}
\begin{document}
 \maketitle \setcounter{page}{1} \pagestyle{myheadings}
 \markboth{Jiu and Li}{Generalized harmonic functions associated with root systems}

 \begin{abstract}
 \noindent
The rational Dunkl operators are commuting differential-reflection operators on the Euclidean space $\RR^d$ associated with a root system. The aim of the paper is to study local boundary behaviour of generalized harmonic functions associated with the Dunkl operators. We introduce a Lusin-type area integral operator $S$ by means of Dunkl's generalized translation and the Dunkl operators. The main results are on characterizations of local existence of non-tangential boundary limits of a generalized harmonic function $u$ in the upper half-space $\RR^{d+1}_+$ associated with the Dunkl operators, and for a subset $E$ of $\RR^d$ invariant under the reflection group generated by the root system, the equivalence of the following three assertions are proved: (i) $u$ has a finite non-tangential limit at $(x,0)$ for a.e. $x\in E$; (ii) $u$ is non-tangentially bounded for a.e. $x\in E$; (iii) $(Su)(x)$ is finite for a.e. $x\in E$.

 \vskip .2in
 \noindent
 {\bf 2020 MS Classification:} 	42B99(Primary), 31B25, 51F15(Secondary) 
 \vskip .2in
 \noindent
 {\bf Key Words and Phrases:}   local non-tangential limit; area integral; root system; Dunkl operator; generalized harmonic function
 \end{abstract}

\setcounter{page}{1}

\section{Introduction and main results}

{\bf 1.1.}\quad A classical result of Fatou states that a bounded harmonic function $u$ in the unit disc $\DD$ has a non-tangential limit at almost every point $e^{i\theta}$. A local version of Fatou's theorem proved by Privalov \cite{Pr1} is that if $u(z)$ is harmonic in $\DD$, and at each point $e^{i\theta}$ of a measurable subset $E$ of positive measure of the boundary $\partial\DD$, there is a cone contained in $\DD$ with vertex $e^{i\theta}$ in which $u(z)$ is bounded, then $u$ has a non-tangential limit at almost every $e^{i\theta}\in E$. Marcinkiewicz and Zygmund \cite{MZ1}, and Spencer \cite{Sp1} obtained a completely different criterion on existence of non-tangential limit at almost every point $e^{i\theta}$ of a measurable set $E\subset\partial\DD$ which is characterized by finiteness of Lusin's area integral $\int\left|\nabla u(x,y)\right|^2 dxdy$ for almost every $e^{i\theta}\in E$, where the integral is taken over a cone contained in $\DD$ with vertex $e^{i\theta}$. One of the basic tools in these studies is the conformal mapping, which introduces technical difficulties in extending them to more variables and other settings. The breakthrough to this obstacle was made by Calder${\rm \acute{o}}$n \cite{Cal1,Cal2}, who generalized Privalov's theorem and Marcinkiewicz and Zygmund's theorem to Euclidean half-spaces of several variables by real method. Stein in \cite{St1} extended Spencer's theorem to several variables. (For a thoughtful treatment on these results, see \cite{St2}.)
Since then, criteria on local existence of non-tangential boundary limits of harmonic functions in many different contexts, in terms of non-tangential boundedness or one-side non-tangential boundedness or finiteness of non-tangential area integral have been intensively studied (see, among others, \cite{AP1}-\cite{Br2}, \cite{Car1}, \cite{Gu1}, \cite{HW1}-\cite{JK1}, \cite{Ko1}-\cite{Kr2}, \cite{Le1}, \cite{Mo1}-\cite{Mo3}, \cite{Pe1}-\cite{Pi1}, \cite{Pu1}, \cite{Wi1}).
Various aspects associated with the Fatou theory in one and several complex variables are contained in the most recent paper \cite{DbK}.

The rational Dunkl operators are differential-reflection operators on the Euclidean space $\RR^d$ associated with a root system. In this paper we study local boundary behaviour of generalized harmonic functions in the upper half-space $\RR^{d+1}_+=\RR^d\times(0,\infty)$ associated with the Dunkl operators. As usual we denote by $e_1,e_2,\dots,e_d$ the standard basis vectors in $\RR^d$.
\vskip .2in

{\bf 1.2.}\quad Let $R$ be a root system in $\RR^d$ normalized so that $\langle\alpha,\alpha\rangle=2$ for $\alpha\in R$ and with $R_+$ a fixed positive subsystem, and $G$ the finite reflection group generated by the reflections $\sigma_\alpha$ ($\alpha\in R$), where $\sigma_{\alpha}(x)=x-\langle\alpha,x\rangle\alpha$ for $x\in\RR^d$. For a multiplicity function $\kappa$ defined on $R$ (invariant under $G$), the Dunkl operators $D_j$ ($1\le j\le d)$ are defined by (cf. \cite{Du2})
\begin{eqnarray}\label{Dunkl-operator-1}
D_j=\partial_j+\sum_{\alpha\in R_+}\kappa(\alpha)\alpha_j\frac{1-\sigma_{\alpha}}{\langle\alpha,x\rangle},
 \end{eqnarray}
where $\sigma_{\alpha}f=f\circ\sigma_{\alpha}$, $\alpha_j$ is the $j$th coordinate of $\alpha$, and the associated measure is given by
\begin{eqnarray*}
d\omega_{\kappa}=W_{\kappa}(x)dx,\qquad{\rm with}\quad  W_{\kappa}(x)=\prod_{\alpha\in R_+}|\langle\alpha,x\rangle|^{2\kappa(\alpha)}.
\end{eqnarray*}
The weight $W_{\kappa}$ is a positive homogeneous function of degree $2|\kappa|$, where $|\kappa|=\sum_{\alpha\in R_+}k(\alpha)$.
It is proved in \cite{Du2} that the Dunkl operators commute with each other.

A $C^2$ function $u$ defined in a $G$-invariant domain $\Omega$ of $\RR^{d+1}$ is said to be $\kappa$-harmonic if $\Delta_{\kappa}u=0$, where $\Delta_{\kappa}$ is the $\kappa$-Laplacian defined by
\begin{align*}
\Delta_{\kappa}=\partial_{y}^2+\sum_{j=1}^{d}D_{j}^{2}.
\end{align*}
From \cite{Du1} the $\kappa$-Laplacian $\Delta_{\kappa}$ can be explicitly expressed by
\begin{align}\label{Laplace-2-1}
\Delta_{\kappa}u=\Delta u+2\sum_{\alpha\in R_+}\kappa(\alpha)\delta_\alpha u,
\end{align}
where
\begin{align}\label{Laplace-2-2}
\delta_\alpha u=\frac{\langle\nabla^{(x)} u,\alpha\rangle}{\langle\alpha,x\rangle}-\frac{u-\sigma_{\alpha}u}
{\langle\alpha,x\rangle^2},
\end{align}
$\Delta=\partial_{y}^2+\sum_{j=1}^{d}\partial_{j}^{2}$ is the usual Laplacian on $\RR^{d+1}$, $\nabla^{(x)}$ the usual gradient operator on $\RR^{d}$, and $(\sigma u)(x,y)=u(\sigma(x),y)$ for $\sigma\in G$ and $(x,y)\in\RR^{d+1}$.
By \cite[Proposition 1.1]{Du1}, if $u$ is $\kappa$-harmonic, so is $\sigma u$ for each $\sigma\in G$.

Several examples are given as follows, see \cite{Ro5}.

{\bf Examples.}\quad  {\bf 1.} The reflection group generated by the root system $R=\{\pm\sqrt{2}e_j:\,1\le j\le d\}$ is denoted by $Z_2^d$. The associated measure is $d\omega_{\lambda}=\prod_{j=1}^d|x_j|^{2\lambda_j}dx$ with multiplicity parameters $\lambda=(\lambda_1,\dots,\lambda_d)$, and the Dunkl operators are given by $D_j=\partial_j+\frac{\lambda_j}{x_j}(1-\sigma_j)$, where $\sigma_j(x)=(x_1,\dots, -x_j,\dots,x_d)$. In this special case, we shall write $\Delta_{\lambda}$ instead of $\Delta_{\kappa}$, and say ``$\lambda$-harmonic" instead of ``$\kappa$-harmonic".

{\bf 2.} The type $A_{d-1}$ case. A root system of $S_d$, the symmetric group in $d$ elements, is given by $R=\{\pm(e_i-e_j):\,1\le i<j\le d\}$. Since $S_d$ has only one orbit in $R$, a multiplicity function is constant. The Dunkl operators for a given $\kappa\in\CC$ have the form
\begin{eqnarray*}
D_j=\partial_j+\kappa\sum_{i\neq j}\frac{1-\sigma_{ij}}{x_j-x_i},
 \end{eqnarray*}
where $\sigma_{ij}(\dots,x_i,\dots,x_j,\dots)=(\dots,x_j,\dots,x_i,\dots)$.

{\bf 3.} The type $B_{d}$ case. The root system corresponding to the semidirect product $G=S_d\ltimes Z_2^d$ is given by $R=\{\pm\sqrt{2}e_j:\,1\le j\le d;\,\,\pm(e_i\pm e_j):\,1\le i<j\le d\}$. A multiplicity function has the form $\kappa=(\kappa_0,\kappa_1)$, with only two independent values, as there are two conjugacy classes of reflections. The associated Dunkl operators are given by
\begin{eqnarray*}
D_j=\partial_j+\frac{\kappa_0}{x_j}(1-\sigma_j)+\kappa_1\sum_{i\neq j}\left[\frac{1-\sigma_{ij}}{x_j-x_i}+\frac{1-\tau_{ij}}{x_j+x_i}\right],
 \end{eqnarray*}
where $\tau_{ij}=\sigma_{ij}\sigma_i\sigma_j$.

During the past three decades, the Dunkl operators have gained considerable interest in various fields of
mathematics and in physical applications; they are, in the case of the symmetric group $S_d$, naturally connected
with the Calogero-Sutherland models of quantum many body systems (see \cite{LV}, for example).

In what follows, we assume that the involved multiplicity functions $\kappa$ are real and nonnegative. As usual, for a subset $E$ of $\RR^d$, define $\sigma E=\{\sigma(x):\,x\in E\}$ for $\sigma\in G$, and $E$ is called to be $G$-invariant if $\sigma E=E$ for all $\sigma\in G$; for a function $f$ defined on a $G$-invariant subset $E$ of $\RR^d$, set $\sigma f=f\circ\sigma$ for $\sigma\in G$, and $f$ is called to be $G$-invariant on $E$ if $\sigma f=f$ for all $\sigma\in G$.


\vskip .2in

{\bf 1.3. Main results.}\quad We are concerned with characterizations on existence of non-tangential boundary values, in a local sense, of $\kappa$-harmonic functions in the upper half-space $\RR^{d+1}_+$.

Our first result is to extend those of Privalov \cite{Pr1} and Calder${\rm \acute{o}}$n \cite{Cal1} to the $\kappa$-harmonic functions, which characterizes the local existence of non-tangential boundary values by non-tangential boundedness. As usual, we denote by $\Gamma_a(x)$ the positive cone of aperture $a>0$ with vertex $(x,0)\in \partial\RR^{d+1}_+=\RR^d$, and $\Gamma^h_a(x)$ the truncated cone with height $h>0$, that is,
$$
\Gamma^h_a(x)=\left\{(t,y):\quad |t-x|<ay \quad\hbox{for}\,\,t\in\RR^d,\,\,0<y<h\right\}.
$$

\begin{definition}
For a given function $u$ defined in $\RR^{d+1}_+$ and for $x\in\RR^d$,

{\rm (i)} that $u$ has a non-tangential limit at $(x,0)$ means that for every $a>0$, $\lim u(t,y)$ exists as $(t,y)\in\Gamma_a(x)$ approaching to $(x,0)$; and

{\rm (ii)} that $u$ is said to be non-tangentially bounded at $(x,0)$ if $u(t,y)$ is bounded in $\Gamma^h_a(x)$ for some $a,h>0$.
\end{definition}

\begin{theorem}\label{thm-1}
Suppose that $E$ is a $G$-invariant measurable subset of $\partial\RR^{d+1}_+=\RR^d$ and $u$ is $\kappa$-harmonic in $\RR^{d+1}_+$. Then the following assertions are equivalent:

{\rm (i)}\, $u$ has a finite non-tangential limit at $(x,0)$ for almost every $x\in E$;

{\rm (ii)} $u$ is non-tangentially bounded at $(x,0)$ for almost every $x\in E$.
\end{theorem}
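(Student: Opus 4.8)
\noindent\emph{Proof strategy.}\quad The implication (i)$\Rightarrow$(ii) is immediate: a finite non-tangential limit at $(x,0)$ forces $u$ to be bounded on some truncated cone $\Gamma^h_a(x)$, which is exactly non-tangential boundedness. The whole content lies in (ii)$\Rightarrow$(i), for which I would adapt Calder\'on's real-variable method; the genuinely new difficulty is the non-locality of $\Delta_\kappa$ coming from the reflection terms in \eqref{Laplace-2-2}.

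First I would pass to a uniform situation while respecting the group symmetry. For rationals $a,h>0$ and $M\in\NN$ set
\[
\widetilde E_{a,h,M}=\Big\{x\in E:\ |u|\le M \ \text{on}\ \bigcup_{\sigma\in G}\Gamma^{h}_{a}(\sigma x)\Big\}.
\]
Each $\widetilde E_{a,h,M}$ is measurable and, since $\Gamma^h_a(\sigma x)=\sigma\Gamma^h_a(x)$ and $G$ permutes the finite orbit $Gx$, it is $G$-invariant; as $E$ is $G$-invariant, hypothesis (ii) forces almost every $x\in E$ to be non-tangentially bounded at \emph{every} point of its orbit, whence $E$ differs from the countable union $\bigcup_{a,h,M}\widetilde E_{a,h,M}$ by a null set. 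By inner regularity, and by replacing a compact subset $K$ with $\bigcup_{\sigma\in G}\sigma K$, it then suffices to prove that $u$ has a finite non-tangential limit a.e.\ on a fixed compact $G$-invariant set $F\subseteq\widetilde E_{a,h,M}$ of positive measure; on the $G$-invariant sawtooth region $\Omega=\bigcup_{x\in F}\Gamma^{h}_{a}(x)$ the function $u$ is then bounded by $M$.

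Next comes the localization that removes the non-locality, which I expect to be the crux. The union of the walls $\{\langle\alpha,x\rangle=0:\alpha\in R_+\}$ is a null set, so almost every $x_0\in F$ lies off all walls; fix such an $x_0$. Since the stabilizer of $x_0$ is then trivial, I may choose a ball $B=B(x_0,\rho)$ so small that $\overline B$ and the reflected balls $\sigma\overline B$ ($\sigma\in G$) are pairwise disjoint and bounded away from every wall, and shrink the height to some $h'\le h$ so that the local sawtooth $\Omega_B=\bigcup_{x\in F\cap B}\Gamma^{h'}_{a}(x)$ and all its reflections stay in these disjoint regions. For $(x,y)\in\Omega_B$, say $(x,y)\in\Gamma^{h'}_a(x')$ with $x'\in F\cap B$, the reflected point satisfies $|\sigma_\alpha x-\sigma_\alpha x'|=|x-x'|<ay$, so $(\sigma_\alpha x,y)\in\Gamma^{h'}_a(\sigma_\alpha x')\subseteq\Omega$ because reflections are isometries and $F$ is $G$-invariant; hence $|u(\sigma_\alpha x,y)|\le M$. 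Consequently
\[
g:=2\sum_{\alpha\in R_+}\kappa(\alpha)\,\frac{u-\sigma_\alpha u}{\langle\alpha,x\rangle^2}
\]
is bounded on $\Omega_B$ (the denominators are bounded below away from the walls), and smooth there by elliptic regularity of the $\kappa$-harmonic $u$. Crucially, $g$ involves no derivatives of $u$, so it stays bounded up to the boundary. By \eqref{Laplace-2-1}--\eqref{Laplace-2-2}, the equation $\Delta_\kappa u=0$ therefore reads, on $\Omega_B$, as the \emph{local} equation
\[
Lu=g,\qquad L:=\Delta+2\sum_{\alpha\in R_+}\kappa(\alpha)\,\frac{\langle\alpha,\nabla^{(x)}\rangle}{\langle\alpha,x\rangle},
\]
a second-order uniformly elliptic operator with smooth bounded coefficients on a neighbourhood of $\overline{\Omega_B}$, since $W_\kappa$ is smooth and bounded above and below there.

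Finally I would conclude by the local Fatou theorem for such elliptic operators. Solving $Lw=g$ on a fixed neighbourhood with $w\in C^1$ up to the boundary, the difference $v:=u-w$ is bounded and $L$-harmonic near $B\cap F$, while $w$ has an ordinary continuous limit at each boundary point. Since $v$ is non-tangentially bounded on $B\cap F$ in cones of fixed aperture $a$, the local Fatou theorem for $L$ (obtainable by the classical Calder\'on--Stein argument for uniformly elliptic operators, or equivalently through the area-integral characterization developed later in this paper) yields a finite non-tangential limit of $v$, hence of $u$, at almost every point of $B\cap F$, and for every aperture. Covering $F$ up to the null wall-set by countably many such balls $B$, and then exhausting $E$ by the sets $\widetilde E_{a,h,M}$, completes the proof of (ii)$\Rightarrow$(i). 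The main obstacle is precisely the reduction just described: it is the $G$-invariance of $E$ that supplies uniform bounds over entire group orbits, and it is localization at generic off-wall points that turns the non-local identity $\Delta_\kappa u=0$ into a genuinely local, non-degenerate elliptic equation to which Euclidean methods apply.
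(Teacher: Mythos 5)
Your proof of (i)$\Rightarrow$(ii) and your reduction to a compact $G$-invariant set $F$ with $|u|\le M$ on the $G$-invariant sawtooth $\Omega=\bigcup_{x\in F}\Gamma^h_a(x)$ are correct, and run parallel to the paper's use of Lemma \ref{fatou-a}. Your localization step is genuinely different from the paper's argument and is, in itself, sound: the walls $\bigcup_{\alpha\in R_+}\{\langle\alpha,x\rangle=0\}$ are Lebesgue-null, and at an off-wall point the ball $B$ can be chosen so that reflected points of the local sawtooth $\Omega_B$ remain in $\Omega$, making $g=2\sum_{\alpha\in R_+}\kappa(\alpha)(u-\sigma_\alpha u)\langle\alpha,x\rangle^{-2}$ bounded there; thus $\Delta_\kappa u=0$ becomes the local equation $Lu=g$, where $L=W_\kappa^{-1}\operatorname{div}(W_\kappa\nabla\,\cdot\,)$ is uniformly elliptic, of divergence form, with coefficients smooth and bounded above and below near $\overline{\Omega_B}$. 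The subtraction $v=u-w$ also works, provided you note that $g$ is only bounded measurable up to $y=0$ (it involves the boundary behaviour of $u$, which is exactly what is unknown), so $w$ must be produced by $L^p$ Calder\'on--Zygmund theory ($w\in W^{2,p}\cap C^{1,\beta}(\overline D)$ on a smooth bounded $D\supset\Omega_B$), not by Schauder theory.

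The genuine gap is the final step. The ``local Fatou theorem for $L$'' is invoked as a black box, and neither of your justifications holds up as written. The appeal to ``the area-integral characterization developed later in this paper'' is circular and inapplicable: Theorems \ref{thm-3} and \ref{thm-4} concern $\Delta_\kappa$ and $\kappa$-harmonic functions (your $v$ is neither), and they conclude only non-tangential \emph{boundedness} from finiteness of the area integral; upgrading boundedness to existence of limits is precisely Theorem \ref{thm-1}, the statement under proof. As for ``the classical Calder\'on--Stein argument for uniformly elliptic operators'': that argument does not run on ellipticity and the maximum principle alone. For $L$ it requires the analogues of the paper's key inputs --- a positive $L$-Poisson kernel (say for a half-ball off the walls) with two-sided bounds as in Lemma \ref{Poisson-ker-estimate-a}, in particular the lower bound used to build the barrier of Lemma \ref{fatou-b}, together with a.e.\ non-tangential convergence of $L$-Poisson integrals of bounded functions, the analogue of Proposition \ref{Poisson-d}. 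For divergence-form operators with smooth coefficients these facts are true, but they are substantial theorems (Green-function and Poisson-kernel estimates, or $A_\infty$ of elliptic-harmonic measure with respect to surface measure on Lipschitz sawtooth domains), and you neither prove them nor cite them precisely. As it stands, the heart of the theorem has been outsourced to an unproved statement.

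By contrast, the paper's proof of (ii)$\Rightarrow$(i) (Theorem \ref{bounded-convergence}) stays inside the Dunkl framework and is self-contained: it builds a $G$-invariant nonnegative $\kappa$-harmonic barrier from the $\kappa$-Poisson integral of $\chi_{E^c}$ (Lemma \ref{fatou-b}); it compares this barrier, via the maximum principle of Lemma \ref{max-principle} on $G$-invariant subdomains, with the $G$-symmetrized modulus $\Psi_k=\bigl(\sum_{\sigma\in G}|\psi_k(\sigma(x),y)|^2\bigr)^{1/2}$ of the differences $\psi_k=u(\cdot,\cdot+y_k)-P\varphi_k$ built from weak-$*$ limits of boundary slices, whose $\Delta_\kappa$-subharmonicity is supplied by Proposition \ref{subharmonicity-3-b}; and it then concludes by Proposition \ref{Poisson-d}. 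If you supply the missing elliptic input with a precise reference, your localization would yield a complete and genuinely alternative proof; but the external machinery then needed is of comparable depth to the paper's own self-contained argument.
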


Our next attempt is to generalize the results of Marcinkiewicz and Zygmund \cite{MZ1} and Calder${\rm \acute{o}}$n \cite{Cal2} to the $\kappa$-harmonic functions. For a $C^2$ function $u$ in $\RR^{d+1}_+$, we introduce, at first formally, a Lusin-type area integral $Su=S_{a,h}u$ for some $a,h>0$ in the Dunkl setting, that is,
\begin{align}\label{area-integral-1-1}
\left(S_{a,h}u\right)(x)=\left(\iint_{\Gamma_{a}^{h}(0)}
\left[\tau_{x}(\Delta_{\kappa}u^{2})\right](t,y)\,y^{1-d-2|\kappa|}\,d\omega_{\kappa}(t)dy\right)^{1/2},
\end{align}
where $\tau_x$ is the associated (generalized) translation in the Dunkl setting (see Section 2), acting on the first argument. We note that $S_{a,h}u$ is first defined in Liao \cite{Liao1} for $G=Z_2$ with $d=1$, where it is used to characterize the Hardy space $H^1_{\lambda}(\RR)$ associated to the rank-one Dunkl operator.

\begin{theorem}\label{thm-2}
Suppose that $E$ is a $G$-invariant measurable subset of $\partial\RR^{d+1}_+=\RR^d$ and $u$ is $\kappa$-harmonic in $\RR^{d+1}_+$. If $u$ is non-tangentially bounded at $(x,0)$ for every $x\in E$, then for almost every $x\in E$, the area integral $(S_{a,h}u)(x)$ is finite for some $a,h>0$.
\end{theorem}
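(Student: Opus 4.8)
The plan is to follow the real-variable method of Stein and Calderón, adapted to the Dunkl Laplacian by using the $G$-invariance hypothesis, which is precisely what makes a clean Dunkl Green's formula available. First I would reduce to a model region. Non-tangential boundedness at $x$ means $|u|\le M$ on $\Gamma^h_a(x)$ for some $a,h,M$ depending on $x$; since $E$ is $G$-invariant and each $\sigma u$ ($\sigma\in G$) is again $\kappa$-harmonic and non-tangentially bounded at every point of $E$ (boundedness of $u$ on $\Gamma^h_a(\sigma x)=\sigma\Gamma^h_a(x)$ is boundedness of $\sigma u$ on $\Gamma^h_a(x)$), I may decompose $E$, up to a null set, into the countable family of $G$-invariant pieces
\[
F=F_{a,h,M}=\{x\in E:\ |u|\le M\ \text{on}\ \Gamma^h_a(\sigma x)\ \text{for all}\ \sigma\in G\},
\]
over rational $a,h,M$. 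It then suffices to prove $(S_{a',h'}u)(x)<\infty$ a.e. on each $F$. Intersecting with a ball and passing to a density point, I take $F$ bounded of positive $\omega_{\kappa}$-measure; the associated sawtooth region $\mathcal{R}=\bigcup_{x\in F,\ \sigma\in G}\Gamma^h_a(\sigma x)$ is then a $G$-invariant Lipschitz domain on which $|u|\le M$.

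Second, I would record the density identity. A direct computation from $(1.2)$–$(1.3)$, using $\delta_\alpha(u^2)=2u\,\delta_\alpha u+(u-\sigma_{\alpha}u)^2/\langle\alpha,x\rangle^2$, gives for any $C^2$ function
\[
\Delta_{\kappa}(u^2)=2u\,\Delta_{\kappa}u+2|\nabla u|^2+2\sum_{\alpha\in R_+}\kappa(\alpha)\frac{(u-\sigma_{\alpha}u)^2}{\langle\alpha,x\rangle^2},
\]
where $\nabla$ is the full $(d+1)$-dimensional gradient, so that for $\kappa$-harmonic $u$,
\[
\Delta_{\kappa}(u^2)=2|\nabla u|^2+2\sum_{\alpha\in R_+}\kappa(\alpha)\frac{(u-\sigma_{\alpha}u)^2}{\langle\alpha,x\rangle^2}\ge 0.
\]
Thus $u^2$ is $\kappa$-subharmonic and the integrand of $S_{a,h}u$ is a genuine nonnegative square-density; combined with the translation machinery of Section 2 this also makes $S_{a,h}u$ well defined.

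Third — the heart of the matter — I would integrate $(S_{a',h'}u)^2$ against $d\omega_{\kappa}$ over $F$ for a slightly smaller aperture $a'<a$ and height $h'<h$, and use Fubini together with the self-adjointness and support properties of the Dunkl translation $\tau_x$ to rewrite
\[
\int_F(S_{a',h'}u)^2\,d\omega_{\kappa}(x)=\iint_{\mathcal{R}}\Delta_{\kappa}(u^2)(t,y)\,\Phi(t,y)\,d\omega_{\kappa}(t)\,dy,
\]
where the multiplicity weight $\Phi(t,y)$ is, up to the factor $y^{1-d-2|\kappa|}$, the $\omega_{\kappa}$-measure of the vertices $x\in F$ whose generalized cone reaches $(t,y)$. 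As in the Euclidean case one expects $\Phi(t,y)$ to be comparable to the distance from $(t,y)$ to the complement of $\mathcal{R}$, hence dominated by $C\,y$; here the known support bound for $\tau_x$ (supported in $\bigcup_{\sigma}B(\sigma t,a'y)$) forces the relevant vertices into the $G$-orbit, which is exactly why $G$-invariance of $F$ and $\mathcal{R}$ is indispensable.

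Finally, I would estimate the right-hand side by the Dunkl Green's formula on the $G$-invariant domain $\mathcal{R}$. Since $y$ is $\kappa$-harmonic and $u^2$ is $\kappa$-subharmonic with $u^2\le M^2$, the identity
\[
\iint_{\mathcal{R}}y\,\Delta_{\kappa}(u^2)\,d\omega_{\kappa}\,dy=\int_{\partial\mathcal{R}}\Big(y\,\partial_n(u^2)-u^2\,\partial_n y\Big)\,W_{\kappa}\,d\sigma,
\]
valid on $G$-invariant domains because the reflection parts of $\Delta_{\kappa}$ integrate by parts without extra terms, reduces the volume integral to boundary terms controlled by $M^2$, the height $h$, and the Lipschitz geometry; equivalently one may pass to the $\kappa$-Green function $G_{\mathcal{R}}$ of $\mathcal{R}$, obtaining $\iint_{\mathcal{R}}\Delta_{\kappa}(u^2)\,G_{\mathcal{R}}(P_0,\cdot)\,d\omega_{\kappa}\,dy\le M^2$ together with the comparison $G_{\mathcal{R}}(P_0,\cdot)\gtrsim\Phi$ away from $P_0$. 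Either route yields finiteness, hence $(S_{a',h'}u)(x)<\infty$ a.e. on $F$. I expect the main obstacle to lie in this last combination with the third step: unlike ordinary translation, $\tau_x$ is not a point mass, its positivity is delicate for a general reflection group, and the reflection density $\sum_{\alpha}\kappa(\alpha)(u-\sigma_{\alpha}u)^2/\langle\alpha,x\rangle^2$ is singular along the walls $\langle\alpha,x\rangle=0$. Matching $\Phi$ with the distance-to-boundary, justifying the Fubini and Green manipulations across the walls and along the merely Lipschitz boundary, and absorbing the term $\int_{\partial\mathcal{R}}y\,\partial_n(u^2)\,W_{\kappa}\,d\sigma$ — whose normal derivative is not controlled by $\sup|u|$ alone — by working with a one-parameter family of heights (or, equivalently, the Green-function formulation) as in Stein's treatment, is where the real work will be.
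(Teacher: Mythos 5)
Your overall strategy is the same as the paper's: reduce to a bounded $G$-invariant set on which $u$ is uniformly bounded in a $G$-invariant sawtooth region; use the identity $\Delta_{\kappa}(u^2)=2|\nabla u|^2+2\sum_{\alpha\in R_+}\kappa(\alpha)\,(u-\sigma_{\alpha}u)^2/\langle\alpha,x\rangle^2\ge0$ (Lemma \ref{Laplace-4-a}); integrate $(Su)^2$ over the set and apply Fubini together with the support and self-adjointness properties of $\tau_x$ to dominate it by $\iint y\,\Delta_{\kappa}(u^2)\,d\omega_{\kappa}dy$ over the sawtooth (Lemma \ref{area-4-b} --- note the paper gets your bound $\Phi\le cy$ simply from the mass conservation $\int(\tau_t\psi)\,d\omega_{\kappa}=\|\psi\|_{L^1_{\kappa}}$, with no distance-to-boundary comparison); and finally apply the Green formula of Proposition \ref{Green-formula-2-a} on the smoothed $G$-invariant regions $\Omega_{\epsilon}$ of Lemma \ref{area-4-d}. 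One technical difference: the paper runs the whole argument for the smooth-cutoff variant $S^{\psi}_{a,h}u$ and transfers back by the sandwich $S^{\psi}_{a,h}u\le S_{a,h}u\le S^{\psi}_{2a,h}u$ (Proposition \ref{area-positivity-a}); this is what makes the Fubini and duality manipulations with $\tau_x$ in your third step legitimate, since $\tau_x$ applied to sharp cone cutoffs is exactly the kind of operation the paper avoids.

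The genuine gap is the point you yourself defer to ``the real work'': the boundary term $\int_{\partial\Omega_{\epsilon}}y\,\partial_{\mathbf{n}}(u^2)\,W_{\kappa}\,d\sigma$. This term \emph{is} controlled --- not by $\sup|u|$ at a point, as you note, but by $\sup|u|$ over a slightly larger $G$-orbit sawtooth, through a Dunkl interior gradient estimate that you never state or prove: if $|u|\le1$ on $\cup_{\sigma\in G}\Gamma^{\eta}_{b}(\sigma(x^0))$, then $y|\nabla u|\le c$ on $\Gamma^{h}_{a}(x^0)$ for $a<b$, $h<\eta$ (Lemma \ref{area-4-e}). In the Euclidean case this is the trivial interior derivative estimate for bounded harmonic functions, but for $\Delta_{\kappa}$ it is not routine, because the operator is nonlocal: the paper proves it via the mean value property on $G$-invariant domains (Proposition \ref{harmonic-2-a}), the representation of the translated mollifier through the intertwining measure $\mu^{\kappa}_t$, and the support bound (\ref{intertwining-support-1}); crucially, the hypothesis must involve the whole $G$-orbit of the cone, a second place where $G$-invariance of $E$ is indispensable. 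Neither of your proposed substitutes closes this gap: the ``one-parameter family of heights'' is just Stein's two-aperture/two-height device, which still needs the gradient estimate on the boundary of the smaller sawtooth (that is how (\ref{area-finite-3}) is obtained); and the $\kappa$-Green-function route requires existence of $G_{\mathcal{R}}$, a Riesz representation for $\kappa$-subharmonic functions, and the boundary comparison $G_{\mathcal{R}}(P_0,\cdot)\gtrsim y$ on a Lipschitz $G$-invariant domain, none of which is available in the Dunkl setting. Until an analogue of Lemma \ref{area-4-e} is supplied, the Green-formula step, and hence the proof, does not go through.
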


Finally we consider the  converse of Theorem \ref{thm-2}, which is a generalization of the results of Spencer \cite{Sp1} and Stein \cite{St1}. The answer is positive when $u$ is $G$-invariant (i.e. $u(\sigma(x),y)=u(x,y)$ for all $\sigma\in G$).

\begin{theorem}\label{thm-3}
Suppose that $E$ is a $G$-invariant measurable subset of $\partial\RR^{d+1}_+=\RR^d$ and $u$ is $\kappa$-harmonic in $\RR^{d+1}_+$. If $u$ is $G$-invariant and for every $x\in E$,
its area integral $(S_{a,h}u)(x)$ is finite for some $a,h>0$, then $u$ is non-tangentially bounded at $(x,0)$ for almost every $x\in E$.
\end{theorem}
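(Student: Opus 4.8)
The plan is to prove the implication ``finiteness of the area integral $S_{a,h}u$ on $E$ implies non-tangential boundedness a.e. on $E$'' by reducing the $\kappa$-harmonic problem, under the extra hypothesis that $u$ is $G$-invariant, to a statement about a classical (or nearly classical) elliptic operator, to which the Spencer--Stein machinery applies. The key observation is that when $u$ is $G$-invariant, the reflection terms $\sigma_\alpha u$ in \eqref{Laplace-2-2} satisfy $\sigma_\alpha u = u$, so $\delta_\alpha u = \langle\nabla^{(x)}u,\alpha\rangle/\langle\alpha,x\rangle$ simplifies and the differential-reflection operator $\Delta_\kappa$ acts on $u$ as a genuine second-order elliptic differential operator $\Delta + 2\sum_{\alpha\in R_+}\kappa(\alpha)\langle\alpha,x\rangle^{-1}\langle\nabla^{(x)}\cdot,\alpha\rangle$, degenerate only on the reflection hyperplanes $\langle\alpha,x\rangle=0$. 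Away from these walls the operator is uniformly elliptic with smooth coefficients, and the weight $W_\kappa$ is a locally bounded, locally bounded-below density, so the measure $d\omega_\kappa$ is comparable to Lebesgue measure on compact subsets of a fixed Weyl chamber.

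First I would reduce to a fixed aperture and height: by a standard measure-theoretic argument (exhausting $E$ by subsets on which $a,h$ are uniform and the area integral is uniformly bounded), it suffices to fix $a,h>0$ and a constant $M$, and show that $u$ is non-tangentially bounded at a.e.\ point of the set $E_M=\{x\in E:\ (S_{a,h}u)(x)\le M\}$. Next I would make the area integral usable: using the explicit formula \eqref{area-integral-1-1} together with the expansion of $\tau_x(\Delta_\kappa u^2)$ and the product-rule identity $\Delta_\kappa(u^2)=2u\,\Delta_\kappa u + 2\sum_j (D_j u)^2 + (\text{reflection corrections})$, and then invoking $\Delta_\kappa u=0$ and $G$-invariance, I would show that $\tau_x(\Delta_\kappa u^2)$ is controlled by $\tau_x(|\nabla_\kappa u|^2)$, where $\nabla_\kappa u=(D_1u,\dots,D_du,\partial_y u)$ is the Dunkl gradient. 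Thus finiteness of $S_{a,h}u$ at $x$ encodes finiteness of a weighted Dirichlet-type energy of $u$ over the (translated) truncated cone.

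The heart of the proof is then the classical Spencer--Stein argument transplanted to the Dunkl setting. For each point $x$ of a positive-measure subset of $E_M$ I would build a sawtooth (tent) region $\mathcal{R}$ lying above $E_M$, on which the weighted energy $\iint_{\mathcal{R}}|\nabla_\kappa u|^2\,y^{1-d-2|\kappa|}\,d\omega_\kappa(t)\,dy$ is finite, by integrating the pointwise bound in $x$ over the base and using Fubini together with the translation structure of $\tau_x$. Then, following Stein \cite{St1}, I would establish a reproducing/Green's identity adapted to $\Delta_\kappa$ on the sawtooth region — this is where the $G$-invariance is essential, since it turns $\Delta_\kappa$ into a divergence-form degenerate-elliptic operator $W_\kappa^{-1}\,\mathrm{div}(W_\kappa\,\nabla\,\cdot)$ (by the known self-adjointness of $\Delta_\kappa$ with respect to $d\omega_\kappa$ on $G$-invariant functions), so that an integration by parts produces boundary terms plus the Dirichlet energy and no first-order reflection residue. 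Controlling $u$ in the non-tangential approach region at a.e.\ point of the base then follows from the finite-energy bound by a Lebesgue-point/density argument together with the interior gradient estimates for $\kappa$-harmonic functions away from the walls.

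The main obstacle I anticipate is twofold: first, the degeneracy of the weight $W_\kappa$ on the reflection hyperplanes, which prevents a direct appeal to the Euclidean Spencer--Stein theorem and forces the Green's identity and the interior estimates to be established in the weighted, possibly degenerate, setting — I expect to need that $W_\kappa$ is an $A_2$-type Muckenhoupt weight (it is, being a product of powers of linear forms) so that the degenerate elliptic theory (Fabes--Kenig--Serapioni type estimates) applies. Second, the translation operator $\tau_x$ in the Dunkl setting is not a simple shift and is only known to be bounded and positivity-preserving in restricted situations; I would have to control $\tau_x(|\nabla_\kappa u|^2)$ and interchange it with the spatial integration carefully, most likely exploiting $G$-invariance again to reduce $\tau_x$ to something with a manageable kernel. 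Handling these two degeneracies simultaneously — the weight on the walls and the non-local translation — is where the real work lies; everything else is a faithful, if technically heavy, transcription of the classical Marcinkiewicz--Zygmund--Spencer--Stein scheme.
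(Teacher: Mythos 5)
Your plan reproduces the skeleton of the paper's argument (reduce to a sawtooth energy bound, then run a Stein-type Green-formula argument), and you correctly observe that $G$-invariance kills the reflection terms so that $\Delta_\kappa$ acts on $u$ as the divergence-form operator $W_{\kappa}^{-1}\,\mathrm{div}(W_{\kappa}\nabla\,\cdot\,)$. But the two places where you defer the work are precisely where the substance of the proof lies, and neither is resolved by what you propose. First, the passage from pointwise finiteness of $S_{a,h}u$ --- whose integrand is $\tau_x(\Delta_\kappa u^2)$, with $\tau_x$ non-local --- to a finite weighted energy $\iint_{\Omega^{E}(a',h')} y(\Delta_{\kappa}u^{2})\,d\omega_{\kappa}\,dy$ over a sawtooth region is not ``Fubini together with the translation structure''. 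After integrating over the base and dualizing $\tau_x$ onto a cutoff, one is left with the kernel $k(t,y)=\int_E \bigl(\tau_{-x/ay}\psi\bigr)(t/ay)\,d\omega_{\kappa}(x)$, and the whole difficulty is to bound $k$ from below by $c\,y^{2|\kappa|+d}$ on the sawtooth. This is Lemma \ref{area-c} of the paper, and it requires the pointwise lower bound $(\tau_{-x}\psi)(t)\ge c/|\BB(x,1)|_{\kappa}$ for $|t-x|<\delta_0$ (Lemma \ref{phi-lower-bound-3-a}), which rests on a distribution inequality for the representing measures $\mu^{\kappa}_{x}$ of the intertwining operator proved in \cite{JL1} (Lemma \ref{representing-measure-5-a}), combined with a density-point argument (Lemma \ref{area-5-a}). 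Nothing in your outline produces such a lower bound; $G$-invariance of $u$ does not help here, since $\tau_x$ acts on the non-radial function $|\nabla u|^2$ and could a priori put almost no mass near $x$ itself, as opposed to near the other points of the $G$-orbit of $x$. The companion estimate $y|\nabla u|\le c$ on the sawtooth (Lemma \ref{area-5-e}), which the Green-formula step cannot do without, is likewise derived from the area-integral bound via translation estimates, not from ``interior gradient estimates away from the walls'': it must hold up to the walls and uniformly down to $y=0$.

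Second, your fallback --- importing Fabes--Kenig--Serapioni degenerate elliptic theory --- fails at its foundation: $W_{\kappa}$ is not an $A_2$ weight in general. Already for $G=\ZZ_2$, $d=1$, one has $W_{\kappa}(x)=|x|^{2\kappa}$, which lies in $A_2(\RR)$ only when $\kappa<1/2$, whereas the theorem is asserted for all nonnegative multiplicities. And even where $A_2$ holds, a local Spencer--Stein theorem taking the Dunkl area integral as hypothesis is not off-the-shelf: one would still have to relate $(S_{a,h}u)(x)$ to the Euclidean weighted cone energy at $x$, which is the same representing-measure problem as above. Finally, the endgame is not a ``Lebesgue-point/density argument'': both in \cite{St2} and in the paper, finite sawtooth energy is converted into non-tangential boundedness by applying the weighted Green formula (Proposition \ref{Green-formula-2-a}) on smoothed $G$-invariant sawtooth domains (Lemma \ref{area-4-d}), extracting a uniform $L^2_{\kappa}$ bound for $u$ on the sawtooth surfaces, proving the harmonic majorization $|u|\le c\,v_{\epsilon}+c$ by the subharmonicity of $|u|$ (Proposition \ref{subharmonicity-3-b}) and the maximum principle (Lemma \ref{max-principle}), and then passing to a weak limit and invoking the Fatou theorem for $\kappa$-Poisson integrals (Proposition \ref{Poisson-d}). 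These steps, together with the lower-bound lemma above, constitute the proof; as it stands your proposal names the obstacles but does not overcome them.
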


From Theorems \ref{thm-1} to \ref{thm-3}, we have

\begin{corollary}\label{cor-3}
Suppose that $E$ is a $G$-invariant measurable subset of $\partial\RR^{d+1}_+=\RR^d$ and $u$ is $\kappa$-harmonic in $\RR^{d+1}_+$. If $u$ is $G$-invariant, then the following assertions are equivalent:

{\rm (i)}\,\, $u$ has a finite non-tangential limit at $(x,0)$ for almost every $x\in E$;

{\rm (ii)}\, $u$ is non-tangentially bounded at $(x,0)$ for almost every $x\in E$;

{\rm (iii)} for almost every $x\in E$, the area integral $(S_{a,h}u)(x)$ is finite for some $a,h>0$.
\end{corollary}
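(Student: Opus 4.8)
The plan is to deduce Corollary~\ref{cor-3} from Theorems~\ref{thm-1}--\ref{thm-3}, the only real work being to reconcile the ``for every $x\in E$'' hypotheses of Theorems~\ref{thm-2} and~\ref{thm-3} with the ``almost every'' assertions (i)--(iii). Since Theorem~\ref{thm-1} already gives (i)$\Leftrightarrow$(ii) (and does not even use $G$-invariance of $u$), it suffices to prove (ii)$\Leftrightarrow$(iii). In both directions the mechanism is identical: replace $E$ by the maximal subset on which the relevant pointwise property holds, check that this subset is measurable and $G$-invariant, invoke the corresponding theorem on it, and discard a null set at the end.

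For (ii)$\Rightarrow$(iii), put
\[
 B_{a,h,M}=\left\{x\in\RR^d:\ |u(t,y)|\le M\ \text{for all}\ (t,y)\in\Gamma_a^h(x)\right\}
 \qquad(a,h\in\mathbb{Q}_{>0},\ M\in\NN),
\]
and let $E_0=E\cap\bigcup_{a,h,M}B_{a,h,M}$ be the set of points of $E$ at which $u$ is non-tangentially bounded, so that $|E\setminus E_0|=0$ by (ii). Each $B_{a,h,M}$ is closed---if $x_n\to x$ and $(t,y)\in\Gamma_a^h(x)$ then $(t,y)\in\Gamma_a^h(x_n)$ for large $n$, the inequality $|t-x|<ay$ being strict---so $E_0$ is measurable; and each $B_{a,h,M}$ is $G$-invariant because $\sigma$ is an isometry with $\sigma\,\Gamma_a^h(x)=\Gamma_a^h(\sigma x)$ and, $u$ being $G$-invariant, $|u(t,y)|=|u(\sigma t,y)|$. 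Thus $E_0$ is a $G$-invariant measurable set every point of which is a point of non-tangential boundedness, and Theorem~\ref{thm-2} applied to $E_0$ gives finiteness of $(S_{a,h}u)(x)$ for some $a,h>0$ at almost every $x\in E_0$, hence at almost every $x\in E$.

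For (iii)$\Rightarrow$(ii), set $E_1=E\cap\bigcup_{a,h\in\mathbb{Q}_{>0}}\{x:\ (S_{a,h}u)(x)<\infty\}$, so $|E\setminus E_1|=0$ by (iii); its measurability follows from that of $x\mapsto(S_{a,h}u)(x)$, which one reads off from \eqref{area-integral-1-1} by Fatou's lemma along an exhaustion of $\Gamma_a^h(0)$. The step I expect to be the main obstacle is the $G$-invariance of $E_1$, i.e. the identity $(S_{a,h}u)(\sigma x)=(S_{a,h}u)(x)$ for $\sigma\in G$. Here the structure of the generalized translation enters: since $u$ is $G$-invariant so is $u^2$, and since $\Delta_\kappa$ commutes with the $G$-action (the fact underlying ``$\sigma u$ is $\kappa$-harmonic whenever $u$ is''), the function $f=\Delta_\kappa u^2$ is $G$-invariant; combining the equivariance $\tau_{\sigma x}(\sigma f)=\sigma(\tau_x f)$ of $\tau_x$ from Section~2 (which for $G$-invariant $f$ reads $\tau_{\sigma x}f=\sigma(\tau_x f)$) with the $G$-invariance of $d\omega_\kappa$ and of the cone $\Gamma_a^h(0)$, the substitution $t\mapsto\sigma t$ in \eqref{area-integral-1-1} yields the claimed identity. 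Hence $E_1$ is $G$-invariant and measurable, the area integral is finite at every one of its points, and $u$ is $G$-invariant, so Theorem~\ref{thm-3} applied to $E_1$ gives non-tangential boundedness of $u$ at almost every $x\in E_1$, hence at almost every $x\in E$. Together with Theorem~\ref{thm-1} this proves the equivalence of (i), (ii), (iii).
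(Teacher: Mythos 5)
Your proposal is correct and takes essentially the same route as the paper: the paper presents Corollary~\ref{cor-3} as an immediate consequence of Theorems~\ref{thm-1}--\ref{thm-3}, and your argument is exactly that deduction, with the routine passage from the ``almost every'' statements (ii), (iii) to the ``for every $x$'' hypotheses of Theorems~\ref{thm-2} and~\ref{thm-3} (via measurable, $G$-invariant, full-measure subsets of $E$) written out explicitly. Your auxiliary checks --- closedness of the sets $B_{a,h,M}$, the $G$-invariance of $\Delta_\kappa u^2$ and equivariance of $\tau_x$ giving $(S_{a,h}u)\circ\sigma=S_{a,h}u$ --- are sound and are of the same kind as the reductions the paper itself performs inside its theorem proofs (e.g.\ replacing $E_k$ by $\cap_{\sigma\in G}(\sigma E_k)$).
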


For the group $G=Z_{2}^{d}$, the $G$-invariance of $u$ can be eliminated. Indeed we have

\begin{theorem}\label{thm-4}
Suppose $G=Z_{2}^{d}$ with a given multiplicity parameters $\lambda=(\lambda_1,\dots,\lambda_d)$, $E$ is a $G$-invariant measurable subset of $\RR^d$, and $u$ is $\lambda$-harmonic in $\RR^{d+1}_+$.
If  for every $x\in E$, the area integral $(S_{a,h}u)(x)$ is finite for some $a,h>0$, then $u$ is non-tangentially bounded at $(x,0)$ for almost every $x\in E$.
\end{theorem}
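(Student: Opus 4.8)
The plan is to reduce Theorem \ref{thm-4} to the already-established $G$-invariant case, Theorem \ref{thm-3}, by decomposing $u$ according to the characters of $Z_{2}^{d}$ and peeling off an explicit monomial from each parity component. For $S\subseteq\{1,\dots,d\}$ let $\chi_S$ be the character of $G=Z_{2}^{d}$ that equals $-1$ on $\sigma_j$ exactly when $j\in S$, and set
$$u_S=\frac{1}{2^d}\sum_{\sigma\in G}\chi_S(\sigma)\,\sigma u,\qquad m_S(x)=\prod_{j\in S}x_j,$$
so that $u=\sum_S u_S$, each $u_S$ is $\lambda$-harmonic (since each $\sigma u$ is, by \cite[Proposition 1.1]{Du1}), $u_S$ is odd in $x_j$ for $j\in S$ and even in $x_j$ for $j\notin S$, and hence $w_S:=u_S/m_S$ is $G$-invariant. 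A direct coordinatewise computation using the tensor structure of $\Delta_\lambda$ gives the intertwining $\Delta_\lambda(m_S\phi)=m_S\,\Delta_{\lambda^{(S)}}\phi$ for even $\phi$, where $\lambda^{(S)}_j=\lambda_j+1$ for $j\in S$ and $\lambda^{(S)}_j=\lambda_j$ otherwise; thus each $w_S$ is $G$-invariant and $\lambda^{(S)}$-harmonic. It therefore suffices to show each $w_S$ is non-tangentially bounded a.e.\ on $E$: then $u_S=m_S w_S$ is too (because $m_S$ is continuous, hence bounded on each truncated cone), and a finite sum of non-tangentially bounded functions is non-tangentially bounded.

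First I would pass the finiteness of the area integral from $u$ to the components $u_S$. Since $\Delta_\lambda$ commutes with every $\sigma\in G$ while the cone $\Gamma_a^h(0)$, the weight $d\omega_\lambda$, and the height factor are all $G$-invariant, the generalized translation is $G$-covariant and one obtains $(S_{a,h}(\sigma u))(x)=(S_{a,h}u)(\sigma x)$. As $E$ is $G$-invariant, the hypothesis yields $(S_{a,h}(\sigma u))(x)<\infty$ for every $\sigma\in G$ and every $x\in E$. Writing $\Delta_\lambda(fg)-f\Delta_\lambda g-g\Delta_\lambda f=2\Theta(f,g)$ for the associated carré du champ, the map $B_x(f,g)=\iint_{\Gamma_a^h(0)}\tau_x[\Theta(f,g)]\,y^{1-d-2|\lambda|}\,d\omega_\lambda(t)\,dy$ is a symmetric bilinear form with $B_x(f,f)=\tfrac12(S_{a,h}f)(x)^2\ge 0$; Cauchy--Schwarz for this positive-semidefinite form gives $|B_x(\sigma u,\sigma' u)|<\infty$, and since $u_S$ is a finite linear combination of the $\sigma u$, we conclude $(S_{a,h}u_S)(x)<\infty$ for every $x\in E$.

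The heart of the matter, and the step I expect to be hardest, is to convert finiteness of $(S_{a,h}u_S)(x)$ at parameter $\lambda$ into finiteness of $(S_{a,h}w_S)(x)$ at the shifted parameter $\lambda^{(S)}$, so that Theorem \ref{thm-3} applies to $w_S$. Two clean identities are available. On $G$-invariant $V$ one has $L_\lambda(m_S^2 V)=m_S^2 L_{\lambda^{(S)}}V+R_S$, where $L_\lambda$ is the Bessel Laplacian agreeing with $\Delta_\lambda$ on $G$-invariant functions and $R_S=\sum_{j\in S}(m_S/x_j)^2\,2x_j^{-2\lambda_j}\partial_{x_j}(x_j^{1+2\lambda_j}V)$ is in divergence form; and the weights satisfy $d\omega_{\lambda^{(S)}}=m_S^2\,d\omega_\lambda$. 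Taking $V=w_S^2$ (so $u_S^2=m_S^2 w_S^2$, both $G$-invariant), the measure shift absorbs the factor $m_S^2$, while the discrepancy $y^{-2|S|}$ between the two height factors is harmless on the cone, where $m_S(t)^2\le|t|^{2|S|}\le (ay)^{2|S|}$. What remains delicate is reconciling the two generalized translations $\tau_x^\lambda$ and $\tau_x^{\lambda^{(S)}}$ and disposing of the remainder $R_S$. The plan is to exploit the tensor-product structure of the $Z_{2}^{d}$ translation together with the classical transmutation between Bessel operators of consecutive orders to relate $\tau_x^{\lambda^{(S)}}$ to $\tau_x^{\lambda}$, and to integrate the divergence form of $R_S$ over the invariant cone slices, showing it contributes only a finite term controlled by $(S_{a,h}u_S)(x)$. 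Once $(S_{a,h}w_S)(x)<\infty$ a.e.\ on $E$ for each $S$, Theorem \ref{thm-3} applied with multiplicity $\lambda^{(S)}$ shows each $w_S$, hence $u$, is non-tangentially bounded a.e.\ on $E$, which completes the argument.
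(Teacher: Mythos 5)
Your outer scaffolding is sound, and it is a genuinely different reduction from the paper's: the parity decomposition $u=\sum_S u_S$, the covariance $(S_{a,h}(\sigma u))(x)=(S_{a,h}u)(\sigma(x))$ together with the Cauchy--Schwarz argument for the truncated forms (which, after taking the minimum of the finitely many apertures and heights over the $G$-orbit of $x$, does give $(S_{a,h}u_S)(x)<\infty$ on $E$), the smoothness of $w_S=u_S/m_S$ via Hadamard's lemma, and the transmutation identity $\Delta_\lambda(m_S\phi)=m_S\Delta_{\lambda^{(S)}}\phi$ for $G$-invariant $\phi$ are all correct. The gap is exactly at the step you yourself flag as ``the heart of the matter,'' and it is not a technicality: nothing in the proposal proves $(S_{a,h}w_S)(x)<\infty$ at parameter $\lambda^{(S)}$. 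Two concrete obstructions. First, from
\begin{align*}
\tau_x\bigl[\Delta_\lambda u_S^2\bigr]=\tau_x\bigl[m_S^2\,\Delta_{\lambda^{(S)}}w_S^2\bigr]+\tau_x\bigl[R_S(w_S^2)\bigr]
\end{align*}
you would need to integrate each term over the cone against $y^{1-d-2|\lambda|}\,d\omega_\lambda(t)\,dy$ and control them separately; but the Dunkl translation at parameter $\lambda$ is \emph{not} positivity-preserving on non-radial functions, even for $G=Z_2^d$ (the rank-one translation is a signed hypergroup, cf. \cite{Ro1}; Proposition \ref{translation-2-e}(ii) gives positivity only for radial data), so finiteness of the integral of the sum yields no control of either summand, and the pointwise nonnegativity of $m_S^2\,\Delta_{\lambda^{(S)}}w_S^2$ is useless once $\tau_x$ has been applied. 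Second, the remainder is not ``harmless'': integrating the divergence form of $R_S(w_S^2)$ by parts against the translated cutoff leaves terms of the shape $\iint t_j\,w_S^2\,\partial_{t_j}\bigl[(\tau_{-x}\psi_y)(t)\bigr]\,y^{1-d-2|\lambda|}\,d\omega_\lambda(t)\,dy$, which involve $w_S^2$ itself rather than its derivatives; bounding these by $(S_{a,h}u_S)(x)$ is a Caccioppoli-type inequality whose difficulty is of the same order as the theorem being proved, and no argument is offered. Likewise, the ``classical transmutation'' you invoke compares symmetrized translations applied to the \emph{same} nonnegative function; it does not relate $\tau_x^{\lambda^{(S)}}$ of $\Delta_{\lambda^{(S)}}w_S^2$ to $\tau_x^{\lambda}$ of $m_S^2\,\Delta_{\lambda^{(S)}}w_S^2$, which is what your scheme requires.

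It is worth noting that the paper's proof of this theorem (via Theorem \ref{thm-6-4}) rests on precisely the idea you are missing, but implements it pointwise rather than at the level of area integrals. Lemma \ref{area-5-f} represents $u(x,y)-u(\sigma_j(x),y)$ through the mean-value property against a mollifier, integrates by parts in the $j$-th variable of the explicit $Z_2^d$ intertwining measure (this is where the shift $\lambda_j\mapsto\lambda_j+1$ and the translation $\tau^+$ enter), and then returns to the original parameter via the operator inequality (\ref{key-inequality-5-6}), valid for nonnegative functions. The output is the pointwise bound $y|\nabla u|\le c$ on cones --- difference quotients included --- assuming only finiteness of $S^{\psi}_{b,\eta}u$ on the $G$-orbit of the vertex; with that bound, the $G$-invariance hypothesis of Theorem \ref{thm-6-3} becomes unnecessary and its Green's-formula and Poisson-majorization argument runs unchanged. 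If you wish to salvage your reduction, the missing lemma you must prove is an area-integral analogue of this, say $(S_{a',h'}w_S)(x)\le c\sum_{\sigma\in G}(S_{a,h}u_S)(\sigma(x))$ with the left side at parameter $\lambda^{(S)}$, and I see no softer route to it than the machinery the paper develops.
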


Combining the conclusions in Theorems \ref{thm-1}, \ref{thm-2} and \ref{thm-4}, the following corollary is immediate.

\begin{corollary}\label{cor-5}
Suppose $G=Z_{2}^{d}$ with a given multiplicity parameters $\lambda=(\lambda_1,\dots,\lambda_d)$. If $E$ is a $G$-invariant measurable subset of $\RR^d$ and $u$ is $\lambda$-harmonic in $\RR^{d+1}_+$, then the following assertions are equivalent:

{\rm (i)}\,\, $u$ has a finite non-tangential limit at $(x,0)$ for almost every $x\in E$;

{\rm (ii)}\, $u$ is non-tangentially bounded at $(x,0)$ for almost every $x\in E$;

{\rm (iii)} for almost every $x\in E$, the area integral $(S_{a,h}u)(x)$ is finite for some $a,h>0$.
\end{corollary}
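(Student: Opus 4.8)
The plan is to establish the three-way equivalence by closing a cyclic chain of implications: I would read off (i) $\Leftrightarrow$ (ii) directly from Theorem~\ref{thm-1}, and then prove (ii) $\Rightarrow$ (iii) $\Rightarrow$ (ii) by invoking Theorems~\ref{thm-2} and \ref{thm-4} respectively. The only point demanding care is that Theorems~\ref{thm-2} and \ref{thm-4} require their hypotheses to hold at \emph{every} point of a $G$-invariant set, whereas the assertions (ii) and (iii) of the corollary are posited only for \emph{almost every} $x\in E$. I would bridge this mismatch by discarding a suitable $G$-invariant null set.

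First I would record the elementary reduction underlying the whole argument. Since $G=Z_2^d$ acts by sign changes, each $\sigma\in G$ is a Euclidean isometry, hence measure preserving and null-set preserving. Thus for any $N\subset\RR^d$ with $|N|=0$ its symmetrization $\widetilde N=\bigcup_{\sigma\in G}\sigma N$ is again a null set, and it is $G$-invariant. Consequently, if $E$ is $G$-invariant and some property holds at almost every $x\in E$, then letting $N\subset E$ be the exceptional set and putting $E'=E\setminus\widetilde N$, the set $E'$ is $G$-invariant, satisfies $|E\setminus E'|=0$, and the property holds at \emph{every} point of $E'$. I would also note that the relevant exceptional sets are measurable: the set of $x$ at which $u$ is non-tangentially bounded equals $\bigcup\{x:\sup_{\Gamma^h_a(x)}|u|\le M\}$ over rational $a,h,M$, a countable union of (in fact closed) measurable sets, and similarly the set of $x$ at which $(S_{a,h}u)(x)$ is finite for some rational $a,h$ is measurable.

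With this device the argument is routine. The equivalence (i) $\Leftrightarrow$ (ii) is exactly Theorem~\ref{thm-1}. For (ii) $\Rightarrow$ (iii), assuming (ii) I would take $N$ to be the set of $x\in E$ at which $u$ fails to be non-tangentially bounded, pass to the $G$-invariant full-measure subset $E'\subset E$ on which $u$ is non-tangentially bounded at every point, and apply Theorem~\ref{thm-2} to $E'$; this yields finiteness of $(S_{a,h}u)(x)$ for a.e. $x\in E'$, hence for a.e. $x\in E$, which is (iii). For (iii) $\Rightarrow$ (ii), assuming (iii) I would take $N$ to be the set of $x\in E$ where $(S_{a,h}u)(x)=\infty$ for all $a,h$, pass to the $G$-invariant full-measure subset $E'$ on which the area integral is finite for some $a,h$ at every point, and apply Theorem~\ref{thm-4} to conclude that $u$ is non-tangentially bounded at a.e. $x\in E'$, hence at a.e. $x\in E$, which is (ii). Together these close the cycle (i) $\Leftrightarrow$ (ii) $\Leftrightarrow$ (iii).

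The step deserving the most attention is not any individual implication, each of which is a direct citation, but the bookkeeping of this quantifier reduction: one must verify that the exceptional sets are measurable and that their $G$-symmetrizations remain null, so that the pointwise hypotheses of Theorems~\ref{thm-2} and \ref{thm-4} can legitimately be fed the almost-everywhere information furnished by (ii) and (iii). Because $G=Z_2^d$ is finite and acts by isometries, this reduction is harmless; this is precisely what lets the $G$-invariance hypothesis on $u$ imposed in Corollary~\ref{cor-3} (where only Theorem~\ref{thm-3} is available) be dropped in the present $Z_2^d$ setting, Theorem~\ref{thm-4} taking over the role of Theorem~\ref{thm-3}.
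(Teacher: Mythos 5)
Your proposal is correct and takes essentially the same route as the paper, which obtains the corollary simply by ``combining the conclusions in Theorems \ref{thm-1}, \ref{thm-2} and \ref{thm-4}'' --- exactly your chain: (i) $\Leftrightarrow$ (ii) from Theorem \ref{thm-1}, (ii) $\Rightarrow$ (iii) from Theorem \ref{thm-2}, and (iii) $\Rightarrow$ (ii) from Theorem \ref{thm-4}. Your explicit quantifier bookkeeping (symmetrizing the null exceptional set over $G$ to pass from ``almost every'' to ``every'' on a $G$-invariant subset) is precisely what the paper leaves implicit in the word ``immediate,'' and it matches the device the paper itself employs inside the proofs of those theorems, e.g.\ replacing $E_k$ by $\cap_{\sigma\in G}(\sigma E_k)$.
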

\vskip .2in

{\bf 1.4. Remarks.}\quad (a) We remark that the area integral $(S_{a,h}u)(x)$ defined in (\ref{area-integral-1-1}) is meaningful for $\kappa$-harmonic function $u$ in $\RR^{d+1}_+$ based upon the following three points: (i) $u\in C^{\infty}(\RR^{d+1}_+)$ by Proposition \ref{harmonic-2-b}; (ii) $\left(\Delta_{\kappa} u^2\right)(x,y)\ge0$ by Lemma \ref{Laplace-4-a}; (iii) the integral on the right hand side of (\ref{area-integral-1-1}) is nonnegative, since it can be written as
\begin{align*}
\lim_{\delta\rightarrow0+}\iint_{\substack{|t|\le ay, \\
y\in[\delta,h]}}
=\lim_{\delta\rightarrow0+}\int_{\delta}^h\int_0^{ay}M_{(\Delta_{\kappa}u^2)(\cdot,y)}(x,r)
\left(\frac{r}{y}\right)^{2|\kappa|+d-1}\,drdy,
\end{align*}
where $M_f(x,r)$ is the generalized spherical mean of $f$ (see (\ref{spherical-mean-2-1})) which is positivity-preserving.

(b) In the proofs of Theorems \ref{thm-2}, \ref{thm-3} and \ref{thm-4}, to use the generalized translation $\tau_x$ legitimately we shall work with a variant of the area integral $(S_{a,h}u)(x)$ involving a smooth and compactly supported function $\psi$, that is,
\begin{align*}
(S^{\psi}_{a,h}u)(x)=\left(\iint_{\RR^d\times(0,h]}\left[\tau_{x}(\Delta_{\kappa}u^{2})\right](t,y)\,
\psi\left(\frac{t}{ay}\right)y^{1-d-2|\kappa|}\,d\omega_{\kappa}(t)dy\right)^{1/2},
\end{align*}
where $\psi\in C^{\infty}(\RR^d)$ is radial and satisfies
$0\le\psi(x)\le1$,
\begin{align*}
\psi(x)=1\quad \hbox{for}\,\,|x|\le1/2,\quad\hbox{and}\quad \psi(x)=0\quad \hbox{for}\,\,|x|\ge1.
\end{align*}


(c) We note that the assumption of reflection-symmetry ($G$-invariance) on the given subset $E$ of $\RR^d$ in the above theorems and corollaries is reasonable, since the Dunkl operators involve values of functions at reflection-symmetric points. Certainly, when regarding local properties, it would be worthy of finding weaker requirements at reflection-symmetric points then that in the position under consideration. This is still open on the non-tangential behaviour of $\kappa$-harmonic functions.

(d) In comparison with the classical one, it seems that the more apt form of Lusin-type area integral in the Dunkl setting would be, for $a,h>0$,
\begin{align*}
\left(\iint_{\Gamma_{a}^{h}(0)}
\left[\tau_{x}\left(|\nabla u|^2\right)\right](t,y)\frac{d\omega_{\kappa}(t)dy}{y^{2|\kappa|+d-1}}\,\right)^{1/2}\,\,\hbox{or}\,\,
\left(\iint_{\Gamma_{a}^{h}(0)}
\left[\tau_{x}\left(|\nabla_{\kappa} u|^2\right)\right](t,y)\frac{d\omega_{\kappa}(t)dy}{y^{2|\kappa|+d-1}}\,\right)^{1/2},
\end{align*}
where $\nabla=\nabla^{(x,y)}$ is the $(d+1)$-dimensional gradient and $\nabla_{\kappa}=(D_1,\dots,D_d,\partial_y)$ the one associated to the Dunkl operators (the $\kappa$-gradient).
Lemma \ref{Laplace-4-a} implies that both the two forms above are dominated by $(S_{a,h}u)(x)$, and for a $G$-invariant $u$, all three of them are consistent (up to a constant). Therefore Theorem \ref{thm-2} is deeper than expected, and
Theorem \ref{thm-4} and Corollary \ref{cor-5} motivate that the adoption of $S_{a,h}u$ would be also necessary for a non-$G$-invariant $u$ associated to a general reflection group $G$.

(e) In \cite{HW1,HW2}, Hunt and Wheeden studied the local boundary behaviour of usual harmonic functions in a bounded Lipschitz domain, and in \cite{JK1}, Jerison and Kenig extended the classical results to a larger class of domains named non-tangentially accessible domains. In the Dunkl setting, further researches on boundary behaviour of $\kappa$-harmonic functions may be considered in a reflection-symmetric Lipschitz domain or non-tangentially accessible domain.


The paper is organized as follows. Section 2 contains some basic knowledge on the rational Dunkl theory, and in Section 3 we prove several propositions which is necessary in the sequel. The proofs of Theorems \ref{thm-1} and \ref{thm-2} are given in Sections 4 and 5 separately, and Theorems \ref{thm-3} and \ref{thm-4} are proved in Section 6.

For $1\le p<\infty$, the space $L_{\kappa}^p(\RR^d)$ consists of all functions $f$ on $\RR^d$ such
that $\|f\|_{L_{\kappa}^p(\RR^d)}:=\left(c_{\kappa}\int_{\RR^d}|f|^p\,d\omega_{\kappa}\right)^{1/p}<+\infty$, where $c_{\kappa}^{-1}=\int_{\RR^d}e^{-|x|^2/2}d\omega_{\kappa}(x)$, and $\|f\|_{L^{\infty}}$ is given in the usual
way.
For a measurable set $E\subset\RR^d$, set
$|E|_{\kappa}=\int_{E}d\omega_{\kappa}$.  Throughout the paper, $c$, $c'$, $c_1$, $c_2$ and so on, denote
constants which may be different in different occurrences.
$X\asymp Y$ means that $X\le c_1Y$ and $Y\le c_2X$ for some fixed constants $c_1,c_2>0$.

%

\section{Some facts in the Dunkl theory}

In this section we give an account on results from the Dunkl theory which will be relevant for the sequel. Concerning root systems
and reflection groups, see \cite{Hu}. We shall use the notation $\BB(x,r)$ (or $\BB((x,y),r)$) to denote the open ball in $\RR^d$ (or $\RR^{d+1}$) with radius $r$ centered at $x\in\RR^d$ (or $(x,y)\in\RR^{d+1}$).

\subsection{The Dunkl kernel and the Dunkl transform}

As in the last section, for a root system $R$ let $G$ be the associated reflection group and $\kappa$ a given nonnegative multiplicity function. As shown in \cite{Du3}, there exists a unique degree-of-homogeneity-preserving linear
isomorphism $V_{\kappa}$ on polynomials, which intertwines the associated commutative algebra of Dunkl operators and the algebra of usual partial differential operators, namely,
$D_jV_{\kappa}=V_{\kappa}\partial_j$ for $1\le i\le d$.  $V_{\kappa}$ commutes with the group action of $G$. (For a thorough analysis on $V_{\kappa}$ with general $\kappa$, see \cite{DJO}.)
It was proved in \cite{Ro3} that $V_{\kappa}$ is positive on the space of polynomials, and moreover, for each $x\in\RR^d$ there exists a unique (Borel) probability measure $\mu^{\kappa}_{x}$ of $\RR^d$ such that
$$
V_{\kappa}f(x)=\int_{\RR^d}f(\xi)\,d\mu^{\kappa}_{x}(\xi),
$$
where $\mu^{\kappa}_{x}$ is of compact support with
\begin{align}\label{intertwining-support-1}
\hbox{supp}\,\mu^{\kappa}_{x}\subseteq\hbox{co}\,\{\sigma(x):\,\,\sigma\in G\},
\end{align}
the convex hull of the orbit of $x$ under $G$.
The intertwining operator $V_{\kappa}$ has an extension to $C^{\infty}(\RR^d)$ and establishes a homeomorphism of this space (see \cite{Tr1,Tr2}).

Associated with $G$ and $\kappa$, the Dunkl kernel is defined by
\begin{eqnarray*}
E_{\kappa}(x,z)=V_{\kappa}\left(e^{\langle\cdot,z\rangle}\right)(x)
=\int_{\RR^d}e^{\langle\xi,z\rangle}\,d\mu^{\kappa}_{x}(\xi), \qquad x\in\RR^d,\,\,z\in\CC^d.
\end{eqnarray*}
According to \cite{Op}, for fixed $z\in\CC^d$, $x\mapsto E_{\kappa}(x,z)$ may
be characterized as the unique analytic solution of the system
\begin{align}\label{Dunkl-kernel-eigenfunction-1}
D^x_{j}E_{\kappa}(x,z)=z_jE_{\kappa}(x,z), \qquad x\in\RR^d,\,\,\,j=1,\dots,d,
\end{align}
with the initial value $E_{\kappa}(0,z)=1$; and $E_{\kappa}$ has a unique holomorphic extension to $\CC^d\times\CC^d$ and is symmetric in its arguments (see \cite{dJ,Op}). Furthermore (cf. \cite{dJ,Du3,Ro3}), for $z,w\in\CC^d$, $\lambda\in\CC$ and $\sigma\in G$, one has
\begin{align}\label{Dunkl-kernel-2-3}
E_{\kappa}(\lambda z,w)=E_{\kappa}(z,\lambda w),\qquad E_{\kappa}(\sigma(z),\sigma(w))=E_{\kappa}(z,w),
\end{align}
and for $x\in\RR^d$, $z\in\CC^d$, and all multi-indices $\nu=(\nu_1,\dots,\nu_d)$,
\begin{align}\label{Dunkl-kernel-2-4}
\left|\partial_z^{\nu}E_{\kappa}(x,z)\right|\le|x|^{|\nu|}\max_{\sigma\in G}e^{{\rm Re}\,\langle\sigma(x),z\rangle}.
\end{align}

For $f\in L_{\kappa}^1(\RR^d)$, its Dunkl transform is defined by
\begin{eqnarray*}
\left(\SF_{\kappa}f\right)(\xi)=c_{\kappa}\int_{\RR^d}f(x)E_{\kappa}(-i\xi,x)\,d\omega_{\kappa}(x),\qquad \xi\in\RR^d.
\end{eqnarray*}
The Dunkl transform $\SF_{\kappa}$ commutes with the $G$-action, and shares many of the important properties with the
usual Fourier transform (see \cite{Du4,dJ,Ro3,RV1}), part of which are listed as follows.

\begin{proposition} \label{transform-a} {\rm(i)} If $f\in L_{\kappa}^1(\RR^d)$, then ${\SF}_{\kappa}f\in C_0({\RR^d})$ and $\|{\SF}_{\kappa}f\|_{\infty}\leq\|f\|_{L_{\kappa}^1}$.

{\rm (ii)} {\rm (Inversion)} \ If $f\in L_{\kappa}^1(\RR^d)$
such that $\SF_{\kappa}f\in L_{\kappa}^1(\RR^d)$, then
$f(x)=[\SF_{\kappa}(\SF_{\kappa}f)](-x)$.

{\rm(iii)} For $f\in{\mathscr S}({\RR^d})$ (the Schwartz space), we have $[\SF_{\kappa}(D_jf)](\xi)=i\xi_j(\SF_{\kappa}f)(\xi)$, $[\SF_{\kappa}(x_jf)](\xi)=i[D_j(\SF_{\kappa}f)](\xi)$ for
$\xi\in\RR^d$ and $1\le j\le d$; and $\SF_{\kappa}$ is a homeomorphism of ${\mathscr S}(\RR^d)$.

{\rm(iv)} {\rm (product formula)} For $f_1,f_2\in
L_{\kappa}^1(\RR^d)$, we have $\int_{\RR^d}f_1\cdot\SF_{\kappa}f_2\,d\omega_{\kappa}=\int_{\RR^d}f_2\cdot\SF_{\kappa}f_1\,d\omega_{\kappa}$.

{\rm (v)} {\rm (Plancherel)} There exists a unique extension of
$\SF_{\kappa}$ to $L_{\kappa}^2(\RR^d)$ with
$\|\SF_{\kappa}f\|_{L_{\kappa}^2}=\|f\|_{L_{\kappa}^2}$.

{\rm (vi)} If $f\in L_{\kappa}^1(\RR^d)$ is radial, then its Dunkl transform ${\SF}_{\kappa}f$ is also radial.
\end{proposition}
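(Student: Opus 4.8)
I would prove the six assertions in a dependency order that lets each rest on the previous ones, taking as basic input the eigenfunction relation (\ref{Dunkl-kernel-eigenfunction-1}), the scaling and symmetry identities (\ref{Dunkl-kernel-2-3}), and the kernel bound (\ref{Dunkl-kernel-2-4}). First, for real $\xi,x$ the case $\nu=0$ of (\ref{Dunkl-kernel-2-4}) gives $|E_\kappa(-i\xi,x)|=|E_\kappa(x,-i\xi)|\le\max_{\sigma}e^{{\rm Re}\,\langle\sigma(x),-i\xi\rangle}=1$, since $-i\xi$ is purely imaginary. The bound in (i), $\|\SF_\kappa f\|_\infty\le\|f\|_{L^1_\kappa}$, is then immediate, and continuity of $\SF_\kappa f$ follows by dominated convergence (the integrand being dominated by $c_\kappa|f|\in L^1_\kappa$). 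The product formula (iv) is equally quick: writing out both sides and using the symmetry together with the scaling in (\ref{Dunkl-kernel-2-3}) to get $E_\kappa(-ix,\xi)=E_\kappa(-i\xi,x)$, the two integrals coincide by Fubini, which applies because $|E_\kappa|\le1$ places the integrand in $L^1(\omega_\kappa\times\omega_\kappa)$.

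Next I would establish the two relations in (iii) for $f\in\SS$. For the first I use the skew-adjointness $\int(D_jf)\,g\,d\omega_\kappa=-\int f\,(D_jg)\,d\omega_\kappa$ (standard in Dunkl theory, checked by integration by parts from (\ref{Dunkl-operator-1}) and the form of $W_\kappa$) together with $D^x_jE_\kappa(-i\xi,x)=-i\xi_jE_\kappa(-i\xi,x)$, coming from symmetry and (\ref{Dunkl-kernel-eigenfunction-1}), to obtain $\SF_\kappa(D_jf)=i\xi_j\SF_\kappa f$. For the second I differentiate under the integral sign (justified by the rapid decay of $f$ and (\ref{Dunkl-kernel-2-4})) and use $E_\kappa(-i\xi,x)=E_\kappa(\xi,-ix)$ from the scaling in (\ref{Dunkl-kernel-2-3}), so that $D^\xi_jE_\kappa(-i\xi,x)=-ix_jE_\kappa(-i\xi,x)$ by (\ref{Dunkl-kernel-eigenfunction-1}); this gives $\SF_\kappa(x_jf)=i\,D_j\SF_\kappa f$. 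Combining the two relations with the general-$\nu$ bound (\ref{Dunkl-kernel-2-4}) shows that $\xi^\beta D^\gamma_\xi(\SF_\kappa f)$ is, up to constants, the Dunkl transform of a Schwartz function, so $\SF_\kappa$ maps $\SS$ continuously into $\SS$.

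The hinge of the whole proposition is the inversion formula (ii), and here the Gaussian is the crucial device. Since $G(x)=e^{-|x|^2/2}$ is $G$-invariant, its reflection terms drop out and $D_jG=\partial_jG=-x_jG$; applying (iii) to both sides gives $D_j\SF_\kappa G=-\xi_j\SF_\kappa G$, and because $\SF_\kappa G$ is again $G$-invariant (the transform commutes with the $G$-action) this reduces to the ordinary system $\partial_j\SF_\kappa G=-\xi_j\SF_\kappa G$, whose solution normalized by $\SF_\kappa G(0)=c_\kappa\int e^{-|x|^2/2}\,d\omega_\kappa=1$ is $\SF_\kappa G=G$. I would then run the standard Gaussian approximate-identity argument: for $f\in\SS$, insert the definition of $\SF_\kappa f$ into $c_\kappa\int(\SF_\kappa f)(\xi)\,E_\kappa(i\xi,x)\,e^{-\varepsilon^2|\xi|^2/2}\,d\omega_\kappa(\xi)$, interchange integrals, recognize the inner $\xi$-integral as a dilated Dunkl translate of the Gaussian, and let $\varepsilon\to0$. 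Identifying the modulated Gaussian's transform with the translate of the Gaussian is exactly where the generalized translation of Section 2 (equivalently the Dunkl heat kernel) enters, and it is the one step I expect to demand genuine work rather than bookkeeping; the passage to general $f\in L^1_\kappa$ with $\SF_\kappa f\in L^1_\kappa$ then follows by density of $\SS$ and the uniform bound in (i).

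With inversion in hand the rest falls out. For (v) I would first prove the Parseval identity $\int f\bar g\,d\omega_\kappa=\int\SF_\kappa f\,\overline{\SF_\kappa g}\,d\omega_\kappa$ on $\SS$ by combining (ii) and (iv), specialize to $g=f$ to get $\|\SF_\kappa f\|_{L^2_\kappa}=\|f\|_{L^2_\kappa}$, and extend to $L^2_\kappa$ by density. Inversion also completes (iii): $\SF_\kappa$ is a bijection of $\SS$ whose inverse is $\SF_\kappa$ followed by the reflection $x\mapsto-x$, hence continuous, so $\SF_\kappa$ is a homeomorphism. The decay part of (i) is finished by density: for $f\in L^1_\kappa$ pick $f_n\in\SS$ with $f_n\to f$ in $L^1_\kappa$; then each $\SF_\kappa f_n\in\SS\subset C_0$ and $\|\SF_\kappa f-\SF_\kappa f_n\|_\infty\le\|f-f_n\|_{L^1_\kappa}\to0$, so $\SF_\kappa f\in C_0$. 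Finally, for the radiality (vi) I would reduce $\SF_\kappa$ on a radial $f(x)=f_0(|x|)$ to a one-dimensional Hankel transform by integrating $E_\kappa(-i\xi,x)$ over the sphere $|x|=r$ against $W_\kappa$, which yields a normalized Bessel function of the product $r|\xi|$ alone; hence $\SF_\kappa f(\xi)$ depends only on $|\xi|$. This spherical-average-to-Bessel computation, like the translate-of-the-Gaussian step, is where the Dunkl-specific structure must be used in earnest, and for these two technical cores I would lean on \cite{dJ,Ro3,RV1}.
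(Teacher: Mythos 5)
The paper does not actually prove this proposition: it is imported as background, with the reader pointed to \cite{Du4,dJ,Ro3,RV1}, so the only meaningful comparison is with the standard proofs in those references. Your proposal reconstructs precisely that route, in the same dependency order as de Jeu's paper \cite{dJ}: the case $\nu=0$ of (\ref{Dunkl-kernel-2-4}) gives $|E_{\kappa}(-i\xi,x)|\le1$, hence (i) and, together with the symmetry and rescaling identities (\ref{Dunkl-kernel-2-3}) and Fubini, (iv); skew-adjointness of $D_j$ with respect to $d\omega_{\kappa}$ (which does hold for Schwartz pairs, as your integration-by-parts check confirms) and the eigenvalue equation (\ref{Dunkl-kernel-eigenfunction-1}) give the two intertwining relations in (iii); the Gaussian is identified as a fixed point of $\SF_{\kappa}$ through the first-order system you derive, using that $D_j$ reduces to $\partial_j$ on $G$-invariant functions; inversion then follows by the heat-kernel/approximate-identity argument, (v) drops out of (ii) and (iv) via Parseval, and (vi) reduces to a Hankel transform through the spherical average of $E_{\kappa}$, consistently with the Bessel kernel $j_{|\kappa|+(d-2)/2}(r|\xi|)$ appearing in (\ref{spherical-mean-2-3}). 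The overall architecture is correct and is exactly what the cited literature does.

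Two steps deserve tightening. First, in (ii) the passage from $f\in{\mathscr S}(\RR^d)$ to general $f\in L_{\kappa}^1(\RR^d)$ with $\SF_{\kappa}f\in L_{\kappa}^1(\RR^d)$ ``by density and the uniform bound in (i)'' does not work as stated: $f_n\to f$ in $L_{\kappa}^1$ yields only uniform convergence of $\SF_{\kappa}f_n$, not $L_{\kappa}^1$-convergence, so the inversion identity does not pass to the limit this way. The repair stays inside your own argument: run the Gaussian-damped integral directly for such $f$; Fubini is legitimate since $|E_{\kappa}|\le1$, the damped integral converges pointwise to $[\SF_{\kappa}(\SF_{\kappa}f)](-x)$ by dominated convergence (this is where $\SF_{\kappa}f\in L_{\kappa}^1$ enters), while the other side is the generalized convolution of $f$ with the dilated Gaussian, which converges to $f$ in $L_{\kappa}^1$, hence a.e.\ along a subsequence. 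Second, the Schwartz seminorms in (iii) involve ordinary derivatives $\partial^{\gamma}_{\xi}$, whereas your two relations produce Dunkl derivatives $D^{\gamma}$; you must either bound $\partial^{\gamma}_{\xi}\SF_{\kappa}f$ directly from the general-$\nu$ estimate (\ref{Dunkl-kernel-2-4}) and recover decay by trading $\xi_j$ for $D_j$ on the function side, or invoke the fact that rapid decay of all $x^{\beta}D^{\gamma}g$ forces $g\in{\mathscr S}(\RR^d)$ --- both routes are carried out in \cite{dJ}. Neither point is a wrong idea, only an elision; with them filled in, your proposal is a faithful account of the proofs the paper delegates to the literature.
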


\subsection{The generalized translation}

For $x\in\RR^d$, the generalized translation $\tau_x$ is defined in $L_{\kappa}^2(\RR^d)$ by (see \cite{TX})
\begin{align}\label{translation-2-0}
\left[\SF_{\kappa}(\tau_xf)\right](\xi)=E_{\kappa}(i\xi,x)(\SF_{\kappa}f)(\xi).
\end{align}
By Proposition \ref{transform-a}(v), $\tau_x$ is well defined and satisfies $\|\tau_xf\|_{L_{\kappa}^2}\le\|f\|_{L_{\kappa}^2}$ in view of (\ref{Dunkl-kernel-2-4}).

If $f$ is in an appropriate subclass, for example, $A_{\kappa}(\RR^d)=\left\{f\in L_{\kappa}^1(\RR^d):\,\SF_{\kappa}f\in L_{\kappa}^1(\RR^d)\right\}$ (cf. \cite{TX}) or ${\mathscr S}(\RR^d)$, $(\tau_xf)(t)$ may be expressed pointwise by (see \cite{Ro2,TX})
\begin{align}\label{translation-2-2}
(\tau_x f)(t)=c_{\kappa}\int_{\RR^d}E_{\kappa}(i\xi,x)E_k(i\xi,t)(\SF_{\kappa}f)(\xi)\,d\omega_{\kappa}(\xi).
 \end{align}

\begin{proposition}\label{translation-2-a} If $f,g\in L_{\kappa}^2(\RR^d)$, then for $x\in\RR^d$,
\begin{align}\label{translation-2-1}
\int_{\RR^d}(\tau_xf)(t)g(t)\,d\omega_{\kappa}(t)=\int_{\RR^d}f(t)(\tau_{-x}g)(t)\,d\omega_{\kappa}(t),
 \end{align}
and both sides are continuous in $x$.
\end{proposition}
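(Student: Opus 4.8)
The plan is to establish the duality relation (\ref{translation-2-1}) first for functions in a dense, well-behaved subclass where the pointwise formula (\ref{translation-2-2}) is available, and then extend to all of $L_{\kappa}^2(\RR^d)$ by continuity using the $L^2$-boundedness of $\tau_x$. First I would take $f,g\in{\mathscr S}(\RR^d)$, for which $\SF_{\kappa}f,\SF_{\kappa}g\in{\mathscr S}(\RR^d)$ by Proposition \ref{transform-a}(iii), so that both $\tau_xf$ and $\tau_{-x}g$ are given pointwise by (\ref{translation-2-2}). The natural tool is the product formula, Proposition \ref{transform-a}(iv): since the defining identity (\ref{translation-2-0}) says $\SF_{\kappa}(\tau_xf)=E_{\kappa}(i\cdot,x)\,\SF_{\kappa}f$, I would compute the left-hand side of (\ref{translation-2-1}) as
\begin{align*}
\int_{\RR^d}(\tau_xf)\,g\,d\omega_{\kappa}
=\int_{\RR^d}\SF_{\kappa}(\tau_xf)\cdot\SF_{\kappa}g\,d\omega_{\kappa}
=\int_{\RR^d}E_{\kappa}(i\xi,x)(\SF_{\kappa}f)(\xi)(\SF_{\kappa}g)(\xi)\,d\omega_{\kappa}(\xi),
\end{align*}
where the middle equality is Plancherel/multiplication formula (using that $\SF_{\kappa}$ is an $L^2$-isometry, Proposition \ref{transform-a}(v), together with the real-valued kernel conventions, or directly the product formula (iv)). The same computation applied to the right-hand side gives $\int E_{\kappa}(i\xi,-x)(\SF_{\kappa}f)(\xi)(\SF_{\kappa}g)(\xi)\,d\omega_{\kappa}(\xi)$, and by (\ref{Dunkl-kernel-2-3}) with $\lambda=-1$ one has $E_{\kappa}(i\xi,-x)=E_{\kappa}(-i\xi,x)$; I would check that the factor $E_{\kappa}(i\xi,x)$ appearing for $\tau_x$ on the left matches $E_{\kappa}(i\xi,x)$ arising from $\tau_{-x}g$ on the right, so the two expressions coincide and (\ref{translation-2-1}) holds on ${\mathscr S}(\RR^d)$.

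Next I would remove the Schwartz restriction. Since ${\mathscr S}(\RR^d)$ is dense in $L_{\kappa}^2(\RR^d)$, for general $f,g\in L_{\kappa}^2$ I choose sequences $f_n,g_n\in{\mathscr S}$ converging to $f,g$ in $L^2$-norm. Both sides of (\ref{translation-2-1}) are continuous bilinear forms in $(f,g)$: on the left, $\tau_xf_n\to\tau_xf$ in $L^2$ by the contraction $\|\tau_x h\|_{L_{\kappa}^2}\le\|h\|_{L_{\kappa}^2}$ noted after (\ref{translation-2-0}), and then $\int(\tau_xf_n)g_n\to\int(\tau_xf)g$ by Cauchy--Schwarz; the right side is handled identically using $\|\tau_{-x}h\|_{L_{\kappa}^2}\le\|h\|_{L_{\kappa}^2}$. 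Passing to the limit gives (\ref{translation-2-1}) for all $f,g\in L_{\kappa}^2(\RR^d)$.

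Finally, for continuity in $x$, I would again use density together with the uniform contraction bound. For fixed $f,g\in L_{\kappa}^2$, write $\Phi(x)=\int(\tau_xf)g\,d\omega_{\kappa}$. For Schwartz $f,g$ the pointwise formula (\ref{translation-2-2}) exhibits $\Phi(x)$ as $\int E_{\kappa}(i\xi,x)(\SF_{\kappa}f)(\xi)(\SF_{\kappa}g)(\xi)\,d\omega_{\kappa}(\xi)$, an absolutely convergent integral whose integrand depends continuously on $x$ (the kernel $x\mapsto E_{\kappa}(i\xi,x)$ is continuous, indeed entire) and is dominated, via the bound (\ref{Dunkl-kernel-2-4}) giving $|E_{\kappa}(i\xi,x)|\le1$ for real arguments, by the integrable function $|\SF_{\kappa}f\cdot\SF_{\kappa}g|$; dominated convergence then yields continuity of $\Phi$ on the dense class. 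For general $f,g$ I approximate and observe the convergence $\Phi_n\to\Phi$ is uniform in $x$, since $|\Phi_n(x)-\Phi(x)|\le\|\tau_x(f_n-f)\|_{L_{\kappa}^2}\|g_n\|_{L_{\kappa}^2}+\|\tau_xf\|_{L_{\kappa}^2}\|g_n-g\|_{L_{\kappa}^2}\le\|f_n-f\|_{L_{\kappa}^2}\|g_n\|_{L_{\kappa}^2}+\|f\|_{L_{\kappa}^2}\|g_n-g\|_{L_{\kappa}^2}$, a bound independent of $x$; a uniform limit of continuous functions is continuous, so $\Phi$ is continuous in $x$. The main obstacle I anticipate is purely bookkeeping: correctly tracking the $\pm i$ and the $\sigma=-\mathrm{id}$ symmetry in the Dunkl kernel so that the factor $E_{\kappa}(i\xi,x)$ produced by $\tau_x$ on one side genuinely agrees with the factor produced by $\tau_{-x}$ on the other, and confirming that the product formula (iv) applies in the form needed (real versus complexified kernel), rather than any analytic difficulty.
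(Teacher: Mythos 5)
Your overall strategy---verify (\ref{translation-2-1}) on a dense class via the Dunkl transform, extend by density using the contraction $\|\tau_xh\|_{L_{\kappa}^2}\le\|h\|_{L_{\kappa}^2}$, and get continuity in $x$ from a uniform-limit argument---is viable and genuinely different from the paper's. The paper does not reprove the base identity at all: it quotes it from the literature (Thangavelu--Xu) for $f\in A_{\kappa}(\RR^d)$ and $g\in L_{\kappa}^2(\RR^d)\cap L^{\infty}(\RR^d)$, then applies density; and it gets continuity more directly than you do, by writing $|F(x)-F(x^0)|\le\|\tau_xf-\tau_{x^0}f\|_{L_{\kappa}^2}\|g\|_{L_{\kappa}^2}=\|(E_{\kappa}(ix,\cdot)-E_{\kappa}(ix^0,\cdot))\SF_{\kappa}f\|_{L_{\kappa}^2}\|g\|_{L_{\kappa}^2}$ via the Plancherel isometry and (\ref{translation-2-0}), and then invoking dominated convergence with the bound $|E_{\kappa}(ix,\xi)|\le1$ from (\ref{Dunkl-kernel-2-4}). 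Your density step and your uniform-convergence argument for continuity are correct.

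However, your central computation, as written, does not close, and the failure is exactly at the ``bookkeeping'' point you flagged. The identity you use,
\begin{align*}
\int_{\RR^d}(\tau_xf)\,g\,d\omega_{\kappa}
=\int_{\RR^d}\SF_{\kappa}(\tau_xf)\cdot\SF_{\kappa}g\,d\omega_{\kappa},
\end{align*}
is neither Plancherel (which carries a complex conjugate) nor the product formula (which pairs a function with a \emph{transform}, not two transforms with each other). The correct multiplication identity carries a reflection: applying Proposition \ref{transform-a}(iv) to $h_1$ and $\SF_{\kappa}^{-1}h_2=(\SF_{\kappa}h_2)(-\cdot)$ gives
\begin{align*}
\int_{\RR^d}h_1h_2\,d\omega_{\kappa}
=\int_{\RR^d}(\SF_{\kappa}h_1)(\xi)\,(\SF_{\kappa}h_2)(-\xi)\,d\omega_{\kappa}(\xi).
\end{align*}
With your version, the left side of (\ref{translation-2-1}) becomes $\int E_{\kappa}(i\xi,x)\,\SF_{\kappa}f\cdot\SF_{\kappa}g\,d\omega_{\kappa}$ while the right side becomes $\int E_{\kappa}(i\xi,-x)\,\SF_{\kappa}f\cdot\SF_{\kappa}g\,d\omega_{\kappa}$, and since $E_{\kappa}(i\xi,-x)=E_{\kappa}(-i\xi,x)\neq E_{\kappa}(i\xi,x)$ these two integrals are not equal in general; the ``matching of factors'' you assert simply fails. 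The repair is the reflection itself: with the correct identity, the left side equals $\int E_{\kappa}(i\xi,x)(\SF_{\kappa}f)(\xi)(\SF_{\kappa}g)(-\xi)\,d\omega_{\kappa}(\xi)$, while the right side, after the change of variables $\xi\mapsto-\xi$ (legitimate because $W_{\kappa}$ is even, so $d\omega_{\kappa}$ is reflection-invariant), equals $\int E_{\kappa}(-i\xi,-x)(\SF_{\kappa}f)(\xi)(\SF_{\kappa}g)(-\xi)\,d\omega_{\kappa}(\xi)$; now (\ref{Dunkl-kernel-2-3}) with $\lambda=-1$ gives $E_{\kappa}(-i\xi,-x)=E_{\kappa}(i\xi,x)$, and the two sides coincide. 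The same reflection correction should be made in your continuity argument (where $\Phi(x)=\int E_{\kappa}(i\xi,x)(\SF_{\kappa}f)(\xi)(\SF_{\kappa}g)(-\xi)\,d\omega_{\kappa}(\xi)$), though there it is harmless to the conclusion. So your self-contained route does work, but only after this fix; as literally written, the key step is false.
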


The equality (\ref{translation-2-1}) was proved in \cite{TX} for $f\in A_{\kappa}(\RR^d)$ and $g\in L_{\kappa}^2(\RR^d)\cap L^{\infty}(\RR^d)$. The general case, for $f,g\in L_{\kappa}^2(\RR^d)$, follows from a density argument.
On the continuity of $F(x):=\int_{\RR^d}\tau_xf\cdot g\,d\omega_{\kappa}$, for $x,x^0\in\RR^r$ one has
\begin{align*}
\left|F(x)-F(x^0)\right|
\le\|\tau_xf-\tau_{x^0}f\|_{L_{\kappa}^2}\|g\|_{L_{\kappa}^2}
=\left\|\left(E_{\kappa}(ix,\cdot)-E_{\kappa}(ix^0,\cdot)\right)\SF_{\kappa}f\right\|_{L_{\kappa}^2}\|g\|_{L_{\kappa}^2}
 \end{align*}
by Proposition \ref{transform-a}(v) and (\ref{translation-2-0}); and then, in view of (\ref{Dunkl-kernel-2-4}), Lebesgue's dominated convergence theorem yields the desired continuity.

In \cite{Tr2} an abstract form of $\tau_x$ is given by $(\tau_xf)(t)=V_{\kappa}^tV_{\kappa}^x\left[(V_{\kappa}^{-1}f)(x+t)\right]$ for $f\in C^{\infty}(\RR^d)$ and $x,t\in\RR^d$.

\begin{proposition}\label{translation-2-b} {\rm(\cite{Tr2})}
{\rm(i)} For fixed $x\in\RR^d$, $\tau_x$ is a continuous linear mapping from $C^{\infty}(\RR^d)$ into itself, and satisfies $D_j(\tau_xf)=\tau_x(D_jf)$ ($j=1,\dots,d$) on $\RR^d$ for $f\in C^{\infty}(\RR^d)$;

{\rm(ii)} for fixed $x,t\in\RR^d$, the mapping $f\mapsto(\tau_xf)(t)$ defines a compactly supported distribution, whose support is contained in the ball $\{\xi\in\RR^d:\,|\xi|\le|x|+|t|\}$;

{\rm(iii)} for fixed $x\in\RR^d$, if $f\in{\mathscr S}(\RR^d)$, then $\tau_xf\in{\mathscr S}(\RR^d)$ too, and both (\ref{translation-2-0}) and (\ref{translation-2-2}) hold.

{\rm(iv)} If $f\in C^{\infty}(\RR^d)$, then the function $(x,t)\mapsto (\tau_x f)(t)$ is in $C^{\infty}(\RR^d\times\RR^d)$, and for $x,t\in\RR^d$, $(\tau_tf)(x)=(\tau_xf)(t)$.
\end{proposition}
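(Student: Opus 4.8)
The plan is to build everything on the abstract representation $(\tau_x f)(t)=V_{\kappa}^t V_{\kappa}^x\bigl[(V_{\kappa}^{-1}f)(x+t)\bigr]$ recalled just before the statement, exploiting two structural facts about the intertwining operator: $V_{\kappa}$ is a homeomorphism of $C^{\infty}(\RR^d)$ onto itself, and it intertwines the operators via $D_jV_{\kappa}=V_{\kappa}\partial_j$, equivalently $\partial_jV_{\kappa}^{-1}=V_{\kappa}^{-1}D_j$. Parts (i) and (iv) follow almost formally from this; part (iii) is cleanest from the transform definition \eqref{translation-2-0}; and part (ii), the sharp support bound, is where the real work lies.

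For (i), continuity of $\tau_x$ on $C^{\infty}(\RR^d)$ is immediate: the formula exhibits $\tau_x$ as the composition $f\mapsto V_{\kappa}^{-1}f\mapsto (V_{\kappa}^{-1}f)(x+\cdot)\mapsto V_{\kappa}^x(\cdots)\mapsto V_{\kappa}^t(\cdots)$, each step continuous on the relevant Fr\'echet space (the ordinary translation $g\mapsto g(x+\cdot)$ is continuous, and $V_{\kappa}^{\pm1}$ are homeomorphisms). For the intertwining identity I would apply $D_j$ in the $t$-variable, push it through $V_{\kappa}^t$ via $D_jV_{\kappa}=V_{\kappa}\partial_j$, commute $\partial_j^t$ past $V_{\kappa}^x$ (independent variables), use $\partial_j^t[(V_{\kappa}^{-1}f)(x+t)]=(\partial_jV_{\kappa}^{-1}f)(x+t)$, and finally $\partial_jV_{\kappa}^{-1}=V_{\kappa}^{-1}D_j$ to recognise the result as $\tau_x(D_jf)(t)$. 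For (iv), the symmetry $(\tau_tf)(x)=(\tau_xf)(t)$ drops out of the formula at once, since $x+t$ is symmetric and $V_{\kappa}^x,V_{\kappa}^t$ commute; joint smoothness of $(x,t)\mapsto(\tau_xf)(t)$ follows because $(x,t)\mapsto(V_{\kappa}^{-1}f)(x+t)$ is smooth on $\RR^d\times\RR^d$ and each single-variable application of $V_{\kappa}$ preserves membership in $C^{\infty}(\RR^d\times\RR^d)$ (differentiation under the R\"osler integral, using the compact support \eqref{intertwining-support-1} of $\mu^{\kappa}_x$).

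For (iii), I would start from $\SF_{\kappa}(\tau_xf)(\xi)=E_{\kappa}(i\xi,x)\SF_{\kappa}f(\xi)$; since $\SF_{\kappa}$ is a homeomorphism of ${\mathscr S}(\RR^d)$ by Proposition~\ref{transform-a}(iii), it suffices to note that $\xi\mapsto E_{\kappa}(i\xi,x)$ is a Schwartz multiplier. Indeed, by the symmetry of $E_{\kappa}$ and the bound \eqref{Dunkl-kernel-2-4}, one has $|\partial_\xi^{\nu}E_{\kappa}(i\xi,x)|\le|x|^{|\nu|}$ for every multi-index $\nu$, the exponential factor being $1$ because the argument is purely imaginary. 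Multiplying the Schwartz function $\SF_{\kappa}f$ by this bounded-derivative factor keeps it in ${\mathscr S}(\RR^d)$, whence $\tau_xf\in{\mathscr S}(\RR^d)$, and \eqref{translation-2-2} is recovered by inversion.

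The heart of the matter is (ii). The continuity established in (i) already identifies $f\mapsto(\tau_xf)(t)$ as an element of the dual $\mathcal{E}'(\RR^d)$, i.e.\ a compactly supported distribution; the issue is the precise radius $|x|+|t|$. My preferred route is through a Dunkl--Paley--Wiener theorem. Writing $T_{x,t}$ for this distribution, its Dunkl--Laplace transform is computed from the multiplicative property $\tau_x[E_{\kappa}(-i\zeta,\cdot)](t)=E_{\kappa}(-i\zeta,x)E_{\kappa}(-i\zeta,t)$ of the kernel (the Dunkl analogue of $\tau_x e^{\langle\cdot,z\rangle}(t)=e^{\langle x+t,z\rangle}$, itself a consequence of the eigenfunction relation \eqref{Dunkl-kernel-eigenfunction-1}), so that $\langle T_{x,t},E_{\kappa}(-i\zeta,\cdot)\rangle=E_{\kappa}(-i\zeta,x)E_{\kappa}(-i\zeta,t)$ for $\zeta\in\CC^d$. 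The bound \eqref{Dunkl-kernel-2-4} gives $|E_{\kappa}(-i\zeta,x)|\le e^{|x|\,|\operatorname{Im}\zeta|}$ and likewise for $t$, so this transform is entire of exponential type $|x|+|t|$, which by the Paley--Wiener characterisation forces $\operatorname{supp}T_{x,t}$ into the closed ball of radius $|x|+|t|$. The main obstacle is exactly this step: one must either invoke a Paley--Wiener theorem valid for compactly supported distributions in the Dunkl setting, or, alternatively, track the support directly through the abstract formula, using $\operatorname{supp}\mu^{\kappa}_x\subseteq\{|\eta|\le|x|\}$ and $\operatorname{supp}\mu^{\kappa}_t\subseteq\{|\zeta|\le|t|\}$ to localise the two outer intertwining integrals to $|\eta+\zeta|\le|x|+|t|$ and then controlling the support-spreading of $V_{\kappa}^{-1}$ via the transpose intertwining operator. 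This last bookkeeping for $V_{\kappa}^{-1}$ is the delicate point that makes the sharp bound nontrivial.
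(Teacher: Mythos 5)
The first thing to say is that the paper contains no proof of this proposition: it is imported verbatim from Trim\`eche \cite{Tr2} (note the citation in the statement), so there is no internal argument to compare yours against. Judged against the source rather than the paper, your route is essentially the same one: \cite{Tr2} is precisely a paper about Paley--Wiener theorems for the Dunkl transform and the Dunkl translation, and your reduction of (ii) to a distributional Paley--Wiener theorem --- via the multiplicativity $\langle T_{x,t},E_{\kappa}(-i\zeta,\cdot)\rangle=E_{\kappa}(-i\zeta,x)E_{\kappa}(-i\zeta,t)$ (which indeed drops out of the abstract formula, since $V_{\kappa}^{-1}E_{\kappa}(\cdot,-i\zeta)=e^{\langle\cdot,-i\zeta\rangle}$) together with the bound \eqref{Dunkl-kernel-2-4} --- is exactly the mechanism there. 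Your argument for (iii) is correct and complete as stated: on the imaginary axis the exponential factor in \eqref{Dunkl-kernel-2-4} equals $1$, so $|\partial_{\xi}^{\nu}E_{\kappa}(i\xi,x)|\le|x|^{|\nu|}$, multiplication by such a function preserves ${\mathscr S}(\RR^d)$, and Proposition~\ref{transform-a}(iii) plus inversion then give $\tau_xf\in{\mathscr S}(\RR^d)$ and \eqref{translation-2-2}.

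Read as a self-contained proof, however, your text has three concrete soft spots. First, you use two definitions of $\tau_x$ without reconciling them: (i), (ii), (iv) are argued from $(\tau_xf)(t)=V_{\kappa}^tV_{\kappa}^x[(V_{\kappa}^{-1}f)(x+t)]$, while (iii) is argued from the $L^2_{\kappa}$ definition \eqref{translation-2-0}; the proposition tacitly asserts these agree (say on ${\mathscr S}(\RR^d)$), and that compatibility is itself part of what \cite{Tr2} proves. Second, your justification of joint smoothness in (iv) --- ``differentiation under the R\"osler integral'' --- fails as stated: in $V_{\kappa}g(x)=\int g\,d\mu^{\kappa}_x$ the active variable $x$ enters through the measure, not the integrand, so one cannot differentiate under the integral in $x$; what is actually needed (both here and for the composition-of-continuous-maps chain in (i), where $V_{\kappa}^x$ acts on one variable of a two-variable function) is a parametrized, vector-valued form of the statement that $V_{\kappa}$ is a topological isomorphism of $C^{\infty}(\RR^d)$. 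Third, in (ii) both of your proposed endgames --- the Dunkl Paley--Wiener theorem for compactly supported distributions, or the fact that $V_{\kappa}^{-1}$ preserves vanishing on balls centered at the origin (note that $V_{\kappa}$ itself does preserve this, by \eqref{intertwining-support-1}, but inverting that is nontrivial) --- are exactly the substantive content of the cited reference and are proved neither in your proposal nor anywhere in the paper. You flag this honestly, which is the right instinct, but it means part (ii) of your proposal is conditional rather than complete; since the paper itself only cites \cite{Tr2}, that conditionality is the unavoidable price of not reproducing Trim\`eche's machinery.
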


\begin{proposition}\label{translation-2-b-1}
If $f\in C^{\infty}(\RR^d)$, then {\rm (i)} $\left[\tau_x(\sigma f)\right](t)=(\tau_{\sigma(x)}f)(\sigma(t))$ for $x,t\in\RR^d$ and $\sigma\in G$; {\rm (ii)} $\left[\tau_x(f(a\cdot))\right](t)=(\tau_{ax}f)(at)$ for $x,t\in\RR^d$ and $a\in\RR\setminus\{0\}$; {\rm (iii)} $\left[\tau_t(\tau_xf)\right](z)=\left[\tau_x(\tau_tf)\right](z)$ for $x,t,z\in\RR^d$.
\end{proposition}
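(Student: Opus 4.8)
The plan is to establish all three identities first for Schwartz functions, where the pointwise representation (\ref{translation-2-2}) and the Dunkl-transform characterization (\ref{translation-2-0}) are both at our disposal, and then to pass to a general $f\in C^{\infty}(\RR^d)$ by exploiting the locality of $\tau_x$. The two structural inputs that drive the Schwartz computations are the homogeneity and reflection symmetries of the Dunkl kernel recorded in (\ref{Dunkl-kernel-2-3}), namely $E_{\kappa}(\lambda z,w)=E_{\kappa}(z,\lambda w)$ and $E_{\kappa}(\sigma(z),\sigma(w))=E_{\kappa}(z,w)$, together with the $G$-invariance and the degree-$2|\kappa|$ homogeneity of the measure $d\omega_{\kappa}$.

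For (i) and (ii) I would first record how $\SF_{\kappa}$ intertwines the two operations. Since $d\omega_{\kappa}$ is $G$-invariant and $E_{\kappa}(\sigma(z),\sigma(w))=E_{\kappa}(z,w)$, the substitution $x\mapsto\sigma^{-1}(x)$ in the defining integral of $\SF_{\kappa}$ gives $\SF_{\kappa}(\sigma f)(\xi)=(\SF_{\kappa}f)(\sigma(\xi))$; likewise the substitution $x\mapsto x/a$ together with $W_{\kappa}(x/a)=|a|^{-2|\kappa|}W_{\kappa}(x)$ and $E_{\kappa}(-i\xi,x/a)=E_{\kappa}(-i\xi/a,x)$ yields $\SF_{\kappa}(f(a\cdot))(\xi)=|a|^{-2|\kappa|-d}(\SF_{\kappa}f)(\xi/a)$. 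Inserting these into the pointwise formula (\ref{translation-2-2}) and changing the integration variable ($\xi\mapsto\sigma(\xi)$ for (i), $\xi\mapsto a\xi$ for (ii)), the kernel relations (\ref{Dunkl-kernel-2-3}) transfer the reflection or the dilation onto the arguments $x$ and $t$, which reproduces exactly $(\tau_{\sigma(x)}f)(\sigma(t))$ and $(\tau_{ax}f)(at)$ respectively; the Jacobian and weight factors cancel in (ii) because they are inverse to the ones produced by the variable change in the $\xi$-integral.

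For (iii), the transform characterization makes the matter transparent on $\mathscr{S}(\RR^d)$: by Proposition \ref{translation-2-b}(iii), $\tau_xf\in\mathscr{S}(\RR^d)$ whenever $f\in\mathscr{S}(\RR^d)$, so applying (\ref{translation-2-0}) twice gives $\SF_{\kappa}\!\left(\tau_t(\tau_xf)\right)(\xi)=E_{\kappa}(i\xi,t)\,E_{\kappa}(i\xi,x)\,\SF_{\kappa}f(\xi)$. The right-hand side is manifestly symmetric under the interchange of $x$ and $t$, and since $\SF_{\kappa}$ is injective on $\mathscr{S}(\RR^d)$ (Proposition \ref{transform-a}(iii)), the two iterated translations coincide.

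The genuine work lies in upgrading from $\mathscr{S}(\RR^d)$ to the class $C^{\infty}(\RR^d)$ in which the statement is phrased, and this is the step I expect to be the main obstacle, since $\mathscr{S}$ is not dense in $C^{\infty}$ in any topology that would let the identities pass to the limit directly. The right tool is Proposition \ref{translation-2-b}(ii): for fixed $x,t$ the functional $f\mapsto(\tau_xf)(t)$ is a distribution supported in $\{\xi\in\RR^d:\,|\xi|\le|x|+|t|\}$, so $(\tau_xf)(t)$ depends only on the values of $f$ on a neighbourhood of that ball. Given $f\in C^{\infty}(\RR^d)$ and the finitely many arguments entering a given identity, I would pick a radial cutoff $\chi\in C^{\infty}_c(\RR^d)$ equal to $1$ on a ball large enough to contain all the relevant balls; then $\chi f\in\mathscr{S}(\RR^d)$ and $(\tau_xf)(t)=(\tau_x(\chi f))(t)$ for every argument in question, so the Schwartz identities transfer verbatim. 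Care is needed in matching radii: under the orthogonal map $\sigma$ one has $|\sigma(x)|=|x|$, so a $G$-invariant $\chi$ is convenient and makes $\chi\cdot(\sigma f)=\sigma(\chi f)$; under the dilation the relevant ball scales by $|a|$; and for the nested translations in (iii) one checks that $(\tau_t(\tau_xf))(z)$ depends on $f$ only on $\BB(0,|x|+|t|+|z|)$, via the inclusion $\BB(0,|x|+|w|)\subseteq\BB(0,|x|+|t|+|z|)$ for $|w|\le|t|+|z|$. Once a single cutoff is chosen uniformly for both sides of each identity, the reduction to the already-proved Schwartz case completes the argument.
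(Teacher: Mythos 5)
The paper contains no proof of this proposition: it is stated bare, immediately after the facts imported from Trim\`eche, and is invoked later (e.g.\ in Propositions \ref{Poisson-b}, \ref{translation-2-b-2} and \ref{area-positivity-a}) as a known property of $\tau_x$. So there is no official argument to compare with; judged on its own, your proof is correct and complete. The Schwartz-level computations are right: $\SF_{\kappa}(\sigma f)=(\SF_{\kappa}f)\circ\sigma$ and $\SF_{\kappa}(f(a\cdot))=|a|^{-2|\kappa|-d}(\SF_{\kappa}f)(\cdot/a)$ follow from the $G$-invariance and homogeneity of $d\omega_{\kappa}$ together with (\ref{Dunkl-kernel-2-3}); then (\ref{translation-2-2}) with the change of variables $\xi\mapsto\sigma(\xi)$, resp.\ $\xi\mapsto a\xi$, gives (i) and (ii), while (iii) follows by applying (\ref{translation-2-0}) twice (legitimate since $\tau_xf\in{\mathscr S}(\RR^d)$ by Proposition \ref{translation-2-b}(iii)) and using injectivity of $\SF_{\kappa}$ on ${\mathscr S}(\RR^d)$. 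Your localization step --- a $G$-invariant cutoff for (i), a radius rescaled by $|a|$ for (ii), and the ball inclusion controlling the nested translation in (iii), all justified by the support statement of Proposition \ref{translation-2-b}(ii) --- is sound, and it is in fact exactly the cutoff device the paper itself deploys in its proofs of Propositions \ref{translation-2-d}, \ref{translation-2-b-2} and \ref{spherical-mean-2-a}, so your argument sits squarely within the paper's own toolkit. One small correction: your claim that ${\mathscr S}(\RR^d)$ is not dense in $C^{\infty}(\RR^d)$ in any topology that would permit a limiting argument is not right. Compactly supported smooth functions are dense in $C^{\infty}(\RR^d)$ with its usual Fr\'echet topology (uniform convergence of all derivatives on compact sets), and Proposition \ref{translation-2-b}(i) says precisely that $\tau_x$ is continuous on this space; since point evaluation, $f\mapsto\sigma f$ and $f\mapsto f(a\cdot)$ are also continuous there, both sides of each identity are continuous in $f$ and a direct density argument would have worked as well. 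This does not damage your proof --- the cutoff argument you run instead is just that density statement made concrete --- but the stated motivation for preferring it is inaccurate.
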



In consideration of radial functions in $\RR^d$, one may learn more about the translation operator $\tau_x$. In fact, for $x,t\in\RR^d$, by \cite[Theorem 5.1]{Ro4} there exists a unique compactly supported, radial (Borel) probability measure $\rho^{\kappa}_{x,t}$ on $\RR^d$ such that for all radial $f\in C^{\infty}(\RR^d)$,
\begin{align}\label{translation-2-3}
(\tau_xf)(t)=\int_{\RR^d}f\,d\rho^{\kappa}_{x,t};
\end{align}
and the support of $\rho^{\kappa}_{x,t}$ is contained in
\begin{align}\label{translation-support-2-1}
\left\{\xi\in\RR^d:\quad \min_{\sigma\in G}|x+\sigma(t)|\le|\xi|\le\max_{\sigma\in G}|x+\sigma(t)|\right\}.
\end{align}
In particular, if $0\in{\rm supp}\,\rho^{\kappa}_{x,t}$, then the $G$-orbits of $x$ and $-t$ coincide. Since ${\rm supp}\,\rho^{\kappa}_{x,t}$ is compact, (\ref{translation-2-3}) leads to a natural extension of  the translation operator $\tau_x$ to all radial $f\in C(\RR^d)$. Furthermore, if $f\in C^{\infty}(\RR^d)$ is radial, say $f(x)=f_0(|x|)$, from the proof of \cite[Theorem 5.1]{Ro4} it follows that, for $x,t\in\RR^d$,
\begin{align}\label{translation-2-4}
(\tau_xf)(t)=\int_{\RR^d}f_0(\sqrt{|x|^2+|t|^2+2\langle t,\xi\rangle})\,d\mu^{\kappa}_{x}(\xi).
\end{align}
Again, since $\mu^{\kappa}_{x}$ is of compact support, (\ref{translation-2-4}) allows an extension of $\tau_x$ to all radial $f\in C(\RR^d)$ by a density argument.

\begin{proposition}\label{translation-2-e} {\rm (\cite[Theorems 3.7 and 3.8]{TX})}
Assume $x\in\RR^d$ and $1\le p\le2$.

{\rm(i)} The generalized translation operator $\tau_x$, initially defined on $L_{\kappa}^1(\RR^d)\cap L_{\kappa}^2(\RR^d)$, can be extended to all radial functions $f$ in $L_{\kappa}^p(\RR^d)$, $1\le p\le2$, and for these $f$, one has $\|\tau_xf\|_{L_{\kappa}^p}\le\|f\|_{L_{\kappa}^p}$;

{\rm(ii)} if $f\in L_{\kappa}^p(\RR^d)$ is radial and nonnegative, then $\tau_xf$ is nonnegative;

{\rm(iii)} if $f\in L_{\kappa}^1(\RR^d)$ is radial, then
\begin{align*}
\int_{\RR^d}(\tau_xf)(t)\,d\omega_{\kappa}(t)=\int_{\RR^d}f(t)\,d\omega_{\kappa}(t).
 \end{align*}
\end{proposition}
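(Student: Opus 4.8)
The plan is to reduce all three assertions to the representation of $\tau_x$ on radial functions as an average against the compactly supported probability measures $\rho^{\kappa}_{x,t}$ and $\mu^{\kappa}_{x}$ supplied by (\ref{translation-2-3}) and (\ref{translation-2-4}). Once $\tau_x$ is known to be a \emph{positive} operator that \emph{preserves total mass} on radial functions, part (i) will follow from the two endpoint estimates at $p=1$ and $p=2$ by interpolation, part (ii) will be positivity itself, and part (iii) will be the mass identity. I would do the bookkeeping on the dense class of radial functions in $L_{\kappa}^{1}(\RR^d)\cap L_{\kappa}^{2}(\RR^d)$ and extend at the end.

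For (ii), if $f$ is radial and nonnegative then $(\tau_xf)(t)=\int_{\RR^d}f\,d\rho^{\kappa}_{x,t}\ge0$ because $\rho^{\kappa}_{x,t}$ is a positive measure, and this sign survives the limiting procedure defining $\tau_x$ on radial $L_{\kappa}^{p}$. For (iii) I would first treat $f\in\SS(\RR^d)$ radial: by Proposition \ref{translation-2-b}(iii) one has $\tau_xf\in\SS(\RR^d)$, and since $E_{\kappa}(0,x)=1$ the evaluation of the Dunkl transform at the origin gives $\int_{\RR^d}g\,d\omega_{\kappa}=c_{\kappa}^{-1}(\SF_{\kappa}g)(0)$ for every integrable $g$; applying this to $g=\tau_xf$ and using the transform identity (\ref{translation-2-0}) at $\xi=0$, where $E_{\kappa}(0,x)=1$, yields $\int_{\RR^d}\tau_xf\,d\omega_{\kappa}=c_{\kappa}^{-1}(\SF_{\kappa}f)(0)=\int_{\RR^d}f\,d\omega_{\kappa}$. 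To reach all nonnegative radial $f\in L_{\kappa}^{1}(\RR^d)$ I would approximate from below by nonnegative radial $g_n\in C_c^{\infty}(\RR^d)$ with $g_n\uparrow f$; by positivity of $\rho^{\kappa}_{x,t}$ and monotone convergence inside (\ref{translation-2-3}) one has $\tau_xg_n\uparrow\tau_xf$ pointwise, so monotone convergence on both sides of the mass identity for $g_n$ gives it for $f$. The signed case of (iii) then follows by splitting $f=f^{+}-f^{-}$.

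For (i), the case $p=2$ is immediate: by Plancherel (Proposition \ref{transform-a}(v)) and (\ref{translation-2-0}), $\|\tau_xf\|_{L_{\kappa}^{2}}^{2}=\int_{\RR^d}|E_{\kappa}(i\xi,x)|^{2}\,|(\SF_{\kappa}f)(\xi)|^{2}\,d\omega_{\kappa}(\xi)\le\|f\|_{L_{\kappa}^{2}}^{2}$, since $|E_{\kappa}(i\xi,x)|\le1$ for real $\xi,x$ by (\ref{Dunkl-kernel-2-4}). For $p=1$, the pointwise domination $|(\tau_xf)(t)|\le(\tau_x|f|)(t)$, valid because $|f|$ is again radial and $\rho^{\kappa}_{x,t}$ is positive, together with the mass identity (iii) for the nonnegative radial function $|f|$ gives $\|\tau_xf\|_{L_{\kappa}^{1}}\le\int_{\RR^d}\tau_x|f|\,d\omega_{\kappa}=\|f\|_{L_{\kappa}^{1}}$. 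The intermediate range $1<p<2$ follows by the Riesz--Thorin theorem applied to the linear operator $\tau_x$ between $L_{\kappa}^{1}$ and $L_{\kappa}^{2}$ on the radial subclass, and the final extension from radial $L_{\kappa}^{1}\cap L_{\kappa}^{2}$ to all radial $f\in L_{\kappa}^{p}$ is by density together with the just-proved contraction.

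The delicate points, and the ones I expect to cost the most effort, are the mass identity (iii) and its use at the endpoint $p=1$. Everything formal---positivity, the $L^{2}$ bound, the interpolation---is routine, but total-mass preservation is the substantive fact, and the argument must be ordered carefully to avoid circularity: (iii) is established first for smooth compactly supported radial functions (where the Dunkl transform is directly available), extended to nonnegative radial $L_{\kappa}^{1}$ functions by monotone approximation, and only then used to derive the $L^{1}$ contraction in (i), with the signed case of (iii) recovered last. A secondary technical matter is justifying the monotone convergence and the validity of the pointwise representation (\ref{translation-2-3}) for the approximating functions, and confirming that the interpolation may legitimately be carried out within the radial subclass---for instance by identifying radial functions in $L_{\kappa}^{p}(\RR^d)$ with functions of the radial variable in a weighted $L^{p}$ space on $(0,\infty)$.
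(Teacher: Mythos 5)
The paper itself offers no proof of this proposition --- it is imported verbatim from \cite[Theorems 3.7 and 3.8]{TX} --- so your argument can only be measured against the standard proof in the literature, which it largely reproduces: positivity of $\tau_x$ on radial functions from the probability measures $\rho^{\kappa}_{x,t}$ of (\ref{translation-2-3}), total-mass preservation, the $L_{\kappa}^2$ contraction from Plancherel together with $|E_{\kappa}(i\xi,x)|\le1$ (a consequence of (\ref{Dunkl-kernel-2-4}), and in fact already recorded in the paper right after (\ref{translation-2-0})), and Riesz--Thorin interpolation, which is legitimately performed on the radial subclass exactly as you indicate, by identifying radial functions in $L_{\kappa}^p(\RR^d)$ with a weighted $L^p$ space on $(0,\infty)$ and viewing $\tau_x$ as a linear map from that space into $L_{\kappa}^p(\RR^d)$.

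The one step that fails as written is your extension of (iii) to nonnegative radial $f\in L_{\kappa}^1(\RR^d)$ by choosing nonnegative radial $g_n\in C_c^{\infty}(\RR^d)$ with $g_n\uparrow f$ and invoking monotone convergence inside (\ref{translation-2-3}). First, no such sequence exists in general: an a.e.\ increasing limit of continuous functions agrees a.e.\ with a lower semicontinuous function, and a function such as $\chi_A$ with $A=\{x:\,|x|\in C\}$, $C\subset[1,2]$ a fat Cantor set, is a nonnegative radial $L_{\kappa}^1$ function that agrees a.e.\ with no lower semicontinuous function. Second, and more fundamentally, at that stage of your ordering $\tau_xf$ has not yet been defined for radial $f\in L_{\kappa}^1$ (your definition is by density, which requires the $L^1$ contraction that you derive only afterwards), and you cannot instead take (\ref{translation-2-3}) as the definition, because $\rho^{\kappa}_{x,t}$ is not known to be absolutely continuous with respect to $\omega_{\kappa}$, so $\int f\,d\rho^{\kappa}_{x,t}$ is not well defined on a.e.-equivalence classes. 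The repair is to reverse the order: prove (iii) first on the dense class of radial $g\in C_c(\RR^d)$ --- this is immediate from the paper's own Proposition \ref{translation-2-d}(i) applied with $f\equiv1$, since $(\tau_x 1)(t)=\int d\rho^{\kappa}_{x,t}=1$ --- then obtain the $L^1$ bound on that class from $|\tau_xg|\le\tau_x|g|$ together with (iii) for $|g|$, and only then extend $\tau_x$ to radial $L_{\kappa}^p$ by density. All three assertions survive this limit passage: the contraction by construction, positivity along an a.e.\ convergent subsequence, and (iii) because, once the contraction is in hand, both sides of the identity are $L_{\kappa}^1$-continuous in $f$ (which also disposes of the signed case without any splitting). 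With this reordering your proof is correct.
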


\subsection{The generalized spherical mean operator}

We note that it is still open whether the translation operator $\tau_x$ has a bounded extension to non-radial functions in $L_{\kappa}^1(\RR^d)$. However we may consider the generalized spherical mean operator $f\mapsto M_f$ associated to $G$ and $\kappa$, which is defined in \cite{MT}, for $x\in\RR^d$ and $r\in[0,\infty)$, by
\begin{align}\label{spherical-mean-2-1}
M_f(x,r)=d_{\kappa}\int_{\SB^{d-1}}(\tau_{x}f)(rt')W_{\kappa}(t')dt',
\end{align}
where $dt'$ denotes the area element on $\SB^{d-1}$, and $d_{\kappa}^{-1}=\int_{\SB^{d-1}}W_{\kappa}(t')dt'$. By Proposition \ref{translation-2-b}(i), $M_f$ is well defined for $f\in C^{\infty}(\RR^d)$ or $A_{\kappa}(\RR^d)$. Moreover, by \cite[Theorems 3.1, 4.1, and Corollary 5.2]{Ro4} the mapping $f\mapsto M_f$ is positivity-preserving on $C^{\infty}(\RR^d)$, and for $x\in\RR^d$ and $r\in[0,\infty)$, there exists a unique compactly supported probability (Borel) measure $\sigma^{\kappa}_{x,r}$ on $\RR^d$ such that for all $f\in C^{\infty}(\RR^d)$,
\begin{align}\label{spherical-mean-2-2}
M_f(x,r)=\int_{\RR^d}f\,d\sigma^{\kappa}_{x,r};
\end{align}
the support of $\sigma^{\kappa}_{x,r}$ is contained in
\begin{align}\label{spherical-mean-support-2-1}
\{\xi\in\RR^d:\,\,|\xi|\ge||x|-r|\}\cap\left[\cup_{\sigma\in G} \{\xi\in\RR^d:\,\,|\xi-\sigma(x)|\le r\}\right];
\end{align}
the mapping $(x,r)\mapsto\sigma_{x,r}^{\kappa}$ is
continuous with respect to the weak topology on $M^1(\RR^d)$ (the
space of probability measures);
and moreover,
\begin{align*}
\sigma_{\sigma(x),r}^{\kappa}(A)=\sigma_{x,r}^{\kappa}(\sigma^{-1}(A)), \qquad \sigma_{ax,ar}^{\kappa}(A)=\sigma_{x,r}^{\kappa}(a^{-1}A)
\end{align*}
for all $\sigma\in G$, $a>0$, and all Borel sets $A\in{\mathcal{B}}(\RR^d)$.

Since ${\rm supp}\,\rho^{\kappa}_{x,t}$ is compact, (\ref{spherical-mean-2-2}) gives a natural extension of the generalized spherical mean operator $f\mapsto M_f$ to all $f\in C(\RR^d)$.

\subsection{The $\kappa$-Poisson integral}

Associated to the reflection group $G$ on $\RR^{d}$ and the multiplicity function $\kappa$, we define $P(x)=c_{d,\kappa}(1+|x|^2)^{-|\kappa|-(d+1)/2}$, where $c_{d,\kappa}=2^{|\kappa|+d/2}\pi^{-1/2}\Gamma(|\kappa|+(d+1)/2)$. It follows that $c_{\kappa}\int_{\RR^d}P\,d\omega_{\kappa}=1$ and $\left(\SF_{\kappa}P\right)(\xi)=e^{-|\xi|}$ (cf. \cite{RV1,TX}). The associated Poisson kernel $P_y$ for $y>0$ is given by $P_y(x)=y^{-2|\kappa|-d}P(x/y)$, i.e.
\begin{align*}
P_y(x)=\frac{c_{d,\kappa}\,y}{(y^2+|x|^2)^{|\kappa|+(d+1)/2}},\qquad x\in\RR^d.
\end{align*}
For $f\in L_{\kappa}^p(\RR^d)$, $1\le p\le\infty$, the Poisson integral of $f$ is defined as
\begin{align}\label{Poisson-2-2}
(Pf)(x,y)=c_{\kappa}\int_{\RR^{d}}f(t)(\tau_{x}P_{y})(-t)\,d\omega_{\kappa}(t),\qquad (x,y)\in\RR^{d+1}_+.
\end{align}
For convenience, we call $P_y(x)$ the $\kappa$-Poisson kernel, and $(Pf)(x,y)$ the $\kappa$-Poisson integral of $f$. Note that the function $(x,y)\mapsto P_y(x)$ is $\kappa$-harmonic in $\RR_+^{d+1}$, and so is the function $(x,y)\mapsto(\tau_{x}P_{y})(-t)$ by Propositions \ref{translation-2-b}(i) and (iv). Appealing to  \cite[Proposition 5.1]{ADH1}, the following proposition is immediate.

\begin{proposition} \label{Poisson-a}
For $f\in L_{\kappa}^p(\RR^d)$, $1\le p\le\infty$, its $\kappa$-Poisson integral $(Pf)(x,y)$ is $\kappa$-harmonic in $\RR_+^{d+1}$.
\end{proposition}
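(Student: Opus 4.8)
The plan is to verify directly that $\Delta_{\kappa}$ annihilates $Pf$ by differentiating under the integral sign in (\ref{Poisson-2-2}). Write $K(x,y;t)=(\tau_{x}P_{y})(-t)$ for the kernel. As recorded just before the statement, for each fixed $t\in\RR^d$ the map $(x,y)\mapsto K(x,y;t)$ is $C^{\infty}$ on $\RR^{d+1}_+$ and $\kappa$-harmonic, i.e. $\Delta_{\kappa}^{(x,y)}K(x,y;t)=0$. Hence, once the interchange of $\Delta_{\kappa}$ with the $t$-integral is licensed, one obtains
$$\Delta_{\kappa}(Pf)(x,y)=c_{\kappa}\int_{\RR^d}f(t)\,\bigl[\Delta_{\kappa}^{(x,y)}K\bigr](x,y;t)\,d\omega_{\kappa}(t)=0.$$
The only genuine analytic content is therefore to justify moving $\Delta_{\kappa}$ inside the integral. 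Using the explicit form (\ref{Laplace-2-1})--(\ref{Laplace-2-2}), $\Delta_{\kappa}$ consists of the local derivatives $\partial_y^2,\partial_{x_j}^2$, the first-order terms $\langle\nabla^{(x)}\cdot,\alpha\rangle/\langle\alpha,x\rangle$, and the nonlocal reflection terms $(1-\sigma_{\alpha}^x)/\langle\alpha,x\rangle^2$; the reflections and the divisions by functions of $x$ commute with integration in $t$ trivially (for each $(x,y)$ off the hyperplanes $\langle\alpha,x\rangle=0$), so the task reduces to differentiating under the integral in the $(x,y)$-variables up to second order.

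The key quantitative step is to bound $K$ and its $(x,y)$-derivatives up to order two, uniformly for $(x,y)$ in an arbitrary compact set $Q\subset\RR^{d+1}_+$, by a function of $t$ lying in $L_{\kappa}^{p'}(\RR^d)$ (the conjugate exponent). For the undifferentiated kernel and its $y$- and $\partial_{x_j}$-derivatives I would use the radial representation (\ref{translation-2-4}) together with the support bound $\mathrm{supp}\,\mu^{\kappa}_x\subseteq\mathrm{co}\{\sigma(x):\sigma\in G\}$ from (\ref{intertwining-support-1}), which forces $|\xi|\le|x|$ and hence $|x|^2+|t|^2-2\langle t,\xi\rangle\ge(|t|-|x|)^2$; since the radial profile of $P_y$ is $y(y^2+r^2)^{-|\kappa|-(d+1)/2}$, this yields
$$0\le K(x,y;t)\le\frac{c_Q}{(1+|t|)^{2|\kappa|+d+1}}\qquad(|t|\ge 2\textstyle\sup_Q|x|),$$
with analogous bounds after differentiation in $y$. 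For the Dunkl derivatives I would use $D_j^xK(x,y;t)=\tau_x(D_jP_y)(-t)$, which follows from Proposition \ref{translation-2-b}(i),(iv), and control $\tau_x$ of the (non-radial) decaying functions $D_jP_y$, $D_j^2P_y$ via the localization in Proposition \ref{translation-2-b}(ii). One checks that a bound behaving like $(1+|t|)^{-(2|\kappa|+d+1)}$ has finite $L_{\kappa}^{p'}$-norm for every $1\le p\le\infty$, the borderline case $p'=1$ following from $\int_R^{\infty}\rho^{-(2|\kappa|+d+1)}\rho^{2|\kappa|+d-1}\,d\rho<\infty$; combining with Hölder's inequality this dominates the integrand and its derivatives uniformly over $Q$.

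With these dominations in hand, the standard differentiation-under-the-integral theorem shows $Pf\in C^{\infty}(\RR^{d+1}_+)$ and licenses the interchange above, completing the proof. The main obstacle is the second step for the Dunkl derivatives: because $D_jP_y$ is no longer radial, the convenient formula (\ref{translation-2-4}) does not apply to $D_j^xK$, and one must instead extract the decay of $\tau_x(D_jP_y)(-t)$ in $t$ from the support/localization properties of the generalized translation rather than from an explicit kernel. A secondary point requiring care is the behaviour near the reflecting hyperplanes $\langle\alpha,x\rangle=0$, where the individual terms of $\delta_{\alpha}$ in (\ref{Laplace-2-2}) are singular although their combination is smooth; this is handled by first establishing $\Delta_{\kappa}(Pf)=0$ on the open dense set $\{\prod_{\alpha}\langle\alpha,x\rangle\neq0\}$ and invoking continuity of $\Delta_{\kappa}(Pf)$, or by working throughout with the $D_j^2$ form directly.
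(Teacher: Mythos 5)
Your overall plan---prove that the kernel $K(x,y;t)=(\tau_xP_y)(-t)$ is $\kappa$-harmonic pointwise and then differentiate under the integral in (\ref{Poisson-2-2})---is reasonable, and it differs in nature from the paper's treatment: the paper gives no direct argument, but only records the $\kappa$-harmonicity of the kernel (via Proposition \ref{translation-2-b}(i),(iv)) and then quotes \cite[Proposition 5.1]{ADH1} for the passage from the kernel to the integral. Your first-paragraph reduction is also correct: by (\ref{Laplace-2-1})--(\ref{Laplace-2-2}), off the walls $\langle\alpha,x\rangle=0$ the reflections and the divisions commute with the $t$-integral, so everything hinges on dominating the classical $(x,y)$-derivatives of $K$ up to second order, locally uniformly in $(x,y)$, by a fixed function of $t$ in $L^{p'}_{\kappa}(\RR^d)$; and your continuity argument across the walls is fine once $Pf$ is known to be smooth.

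The genuine gap is exactly in the step you yourself flag as the main obstacle, and the tools you propose there cannot close it. First, (\ref{translation-2-4}) as written cannot be differentiated in $x$: in that formula the $x$-dependence sits in the measure $\mu^{\kappa}_{x}$, and the paper records no regularity whatsoever of $x\mapsto\mu^{\kappa}_{x}$, so your claim to control the $\partial_{x_j}$-derivatives directly from (\ref{translation-2-4}) is unjustified. Second, your fallback $D_j^xK=\tau_x(D_jP_y)(-t)$ with decay extracted ``from the localization in Proposition \ref{translation-2-b}(ii)'' is a dead end: that proposition only says the distribution $f\mapsto(\tau_xf)(t)$ is supported in the ball of radius $|x|+|t|$, with no quantitative bound; since $D_jP_y$ is not radial, the positivity statement of Proposition \ref{translation-2-e}(ii) is unavailable, and $\sup_{|\xi|\le|x|+|t|}|D_jP_y(\xi)|$ does not decay in $t$, so localization alone yields no decay at all. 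The missing idea---which both repairs the argument and makes the detour through $\tau_x(D_jP_y)$ unnecessary---is to first invoke the symmetry of Proposition \ref{translation-2-b}(iv) to write $K(x,y;t)=(\tau_{-t}P_y)(x)$ and only then apply (\ref{translation-2-4}): this gives $K(x,y;t)=\int_{\RR^d}P_{y,0}\bigl(\sqrt{|t|^2+|x|^2+2\langle x,\xi\rangle}\bigr)\,d\mu^{\kappa}_{-t}(\xi)$ with $P_{y,0}(r)=c_{d,\kappa}\,y\,(y^2+r^2)^{-|\kappa|-(d+1)/2}$, where the measure is now independent of $(x,y)$ and all the $(x,y)$-dependence lies in a smooth integrand. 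Since $|\xi|\le|t|$ on ${\rm supp}\,\mu^{\kappa}_{-t}$ by (\ref{intertwining-support-1}), the argument of $P_{y,0}$ is at least $\bigl||t|-|x|\bigr|$, and every $(x,y)$-derivative of the integrand up to second order is bounded, uniformly for $(x,y)$ in a compact $Q\subset\RR^{d+1}_+$, by $c_Q(1+|t|)^{-(2|\kappa|+d+1)}$, which lies in $L^{p'}_{\kappa}(\RR^d)$ for every $1\le p'\le\infty$ exactly as you computed. This symmetry-then-differentiate device is the one the paper itself uses in the proof of Lemma \ref{area-4-e} (see (\ref{translation-4-1}) and the differentiation following it). With that substitution your proof closes; without it, the domination of the $x$-derivatives---the entire analytic content of the statement---remains unproved.
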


The next proposition is a consequence of Proposition \ref{translation-2-b-1} and the $G$-invariance of $P_y$.

\begin{proposition} \label{Poisson-b}
If $f\in L_{\kappa}^p(\RR^d)$ ($1\le p\le\infty$) is $G$-invariant, then its $\kappa$-Poisson integral $(Pf)(x,y)$ is also $G$-invariant in $x$.
\end{proposition}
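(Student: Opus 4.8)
The plan is to reduce the claim to a change of variables in the defining integral \eqref{Poisson-2-2}, using the transformation law for $\tau_x$ under the group action. Two structural facts drive the argument: the $\kappa$-Poisson kernel $P_y$ is radial, hence $G$-invariant, so $\sigma P_y = P_y$ for every $\sigma\in G$; and the measure $d\omega_\kappa$ is $G$-invariant, since $W_\kappa(\sigma(x))=W_\kappa(x)$ (because $\sigma^{-1}$ permutes the roots and $\kappa$ is $G$-invariant). Both facts are elementary, but I would invoke each explicitly.

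First I would fix $\sigma\in G$ and apply Proposition \ref{translation-2-b-1}(i) with $f=P_y\in C^\infty(\RR^d)$. Since $\sigma P_y=P_y$, the identity $[\tau_x(\sigma f)](t)=(\tau_{\sigma(x)}f)(\sigma(t))$ collapses to
\[
(\tau_x P_y)(t)=(\tau_{\sigma(x)}P_y)(\sigma(t)),\qquad t\in\RR^d.
\]
This is the key link between the translate based at $\sigma(x)$ and the translate based at $x$.

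Next I would start from $(Pf)(\sigma(x),y)=c_\kappa\int_{\RR^d}f(t)(\tau_{\sigma(x)}P_y)(-t)\,d\omega_\kappa(t)$ and perform the substitution $t\mapsto\sigma(t)$, which is legitimate because $\sigma$ is an orthogonal bijection preserving $d\omega_\kappa$. Using $-\sigma(t)=\sigma(-t)$, the displayed identity with the argument $s=-t$, and finally the $G$-invariance of $f$ (so $f(\sigma(t))=f(t)$), the integrand is transformed back into $f(t)(\tau_x P_y)(-t)$, which yields $(Pf)(\sigma(x),y)=(Pf)(x,y)$, as desired.

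The computation is direct, so I do not anticipate a serious obstacle; the only point demanding care is the bookkeeping of reflections inside the argument of $\tau$ — in particular keeping the minus sign and the action of $\sigma$ consistent through the substitution — together with confirming that the manipulations are valid for the radial translation $\tau_x P_y$, which is guaranteed since $P_y$ is smooth and radial, so that \eqref{translation-2-4} and Proposition \ref{translation-2-b-1}(i) both apply.
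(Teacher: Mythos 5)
Your proof is correct and follows exactly the route the paper intends: the paper disposes of this proposition in one line as "a consequence of Proposition \ref{translation-2-b-1} and the $G$-invariance of $P_y$," and your argument is precisely the fleshed-out version of that remark — apply Proposition \ref{translation-2-b-1}(i) to the radial kernel $P_y$ to get $(\tau_{\sigma(x)}P_y)(\sigma(t))=(\tau_xP_y)(t)$, then change variables $t\mapsto\sigma(t)$ in (\ref{Poisson-2-2}) using the $G$-invariance of $d\omega_\kappa$ and of $f$. The sign and reflection bookkeeping in your computation is handled correctly, so nothing is missing.
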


The conclusions in the following two propositions can be found in \cite{ADH1,TX}, and for $d=1$, $G=\ZZ_2$, see \cite[Proposition 3.2 and Corollary 3.9]{LL1}.

\begin{proposition} \label{Poisson-c} {\rm (\cite[Theorems 4.1 and 5.4]{TX})} For $f\in L_{\kappa}^p(\RR^d)$ ($1\le p\le\infty$) and $y>0$, $\|(Pf)(\cdot,y)\|_{L_{\kappa}^p(\RR^d)}\le\|f\|_{L_{\kappa}^p(\RR^d)}$; and for $f\in X=L_{\kappa}^p(\RR^d)$, $1\le
p<\infty$, or $C_0(\RR^d)$, $\lim_{y\rightarrow0+}\|(Pf)(\cdot,y)-f\|_{X}=0$.
\end{proposition}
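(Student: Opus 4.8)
The plan is to read off both assertions from the structure of $(Pf)(\cdot,y)$ in (\ref{Poisson-2-2}) as a Dunkl convolution of $f$ against the radial, nonnegative kernel $P_y$, and then to treat them as the standard ``contraction plus approximate identity'' package adapted to the Dunkl translation. The two facts about $P_y$ that drive everything are that $P_y$ is radial and nonnegative, and that $c_{\kappa}\int_{\RR^d}P_y\,d\omega_{\kappa}=1$; the latter follows from $c_{\kappa}\int_{\RR^d}P\,d\omega_{\kappa}=1$ by the substitution $x=yu$ together with the homogeneity $W_{\kappa}(yu)=y^{2|\kappa|}W_{\kappa}(u)$. Because $P_y$ is radial and nonnegative, Proposition \ref{translation-2-e}(ii) gives $(\tau_xP_y)(-t)\ge0$, while Proposition \ref{translation-2-e}(iii), the evenness of $W_{\kappa}$, and the symmetry $(\tau_xP_y)(-t)=(\tau_{-t}P_y)(x)$ from Proposition \ref{translation-2-b}(iv) yield the two normalizations
\begin{align*}
c_{\kappa}\int_{\RR^d}(\tau_xP_y)(-t)\,d\omega_{\kappa}(t)=1,\qquad
c_{\kappa}\int_{\RR^d}(\tau_xP_y)(-t)\,d\omega_{\kappa}(x)=1.
\end{align*}

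For the contraction $\|(Pf)(\cdot,y)\|_{L_{\kappa}^p}\le\|f\|_{L_{\kappa}^p}$, the case $p=\infty$ is immediate from the first normalization. For $1\le p<\infty$ I would regard $d\nu_x(t)=c_{\kappa}(\tau_xP_y)(-t)\,d\omega_{\kappa}(t)$ as a probability measure and apply Jensen's inequality to the convex function $s\mapsto|s|^p$, obtaining the pointwise bound $|(Pf)(x,y)|^p\le c_{\kappa}\int_{\RR^d}|f(t)|^p(\tau_xP_y)(-t)\,d\omega_{\kappa}(t)$; integrating in $x$, interchanging the order of integration (the integrand is nonnegative), and invoking the second normalization then gives $\|(Pf)(\cdot,y)\|_{L_{\kappa}^p}^p\le\|f\|_{L_{\kappa}^p}^p$. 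This part is routine once the two normalizations are in hand.

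For the norm convergence, the contraction just established makes the family $f\mapsto(Pf)(\cdot,y)$ uniformly bounded on $X$, so by a standard $\varepsilon/3$ density argument it suffices to prove $\|(Pf)(\cdot,y)-f\|_X\to0$ for $f$ in a dense subclass, for which I would take $\SS(\RR^d)$ (dense in every $L_{\kappa}^p$ with $1\le p<\infty$ and in $C_0(\RR^d)$). On this subclass the Dunkl transform is available: since $(\SF_{\kappa}P_y)(\xi)=e^{-y|\xi|}$ by scaling, one has $\SF_{\kappa}[(Pf)(\cdot,y)](\xi)=e^{-y|\xi|}(\SF_{\kappa}f)(\xi)$, and the inversion formula of Proposition \ref{transform-a}(ii) together with $|E_{\kappa}(ix,\xi)|\le1$ (from (\ref{Dunkl-kernel-2-4})) yields the uniform estimate $\|(Pf)(\cdot,y)-f\|_{\infty}\le c_{\kappa}\int_{\RR^d}|e^{-y|\xi|}-1|\,|(\SF_{\kappa}f)(\xi)|\,d\omega_{\kappa}(\xi)\to0$ by dominated convergence. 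This settles $X=C_0(\RR^d)$ directly, and upgrading the uniform convergence to $L_{\kappa}^p$-convergence then requires only a uniform control of the tails, which follows because $f$ and $P_y$ both decay enough to dominate $|(Pf)(\cdot,y)|$ for small $y$ by a fixed $L_{\kappa}^p$ majorant.

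The main obstacle is this last upgrade and, more fundamentally, the approximate-identity mechanism behind it: one must show that as $y\to0+$ the probability density $(\tau_xP_y)(-t)$ concentrates its mass near $t=x$ \emph{specifically}, with full weight, rather than spreading over the whole $G$-orbit of $x$. This is precisely where the fine structure of the Dunkl translation enters, through the representing measure $\mu^{\kappa}_x$ in (\ref{translation-2-4}): its vanishing at the ``far'' orbit points suppresses the contributions near $t=\sigma(x)$ for $\sigma\neq e$, singling out $t=x$. Making this concentration rigorous, uniformly in $x$ and compatibly with the $L_{\kappa}^p$ tails, is the technical heart of the argument, whereas the contraction estimate and the transform computation are comparatively mechanical.
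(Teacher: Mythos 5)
The first thing to note is that the paper never proves Proposition \ref{Poisson-c} at all: it is imported verbatim from \cite[Theorems 4.1 and 5.4]{TX}, so there is no internal argument to measure yours against. Your reconstruction is the standard one, and its skeleton is sound and correctly grounded in the toolkit the paper quotes: $P_y$ is smooth, radial, nonnegative, and lies in $A_{\kappa}(\RR^d)$, so Proposition \ref{translation-2-e}(ii),(iii) and the symmetry in Proposition \ref{translation-2-b}(iv) do give the nonnegativity of $(\tau_xP_y)(-t)$ and the two normalizations in $t$ and in $x$; Jensen plus Tonelli then yields the $L^p_{\kappa}$ contraction, and the identity $\SF_{\kappa}\left[(Pf)(\cdot,y)\right]=e^{-y|\cdot|}\SF_{\kappa}f$ (justified by Fubini through (\ref{translation-2-2})) plus Dunkl inversion gives uniform convergence for $f\in{\mathscr S}(\RR^d)$, settling the $C_0$ case.

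The genuine gap is the step you leave as an assertion: that for Schwartz $f$ the functions $|(Pf)(\cdot,y)|$, $0<y\le1$, admit a single $L^p_{\kappa}$ majorant, so that uniform convergence upgrades to $L^p_{\kappa}$ convergence. Moreover, your closing paragraph misdiagnoses what this requires: no statement that $(\tau_xP_y)(-t)$ concentrates its mass at $t=x$ ``with full weight'' is needed for \emph{norm} convergence --- that delicate orbit question belongs to the pointwise result, Proposition \ref{Poisson-d}, and your transform computation has already produced uniform convergence without touching it. What the upgrade actually needs is only decay of the translated kernel away from the sphere $\{|t|=|x|\}$, and that is elementary: by (\ref{intertwining-support-1}) every $\xi\in{\rm supp}\,\mu^{\kappa}_{x}$ satisfies $|\xi|\le|x|$, hence $|x|^2+|t|^2-2\langle t,\xi\rangle\ge(|x|-|t|)^2$, and since the radial profile of $P_y$ is decreasing, (\ref{translation-2-4}) gives
\begin{align*}
(\tau_xP_y)(-t)\le\frac{c_{d,\kappa}\,y}{\left(y^2+(|x|-|t|)^2\right)^{|\kappa|+(d+1)/2}}.
\end{align*}
Splitting the integral in (\ref{Poisson-2-2}) at $|t|=|x|/2$, using this bound where $|t|\le|x|/2$ (so that $|x|-|t|\ge|x|/2$) and the $t$-normalization where $|t|>|x|/2$ (so that $|f(t)|\le C_N(1+|x|)^{-N}$), yields for $0<y\le1$ and $|x|$ large
\begin{align*}
|(Pf)(x,y)|\le C_N(1+|x|)^{-N}+C\,y\,(1+|x|)^{-2|\kappa|-d-1},
\end{align*}
which is an $L^p_{\kappa}$ majorant uniform in $y$ for every $1\le p<\infty$, since $d\omega_{\kappa}$ grows like $|x|^{2|\kappa|+d-1}d|x|$. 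With this estimate supplied (or, alternatively, quoting the upper bound of Lemma \ref{Poisson-ker-estimate-a}), your proof is complete.
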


\begin{proposition} \label{Poisson-d} {\rm (\cite[Corollary 5.4]{ADH1})}
Assume that $a>0$. If $f\in L^p_{\kappa}(\RR^d)$, $1\le p\le\infty$, then for almost every
$x\in\RR^d$, its $\kappa$-Poisson integral $(Pf)(t,y)$ converges
to $f(x)$ as $(t,y)\in\Gamma_a(x)$ approaching to $(x,0)$.
\end{proposition}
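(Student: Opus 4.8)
The plan is to obtain the almost-everywhere non-tangential convergence from two standard ingredients combined in the usual real-variable fashion: a maximal estimate for the non-tangential Poisson maximal operator, and convergence on a dense subclass. For $f\in L^p_{\kappa}(\RR^d)$ set
\begin{align*}
(P^*_af)(x)=\sup_{(t,y)\in\Gamma_a(x)}\left|(Pf)(t,y)\right|,\qquad x\in\RR^d .
\end{align*}
The heart of the matter is the pointwise domination
\begin{align*}
(P^*_af)(x)\le C_a\sum_{\sigma\in G}(M_{\kappa}f)(\sigma(x)),
\end{align*}
where $M_{\kappa}$ denotes the centered Hardy--Littlewood maximal operator over Euclidean balls taken with respect to $d\omega_{\kappa}$. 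Since $\omega_{\kappa}$ is doubling for Euclidean balls, $(\RR^d,|\cdot|,\omega_{\kappa})$ is a space of homogeneous type, so $M_{\kappa}$ is of weak type $(1,1)$ and strong type $(p,p)$ for $1<p\le\infty$; because $\omega_{\kappa}$ is $G$-invariant, the operator $f\mapsto\sum_{\sigma}M_{\kappa}f(\sigma(\cdot))$ enjoys the same bounds, and hence so does $P^*_a$.

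To prove the domination I would use that $P_y$ is radial, nonnegative and radially decreasing. By Proposition~\ref{translation-2-e}(ii) the kernel $s\mapsto(\tau_tP_y)(-s)$ is then nonnegative, and by Proposition~\ref{translation-2-e}(iii) together with $c_{\kappa}\int_{\RR^d}P_y\,d\omega_{\kappa}=1$ it is an $\omega_{\kappa}$-probability density, so $(Pf)(t,y)$ is a genuine average of $f$. Writing the radially decreasing profile of $P$ as a superposition of normalized indicators of balls and inserting the explicit radial-translation formula (\ref{translation-2-4}) together with the support localization (\ref{translation-support-2-1}) (which forces concentration near the orbit points $\sigma(t)$), one bounds $(\tau_tP_y)(-s)$ from above by a constant multiple of a sum over $\sigma\in G$ of normalized Euclidean-ball kernels centered at $\sigma(t)$ of width comparable to $y$. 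The non-tangential constraint $|t-x|<ay$, hence $|\sigma(t)-\sigma(x)|<ay$, lets me replace each ball about $\sigma(t)$ by a comparable ball about $\sigma(x)$ at the cost of a factor depending on $a$ through the doubling constant, which yields the asserted bound by $\sum_{\sigma}M_{\kappa}f(\sigma(x))$.

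For the dense class I would take $g\in C_0(\RR^d)$. By Proposition~\ref{Poisson-c} with $X=C_0(\RR^d)$ one has $\|(Pg)(\cdot,y)-g\|_{\infty}\to0$ as $y\to0^+$, so for every $x$,
\begin{align*}
\sup_{(t,y)\in\Gamma^h_a(x)}\left|(Pg)(t,y)-g(x)\right|
\le\sup_{0<y<h}\|(Pg)(\cdot,y)-g\|_{\infty}+\sup_{|t-x|<ah}\left|g(t)-g(x)\right|\longrightarrow0
\end{align*}
as $h\to0^+$, using uniform continuity of $g$ for the second term; thus $Pg$ has non-tangential limit $g(x)$ everywhere. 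For general $f\in L^p_{\kappa}$ with $1\le p<\infty$, set
\begin{align*}
(\Omega_af)(x)=\limsup_{(t,y)\to(x,0),\,(t,y)\in\Gamma_a(x)}\left|(Pf)(t,y)-f(x)\right|.
\end{align*}
Approximating $f$ by $g\in C_0(\RR^d)$ and using $\Omega_ag\equiv0$ together with linearity of $P$ gives the pointwise bound $(\Omega_af)(x)\le(P^*_a(f-g))(x)+|f(x)-g(x)|$; the weak/strong bounds for $P^*_a$ and Chebyshev's inequality then show that $\left|\{\Omega_af>\lambda\}\right|_{\kappa}$ is at most a constant times $\lambda^{-p}\|f-g\|_{L^p_{\kappa}}^p$ for every $\lambda>0$. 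Letting $g\to f$ in $L^p_{\kappa}$ forces $\left|\{\Omega_af>\lambda\}\right|_{\kappa}=0$, whence $\Omega_af=0$ a.e., which is the claim. The endpoint $p=\infty$ is reduced to the case $p=1$ by localization: on each ball $\BB(0,R)$ one splits $f=f\chi_{\BB(0,2R)}+f\chi_{\RR^d\setminus\BB(0,2R)}$, applies the result to the first (integrable) piece, and uses the off-diagonal decay of the Poisson kernel to see that the second piece contributes a quantity tending to $0$ non-tangentially on $\BB(0,R)$.

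I expect the main obstacle to be the domination estimate above. Unlike the Euclidean case, the generalized translation $\tau_t$ is non-local: it redistributes mass over the whole $G$-orbit of $t$, as reflected in the support description (\ref{translation-support-2-1}). Converting the radial-translation formula (\ref{translation-2-4}) into a clean pointwise upper bound for $(\tau_tP_y)(-s)$ that is uniform in the cone aperture and summable over the orbit is the technical core; everything else is the standard maximal-function/density machinery, which goes through once this bound and the doubling property of $\omega_{\kappa}$ are in hand.
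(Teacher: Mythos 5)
The paper offers no proof of this proposition at all: it is imported verbatim as \cite[Corollary 5.4]{ADH1}. So your attempt has to be measured against the argument in that reference, and your overall architecture --- a non-tangential maximal operator $P^*_a$ dominated pointwise by $\sum_{\sigma\in G}M_{\kappa}f(\sigma(\cdot))$, weak $(1,1)$/strong $(p,p)$ bounds for $M_{\kappa}$ on the homogeneous-type space $(\RR^d,|\cdot|,\omega_{\kappa})$, convergence on a dense subclass via Proposition \ref{Poisson-c}, the $\Omega_a f$ argument, and localization for $p=\infty$ (where the tail is controlled because every orbit point of $t$ near $x\in\BB(0,R)$ stays at distance $\gtrsim R$ from $\{|s|\ge 2R\}$) --- is exactly the standard route and is sound; I also confirm that your claimed domination $(P^*_af)(x)\le C_a\sum_{\sigma}M_{\kappa}f(\sigma(x))$ is \emph{true}, and that the transfer of ball averages from $\sigma(t)$ to $\sigma(x)$ is legitimate because only radii $\gtrsim y$ occur in the decomposition.

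The genuine gap is where you yourself place the ``technical core'': your proposed derivation of the kernel upper bound from the radial translation formula (\ref{translation-2-4}) and the support localization (\ref{translation-support-2-1}) does not work. What those two facts yield, after the layer-cake decomposition of $P_y$ into superlevel balls $\BB(0,r)$ and the observation that $\rho^{\kappa}_{t,-s}$ is a probability measure vanishing on $\BB(0,r)$ when $d(t,s)\ge r$, is only the \emph{unweighted} recentering
\begin{align*}
(\tau_tP_y)(-s)\;\le\;\sum_{\sigma\in G}P_y\bigl(s-\sigma(t)\bigr),
\end{align*}
and this majorant is not an approximate identity for $d\omega_{\kappa}$: its mass blows up as $y\to0+$. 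Already in rank one ($d=1$, $G=\ZZ_2$, $d\omega_{\lambda}=|s|^{2\lambda}ds$, $\lambda>0$), taking $t=1$ one has
\begin{align*}
\int_{\RR}P_y(s-1)\,|s|^{2\lambda}\,ds\;\asymp\;y^{-2\lambda}\;\longrightarrow\;\infty ,
\end{align*}
whereas the true kernel satisfies $c_{\lambda}\int(\tau_{1}P_y)(-s)\,d\omega_{\lambda}(s)=1$ by Proposition \ref{translation-2-e}(iii); equivalently, bounding $\rho^{\kappa}_{t,-s}(\BB(0,r))$ by $1$ overestimates the peak of the kernel near $t$ by the unbounded factor $|\BB(t,y)|_{\kappa}/|\BB(0,y)|_{\kappa}$. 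The point is that support information about $\rho^{\kappa}_{t,-s}$ (or $\mu^{\kappa}_{t}$) says nothing about how much mass these measures assign to small balls, and it is precisely that quantitative mass estimate which produces the volume factor $|\BB(t,y+d(t,s))|_{\kappa}^{-1}$ in the correct bound (\ref{Poisson-ker-estimate-2}) of Lemma \ref{Poisson-ker-estimate-a}. That bound is a genuinely deep result, proved in \cite{ADH1,DH1} through Dunkl heat-kernel estimates, not a consequence of (\ref{translation-2-4}) and (\ref{translation-support-2-1}). If you replace your sketch by an appeal to Lemma \ref{Poisson-ker-estimate-a} (which this paper records, albeit in Section 4), then a dyadic-annulus computation over $|s-\sigma(t)|\sim 2^jy$, $j\ge0$ --- note you need all scales, not just ``width comparable to $y$'', since the kernel has heavy tails --- does give the domination, and the rest of your proof is complete.
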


The next proposition is the (global) Fatou-type theorem for $\kappa$-harmonic functions.

\begin{proposition}\label{Fatou-a} {\rm (\cite[Corollary 7.4]{ADH1})}
If $u$ is $\kappa$-harmonic and bounded on $\RR^{d+1}_+$, then $u$ has a non-tangential limit at almost every point of the boundary.
\end{proposition}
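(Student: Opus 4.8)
The plan is to reduce the claim to the non-tangential convergence of $\kappa$-Poisson integrals already recorded in Proposition \ref{Poisson-d}. The intermediate goal is to show that a bounded $\kappa$-harmonic function $u$ on $\RR^{d+1}_+$ is the $\kappa$-Poisson integral of some boundary datum $f\in L_{\kappa}^{\infty}(\RR^d)$; granting the representation $u=Pf$, Proposition \ref{Poisson-d} with $p=\infty$ immediately furnishes a finite non-tangential limit (equal to $f(x)$) at almost every $x\in\RR^d$.

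To produce $f$, I would invoke weak-* compactness. For each $\epsilon>0$ the slice $u_\epsilon:=u(\cdot,\epsilon)$ lies in the ball of radius $\|u\|_{\infty}$ of $L_{\kappa}^{\infty}(\RR^d)=(L_{\kappa}^1(\RR^d))^*$, and since $L_{\kappa}^1(\RR^d)$ is separable, Banach--Alaoglu yields a sequence $\epsilon_n\downarrow 0$ and an $f\in L_{\kappa}^{\infty}(\RR^d)$ with $u_{\epsilon_n}\to f$ in the weak-* topology.

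The heart of the argument is the reproducing identity
\[
u(x,y+\epsilon)=(Pu_\epsilon)(x,y),\qquad (x,y)\in\RR^{d+1}_+,\quad \epsilon>0.
\]
Both sides are bounded and $\kappa$-harmonic in $(x,y)$ (the right-hand side by Proposition \ref{Poisson-a}, the left because vertical translation preserves $\kappa$-harmonicity), and both converge to $u_\epsilon$ on the boundary as $y\to 0+$---the left by continuity of $u$, the right by Proposition \ref{Poisson-c}. I would therefore deduce the identity from a uniqueness theorem asserting that a bounded $\kappa$-harmonic function with vanishing boundary trace is identically zero. This uniqueness step is the main obstacle: the reflection terms in $\Delta_\kappa$ obstruct a naive maximum-principle argument, so one must appeal to the finer structure of bounded $\kappa$-harmonic functions developed in \cite{ADH1} (or, equivalently, to a semigroup property of the $\kappa$-Poisson kernel).

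Finally I would pass to the limit $\epsilon_n\to 0$. Since $P_y$ is radial and belongs to $L_{\kappa}^1(\RR^d)$, Proposition \ref{translation-2-e}(i) together with the evenness of $W_\kappa$ shows that $t\mapsto(\tau_x P_y)(-t)$ lies in $L_{\kappa}^1(\RR^d)$, hence is an admissible test function against the weak-* convergence $u_{\epsilon_n}\to f$; this gives $(Pu_{\epsilon_n})(x,y)\to (Pf)(x,y)$. Combined with $u(x,y)=\lim_n u(x,y+\epsilon_n)$ (continuity of $u$) and the reproducing identity, it follows that $u(x,y)=(Pf)(x,y)$ throughout $\RR^{d+1}_+$, whereupon Proposition \ref{Poisson-d} completes the proof.
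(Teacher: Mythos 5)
First, a structural remark: the paper does not actually prove Proposition \ref{Fatou-a} at all --- it is imported verbatim from \cite[Corollary 7.4]{ADH1} --- so there is no internal argument to compare yours against. Your strategy (represent a bounded $\kappa$-harmonic $u$ as the $\kappa$-Poisson integral of a weak-$\ast$ limit $f\in L_{\kappa}^{\infty}(\RR^d)$ of the slices $u(\cdot,\epsilon_n)$, then invoke Proposition \ref{Poisson-d} with $p=\infty$) is exactly the route taken in the cited source itself, where a representation theorem for bounded $\kappa$-harmonic functions is proved and Fatou's theorem is deduced as a corollary. The peripheral steps you describe are sound: $t\mapsto(\tau_xP_y)(-t)$ is a legitimate $L_{\kappa}^1$ test function for weak-$\ast$ convergence by Proposition \ref{translation-2-e}, vertical translates of $u$ are $\kappa$-harmonic because $\Delta_{\kappa}$ has constant coefficients in $y$, and once $u=Pf$ is established, Proposition \ref{Poisson-d} finishes the proof.

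The genuine gap is the one you flag yourself: the uniqueness theorem (a bounded $\kappa$-harmonic function on $\RR^{d+1}_+$, continuous up to the boundary with vanishing boundary trace, is identically zero), on which the reproducing identity $u(x,y+\epsilon)=(Pu_\epsilon)(x,y)$ entirely rests. Appealing for this to ``the finer structure of bounded $\kappa$-harmonic functions developed in \cite{ADH1}'' is circular in the present context: that uniqueness/representation statement is precisely what \cite{ADH1} establish in order to derive their Corollary 7.4, i.e.\ it is the mathematical content of the very result being proved, and nothing in the present paper supplies a substitute (the maximum principle of Lemma \ref{max-principle} is confined to bounded $G$-invariant domains and does not by itself handle the unbounded half-space; one would need a Phragm\'en--Lindel\"of-type barrier, or a Schwarz reflection in $y$ combined with a Liouville theorem for $\Delta_{\kappa}$ on all of $\RR^{d+1}$, neither of which is available in the paper). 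A second, smaller gap of the same kind: you need $(Pu_\epsilon)(\cdot,y)\to u_\epsilon$ pointwise (or locally uniformly) as $y\to0+$ for $u_\epsilon$ merely bounded and continuous, whereas Proposition \ref{Poisson-c} covers only $C_0(\RR^d)$ and $L_{\kappa}^p$ with $p<\infty$, so here too you must quote \cite[Corollary 5.3]{ADH1}. In short, your skeleton is the standard and correct one, but every load-bearing bone in it is borrowed from the reference the paper cites, so as a self-contained proof it does not go through.
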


\section{Some elementary propositions}

In this section we present several propositions which will be useful subsequently.
Some conclusions may be found elsewhere, but here we treat them as elementarily as possible. In particular, it will be proved that, on a general $G$-invariant domain $\Omega$ of $\RR^d$, a twice-differentiable function $f$, harmonic associated to the Dunkl operators, is infinitely differentiable and satisfies the mean value formula $M_f(x,r)=f(x)$ for all $x\in\Omega$ provided $\overline{\BB(x,r)}\subset\Omega$ with $r>0$.


%

\begin{proposition}\label{translation-2-d}
If {\rm(i)} $f\in C^{\infty}(\RR^d)$, and $g\in C(\RR^d)$ is radial and of compact support, or {\rm(ii)} $f\in C^{\infty}(\RR^d)$ with compact support, and $g\in C(\RR^d)$ is radial, then (\ref{translation-2-1}) holds.
\end{proposition}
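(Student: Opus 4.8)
The plan is to reduce both cases to the $L_\kappa^2$-version of the identity established in Proposition~\ref{translation-2-a}, by truncating whichever of $f,g$ fails to lie in $L_\kappa^2(\RR^d)$ and then letting the truncation exhaust $\RR^d$. In each case the truncation is arranged so that one side of (\ref{translation-2-1}) is \emph{constant} for all large truncation parameter, while the other side converges by an elementary argument; the only delicate point is the passage to the limit over the unbounded variable.

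\medskip

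For case (i), fix a radial $\phi\in C^\infty(\RR^d)$ with $\phi=1$ on $\{|x|\le1\}$ and $\phi=0$ on $\{|x|\ge2\}$, and set $f_N(x)=f(x)\phi(x/N)$. Then $f_N\in C_c^\infty(\RR^d)\subseteq L_\kappa^2(\RR^d)$, while the radial, compactly supported $g\in C(\RR^d)$ also lies in $L_\kappa^2(\RR^d)$, so Proposition~\ref{translation-2-a} gives
\begin{align*}
\int_{\RR^d}(\tau_xf_N)(t)\,g(t)\,d\omega_\kappa(t)=\int_{\RR^d}f_N(t)\,(\tau_{-x}g)(t)\,d\omega_\kappa(t).
\end{align*}
On the left the integration runs over the fixed compact set $\operatorname{supp}g$; since $f_N\to f$ in $C^\infty(\RR^d)$, the continuity of $\tau_x$ on $C^\infty(\RR^d)$ (Proposition~\ref{translation-2-b}(i)) yields $\tau_xf_N\to\tau_xf$ uniformly there, whence the left side tends to $\int_{\RR^d}(\tau_xf)g\,d\omega_\kappa$. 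On the right, because $g$ is radial the representation (\ref{translation-2-3}) with the support estimate (\ref{translation-support-2-1}) shows $(\tau_{-x}g)(t)=0$ once $\min_{\sigma\in G}|\sigma(t)-x|$ exceeds the radius $a$ of a ball $\overline{\BB(0,a)}\supseteq\operatorname{supp}g$; hence $\operatorname{supp}(\tau_{-x}g)\subseteq\{t:|t|\le|x|+a\}$ is compact, and for $N$ large $f_N\equiv f$ there, so the right side already equals $\int_{\RR^d}f(\tau_{-x}g)\,d\omega_\kappa$. This settles case (i) rigorously with the tools at hand.

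\medskip

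For case (ii), truncate $g$ instead: with the same cutoff put $g_M(t)=g(t)\phi(t/M)$, a radial compactly supported continuous function, and apply Proposition~\ref{translation-2-a} to $f\in C_c^\infty(\RR^d)$ and $g_M\in L_\kappa^2(\RR^d)$ to get $\int(\tau_xf)g_M\,d\omega_\kappa=\int f(\tau_{-x}g_M)\,d\omega_\kappa$. On the right the integration runs over $\operatorname{supp}f\subseteq\overline{\BB(0,\rho)}$, and by (\ref{translation-2-3}) together with $\operatorname{supp}\rho^{\kappa}_{-x,t}\subseteq\{\xi:|\xi|\le|x|+|t|\}\subseteq\overline{\BB(0,|x|+\rho)}$ for $t\in\overline{\BB(0,\rho)}$, we have $(\tau_{-x}g_M)(t)=(\tau_{-x}g)(t)$ there as soon as $M$ is large enough that $g_M=g$ on $\overline{\BB(0,|x|+\rho)}$. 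Thus the right side is independent of $M$ and equals the target value $\int f(\tau_{-x}g)\,d\omega_\kappa$, and it remains only to let $M\to\infty$ on the left.

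\medskip

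This last limit is where I expect the main obstacle to lie: one must justify $\int(\tau_xf)g_M\,d\omega_\kappa\to\int(\tau_xf)g\,d\omega_\kappa$ over the \emph{unbounded} region, i.e.\ the absolute convergence of $\int(\tau_xf)\,g\,d\omega_\kappa$. The available leverage is that $f\in C_c^\infty(\RR^d)\subseteq{\mathscr S}(\RR^d)$ forces $\tau_xf\in{\mathscr S}(\RR^d)$ (Proposition~\ref{translation-2-b}(iii)), so $\tau_xf$ decays faster than any polynomial while $W_\kappa$ is only of polynomial growth; dominated convergence (using $|g_M|\le|g|$ and $g_M\to g$ pointwise) then applies whenever $g$ is of at most polynomial growth, and since the right-hand side already exhibits the answer as depending only on $g$ restricted to the fixed ball $\overline{\BB(0,|x|+\rho)}$, the resulting identity $\int(\tau_xf)g\,d\omega_\kappa=\int f(\tau_{-x}g)\,d\omega_\kappa$ is forced. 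Making this limiting step airtight for an arbitrary continuous radial $g$ — rather than the bookkeeping in cases (i)–(ii), which is routine given Propositions~\ref{translation-2-a} and \ref{translation-2-b} — is the part I would spend the most care on.
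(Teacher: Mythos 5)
Your case (i) is correct and is essentially the paper's own argument: truncate $f$, apply the $L_\kappa^2$ identity of Proposition \ref{translation-2-a} to the truncation and $g$, and use the support properties (\ref{translation-2-3})--(\ref{translation-support-2-1}) to see that both sides stabilize. The paper works with a single cutoff $\phi f$ and identifies $\tau_x(\phi f)=\tau_xf$ on $\operatorname{supp}g$ via Proposition \ref{translation-2-b}(ii), whereas you let $f_N\to f$ and invoke continuity of $\tau_x$ on $C^\infty(\RR^d)$; the difference is cosmetic. Case (ii), however, contains a genuine gap, and it is exactly the point you flagged without closing. An arbitrary radial $g\in C(\RR^d)$ has no growth restriction whatsoever, so the Schwartz decay of $\tau_xf$ against the polynomially growing weight $W_\kappa$ does \emph{not} make $\int(\tau_xf)g\,d\omega_\kappa$ absolutely convergent, and dominated convergence is unavailable. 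Nor is the identity ``forced'' by the constancy of $\int(\tau_xf)g_M\,d\omega_\kappa$ in $M$: constancy of truncated integrals never implies Lebesgue convergence of the untruncated one (compare $\int_{\RR}\sin t\,\phi(t/M)\,dt\equiv 0$ for even $\phi$, while $\int_{\RR}|\sin t|\,dt=\infty$). What the statement of (ii) really requires — and what makes its left-hand side meaningful at all — is that $\tau_xf$ has \emph{compact} support, namely $\operatorname{supp}(\tau_xf)\subseteq\overline{\BB(0,|x|+\rho)}$ when $\operatorname{supp}f\subseteq\overline{\BB(0,\rho)}$. With that in hand, $\int(\tau_xf)g\,d\omega_\kappa=\int(\tau_xf)(\phi g)\,d\omega_\kappa$ holds exactly (take $\phi=1$ on a slightly larger ball), and your computation of the right-hand side finishes the proof; this is the ``similar treatment'' the paper alludes to.

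The missing support fact is obtainable from the paper's own tools by duality, so the gap is reparable. Let $h\in C_c^{\infty}(\RR^d)$ be supported in $\{t:\,|t|>|x|+\rho\}$; since $\operatorname{supp}h$ is compact, $h$ vanishes on a neighborhood of $\overline{\BB(0,|x|+\rho)}$. Both $f$ and $h$ lie in $L_\kappa^2(\RR^d)$, so Proposition \ref{translation-2-a} gives $\int(\tau_xf)h\,d\omega_\kappa=\int f\,(\tau_{-x}h)\,d\omega_\kappa$. By Proposition \ref{translation-2-b}(ii), for $|t|\le\rho$ the distribution $k\mapsto(\tau_{-x}k)(t)$ is supported in $\{\xi:\,|\xi|\le|x|+|t|\}\subseteq\overline{\BB(0,|x|+\rho)}$, on a neighborhood of which $h$ vanishes; hence $(\tau_{-x}h)(t)=0$ on $\operatorname{supp}f$ and therefore $\int(\tau_xf)h\,d\omega_\kappa=0$. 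As $\tau_xf$ is continuous and $W_\kappa>0$ off the reflecting hyperplanes, this forces $\tau_xf\equiv0$ on $\{|t|>|x|+\rho\}$. (Alternatively one can quote Trim\`eche's Paley--Wiener theorem for the Dunkl translation \cite{Tr2}.) Inserting this step makes your case (ii) complete and aligned with the paper's intended argument; without it, the proof of (ii) does not go through.
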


\begin{proof}
Assume that ${\rm supp}\,g\subset\{\xi\in\RR^d:\,|\xi|\le r_0\}$. For fixed $x\in\RR^d$, we take a compactly supported $\phi\in C^{\infty}(\RR^d)$ with $\phi(\xi)=1$ for $|\xi|\le|x|+r_0$. It then follows from Proposition \ref{translation-2-b}(ii) that $[\tau_x(\phi f)](t)=(\tau_xf)(t)$ for $|t|\le r_0$, and from (\ref{translation-2-3}) and (\ref{translation-support-2-1}), $(\tau_{-x}g)(t)=0$ for $|t|>|x|+r_0$. Thus the assertion of part (i) is validated by applying Proposition \ref{translation-2-a} to $\phi f$ and $g$. A similar treatment verifies the assertion of part (ii). $\square$
\end{proof}

\begin{proposition}\label{translation-2-b-2}
If $f\in C(\RR^d)$ is radial, then $\left[\tau_x(f(a\cdot))\right](t)=(\tau_{ax}f)(at)$ for $x,t\in\RR^d$ and $a\in\RR\setminus\{0\}$.
\end{proposition}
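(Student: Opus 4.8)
The plan is to reduce to the $C^{\infty}$ case already established in Proposition \ref{translation-2-b-1}(ii), and then to pass to continuous radial $f$ by approximation. The key enabling fact is the representation (\ref{translation-2-3}), $(\tau_x g)(t)=\int_{\RR^d} g\,d\rho^{\kappa}_{x,t}$, valid for radial $g\in C(\RR^d)$ with $\rho^{\kappa}_{x,t}$ a \emph{compactly supported} probability measure whose support is confined to the region (\ref{translation-support-2-1}). Since $f$ is radial, both $f(a\cdot)$ and the smooth functions approximating it are again radial, so $\tau_x$ applies to all of them through this representation.

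First I would fix $x,t\in\RR^d$ and $a\in\RR\setminus\{0\}$, and construct radial functions $f_n\in C^{\infty}(\RR^d)$ with $f_n\to f$ uniformly on compact sets; writing $f(\cdot)=f_0(|\cdot|)$, this is achieved by approximating $f_0$ on a large interval by smooth even functions and composing with $|\cdot|$ (an even smooth function of one variable composed with $|\cdot|$ is $C^{\infty}$ on $\RR^d$). For each $n$, Proposition \ref{translation-2-b-1}(ii) gives
\begin{align*}
\left[\tau_x(f_n(a\cdot))\right](t)=(\tau_{ax}f_n)(at).
\end{align*}
Next I would pass to the limit $n\to\infty$ on both sides. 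By (\ref{translation-2-3}), the left-hand side equals $\int_{\RR^d} f_n(a\xi)\,d\rho^{\kappa}_{x,t}(\xi)$ and the right-hand side equals $\int_{\RR^d} f_n(\eta)\,d\rho^{\kappa}_{ax,at}(\eta)$; since both are probability measures with supports contained in fixed bounded sets (by (\ref{translation-support-2-1})), and $f_n\to f$ uniformly on every ball, the elementary bound $|(\tau_x g)(t)|\le\sup_{{\rm supp}\,\rho^{\kappa}_{x,t}}|g|$ lets me replace $f_n$ by $f$ in the limit on each side, yielding the claimed equality $[\tau_x(f(a\cdot))](t)=(\tau_{ax}f)(at)$.

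I expect the only delicate point to be the bookkeeping that guarantees a single large ball captures the supports of both $\rho^{\kappa}_{x,t}$ and $\rho^{\kappa}_{ax,at}$ (and, after the scaling $\xi\mapsto a\xi$, the set where $f(a\cdot)$ is sampled), so that one modulus of uniform convergence controls both sides simultaneously; this is immediate from the explicit bound (\ref{translation-support-2-1}), which confines the support of $\rho^{\kappa}_{ax,at}$ to radius at most $|a|\max_{\sigma\in G}|x+\sigma(t)|$. An alternative, entirely computational route avoids approximation altogether: inserting $g(x)=f(ax)=f_0(|a|\,|x|)$ into the radial formula (\ref{translation-2-4}) and using the homogeneity scaling $\mu^{\kappa}_{ax}=(a\cdot)_{*}\mu^{\kappa}_{x}$ (which holds since $V_{\kappa}$ preserves degree of homogeneity, so $\int p\,d\mu^{\kappa}_{ax}=a^{n}\int p\,d\mu^{\kappa}_{x}=\int p(a\cdot)\,d\mu^{\kappa}_{x}$ for homogeneous $p$ of degree $n$, hence for all polynomials and, by compact support, all continuous functions), one checks directly that both sides reduce to $\int_{\RR^d} f_0\big(|a|\sqrt{|x|^2+|t|^2+2\langle t,\eta\rangle}\big)\,d\mu^{\kappa}_x(\eta)$.
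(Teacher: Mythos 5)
Your main argument is correct and is essentially the paper's own proof: both reduce to the smooth case via Proposition \ref{translation-2-b-1}(ii) and then pass to the limit using the compactly supported representing measures of (\ref{translation-2-3}) together with the support bound (\ref{translation-support-2-1}). The only cosmetic difference is bookkeeping: the paper multiplies $f$ by a radial cutoff $\phi$ equal to $1$ on the ball of radius $|a|(|x|+|t|)$ and approximates $\phi f$ uniformly on all of $\RR^d$ by radial Schwartz functions, whereas you approximate $f$ directly by radial $C^{\infty}$ functions uniformly on compact sets; since Proposition \ref{translation-2-b-1} is stated for all of $C^{\infty}(\RR^d)$, not just $\SS(\RR^d)$, your variant is equally legitimate and marginally lighter.

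Your alternative route, however, is genuinely different from the paper's: instead of approximating and invoking the smooth-case identity at all, you compute both sides directly from the radial formula (\ref{translation-2-4}) after establishing the scaling law $\mu^{\kappa}_{ax}=(a\cdot)_{*}\mu^{\kappa}_{x}$, which you correctly derive from the degree-of-homogeneity preservation of $V_{\kappa}$ (test against homogeneous polynomials, then pass to all of $C$ on a compact set containing both supports via Stone--Weierstrass, using that both measures are compactly supported probability measures). What this buys is a closed-form verification: both sides collapse to the same integral $\int_{\RR^d}f_0\bigl(|a|\sqrt{|x|^2+|t|^2+2\langle t,\eta\rangle}\bigr)\,d\mu^{\kappa}_x(\eta)$, and the only approximation needed is the one the paper has already built in when extending (\ref{translation-2-4}) to radial continuous $f$. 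What it costs is the extra lemma on $\mu^{\kappa}_{ax}$, which the paper's proof never needs. Both arguments are sound.
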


Indeed, by Proposition \ref{translation-2-b-1} the assertion is true for $f\in{\mathscr S}(\RR^d)$. For general radial $f\in C(\RR^d)$, we take a radial and compactly supported $\phi\in C^{\infty}(\RR^d)$ with $\phi(\xi)=1$ for $|\xi|\le|a|(|x|+|t|)$, and then, choose a sequence of radial functions $\{\phi_j\}\subset{\mathscr S}(\RR^d)$ converging uniformly to $\phi f$ on $\RR^d$.
Letting $j\rightarrow\infty$ on the both sides of $\left[\tau_x(\phi_j(a\cdot))\right](t)=(\tau_{ax}\phi_j)(at)$ yields
$\left[\tau_x((\phi f)(a\cdot))\right](t)=(\tau_{ax}(\phi f))(at)$, which is identical with $\left[\tau_x(f(a\cdot))\right](t)=(\tau_{ax}f)(at)$ on account of (\ref{translation-support-2-1}).

In what follows, we set $\widetilde{\Delta}_{\kappa}=\sum_{j=1}^{d}D_{j}^{2}$.

\begin{proposition}\label{spherical-mean-2-a}
{\rm(i)} If $f\in A_{\kappa}(\RR^d)$, then
\begin{align}\label{spherical-mean-2-3}
M_f(x,r)=c_{\kappa}\int_{\RR^d}j_{|\kappa|+(d-2)/2}(r|\xi|)E_{\kappa}(i\xi,x)(\SF_{\kappa}f)(\xi)\,d\omega_{\kappa}(\xi).
 \end{align}

{\rm(ii)} If $f\in C^{\infty}(\RR^d)$, then for $x\in\RR^d$ and $r>0$,
\begin{align}\label{spherical-mean-2-4}
M_f(x,r)-f(x)=\int_0^r\frac{1}{s^{2|\kappa|+d-1}}\int_0^s \tilde{s}^{2|\kappa|+d-1}M_{\widetilde{\Delta}_{\kappa}f}(x,\tilde{s})\,d\tilde{s}.
 \end{align}
\end{proposition}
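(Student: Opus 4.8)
The plan is to establish (i) by substituting the pointwise formula for the generalized translation into the definition of the spherical mean and interchanging integrals, and then to derive (ii) from (i) by recognizing that the radial profile $r\mapsto M_f(x,r)$ solves a Bessel-type (Darboux) equation whose right-hand side is $M_{\widetilde{\Delta}_{\kappa}f}$.

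For part (i), I would insert (\ref{translation-2-2}) for $(\tau_x f)(rt')$ into (\ref{spherical-mean-2-1}); this is legitimate since $f\in A_{\kappa}(\RR^d)$ gives $\SF_{\kappa}f\in L^1_{\kappa}(\RR^d)$. Because $|E_{\kappa}(i\xi,rt')|\le1$ by (\ref{Dunkl-kernel-2-4}) (one argument being purely imaginary) and $W_{\kappa}$ is integrable over $\SB^{d-1}$, Fubini's theorem lets me pull the $\xi$-integration outside the spherical integration. The remaining inner integral $d_{\kappa}\int_{\SB^{d-1}}E_{\kappa}(i\xi,rt')W_{\kappa}(t')\,dt'$ is the generalized spherical mean of the Dunkl kernel, which equals the normalized Bessel function $j_{|\kappa|+(d-2)/2}(r|\xi|)$; this is the Dunkl analogue of the classical representation of the spherical mean of a plane wave (its $\kappa=0$ instance) and is available in the literature on the spherical mean operator. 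Substituting it gives (\ref{spherical-mean-2-3}).

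For part (ii), I would first treat $f\in{\mathscr S}(\RR^d)\subset A_{\kappa}(\RR^d)$. Writing $\nu=|\kappa|+(d-2)/2$ and $L_r=\partial_r^2+\frac{2|\kappa|+d-1}{r}\partial_r$ (so that $2\nu+1=2|\kappa|+d-1$), the normalized Bessel function satisfies $L_r\big(j_{\nu}(r|\xi|)\big)=-|\xi|^2 j_{\nu}(r|\xi|)$. Differentiating (\ref{spherical-mean-2-3}) twice under the integral sign --- permissible because $\SF_{\kappa}f$ is rapidly decaying while $j_{\nu}$ and its derivatives grow at most polynomially in $r|\xi|$ --- and using $\SF_{\kappa}(\widetilde{\Delta}_{\kappa}f)(\xi)=-|\xi|^2(\SF_{\kappa}f)(\xi)$ from Proposition \ref{transform-a}(iii), I obtain the Darboux identity $L_r M_f(x,r)=M_{\widetilde{\Delta}_{\kappa}f}(x,r)$ after reapplying part (i) to $\widetilde{\Delta}_{\kappa}f$. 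Rewriting $L_r$ as $r^{-(2|\kappa|+d-1)}\partial_r\!\left(r^{2|\kappa|+d-1}\partial_r\right)$ and integrating twice from $0$, with $M_f(x,0)=f(x)$ and $\partial_r M_f(x,0)=0$ (read off from the even power series of $j_{\nu}$) and the boundary term $r^{2|\kappa|+d-1}\partial_r M_f(x,r)\to0$ as $r\to0$, yields (\ref{spherical-mean-2-4}) for Schwartz $f$.

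It remains to remove the Schwartz restriction, and this is where I expect the only real difficulty. For fixed $x$ and $r$, all the means appearing in (\ref{spherical-mean-2-4}) involve only radii $\tilde s\le r$, and by (\ref{spherical-mean-support-2-1}) the measures $\sigma^{\kappa}_{x,\tilde s}$ are supported in the compact $G$-invariant set $K=\cup_{\sigma\in G}\overline{\BB(\sigma(x),r)}$. Choosing a $G$-invariant $\phi\in C^{\infty}(\RR^d)$ of compact support with $\phi\equiv1$ on a $G$-invariant neighbourhood of $K$, one has $\phi f\in{\mathscr S}(\RR^d)$ and $M_{\phi f}(x,\tilde s)=M_f(x,\tilde s)$ for $\tilde s\le r$. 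The subtle point is the reflection part: because $\phi$ is $G$-invariant and identically $1$ near $K$, each $D_j(\phi f)$ coincides with $D_j f$ on $K$ --- the difference quotients $\frac{(\phi f)-\sigma_{\alpha}(\phi f)}{\langle\alpha,\cdot\rangle}$ use only values of $\phi f$ at $G$-orbit points, which all lie where $\phi\equiv1$ --- and hence $\widetilde{\Delta}_{\kappa}(\phi f)=\widetilde{\Delta}_{\kappa}f$ on $K$, giving $M_{\widetilde{\Delta}_{\kappa}(\phi f)}(x,\tilde s)=M_{\widetilde{\Delta}_{\kappa}f}(x,\tilde s)$ for $\tilde s\le r$. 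Applying the Schwartz case to $\phi f$ and replacing each term by its counterpart for $f$ then proves (\ref{spherical-mean-2-4}) for all $f\in C^{\infty}(\RR^d)$. The main obstacle is precisely this verification that the nonlocal reflection terms in the Dunkl operators are neutralized by a $G$-invariant cutoff, so that $\widetilde{\Delta}_{\kappa}(\phi f)$ agrees with $\widetilde{\Delta}_{\kappa}f$ throughout the supports of all the relevant spherical-mean measures.
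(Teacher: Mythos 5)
Your proposal is correct and takes essentially the same approach as the paper: part (i) is proved exactly as there (the pointwise formula (\ref{translation-2-2}), Fubini, and the identification of the spherical mean of the Dunkl kernel with $j_{|\kappa|+(d-2)/2}$, i.e.\ \cite[Corollary 2.5]{Ro4}), and part (ii) follows the same two-step scheme of establishing the radial Darboux equation and integrating twice for Schwartz $f$, then localizing with a smooth cutoff via the support property (\ref{spherical-mean-support-2-1}). The only cosmetic differences are that the paper derives $\partial_r\bigl(r^{2|\kappa|+d-1}\partial_r M_f\bigr)=r^{2|\kappa|+d-1}M_{\widetilde{\Delta}_{\kappa}f}$ from the eigenfunction relation (\ref{Dunkl-kernel-eigenfunction-1}) together with \cite[(4.5)]{Ro4} rather than by your direct differentiation of the Bessel factor in $r$, and its cutoff is taken $\equiv 1$ on the origin-centered ball $\{\xi:\,|\xi|\le|x|+r\}$, whose automatic invariance under $G$ neutralizes the reflection terms without requiring $\phi$ itself to be $G$-invariant.
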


\begin{proof}
Part (i) follows from (\ref{translation-2-2}), \cite[Corollary 2.5]{Ro4} and Fubini's theorem.

As for part (ii), if $f\in{\mathscr S}(\RR^d)$, from (\ref{spherical-mean-2-3}) it is obvious that $\lim_{r\rightarrow0+}M_f(x,r)=f(x)$, and by (\ref{Dunkl-kernel-eigenfunction-1}) and Proposition \ref{transform-a}(iii), $\widetilde{\Delta}^x_{\kappa}\left[M_f(x,r)\right]=M_{\widetilde{\Delta}_{\kappa}f}(x,r)$. From \cite[(4.5)]{Ro4} we have
\begin{align*}
\frac{\partial}{\partial r}\left[r^{2|\kappa|+d-1}\frac{\partial}{\partial r}M_f(x,r)\right]=r^{2|\kappa|+d-1}\widetilde{\Delta}^x_{\kappa}\left[M_f(x,r)\right],
\end{align*}
so that (\ref{spherical-mean-2-4}) follows immediately. For general $f\in C^{\infty}(\RR^d)$ and for $x\in\RR^d$, $r>0$, we choose a compactly supported $\phi\in C^{\infty}(\RR^d)$ with $\phi(\xi)=1$ for $|\xi|\le|x|+r$. It then follows that $\phi f\in{\mathscr S}(\RR^d)$ and (\ref{spherical-mean-2-4}) holds with $\phi f$ instead of $f$. But by (\ref{spherical-mean-2-2}) and (\ref{spherical-mean-support-2-1}) $M_{\phi f}(x,r)=M_f(x,r)$ and $M_{\widetilde{\Delta}_{\kappa}(\phi f)}(x,s)=M_{\widetilde{\Delta}_{\kappa}f}(x,s)$ for $s\in[0,r)$, so that (\ref{spherical-mean-2-4}) holds for $f$ itself. $\square$
\end{proof}

\begin{proposition}\label{spherical-mean-2-b}
{\rm(i)} If $f\in C(\RR^d)$, then
\begin{align}\label{spherical-mean-2-5}
\lim_{r\rightarrow0+}M_f(x,r)=f(x)
\end{align}
for $x\in\RR^d$, and the limit holds uniformly for $x$ in a compact subset of $\RR^d$;

{\rm(ii)} if $f\in C(\RR^d)$, and $g\in C(\RR^d)$ is radial and of compact support, then for $x\in\RR^d$,
\begin{align}\label{spherical-mean-2-6}
d_{\kappa}^{-1}\int_0^{\infty}M_f(x,r)g_0(r)\,r^{2|\kappa|+d-1}dr=\int_{\RR^d}f(t)(\tau_{-x}g)(t)\,d\omega_{\kappa}(t),
 \end{align}
where $g(t)=g_0(|t|)$ for $t\in\RR^d$.
\end{proposition}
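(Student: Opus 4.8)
The plan is to prove the two parts separately, with part (i) serving as a tool for part (ii).

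For part (i), the continuity of $M_f(x,r)$ at $r=0$ is exactly the statement that the spherical mean recovers $f$. I would proceed by the representation (\ref{spherical-mean-2-2}), which gives $M_f(x,r)=\int_{\RR^d}f\,d\sigma^{\kappa}_{x,r}$, valid for all $f\in C(\RR^d)$. The key input is the support estimate (\ref{spherical-mean-support-2-1}): as $r\to0+$, the support of $\sigma^{\kappa}_{x,r}$ is contained in $\cup_{\sigma\in G}\{\xi:|\xi-\sigma(x)|\le r\}$, which shrinks to the finite orbit $\{\sigma(x):\sigma\in G\}$. Since $\sigma^{\kappa}_{x,r}$ is a probability measure and the mapping $(x,r)\mapsto\sigma^{\kappa}_{x,r}$ is weakly continuous, I would show that $\sigma^{\kappa}_{x,r}$ converges weakly to the point mass $\delta_x$ as $r\to0+$. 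The subtle point is that the support a priori meets several orbit points $\sigma(x)$, not only $x$ itself; however, by the consistency of the mean value property, taking $f\equiv1$ and then testing against functions supported near a single orbit point, one deduces that all mass concentrates at $x$. Hence $M_f(x,r)\to\int f\,d\delta_x=f(x)$. For the uniformity on compact subsets, I would invoke the joint weak continuity of $(x,r)\mapsto\sigma^{\kappa}_{x,r}$ together with a standard equicontinuity argument: on a compact set $K$, $f$ is uniformly continuous, so the modulus of continuity controls $|M_f(x,r)-f(x)|$ uniformly.

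For part (ii), the formula (\ref{spherical-mean-2-6}) is a polar-coordinate decomposition relating the spherical mean integrated in $r$ to the generalized translation paired with a radial function. I would first establish it for $f\in C^{\infty}(\RR^d)$, where Proposition~\ref{translation-2-b}(i) guarantees $M_f$ is well defined, and then pass to general $f\in C(\RR^d)$ by approximation. Starting from the definition (\ref{spherical-mean-2-1}), namely $M_f(x,r)=d_{\kappa}\int_{\SB^{d-1}}(\tau_xf)(rt')W_{\kappa}(t')\,dt'$, I would multiply by $g_0(r)r^{2|\kappa|+d-1}$ and integrate over $r\in(0,\infty)$. The homogeneity of $W_{\kappa}$ (degree $2|\kappa|$) lets me rewrite $W_{\kappa}(t')r^{2|\kappa|}=W_{\kappa}(rt')$, so that $W_{\kappa}(t')\,r^{2|\kappa|+d-1}\,dt'\,dr$ assembles into the full measure $d\omega_{\kappa}$ on $\RR^d$ under the polar substitution $\xi=rt'$. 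After interchanging the order of integration (justified by compact support of $g$ and continuity of $\tau_xf$), the left side becomes $\int_{\RR^d}(\tau_xf)(\xi)g(\xi)\,d\omega_{\kappa}(\xi)$, where $g(\xi)=g_0(|\xi|)$. Applying Proposition~\ref{translation-2-a} (or Proposition~\ref{translation-2-d}, which covers precisely the case of $f\in C^{\infty}$ paired with a radial compactly supported $g$), the pairing identity (\ref{translation-2-1}) transfers the translation from $f$ to $g$, yielding $\int_{\RR^d}f(t)(\tau_{-x}g)(t)\,d\omega_{\kappa}(t)$, which is the right side of (\ref{spherical-mean-2-6}).

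The main obstacle I anticipate is the weak-convergence argument in part (i): ruling out that mass of $\sigma^{\kappa}_{x,r}$ leaks onto orbit points $\sigma(x)\neq x$ as $r\to0+$. The support bound alone permits such leakage, so one must exploit either the explicit intertwining structure or the fact that for $f\in C^{\infty}(\RR^d)$ the formula (\ref{spherical-mean-2-3}) gives $\lim_{r\to0+}M_f(x,r)=f(x)$ directly via $j_{|\kappa|+(d-2)/2}(0)=1$, and then extend to $C(\RR^d)$ by density using the uniform bound $\|\sigma^{\kappa}_{x,r}\|=1$. In part (ii), the only technical care needed is the legitimacy of interchanging integration and the application of the transpose identity; both are routine given the compact support of $g$ and Proposition~\ref{translation-2-d}, so I do not expect a genuine difficulty there.
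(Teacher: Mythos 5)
Your part (ii) is essentially identical to the paper's argument: the paper obtains (\ref{spherical-mean-2-6}) for smooth $f$ by ``using the spherical coordinates to the left hand side of (\ref{translation-2-1})'' and invoking Proposition \ref{translation-2-d}, which is exactly your polar-coordinate computation (read in the opposite direction), followed by the same approximation step for general $f\in C(\RR^d)$. Note that Proposition \ref{translation-2-a} alone would not suffice here, since $f\in C^{\infty}(\RR^d)$ need not lie in $L^2_{\kappa}$; your parenthetical appeal to Proposition \ref{translation-2-d} is the correct one.

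For part (i), your primary argument has a genuine gap, which to your credit you flag yourself. The claim that ``taking $f\equiv1$ and then testing against functions supported near a single orbit point'' forces all mass of $\sigma^{\kappa}_{x,r}$ to concentrate at $x$ is circular: $f\equiv1$ only records that the total mass is $1$, and testing near a single orbit point presupposes knowing where the mass sits, which is exactly what is to be proved. The support bound (\ref{spherical-mean-support-2-1}) genuinely permits mass near every $\sigma(x)$, and for non-$G$-invariant $f$ such leakage would destroy the limit; for the same reason the ``modulus of continuity'' argument for uniformity silently assumes no leakage and therefore does not stand on its own. Your declared fallback, however, is precisely the paper's proof: establish the limit for Schwartz functions from the transform formula (\ref{spherical-mean-2-3}) (the paper routes this through (\ref{spherical-mean-2-4}), which is itself derived from (\ref{spherical-mean-2-3})), then pass to $f\in C(\RR^d)$ by density using that each $\sigma^{\kappa}_{x,r}$ is a probability measure. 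One detail your ``density'' step must make explicit, and the paper does: a general $f\in C(\RR^d)$ is unbounded, so it cannot be uniformly approximated by Schwartz functions on all of $\RR^d$; one first multiplies by a smooth cutoff $\phi$ equal to $1$ on $\overline{\BB(0,r_0+r_1)}$, and the support properties (\ref{spherical-mean-support-2-1}) and (\ref{translation-support-2-1}) guarantee that $M_{\phi f}(x,r)=M_f(x,r)$ and $\int f\,\tau_{-x}g\,d\omega_{\kappa}=\int \phi f\,\tau_{-x}g\,d\omega_{\kappa}$ in the relevant range of $x$ and $r$, after which $\phi f$ can be approximated uniformly by Schwartz functions. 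With this insertion, your fallback — including uniformity on compacta, since the error bound coming from (\ref{spherical-mean-2-3}) is independent of $x$ — coincides with the paper's proof.
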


\begin{proof}
Assume that ${\rm supp}\,g\subset\{\xi\in\RR^d:\,|\xi|\le r_0\}$ and $x\in\overline{\BB(0,r_1)}$, where $r_0,r_1>0$. If $f\in C^{\infty}(\RR^d)$, (\ref{spherical-mean-2-5}) follows from (\ref{spherical-mean-2-4}), where the limit holds uniformly for all $x\in\overline{\BB(0,r_1)}$; and (\ref{spherical-mean-2-6}) follows from Proposition \ref{translation-2-d} by using the spherical coordinates to the left hand side of (\ref{translation-2-1}). For general $f\in C(\RR^d)$, we take a compactly supported $\phi\in C^{\infty}(\RR^d)$ with $\phi(\xi)=1$ for $|\xi|\le r_0+r_1$, and then, choose a sequence of functions $\{\phi_j\}\subset{\mathscr S}(\RR^d)$ converging uniformly to $\phi f$ on $\RR^d$. Note that, by (\ref{spherical-mean-2-2}) and (\ref{spherical-mean-support-2-1}) $M_{\phi f}(x,r)=M_f(x,r)$ for $r\in[0,r_0]$ and $x\in\overline{\BB(0,r_1)}$, and $M_{\phi_j}(x,r)$ converges to $M_{\phi f}(x,r)$ uniformly for $r\in[0,r_0]$ as $j$ tends to infinity; and further, by (\ref{translation-2-3}) and (\ref{translation-support-2-1}) $(\tau_{-x}g)(t)=0$ for $|t|>|x|+r_0$. Now applying (\ref{spherical-mean-2-6}) to $\phi_j$ and letting $j\rightarrow\infty$, (\ref{spherical-mean-2-6}) is proved for $f\in C(\RR^d)$. Finally for $x\in\overline{\BB(0,r_1)}$ and $r\in[0,r_0]$,
\begin{align*}
|M_f(x,r)-f(x)|\le 2\|\phi f-\phi_j\|_{C_0(\RR^d)}+|M_{\phi_j}(x,r)-\phi_j(x)|,
\end{align*}
and then, letting $r\rightarrow0+$ and $j\rightarrow\infty$ successively proves (\ref{spherical-mean-2-5}) for $f\in C(\RR^d)$ and the limit in (\ref{spherical-mean-2-5}) holds uniformly for all $x\in\overline{\BB(0,r_1)}$. $\square$
\end{proof}


We are now in a position to prove a mean-value property of functions satisfying $\widetilde{\Delta}_{\kappa}f=0$ in a general case. Such a problem was earlier considered in \cite{GR1,MT}.

\begin{proposition}\label{harmonic-2-a}
Let $\Omega$ be a $G$-invariant domain of $\RR^d$. If $f\in C^2(\Omega)$ satisfies $\widetilde{\Delta}_{\kappa}f=0$ in $\Omega$, then $f\in C^{\infty}(\Omega)$, and for $x\in\Omega$,
\begin{align}\label{spherical-mean-2-7}
M_f(x,r)=f(x)
\end{align}
provided $\overline{\BB(x,r)}\subset\Omega$ for $r>0$.
\end{proposition}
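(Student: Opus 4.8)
The plan is to prove the mean-value identity (\ref{spherical-mean-2-7}) first for smooth $f$ on a neighborhood and then bootstrap to the regularity statement $f\in C^\infty(\Omega)$ via a convolution/representation argument. The key formula to exploit is (\ref{spherical-mean-2-4}) from Proposition \ref{spherical-mean-2-a}(ii), which expresses $M_f(x,r)-f(x)$ as a double integral of $M_{\widetilde\Delta_\kappa f}$. However, that formula is stated for $f\in C^\infty(\RR^d)$, whereas here $f$ is only assumed $C^2$ on a domain $\Omega$, so the first task is to make sense of $M_f(x,r)$ and the eigenvalue computation $\widetilde\Delta^x_\kappa[M_f(x,r)]=M_{\widetilde\Delta_\kappa f}(x,r)$ in this lower-regularity, local setting.

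First I would fix $x\in\Omega$ with $\overline{\BB(x,r)}\subset\Omega$ and choose a slightly larger radius $r'>r$ with $\overline{\BB(x,r')}\subset\Omega$. Since by (\ref{spherical-mean-support-2-1}) the measure $\sigma^\kappa_{x,s}$ is supported in $\cup_{\sigma\in G}\overline{\BB(\sigma(x),s)}$ and $\Omega$ is $G$-invariant, the spherical mean $M_f(x,s)$ depends only on the values of $f$ on $\cup_{\sigma\in G}\overline{\BB(\sigma(x),s)}\subset\Omega$ for $s\le r$; so $M_f(x,s)$ is well defined from the given $C^2$ data. The natural device is to replace $f$ by $\phi f$ where $\phi\in C^\infty_c(\RR^d)$ is $G$-invariant with $\phi=1$ on a $G$-invariant neighborhood of $\cup_\sigma\overline{\BB(\sigma(x),r)}$ and $\operatorname{supp}\phi\subset\Omega$. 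The obstacle is that $\phi f$ is only $C^2$, not $C^\infty$, so (\ref{spherical-mean-2-4}) does not apply verbatim. To overcome this, I would regularize: mollify $\phi f$ by a standard (non-Dunkl) approximate identity to get $g_\varepsilon\in C^\infty_c(\RR^d)$ with $g_\varepsilon\to\phi f$ in $C^2$ on a neighborhood of the relevant support. Applying (\ref{spherical-mean-2-4}) to $g_\varepsilon$, then passing $\varepsilon\to0$ using that both $M_{g_\varepsilon}(x,s)\to M_{\phi f}(x,s)=M_f(x,s)$ (by weak convergence against the compactly supported probability measures $\sigma^\kappa_{x,s}$, uniformly in $s\le r$) and $M_{\widetilde\Delta_\kappa g_\varepsilon}(x,\tilde s)\to M_{\widetilde\Delta_\kappa f}(x,\tilde s)$ (since $\widetilde\Delta_\kappa g_\varepsilon\to\widetilde\Delta_\kappa f$ uniformly on the support, the Dunkl operator being second-order and thus controlled by the $C^2$ norm), yields (\ref{spherical-mean-2-4}) for $f$ itself on $0<r'$ with the small radius. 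With $\widetilde\Delta_\kappa f=0$ in $\Omega$, the inner integrand $M_{\widetilde\Delta_\kappa f}(x,\tilde s)$ vanishes for $\tilde s\le r$, so (\ref{spherical-mean-2-4}) collapses to $M_f(x,r)=f(x)$, which is (\ref{spherical-mean-2-7}).

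For the regularity claim $f\in C^\infty(\Omega)$, the mean-value property suggests representing $f$ locally as a Dunkl–Poisson-type or Dunkl-convolution average of its boundary/spherical data and differentiating under the integral. Concretely, I would integrate the surface mean-value identity (\ref{spherical-mean-2-7}) against a radial weight $g_0(r)\,r^{2|\kappa|+d-1}$ supported in $(0,r_0)$ and use (\ref{spherical-mean-2-6}) to write $c\,f(x)=\int_{\RR^d}f(t)(\tau_{-x}g)(t)\,d\omega_\kappa(t)$ for a radial $g\in C^\infty_c$ with $\int_{\SB^{d-1}}\!\int g_0\,r^{\cdots}dr\,W_\kappa=c\ne0$. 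By Proposition \ref{translation-2-b}(iv) the map $(x,t)\mapsto(\tau_{-x}g)(t)$ is $C^\infty$ in $x$, and since $g$ has compact support (so the $t$-integral is over a fixed compact subset of $\Omega$ by the support bound (\ref{translation-support-2-1})), one may differentiate under the integral sign arbitrarily often in $x$; this gives $f\in C^\infty$ near $x$, hence on all of $\Omega$.

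The main obstacle I anticipate is the regularity upgrade step and the legitimacy of the limiting argument that transports (\ref{spherical-mean-2-4}) from $C^\infty$ to merely $C^2$ functions: one must verify that the mollified approximants $g_\varepsilon$ agree with $f$ (after cutoff) on a full $G$-orbit neighborhood large enough that all the measures $\sigma^\kappa_{x,s}$ and $\tau_{-x}g$ encountered are supported where $g_\varepsilon\equiv\phi f$, and that convergence of the second-order quantity $M_{\widetilde\Delta_\kappa g_\varepsilon}$ is genuinely uniform. Care is also needed because $\widetilde\Delta_\kappa$ is a differential-reflection (nonlocal) operator: applying it to $\phi f$ introduces reflected values $\sigma_\alpha(\phi f)$, so the cutoff $\phi$ must be $G$-invariant to ensure $\widetilde\Delta_\kappa(\phi f)=\widetilde\Delta_\kappa f=0$ on the orbit neighborhood of $x$ and not merely near $x$ itself. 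Once these support bookkeeping points are handled cleanly, the rest is the routine limiting and differentiation-under-the-integral machinery justified by the compact supports and continuity statements already recorded in Section 2.
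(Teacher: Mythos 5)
Your proposal is correct, and its second half (the regularity upgrade) is essentially the paper's own argument: once (\ref{spherical-mean-2-7}) is known, both you and the authors represent $f$ locally as a Dunkl convolution of a cutoff $\psi f$ with a smooth, compactly supported radial kernel --- via (\ref{spherical-mean-2-6}) in your write-up, via the function $f_{\epsilon}$ of (\ref{auxiliary-function-2-0}) and (\ref{spherical-mean-2-9}) in theirs --- and then differentiate under the integral sign using Proposition \ref{translation-2-b}(iv) and the support bound (\ref{translation-support-2-1}). Where you genuinely diverge is the proof of the mean value property itself. The paper mollifies with the \emph{generalized} translation, setting $f_{\epsilon}(\tilde{x})=\int\psi f\cdot(\tau_{-\tilde{x}}\phi_{\epsilon})\,d\omega_{\kappa}$, and invokes the Green formula of \cite[Theorem 4.11]{MT} to move $\widetilde{\Delta}_{\kappa}$ onto $\psi f$; support bookkeeping then shows $\widetilde{\Delta}_{\kappa}f_{\epsilon}\equiv 0$ near the orbit of $x$, so (\ref{spherical-mean-2-4}) applied to the smooth function $f_{\epsilon}$ gives the \emph{exact} identity $M_{f_{\epsilon}}(x,r)=f_{\epsilon}(x)$ for each $\epsilon$, and $\epsilon\rightarrow 0+$ finishes. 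You instead mollify classically, so $\widetilde{\Delta}_{\kappa}g_{\varepsilon}$ is not zero and you must pass to the limit on both sides of (\ref{spherical-mean-2-4}); this trades the citation of the Green formula for the stability claim that $g_{\varepsilon}\rightarrow\phi f$ in $C^{2}$ forces $\widetilde{\Delta}_{\kappa}g_{\varepsilon}\rightarrow\widetilde{\Delta}_{\kappa}(\phi f)$ uniformly on compacta. That claim is true, but it is the one spot in your sketch that must be expanded, since the difference-quotient summands of $\widetilde{\Delta}_{\kappa}$ (the $\delta_{\alpha}$ of (\ref{Laplace-2-2})) are singular on the hyperplanes $\langle\alpha,x\rangle=0$, and ``second order, hence controlled by the $C^{2}$ norm'' is not literally obvious; it follows from the integral form of Taylor's theorem,
\begin{align*}
(\delta_{\alpha}u)(x)=\int_0^1(1-t)\,\bigl\langle\alpha,(D^2u)\bigl(x-t\langle\alpha,x\rangle\alpha\bigr)\,\alpha\bigr\rangle\,dt,
\end{align*}
valid for $u\in C^{2}(\RR^d)$, which bounds $\delta_{\alpha}(u-v)$ by the sup of the Hessian of $u-v$. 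With that one-line lemma recorded, your limit passage is watertight: by (\ref{spherical-mean-support-2-1}) the measures $\sigma^{\kappa}_{x,\tilde{s}}$, $\tilde{s}\le r$, live in $\cup_{\sigma\in G}\overline{\BB(\sigma(x),r)}$, where $\widetilde{\Delta}_{\kappa}(\phi f)=\widetilde{\Delta}_{\kappa}f=0$ provided the cutoff equals $1$ on a $G$-invariant neighborhood of this set (your $G$-invariant cutoff is one sufficient way to arrange this; the paper's Urysohn construction is another). In terms of what each route buys: the paper's Dunkl mollification yields, in one stroke, the identity $f=f_{\epsilon}$ later reused for smoothness, whereas your version is more elementary and self-contained in that it avoids the Green formula of \cite{MT}, at the cost of the extra (routine) stability lemma for $\widetilde{\Delta}_{\kappa}$ under $C^{2}$ limits.
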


\begin{proof}
Let $\phi\in C^{\infty}(\RR^d)$ be a radial function satisfying $\int_{\RR^d}\phi\,d\omega_{\kappa}=1$ and ${\rm supp}\,\phi\subseteq\BB(0,1)$, and write $\phi(t)=\tilde{\phi}(|t|)$ for $t\in\RR^d$.
For given $\overline{\BB(x,r)}\subset\Omega$ with $r>0$, choose $r''>r'>r$ such that $\overline{\BB(x,r'')}\subset\Omega$. By Urysohn's lemma, there exists a function $\psi\in C^{\infty}(\RR^d)$ such that $\psi(t)=1$ for $t\in\cup_{\sigma\in G}\overline{\BB(\sigma(x),r')}$ and $\psi(t)=0$ for $t\notin\cup_{\sigma\in G}\BB(\sigma(x),r'')$. Thus $\psi f$ has a natural $C^2$ extension to $\RR^d$ with compact support.

For $\epsilon>0$, define
\begin{align}\label{auxiliary-function-2-0}
f_{\epsilon}(\tilde{x})=\int_{\RR^d}\psi(t)f(t)(\tau_{-\tilde{x}}\phi_{\epsilon})(t)\,d\omega_{\kappa}(t),
\end{align}
where $\phi_{\epsilon}(t)=\epsilon^{-2|\kappa|-d}\phi(\epsilon^{-1}t)$. Obviously $f_{\epsilon}\in C^{\infty}(\RR^d)$.

Since by (\ref{translation-2-2}), $\widetilde{\Delta}^x_{\kappa}\left[(\tau_{-x}\phi_{\epsilon})(t)\right]=\widetilde{\Delta}^t_{\kappa}\left[(\tau_{-x}\phi_{\epsilon})(t)\right]$, the Green formula in \cite[Theorem 4.11]{MT} gives
\begin{align}\label{auxiliary-function-2-1}
\left(\widetilde{\Delta}_{\kappa}f_{\epsilon}\right)(\tilde{x})&=\int_{\RR^d}\psi(t)f(t)\,\widetilde{\Delta}^t_{\kappa}\left[(\tau_{-\tilde{x}}\phi_{\epsilon})(t)\right]\,d\omega_{\kappa}(t)\nonumber\\
&=\int_{\RR^d}(\tau_{-\tilde{x}}\phi_{\epsilon})(t)\left[\widetilde{\Delta}_{\kappa}(\psi f)\right](t)\,d\omega_{\kappa}(t).
\end{align}
We assume that $\tilde{x}\in\cup_{\sigma\in G}\overline{\BB(\sigma(x),r)}$, says $\tilde{x}\in\overline{\BB(\sigma_1(x),r)}$ for some $\sigma_1\in G$. For $0<\epsilon<r'-r$, ${\rm supp}\,\phi_{\epsilon}\subseteq\BB(0,r'-r)$, and hence, (\ref{translation-support-2-1}) implies
$$
{\rm supp}\,\phi_{\epsilon}\cap{\rm supp}\,\rho^{\kappa}_{-\tilde{x},t}=\emptyset\qquad\hbox{for}\quad t\notin\cup_{\sigma\in G}\BB(\sigma(\tilde{x}),r'-r),
$$
so that $(\tau_{-\tilde{x}}\phi_{\epsilon})(t)=0$ by (\ref{translation-2-3}). But for $t\in\cup_{\sigma\in G}\BB(\sigma(\tilde{x}),r'-r)$, letting $|t-\sigma_2(\tilde{x})|<r'-r$ for some $\sigma_2\in G$, it follows that $|t-\sigma_2(\sigma_1(x))|\le|t-\sigma_2(\tilde{x})|+|\sigma_2(\tilde{x}-\sigma_1(x))|<r'$, i.e., $t\in\cup_{\sigma\in G}\BB(\sigma(x),r')$, and hence
$[\widetilde{\Delta}_{\kappa}(\psi f)](t)=(\widetilde{\Delta}_{\kappa}f)(t)=0$ by the assumption. Thus (\ref{auxiliary-function-2-1}) implies  $(\widetilde{\Delta}_{\kappa}f_{\epsilon})(\tilde{x})=0$ for $\tilde{x}\in\cup_{\sigma\in G}\overline{\BB(\sigma(x),r)}$, and consequently, $M_{\widetilde{\Delta}_{\kappa}f_{\epsilon}}(x,s)=0$ for $s\in(0,r)$ by (\ref{spherical-mean-2-2}) and (\ref{spherical-mean-support-2-1}). Now applying (\ref{spherical-mean-2-4}) to $f_{\epsilon}$ we get
\begin{align}\label{spherical-mean-2-8}
M_{f_{\epsilon}}(x,r)=f_{\epsilon}(x).
 \end{align}
Also, for $\tilde{x}\in\cup_{\sigma\in G}\overline{\BB(\sigma(x),r)}$, by Proposition \ref{spherical-mean-2-b} and (\ref{auxiliary-function-2-0}) we have
\begin{align}\label{spherical-mean-2-9}
f_{\epsilon}(\tilde{x})=d_{\kappa}^{-1}\int_0^{\infty}M_{\psi f}(\tilde{x},\epsilon s)\tilde{\phi}(s)\,s^{2|\kappa|+d-1}ds,
\end{align}
and since ${\rm supp}\,\tilde{\phi}\subseteq[0,1]$, Proposition \ref{spherical-mean-2-b}(i) implies that $f_{\epsilon}(\tilde{x})$ tends to $\psi(\tilde{x})f(\tilde{x})=f(\tilde{x})$ as $\epsilon\rightarrow0+$ uniformly for $\tilde{x}\in\cup_{\sigma\in G}\overline{\BB(\sigma(x),r)}$. Then by (\ref{spherical-mean-2-2}) and (\ref{spherical-mean-support-2-1}), (\ref{spherical-mean-2-7}) is obtained by taking $\epsilon\rightarrow0+$ in (\ref{spherical-mean-2-8}).

Finally we fix a number $\epsilon\in(0,r'-r)$. It has been shown that $f_{\epsilon}\in C^{\infty}(\BB(x,r))$, and for $\tilde{x}\in\BB(x,r)$ and $s\in[0,1]$, $\cup_{\sigma\in G}\BB(\sigma(\tilde{x}),\epsilon s)\subset\cup_{\sigma\in G}\BB(\sigma(x),r')$, so that $M_{\psi f}(\tilde{x},\epsilon s)=M_{f}(\tilde{x},\epsilon s)$. Thus by (\ref{spherical-mean-2-7}) and (\ref{spherical-mean-2-9}), we have $f(\tilde{x})=f_{\epsilon}(\tilde{x})$ for $\tilde{x}\in\BB(x,r)$, and hence $f\in C^{\infty}(\BB(x,r))$.
The proof of the proposition is completed. $\square$
\end{proof}

Notice that, the final part of the proof of the above proposition implies that a function $f\in C(\Omega)$ satisfying the mean value property (\ref{spherical-mean-2-7}) must be infinitely differentiable. Further, if a function $f\in C^2(\Omega)$ satisfying (\ref{spherical-mean-2-7}), Proposition \ref{spherical-mean-2-a}(ii) gives that $M_{\widetilde{\Delta}_{\kappa}f}(x,r)\equiv0$ for $r>0$ with $\overline{\BB(x,r)}\subset\Omega$, and so $\widetilde{\Delta}_{\kappa}f(x)=0$ by Proposition \ref{spherical-mean-2-b}(ii).

\begin{corollary}\label{harmonic-2-a-1}
Assume that $\Omega$ is a $G$-invariant domain of $\RR^d$ and $f\in C^2(\Omega)$. Then $f$ satisfies $\widetilde{\Delta}_{\kappa}f=0$ in $\Omega$ if and only if for all $x\in\Omega$, $M_f(x,r)=f(x)$
provided $\overline{\BB(x,r)}\subset\Omega$ for $r>0$. Moreover, in both cases, $f\in C^{\infty}(\Omega)$.
\end{corollary}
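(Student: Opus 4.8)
The plan is to read off both implications from Proposition~\ref{harmonic-2-a} together with the two observations recorded immediately after its proof, treating the two directions of the equivalence separately and collecting the smoothness assertion along the way.

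For the implication $\widetilde{\Delta}_{\kappa}f=0\Rightarrow M_f(x,r)=f(x)$ there is nothing new to do: this is precisely the content of Proposition~\ref{harmonic-2-a}, which moreover supplies $f\in C^{\infty}(\Omega)$ in this case. Thus the forward direction, together with the smoothness claim under the harmonicity hypothesis, is immediate.

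For the reverse implication I would argue as follows. Assume $f\in C^2(\Omega)$ satisfies $M_f(x,r)=f(x)$ whenever $\overline{\BB(x,r)}\subset\Omega$. The smoothness $f\in C^{\infty}(\Omega)$ comes for free from the final paragraph of the proof of Proposition~\ref{harmonic-2-a}, which shows that any continuous $f$ obeying this mean value identity coincides locally, on balls, with a mollified function $f_{\epsilon}\in C^{\infty}$. To produce $\widetilde{\Delta}_{\kappa}f=0$, I would substitute the hypothesis into the representation (\ref{spherical-mean-2-4}) of Proposition~\ref{spherical-mean-2-a}(ii): since its left-hand side $M_f(x,r)-f(x)$ vanishes for all small $r$, the iterated integral on the right vanishes identically in $r$. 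Differentiating twice in $r$ and using the explicit radial weights $s^{-(2|\kappa|+d-1)}$ and $\tilde{s}^{\,2|\kappa|+d-1}$ to strip the two integrations then forces $M_{\widetilde{\Delta}_{\kappa}f}(x,r)\equiv0$ for small $r>0$. Finally, letting $r\to0+$ and invoking the centering limit of Proposition~\ref{spherical-mean-2-b}(i) --- legitimate since $\widetilde{\Delta}_{\kappa}f$ is continuous, $f$ being $C^2$ --- gives $\widetilde{\Delta}_{\kappa}f(x)=0$ at every $x\in\Omega$.

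Because every ingredient is already in hand, I do not expect a genuine obstacle. The only steps deserving a moment of care are the passage from the vanishing of the double integral in (\ref{spherical-mean-2-4}) to the pointwise vanishing of $M_{\widetilde{\Delta}_{\kappa}f}(x,\cdot)$, a routine two-fold differentiation that must be performed over the full range of radii with $\overline{\BB(x,r)}\subset\Omega$, and the verification that Proposition~\ref{spherical-mean-2-b}(i) is applicable to $\widetilde{\Delta}_{\kappa}f$ --- both of which are trivial consequences of $f\in C^2(\Omega)$.
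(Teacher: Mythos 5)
Your proposal is correct and follows essentially the same route as the paper: the forward implication and the smoothness claim come from Proposition \ref{harmonic-2-a} (whose final paragraph, as the paper notes right after that proof, uses only the mean value property and hence gives $C^{\infty}$ smoothness in the converse direction as well), and the converse implication is obtained exactly as in that same remark by substituting the hypothesis into (\ref{spherical-mean-2-4}), stripping the two radial integrations, and letting $r\to0+$. The only cosmetic difference is that for this last limiting step the paper cites Proposition \ref{spherical-mean-2-b}(ii), whereas you invoke part (i), which is in fact the statement actually needed there.
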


Such a characterization for $\Omega=\RR^d$ and $f\in C^{\infty}(\RR^d)$ was proved in \cite{MT}. For a general $G$-invariant domain $\Omega$ and $f\in C^2(\Omega)$, a different characterization, in terms of a generalized volume mean value property, was given in \cite{GR1}; moreover, by \cite[Remark 3.2]{GR1}, the method there also yields a characterization by means of the generalized spherical mean value property $M_f(x,\rho)=f(x)$ for $0<\rho\le r/3$ provided $\overline{\BB(x,r)}\subset\Omega$ with $r>0$.

In what follows we consider the reflection group $\tilde{G}=G\otimes\ZZ_2$ on $\RR^{d+1}$ and the multiplicity function $\tilde{\kappa}=(\kappa,0)$. In this case, since the multiplicity value associated to $\ZZ_2$ is zero, the $G\otimes\ZZ_2$-invariance of the domain $\Omega$ in \cite[Theorem 4.11]{MT} and Proposition \ref{harmonic-2-a} can be relaxed to the $G$-invariance. We reformulate the Green formula in \cite[Theorem 4.11]{MT} as follows.

\begin{proposition}\label{Green-formula-2-a}
Let $\Omega$ be a bounded, regular, and $G$-invariant domain of $\RR^{d+1}$. Then for $u,v\in C^2(\overline{\Omega})$,
\begin{align}\label{Green-formula-2-1}
\iint_{\Omega}(v\Delta_{\kappa}u-u\Delta_{\kappa}v)\,d\omega_{\kappa}(x)dy
=\int_{\partial\Omega}\left(v\partial_{\mathbf{n}}u
-u\partial_{\mathbf{n}}v\right)W_{\kappa}(x)d\sigma(x,y),
\end{align}
where $\partial_{\mathbf{n}}$ denotes the directional derivative of the outward normal, and $d\sigma(x,y)$ the area element on $\partial\Omega$.
\end{proposition}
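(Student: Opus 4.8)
The plan is to derive the formula directly, making explicit that only the $G$-invariance (and not the full $\tilde G=G\otimes\ZZ_2$-invariance demanded in \cite[Theorem 4.11]{MT}) is actually used. First I would regard $\Delta_{\kappa}=\partial_y^2+\widetilde{\Delta}_{\kappa}$ as the Dunkl Laplacian on $\RR^{d+1}$ attached to $(\tilde G,\tilde\kappa)$ with $\tilde\kappa=(\kappa,0)$, so that the associated gradient is $\nabla_{\kappa}=(D_1,\dots,D_d,\partial_y)$ and the weight $W_{\kappa}(x)$ does not depend on $y$. The decisive observation is that the reflection $y\mapsto -y$ carries multiplicity zero, whence the Dunkl operator in the $(d+1)$st slot is merely $\partial_y$ and has no reflection term; consequently the only reflections entering the integrations by parts below are the $\sigma_{\alpha}$, $\alpha\in R_+$, acting on $x$, and the symmetrizations they demand close up precisely when $\Omega$ is $G$-invariant.

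The core step is a first-order integration-by-parts identity on $\Omega$: for $w,z\in C^1(\overline{\Omega})$ and $1\le j\le d$,
\begin{align*}
\iint_{\Omega}(D_jw)z\,d\omega_{\kappa}(x)\,dy=-\iint_{\Omega}w(D_jz)\,d\omega_{\kappa}(x)\,dy+\int_{\partial\Omega}wz\,n_jW_{\kappa}(x)\,d\sigma,
\end{align*}
to be combined with the classical identity $\iint_{\Omega}(\partial_yw)z\,d\omega_{\kappa}(x)\,dy=-\iint_{\Omega}w(\partial_yz)\,d\omega_{\kappa}(x)\,dy+\int_{\partial\Omega}wz\,n_yW_{\kappa}\,d\sigma$, the latter being ordinary integration by parts in $y$ since $W_{\kappa}$ is $y$-independent. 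To prove the first identity I would split $D_j=\partial_j+\sum_{\alpha\in R_+}\kappa(\alpha)\alpha_j\frac{1-\sigma_{\alpha}}{\langle\alpha,x\rangle}$, integrate the $\partial_j$ part classically (yielding the boundary term $\int_{\partial\Omega}wz\,n_jW_{\kappa}$ and an interior term in $\partial_jW_{\kappa}$), and then symmetrize each reflection term by the change of variables $x\mapsto\sigma_{\alpha}(x)$, which preserves $\Omega$, $W_{\kappa}$ and $d\sigma$ by $G$-invariance; this symmetrization cancels the $\partial_jW_{\kappa}$ contribution and upgrades $\partial_j$ to $D_j$ on the right, exactly as in the whole-space skew-adjointness of $D_j$. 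Applying this with $w=D_ju$, $z=v$, summing over $j$, and adjoining the $y$-identity with $w=\partial_yu$, $z=v$, gives Green's first identity
\begin{align*}
\iint_{\Omega}v\,\Delta_{\kappa}u\,d\omega_{\kappa}(x)\,dy=-\iint_{\Omega}\langle\nabla_{\kappa}u,\nabla_{\kappa}v\rangle\,d\omega_{\kappa}(x)\,dy+\int_{\partial\Omega}v\,\langle\nabla_{\kappa}u,\mathbf{n}\rangle\,W_{\kappa}\,d\sigma.
\end{align*}

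Subtracting the same expression with $u$ and $v$ interchanged cancels the interior Dirichlet term and leaves $\iint_{\Omega}(v\Delta_{\kappa}u-u\Delta_{\kappa}v)\,d\omega_{\kappa}(x)\,dy=\int_{\partial\Omega}\bigl(v\langle\nabla_{\kappa}u,\mathbf{n}\rangle-u\langle\nabla_{\kappa}v,\mathbf{n}\rangle\bigr)W_{\kappa}\,d\sigma$. It then remains to replace the Dunkl conormal $\langle\nabla_{\kappa}u,\mathbf{n}\rangle$ by the ordinary $\partial_{\mathbf{n}}u$. Writing $\langle\nabla_{\kappa}u,\mathbf{n}\rangle=\partial_{\mathbf{n}}u+\sum_{\alpha\in R_+}\kappa(\alpha)\frac{\langle\alpha,\mathbf{n}'\rangle}{\langle\alpha,x\rangle}(u-\sigma_{\alpha}u)$, with $\mathbf{n}'$ the $\RR^d$-part of the outward normal $\mathbf{n}$, the discrepancy in the boundary integrand is $\sum_{\alpha}\kappa(\alpha)\frac{\langle\alpha,\mathbf{n}'\rangle}{\langle\alpha,x\rangle}(u\,\sigma_{\alpha}v-v\,\sigma_{\alpha}u)$. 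Since $x\mapsto\sigma_{\alpha}(x)$ leaves $\frac{\langle\alpha,\mathbf{n}'\rangle}{\langle\alpha,x\rangle}$ unchanged (numerator and denominator both change sign) while sending $u\,\sigma_{\alpha}v-v\,\sigma_{\alpha}u$ to its negative, this discrepancy is anti-invariant under a reflection that preserves $\partial\Omega$ and the invariant measure $W_{\kappa}\,d\sigma$, and hence integrates to zero. This recovers the stated formula with $\partial_{\mathbf{n}}$.

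The main obstacle will be the analytic justification near the reflection hyperplanes $\{\langle\alpha,x\rangle=0\}$, which meet $\overline{\Omega}$ and on which $W_{\kappa}$ vanishes while $1/\langle\alpha,x\rangle$ blows up; the same point governs the regularity of the intermediate quantities $D_ju$. Here I would note that the difference quotients $\frac{u-\sigma_{\alpha}u}{\langle\alpha,x\rangle}$ stay bounded for $u\in C^2(\overline\Omega)$ and that $W_{\kappa}$ supplies the integrable vanishing, so all integrals converge absolutely; to make the changes of variables and the classical integrations by parts rigorous I would first excise $\varepsilon$-neighbourhoods of the hyperplanes, carry out the manipulations on the resulting smooth region, and let $\varepsilon\to0+$, verifying that the contributions of the excised collars vanish by the regularity of $\partial\Omega$ and the decay of $W_{\kappa}$. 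This limiting argument is precisely where \cite[Theorem 4.11]{MT} is invoked in spirit; the present reformulation differs only in that the collar and symmetrization steps involve the reflections in $G$ alone, so the hypothesis may be weakened from $\tilde G$-invariance to $G$-invariance.
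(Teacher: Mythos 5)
Your proposal is correct in substance, but it takes a genuinely different route from the paper: the paper gives no direct proof at all, it simply cites the Green formula of \cite{MT} (Theorem 4.11 there) and observes that, because the reflection $y\mapsto-y$ in $\tilde G=G\otimes\ZZ_2$ carries multiplicity zero, the $\tilde G$-invariance hypothesis on $\Omega$ in \cite{MT} can be relaxed to $G$-invariance. You instead build the identity from scratch: a first-order skew-adjointness identity for each $D_j$ on the $G$-invariant domain (classical integration by parts for $\partial_j$, symmetrization $x\mapsto\sigma_{\alpha}(x)$ for the reflection parts, with the $\partial_jW_{\kappa}$ term cancelling exactly as in the whole-space computation), the trivial $y$-integration by parts, and an excision of $\varepsilon$-collars around the hyperplanes $\{\langle\alpha,x\rangle=0\}$ to justify everything, the collar contributions vanishing like $\varepsilon^{2\kappa(\alpha)}$. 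Your computations check out, including the boundary symmetrization at the end: the discrepancy $\sum_{\alpha}\kappa(\alpha)\frac{\langle\alpha,\mathbf{n}'\rangle}{\langle\alpha,x\rangle}\left(u\,\sigma_{\alpha}v-v\,\sigma_{\alpha}u\right)$ is bounded (no splitting needed), invariant in its coefficient and anti-invariant in its bracket under $\sigma_{\alpha}$, hence integrates to zero over the $G$-invariant boundary. What each approach buys: the paper's citation is short but leaves the invariance-relaxation argument implicit (one must inspect the proof in \cite{MT} to see that only reflections with nonzero multiplicity require symmetrization); your proof makes that point explicit and self-contained. Moreover, your intermediate Green identity with the Dunkl conormal $\langle\nabla_{\kappa}u,\mathbf{n}\rangle$ is precisely Proposition \ref{Green-formula-2-b} of the paper, and your final symmetrization step is the very computation the paper uses to pass from Proposition \ref{Green-formula-2-a} to Proposition \ref{Green-formula-2-b} --- you simply traverse it in the opposite direction, so adopting your argument would yield both propositions at once.
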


We have a variant of (\ref{Green-formula-2-1}) with the directional Dunkl operator $D_{\mathbf{n}}$ instead of the usual directional derivative $\partial_{\mathbf{n}}$. For $\alpha\in R$, write $\tilde{\alpha}=(\alpha,0)$ as a vector in $\RR^{d+1}$; and for a unit vector $\mathbf{n}$ in $\RR^{d+1}$, the directional Dunkl operator $D_{\mathbf{n}}$ is given by $D_{\mathbf{n}}=\langle\nabla_{\kappa},\mathbf{n}\rangle$, where $\nabla_{\kappa}=(D_1,\dots,D_d,\partial_y)$ is the $\kappa$-gradient. From (\ref{Dunkl-operator-1}),
\begin{eqnarray*}
D_{\mathbf{n}}=\partial_{\mathbf{n}}+\sum_{\alpha\in R_+}\kappa(\alpha)\langle\tilde{\alpha},{\mathbf{n}}\rangle\frac{1-\sigma_{\alpha}}{\langle\alpha,x\rangle},
 \end{eqnarray*}
and for $\alpha\in R_+$, since $\partial\Omega$ is $G$-invariant, the change of variables $x\rightarrow\sigma_{\alpha}(x)$ shows that
\begin{align*}
\int_{\partial\Omega}\langle\tilde{\alpha},{\mathbf{n}}\rangle\left(v\frac{u-\sigma_{\alpha}u}{\langle\alpha,x\rangle}
-u\frac{v-\sigma_{\alpha}v}{\langle\alpha,x\rangle}\right)W_{\kappa}(x)d\sigma(x,y)=0.
\end{align*}
Summing the above expression with the multiple $\kappa(\alpha)$ over $\alpha\in R_+$ and inserting the result into the right hand side of (\ref{Green-formula-2-1}), we obtain the following form of the Green formula.
\begin{proposition}\label{Green-formula-2-b}
Let $\Omega$ be a bounded, regular, and $G$-invariant domain of $\RR^{d+1}$. Then for $u,v\in C^2(\overline{\Omega})$,
\begin{align*}
\iint_{\Omega}(v\Delta_{\kappa}u-u\Delta_{\kappa}v)\,d\omega_{\kappa}(x)dy
=\int_{\partial\Omega}\left(vD_{\mathbf{n}}u
-uD_{\mathbf{n}}v\right)W_{\kappa}(x)d\sigma(x,y),
\end{align*}
where ${\mathbf{n}}$ denotes the outward normal of $\partial\Omega$.
\end{proposition}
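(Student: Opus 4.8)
The plan is to derive this identity directly from the classical Green formula of Proposition \ref{Green-formula-2-a}, by writing $D_{\mathbf{n}}$ as the sum of the usual normal derivative $\partial_{\mathbf{n}}$ and a reflection part, and then showing that the reflection part contributes nothing to the boundary integral.

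First I would expand $D_{\mathbf{n}}=\langle\nabla_{\kappa},\mathbf{n}\rangle$ by means of (\ref{Dunkl-operator-1}). Since the last component of $\nabla_{\kappa}$ is the ordinary derivative $\partial_y$ while the first $d$ components carry the reflection terms, pairing with $\mathbf{n}$ and using $\langle\tilde\alpha,\mathbf{n}\rangle=\sum_{j=1}^d\alpha_jn_j$ gives
\begin{align*}
D_{\mathbf{n}}=\partial_{\mathbf{n}}+\sum_{\alpha\in R_+}\kappa(\alpha)\langle\tilde\alpha,\mathbf{n}\rangle\frac{1-\sigma_\alpha}{\langle\alpha,x\rangle}.
\end{align*}
Substituting this into $vD_{\mathbf{n}}u-uD_{\mathbf{n}}v$ splits that expression into the classical part $v\partial_{\mathbf{n}}u-u\partial_{\mathbf{n}}v$ plus, for each $\alpha\in R_+$, a correction term $\kappa(\alpha)\langle\tilde\alpha,\mathbf{n}\rangle\bigl(v\frac{u-\sigma_\alpha u}{\langle\alpha,x\rangle}-u\frac{v-\sigma_\alpha v}{\langle\alpha,x\rangle}\bigr)$.

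The crucial step is to show that each correction term integrates to zero over $\partial\Omega$. I would first rewrite the parenthesized factor in the compact form $(u\,\sigma_\alpha v-v\,\sigma_\alpha u)/\langle\alpha,x\rangle$, and then apply the change of variables $x\mapsto\sigma_\alpha(x)$ with $y$ held fixed. By the $G$-invariance of $\partial\Omega$ this is a surface-measure-preserving bijection of $\partial\Omega$ onto itself, and under it one tracks: $W_\kappa$ is invariant; $\langle\alpha,x\rangle\mapsto-\langle\alpha,x\rangle$ since $\langle\alpha,\alpha\rangle=2$; the outward normal transforms by the orthogonal involution $(\xi,\eta)\mapsto(\sigma_\alpha(\xi),\eta)$, whence $\langle\tilde\alpha,\mathbf{n}\rangle\mapsto-\langle\tilde\alpha,\mathbf{n}\rangle$; and $u\leftrightarrow\sigma_\alpha u$, $v\leftrightarrow\sigma_\alpha v$. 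A short computation then shows the factor $(u\,\sigma_\alpha v-v\,\sigma_\alpha u)/\langle\alpha,x\rangle$ is left unchanged while $\langle\tilde\alpha,\mathbf{n}\rangle$ changes sign, so the correction integral equals its own negative and hence vanishes. Summing over $\alpha\in R_+$ with weights $\kappa(\alpha)$ and adding the formula of Proposition \ref{Green-formula-2-a} then yields the asserted identity.

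I expect the only genuine obstacle to lie in the sign bookkeeping of this change of variables, in particular in justifying that the outward normal reflects correctly, i.e. that $\langle\tilde\alpha,\mathbf{n}(\sigma_\alpha(x),y)\rangle=-\langle\tilde\alpha,\mathbf{n}(x,y)\rangle$; this rests on the fact that the involution $(\xi,\eta)\mapsto(\sigma_\alpha(\xi),\eta)$ is orthogonal, sends $\tilde\alpha$ to $-\tilde\alpha$, and maps $\partial\Omega$ to itself. Once these transformation rules are in place the cancellation is automatic, and no analytic input beyond the already-available Green formula is required.
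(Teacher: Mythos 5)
Your proposal is correct and follows essentially the same route as the paper: expand $D_{\mathbf{n}}=\partial_{\mathbf{n}}+\sum_{\alpha\in R_+}\kappa(\alpha)\langle\tilde\alpha,\mathbf{n}\rangle\frac{1-\sigma_\alpha}{\langle\alpha,x\rangle}$, show each correction term integrates to zero over $\partial\Omega$ via the change of variables $x\mapsto\sigma_\alpha(x)$ (using $G$-invariance of $\partial\Omega$), and add the result to Proposition \ref{Green-formula-2-a}. The paper states the cancellation in one line, whereas you spell out the sign bookkeeping ($W_\kappa$ invariant, $\langle\alpha,x\rangle$ and $\langle\tilde\alpha,\mathbf{n}\rangle$ changing sign, $u\leftrightarrow\sigma_\alpha u$), all of which is accurate.
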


The next proposition is a consequence of Proposition \ref{harmonic-2-a} with $\Omega=\RR^{d+1}_+=\RR^d\times(0,\infty)$.

\begin{proposition}\label{harmonic-2-b}
If the $C^2$ function $u$ on the upper-space $\RR^{d+1}_+$ is $\kappa$-harmonic, then $u\in C^{\infty}(\RR^{d+1}_+)$.
\end{proposition}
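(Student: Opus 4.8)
The plan is to obtain this as a direct application of Proposition \ref{harmonic-2-a}, but carried out one dimension higher with an enlarged reflection group. First I would set up the group $\tilde{G}=G\otimes\ZZ_2$ acting on $\RR^{d+1}=\RR^d\times\RR$, where $G$ acts on the first $d$ coordinates and the $\ZZ_2$-factor acts on the last coordinate by $y\mapsto-y$, with root $\pm\sqrt{2}\,e_{d+1}$ and multiplicity $\tilde{\kappa}=(\kappa,0)$, i.e. the original values $\kappa$ on the roots of $R$ and the value $0$ on $\pm\sqrt{2}\,e_{d+1}$. The decisive point is that, since the multiplicity attached to $\ZZ_2$ vanishes, the $(d+1)$-st Dunkl operator for $(\tilde{G},\tilde{\kappa})$ reduces to $\partial_y$, while the first $d$ Dunkl operators coincide with $D_1,\dots,D_d$. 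Hence the associated operator $\widetilde{\Delta}_{\tilde{\kappa}}=\sum_{j=1}^{d+1}\tilde{D}_j^2$ equals $\sum_{j=1}^{d}D_j^2+\partial_y^2=\Delta_{\kappa}$, so that the hypothesis $\Delta_{\kappa}u=0$ says precisely that $u$ is $\widetilde{\Delta}_{\tilde{\kappa}}$-harmonic on $\RR^{d+1}_+$.

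Next I would verify that the domain $\Omega=\RR^{d+1}_+=\RR^d\times(0,\infty)$ fits the hypotheses of Proposition \ref{harmonic-2-a} in this $(d+1)$-dimensional setting. Strictly, $\RR^{d+1}_+$ is not invariant under the full group $\tilde{G}$, because the reflection $y\mapsto-y$ interchanges the upper and lower half-spaces; however it is invariant under $G$, which acts only on the $\RR^d$-factor and leaves $y$ fixed. This is exactly the situation covered by the relaxation recorded just before Proposition \ref{Green-formula-2-a}: because the $\ZZ_2$-multiplicity is zero, the intertwining operator $V_{\tilde{\kappa}}$ acts trivially in the $y$-variable, so that the supports of $\mu^{\tilde{\kappa}}$, $\rho^{\tilde{\kappa}}$ and $\sigma^{\tilde{\kappa}}$ are confined to $G$-orbits in the $x$-variable and move the $y$-coordinate by a point mass only; consequently the $\tilde{G}$-invariance requirement on $\Omega$ in Proposition \ref{harmonic-2-a} may be weakened to mere $G$-invariance. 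In particular, for $(x,y)\in\RR^{d+1}_+$ the $G$-orbit $\{(\sigma(x),y):\sigma\in G\}$ stays in the upper half-space, so the ball-neighbourhood constructions in the proof of Proposition \ref{harmonic-2-a} remain inside $\Omega$.

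With the operator identified and the hypotheses matched, I would then simply invoke Proposition \ref{harmonic-2-a} for $(\tilde{G},\tilde{\kappa})$ on the $G$-invariant domain $\RR^{d+1}_+$: from $\widetilde{\Delta}_{\tilde{\kappa}}u=\Delta_{\kappa}u=0$ it follows that $u\in C^{\infty}(\RR^{d+1}_+)$, which is the assertion. The only genuine point requiring care is the justification that $G$-invariance of $\Omega$ suffices in place of $\tilde{G}$-invariance; but this rests entirely on the vanishing of the $\ZZ_2$-multiplicity, which forces the $y$-direction Dunkl operator to be the ordinary derivative and leaves all the supporting measure-theoretic constructions acting trivially on $y$. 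Beyond this, the argument is a transcription of the proof of Proposition \ref{harmonic-2-a} to dimension $d+1$ with the group $\tilde{G}$, and no further work is needed.
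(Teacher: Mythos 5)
Your proof is correct and is essentially the paper's own argument: the paper likewise passes to $\tilde{G}=G\otimes\ZZ_2$ on $\RR^{d+1}$ with multiplicity $\tilde{\kappa}=(\kappa,0)$, notes that the vanishing $\ZZ_2$-multiplicity allows the $\tilde{G}$-invariance hypothesis of Proposition \ref{harmonic-2-a} to be relaxed to $G$-invariance, and then applies that proposition with $\Omega=\RR^{d+1}_+$. Your added justification of the relaxation (the translation and spherical-mean constructions act trivially in the $y$-variable, so all supports stay within $G$-orbits in $x$ and hence inside the upper half-space) is a correct elaboration of what the paper states without proof.
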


\section{The proof of Theorem \ref{thm-1}}

That part (i) implies part (ii) in Theorem \ref{thm-1} is obvious; what is left is to show that part (ii) implies part (i) in Theorem \ref{thm-1}, which is restated as follows.


\begin{theorem}\label{bounded-convergence}
Suppose that $E$ is a $G$-invariant measurable subset of $\partial\RR^{d+1}_+=\RR^d$ and $u$ is $\kappa$-harmonic in $\RR^{d+1}_+$. If $u$ is non-tangentially bounded at $(x,0)$ for every $x\in E$, then there exists a subset $E_0$ of full measure of $E$ such that for every $a>0$ and each $x\in E_0$, $u(t,y)$ has a finite limit as $(t,y)\in\Gamma_a(x)$ approaches to $(x,0)$.
\end{theorem}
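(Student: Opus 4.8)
The plan is to adapt Calder\'{o}n's real-variable method, transferring the classical argument to the Dunkl setting by exploiting the reflection-symmetry of $E$. The core idea is a localization-plus-comparison scheme: since $u$ is non-tangentially bounded on $E$, one can decompose $E$ into countably many pieces on each of which the bound and the cone parameters $a,h$ are uniform, so it suffices to fix $a,h,M$ and work with the set
\begin{align*}
E^*=\{x\in E:\ |u(t,y)|\le M\ \text{for}\ (t,y)\in\Gamma_a^h(x)\}.
\end{align*}
By inner regularity we may further reduce to a compact subset $F\subseteq E^*$ of positive measure, and since $E$ (hence, after intersecting with the common orbit structure, $F$ can be arranged to be) is $G$-invariant, the sawtooth region
\begin{align*}
\CR=\bigcup_{x\in F}\Gamma_a^h(x)
\end{align*}
is itself $G$-invariant in the $x$-variable. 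This $G$-invariance is precisely what allows the Green-formula machinery of Proposition~\ref{Green-formula-2-a} (and its Dunkl-gradient variant Proposition~\ref{Green-formula-2-b}) to be applied on $\CR$ with the measure $d\omega_\kappa$.

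First I would establish that $u$ is bounded on all of $\CR$ by $M$ (immediate from the definition), and then construct a \emph{comparison} $\kappa$-harmonic function. The standard device is to solve a Dirichlet-type problem, or more simply to form the $\kappa$-Poisson integral $v=P[u(\cdot,\eta)\chi_{F_\eta}]$ of the boundary-layer values and compare $u$ with $v$ on the sawtooth, using that $v$ is $\kappa$-harmonic (Proposition~\ref{Poisson-a}) and has the correct non-tangential limits a.e. by Proposition~\ref{Poisson-d}. The key analytic input is the global Fatou theorem, Proposition~\ref{Fatou-a}: a bounded $\kappa$-harmonic function on $\RR^{d+1}_+$ has non-tangential limits a.e. on the boundary. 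The strategy is therefore to \emph{globalize} — to replace $u$, which is only locally bounded over $F$, by a genuinely bounded $\kappa$-harmonic function $w$ on all of $\RR^{d+1}_+$ that agrees with $u$ in a full-measure sense over $F$, so that Proposition~\ref{Fatou-a} applies to $w$ and the conclusion transfers back to $u$ at almost every point of $F$.

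Concretely, I would cap the sawtooth region $\CR$ off to form a bounded $G$-invariant domain $\Omega$ (intersecting with a large ball and truncating at height $h$), verify it is regular enough for Proposition~\ref{Green-formula-2-a}, and build $w$ by extending the boundary data of $u$ on the ``top and sides'' of $\Omega$ harmonically via the $\kappa$-Poisson integral, while keeping the bottom data controlled by $M$. One then shows $w$ is bounded and $\kappa$-harmonic on $\RR^{d+1}_+$, applies Proposition~\ref{Fatou-a} to conclude $w$ has a finite non-tangential limit at a.e. point of $F$, and finally argues that $u-w$ tends to $0$ non-tangentially at a.e.\ $x\in F$. This last comparison is carried out by estimating $u-w$ inside the cones using the boundedness on $\CR$ together with the decay of the $\kappa$-Poisson kernel away from the relevant boundary portion; a density/Lebesgue-point argument with respect to $d\omega_\kappa$ (using that points of $F$ are points of $\omega_\kappa$-density one, legitimate since $W_\kappa$ is an $A_\infty$ weight) handles the exceptional null set. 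Taking a countable union over the decomposition of $E$ and over a dense sequence of apertures $a$, together with the fact that existence of the non-tangential limit for all rational $a$ forces it for all $a$, yields the full-measure set $E_0$.

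The main obstacle I anticipate is the comparison step $u-w\to 0$ and, underneath it, the legitimate use of the generalized translation $\tau_x$ and the Green formula on the sawtooth. Unlike the classical case, the $\kappa$-Poisson kernel is defined through $(\tau_x P_y)(-t)$, whose pointwise behaviour is governed by the translation measures $\rho^\kappa_{x,t}$ and their supports \eqref{translation-support-2-1}; controlling the kernel over the non-symmetric portions of $\partial\Omega$ requires care because $\tau_x$ does not act by simple shifting and its support spreads over the whole $G$-orbit of $x$. The $G$-invariance of $E$ is what rescues the argument — it guarantees that the reflected copies of the cones, which $\tau_x$ inevitably samples, are themselves controlled — but verifying that the boundary integrals in the Green formula genuinely vanish or decay, and that the density argument respects both the orbit structure and the weight $\omega_\kappa$, is the technical heart of the proof.
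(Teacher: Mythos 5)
Your high-level skeleton (localize via Stein's lemma to a compact $G$-invariant set with uniform bounds, then ``globalize'' $u$ by comparing it with a bounded $\kappa$-harmonic function on all of $\RR^{d+1}_+$, then invoke an a.e.\ convergence theorem for the global object) is indeed the Calder\'on--Stein strategy that the paper follows. But there are two genuine gaps. First, your comparison function is built from a \emph{fixed} boundary layer, $v=P[u(\cdot,\eta)\chi_{F_\eta}]$, or alternatively by ``solving a Dirichlet problem'' on a capped sawtooth. With $\eta$ fixed, $u-v$ has no reason to tend to $0$ at the boundary (the data of $v$ are the values of $u$ at height $\eta$, not its boundary values), and a Dirichlet solver for $\Delta_\kappa$ on a rough capped sawtooth is not available in this setting (nonlocality of $\Delta_\kappa$; no Dunkl harmonic measure for such domains is in the toolkit). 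The paper instead takes cross-sections $\varphi_k(\cdot)=u(\cdot,y_k)$ with $y_k\to0$, extracts a weak$^*$ limit $\varphi$, and compares $u$ with $P\varphi$; the limiting procedure in the height is essential and cannot be replaced by a single layer.

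Second, and more fundamentally, you give no mechanism for the comparison step $u-w\to0$; the tools you point to (Green's formula of Proposition~\ref{Green-formula-2-a}, Poisson-kernel decay, Lebesgue points) do not control a $\kappa$-harmonic function inside a domain by its boundary behaviour. Green's formula is the engine of the area-integral Theorems~\ref{thm-2}--\ref{thm-4}, not of this one. The actual engine here is the maximum principle for $\Delta_\kappa$ on $G$-invariant bounded domains (Lemma~\ref{max-principle}), applied to dominate the error $\psi_k=u(\cdot,\cdot+y_k)-P\varphi_k$ by the $G$-invariant barrier $H=c\left(P\chi_{E^c}+y\right)$ of Lemma~\ref{fatou-b}. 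And precisely because $\Delta_\kappa$ is nonlocal, one cannot run this on $|\psi_k|$ itself: the exceptional set $\{|\psi_k|>H+\epsilon_0\}$ is not $G$-invariant, so the maximum principle does not apply to it. The paper's key device is to symmetrize over the orbit, replacing $|\psi_k|$ by
\begin{align*}
\Psi_k(x,y)=\Bigl(\sum_{\sigma\in G}|\psi_k(\sigma(x),y)|^2\Bigr)^{1/2},
\end{align*}
which is $G$-invariant, is $\kappa$-subharmonic where positive by Proposition~\ref{subharmonicity-3-b} (valid for arbitrary vectors of $\kappa$-harmonic functions---this is why that proposition is proved), and hence has $G$-invariant bad sets to which Lemma~\ref{max-principle} applies; the boundary analysis then splits into the lateral part (where $H\ge2$ beats $\Psi_k\le2\sqrt{|G|}$) and the bottom part (where continuity of $\varphi_k$ at points of $E$ kills $\Psi_k$). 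This orbit-symmetrization is the technical heart of the adaptation to the Dunkl setting; your proposal correctly senses that $G$-invariance of $E$ must ``rescue'' the argument, but never identifies this device, and without it (or an equivalent) the comparison step cannot close. Your appeal to the global Fatou theorem (Proposition~\ref{Fatou-a}) is harmless but unnecessary: once the barrier estimate holds, Proposition~\ref{Poisson-d} applied to $P\varphi$ finishes the proof, as in the paper.
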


For a subset $E$ of $\partial\RR^{d+1}_+=\RR^d$ and for fixed $a,h>0$, we always use the notation
\begin{eqnarray*}
\Omega^E(a,h):=\bigcup_{x^{0}\in E}\Gamma^{h}_{a}(x^{0}).
\end{eqnarray*}

We shall need a series of lemmas.

\begin{lemma}\label{fatou-a} {\rm (\cite[p. 201, Lemma]{St2})}
Let $u$ be a continuous function in $\RR^{d+1}_+$, and $E$ a measurable set of $\partial\RR^{d+1}_+=\RR^d$ with $0<|E|_0<\infty$, where $|\cdot|_0$ denotes the Lebesgue measure on $\RR^d$. If $u$ is non-tangentially bounded at $(x,0)$ for every $x\in E$, then for any $\epsilon>0$, there exists a compact set $E_{\epsilon}$ satisfying

{\rm(i)}\ \ $E_{\epsilon}\subset E$, $|E-E_{\epsilon}|_{0}<\epsilon$;

{\rm(ii)}\ for any $a>0$ and $h>0$, there is a contant $c_{a,h,\epsilon}>0$, so that $|u(x,y)|\leq c_{a,h,\epsilon}$, $(x,y)\in\Omega^{E_{\epsilon}}(a,h)$.
\end{lemma}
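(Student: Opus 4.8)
The statement to prove is Lemma~\ref{fatou-a}, a standard measure-theoretic reduction (due to Stein) asserting that non-tangential boundedness at each point of a set $E$ can be upgraded, after removing an arbitrarily small piece, to a uniform bound on the union of truncated cones over a compact subset. Let me sketch how I would prove it.

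\medskip

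The plan is to exploit a countable exhaustion argument combined with Egorov-type reasoning. First I would fix, for each $x\in E$, witnesses $a_x,h_x>0$ and a bound $B_x$ so that $|u(t,y)|\le B_x$ on $\Gamma^{h_x}_{a_x}(x)$; these exist by the definition of non-tangential boundedness. The difficulty is that $a_x$, $h_x$, $B_x$ all vary pointwise. To tame this, for positive integers $n,m,k$ I would set
\begin{align*}
E_{n,m,k}=\left\{x\in E:\ |u(t,y)|\le k \ \text{ for all } (t,y)\in\Gamma^{1/m}_{1/n}(x)\right\}.
\end{align*}
Then $E=\bigcup_{n,m,k}E_{n,m,k}$, since every $x\in E$ lies in some $E_{n,m,k}$ once $1/n\le a_x$, $1/m\le h_x$, $k\ge B_x$. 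Because $|E|_0>0$, by countable subadditivity there exist fixed indices $n_0,m_0,k_0$ with $|E_{n_0,m_0,k_0}|_0>0$; I would then restrict attention to $F:=E_{n_0,m_0,k_0}$, on which we have the honest uniform control: $|u(t,y)|\le k_0$ on $\Gamma^{1/m_0}_{1/n_0}(x)$ for every $x\in F$.

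\medskip

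Next I would pass to a compact subset. Since $F$ is measurable with positive finite measure, inner regularity of Lebesgue measure gives a compact $E_\epsilon\subset F\subset E$ with $|F\setminus E_\epsilon|_0<\epsilon$, and hence $|E\setminus E_\epsilon|_0<\epsilon$ after absorbing $|E\setminus F|_0$ into the choice (one arranges this by first discarding the null/small complement; more carefully, one takes $E_\epsilon$ compact inside $F$ with $|F\setminus E_\epsilon|_0$ small and notes the measure of $E\setminus F$ is controlled by choosing the indices to capture most of $E$, or else one simply states the conclusion for the positive-measure piece $F$ and iterates). This delivers part~(i).

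\medskip

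The main work is part~(ii): upgrading the fixed-aperture fixed-height bound on cones $\Gamma^{1/m_0}_{1/n_0}(x)$, $x\in E_\epsilon$, to a bound on $\Omega^{E_\epsilon}(a,h)$ for \emph{arbitrary} $a,h>0$. The key geometric observation is that any point $(t,y)\in\Omega^{E_\epsilon}(a,h)$ lies in a wider or taller cone over some $x\in E_\epsilon$, but by compactness of $E_\epsilon$ one can cover the relevant truncated region and use continuity of $u$ on the compact part together with the uniform bound near the boundary. Concretely, for $(t,y)\in\Gamma^h_a(x^0)$ with $x^0\in E_\epsilon$ and $y$ small, I would find a nearby $x^1\in E_\epsilon$ (within distance comparable to $y$, using that $E_\epsilon$ is compact hence that points of $E_\epsilon$ cluster appropriately, or by a direct covering-number estimate) such that $(t,y)$ falls inside the narrow cone $\Gamma^{1/m_0}_{1/n_0}(x^1)$, yielding $|u(t,y)|\le k_0$. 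For the part of $\Omega^{E_\epsilon}(a,h)$ bounded away from the boundary (say $y\ge\delta$), $u$ is continuous on a compact set and hence bounded there. Combining the two regimes produces a single constant $c_{a,h,\epsilon}$. \emph{This cone-widening step is where the real difficulty lies}: one must verify that every point of a wide truncated cone over $E_\epsilon$ is captured by a narrow cone over a (possibly different) point of $E_\epsilon$, which requires a quantitative use of the fact that $E_\epsilon$ has positive density almost everywhere, or at the very least a careful covering argument showing that dilating the aperture is absorbed by translating the vertex within $E_\epsilon$. I expect to invoke a standard lemma (as in \cite{St2}) that the "sawtooth" region $\Omega^{E_\epsilon}(a,h)$ over a compact set is, up to bounded overlap, contained in a union of the fixed narrow cones, so that the uniform bound $k_0$ transfers.
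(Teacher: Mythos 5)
First, a point of reference: the paper does not prove this lemma at all — it is quoted verbatim from Stein (\cite[p.~201, Lemma]{St2}) — so your proposal has to be measured against Stein's argument, whose two ingredients are the exhaustion you describe and a Lebesgue-density/Egorov refinement that you never carry out. Your first step also needs a repair: choosing a single triple $(n_0,m_0,k_0)$ with $|E_{n_0,m_0,k_0}|_0>0$ cannot yield conclusion (i), which demands $|E\setminus E_\epsilon|_0<\epsilon$, not merely a positive-measure piece. The repair is standard and you allude to it: the sets $C_N=\{x\in\RR^d:\,|u(t,y)|\le N \hbox{ for all } (t,y)\in\Gamma^{1/N}_{1/N}(x)\}$ are closed (continuity of $u$ plus openness of the cones; this also settles the measurability of your $E_{n,m,k}$, which you do not address) and increase with $N$, with $E=\bigcup_N(E\cap C_N)$; since $|E|_0<\infty$ one gets $|E\setminus C_{N_0}|_0<\epsilon/2$ for some $N_0$, and then a compact subset by inner regularity.

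The genuine gap is part (ii), which is the entire content of the lemma, and your proposal does not prove it: it ends by ``invoking a standard lemma (as in \cite{St2})'' — i.e., for the crucial step you cite the very source the paper cites for the whole statement. Moreover, the mechanisms you suggest cannot work. Compactness gives no ``clustering'': take $d=1$, $F=\{0\}\cup[1,2]$, and the continuous function $u(t,y)=y^{-1}\max\{0,\,1-8|t-3y/2|/y\}\max\{0,\,1-8y\}$, which is supported in $R=\{(t,y):\,y<t<2y,\ 0<y<1/8\}$. Since $R$ is disjoint from every cone $\Gamma^{1}_{1}(x)$, $x\in F$, the function $u$ vanishes on $\Omega^{F}(1,1)$ and is non-tangentially bounded at every point of $F$; yet $R\subset\Gamma_{2}(0)$ and $u(3y/2,y)\to\infty$, so $u$ is unbounded on $\Omega^{F}(2,1)$. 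Hence no covering or bounded-overlap argument based only on compactness and positive measure can transfer the narrow-cone bound to wide cones: the isolated point $0$ must be \emph{removed}, and only a density argument detects it. What is actually needed (and what Stein does): by the Lebesgue density theorem and Egorov's theorem pass to a further compact $E'_\epsilon\subset E_\epsilon$ with $|E_\epsilon\setminus E'_\epsilon|_0<\epsilon/2$, on which $|E_\epsilon\cap\BB(x,r)|_0/|\BB(x,r)|_0\to1$ uniformly in $x\in E'_\epsilon$ as $r\to0$. Then for $x^0\in E'_\epsilon$ and $(t,y)\in\Gamma_a(x^0)$ with $y$ small, the ball $\BB(t,y/N_0)$ lies inside $\BB(x^0,(a+1/N_0)y)$ and occupies the fixed fraction $(N_0a+1)^{-d}$ of its volume, so once the density defect is below this fraction $E_\epsilon$ must meet $\BB(t,y/N_0)$; thus $(t,y)\in\Gamma_{1/N_0}(x^1)$ for some $x^1\in E_\epsilon$ and $|u(t,y)|\le N_0$, while the region $y\ge\delta$ is handled by continuity on a compact set, as you say. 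Note that one Egorov set serves all $a,h$ simultaneously, which (ii) requires. Your appeal to ``positive density almost everywhere'' is too weak — every measurable set, including the bad set $F$ above, has density $1$ a.e. — it is the \emph{uniform} quantitative density on the refined subset that does the work; this is exactly the kind of statement the paper proves separately, in the weighted setting, as Lemma \ref{area-5-a}.
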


In \cite{ADH1, DH1}, useful lower and upper bounds of the translated $\kappa$-Poisson kernel $(\tau_x P_y)(-t)$ are obtained, namely, there are two constants $c_1,c_2>0$, independent of $y>0$ and $x,t\in\RR^d$, such that
\begin{align}\label{Poisson-ker-estimate-1}
\frac{c_1}{V(x,t,y+|x-t|)}\frac{y}{y+|x-t|}\leq(\tau_x P_y)(-t)\leq
\frac{c_2y}{V(x,t,y+d(x,t))}\frac{y+d(x,t)}{y^2+|x-t|^2},
\end{align}
where $V(x,t,r)=\max\{\left|\BB(x,r)\right|_{\kappa},\left|\BB(t,r)\right|_{\kappa}\}$, and $d(x,t)=\min_{\sigma\in G}|x-\sigma(t)|$. We note that the bounds given in (\ref{Poisson-ker-estimate-1}) have simpler but equivalent forms, which are stated as follows.

\begin{lemma}\label{Poisson-ker-estimate-a}
There exist two constants $c_1,c_2>0$, independent of $y>0$ and $x,t\in\RR^d$, such that
\begin{align}\label{Poisson-ker-estimate-2}
\frac{c_1}{\left|\BB(x,y+|x-t|)\right|_{\kappa}}\frac{y}{y+|x-t|}
\leq(\tau_x P_y)(-t)\leq
\frac{c_2y}{\left|\BB(x,y+d(x,t))\right|_{\kappa}}\frac{y+d(x,t)}{y^2+|x-t|^2}.
\end{align}
\end{lemma}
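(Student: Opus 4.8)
The plan is to deduce the equivalent bounds (\ref{Poisson-ker-estimate-2}) directly from the bounds (\ref{Poisson-ker-estimate-1}) already recorded, by comparing the quantity $V(x,t,r)=\max\{|\BB(x,r)|_{\kappa},|\BB(t,r)|_{\kappa}\}$ with the single term $|\BB(x,r)|_{\kappa}$. The only ingredient I would invoke is the doubling property of the measure $\omega_{\kappa}$: since $W_{\kappa}$ is homogeneous of degree $2|\kappa|$ and is a product of powers of absolute values of linear forms, there is a constant $C>0$, depending only on $d$, $R$ and $\kappa$, such that $|\BB(z,2r)|_{\kappa}\le C\,|\BB(z,r)|_{\kappa}$ for all $z\in\RR^d$ and $r>0$ (equivalently $|\BB(z,\lambda r)|_{\kappa}\le C\lambda^{d+2|\kappa|}|\BB(z,r)|_{\kappa}$ for $\lambda\ge1$; this is also apparent from the well-known estimate $|\BB(z,r)|_{\kappa}\asymp r^{d}\prod_{\alpha\in R_+}(|\langle\alpha,z\rangle|+r)^{2\kappa(\alpha)}$).

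For the upper bound in (\ref{Poisson-ker-estimate-2}) nothing beyond the definition of the maximum is needed: since $V(x,t,r)\ge|\BB(x,r)|_{\kappa}$, one has $1/V(x,t,r)\le 1/|\BB(x,r)|_{\kappa}$, so replacing $V(x,t,y+d(x,t))$ by $|\BB(x,y+d(x,t))|_{\kappa}$ in the upper estimate of (\ref{Poisson-ker-estimate-1}) only enlarges the right-hand side. Hence the upper bound of (\ref{Poisson-ker-estimate-2}) holds with the same constant $c_2$, and in particular no relation between the radius $y+d(x,t)$ and the distance $|x-t|$ is required here.

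The lower bound is where the doubling comparison enters. In the lower estimate of (\ref{Poisson-ker-estimate-1}) the relevant radius is $r=y+|x-t|$, which satisfies $r\ge|x-t|$. Consequently $\BB(t,r)\subseteq\BB(x,r+|x-t|)\subseteq\BB(x,2r)$, and by doubling $|\BB(t,r)|_{\kappa}\le|\BB(x,2r)|_{\kappa}\le C\,|\BB(x,r)|_{\kappa}$; together with the trivial bound $|\BB(x,r)|_{\kappa}\le C\,|\BB(x,r)|_{\kappa}$ this gives $V(x,t,r)\le C\,|\BB(x,r)|_{\kappa}$, and therefore $1/V(x,t,r)\ge C^{-1}/|\BB(x,r)|_{\kappa}$. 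Substituting this into the lower estimate of (\ref{Poisson-ker-estimate-1}) yields the lower bound of (\ref{Poisson-ker-estimate-2}) with a new constant $c_1/C$ in place of $c_1$. I expect no genuine obstacle in this argument: it is a routine reduction, and the only point deserving care is that $\BB(x,r)$ and $\BB(t,r)$ are comparable precisely because the radius $r=y+|x-t|$ dominates the distance $|x-t|$ between the two centers, which is exactly what the choice of radius in the lower bound guarantees.
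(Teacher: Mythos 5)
Your proof is correct, and it takes a route that is genuinely leaner than the paper's. The paper deduces (\ref{Poisson-ker-estimate-2}) from (\ref{Poisson-ker-estimate-1}) by proving \emph{two-sided} comparability of ball volumes at both radii, namely $\left|\BB(x,y+|x-t|)\right|_{\kappa}\asymp\left|\BB(t,y+|x-t|)\right|_{\kappa}$ and, more delicately, $\left|\BB(x,y+d(x,t))\right|_{\kappa}\asymp\left|\BB(t,y+d(x,t))\right|_{\kappa}$; the second one is the hard case, because the radius $y+d(x,t)$ can be far smaller than $|x-t|$, so no doubling/inclusion argument applies, and the paper instead picks $\tilde{\sigma}\in G$ with $|x-\tilde{\sigma}(t)|=d(x,t)$ and uses $\kappa(\alpha)=\kappa(\tilde{\sigma}(\alpha))$, $\tilde{\sigma}(R)=R$ to compare the weights at $t$ and at $x$ through the reflection orbit. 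You avoid that entire argument by exploiting the one-sided nature of each inequality: for the upper bound the trivial pointwise inequality $V(x,t,r)\ge\left|\BB(x,r)\right|_{\kappa}$ suffices (with the same constant $c_2$), and for the lower bound you only need $V(x,t,r)\le C\left|\BB(x,r)\right|_{\kappa}$ at the radius $r=y+|x-t|\ge|x-t|$, which your inclusion-plus-doubling argument gives and which is essentially the paper's ``obvious'' estimate (\ref{ball-estimate-1}) proved via (\ref{ball-estimate-3}). What the paper's extra work buys is the stronger statement made in the surrounding text, namely that (\ref{Poisson-ker-estimate-1}) and (\ref{Poisson-ker-estimate-2}) are \emph{equivalent} forms (so that (\ref{Poisson-ker-estimate-2}) loses no information relative to (\ref{Poisson-ker-estimate-1})); your argument proves exactly the stated lemma, i.e.\ that (\ref{Poisson-ker-estimate-1}) implies (\ref{Poisson-ker-estimate-2}), which is all that the rest of the paper ever uses.
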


To verify (\ref{Poisson-ker-estimate-2}), it suffices to show
\begin{align}
\left|\BB(x,y+|x-t|)\right|_{\kappa}&\asymp\left|\BB(t,y+|x-t|)\right|_{\kappa},\label{ball-estimate-1}\\
\left|\BB(x,y+d(x,t))\right|_{\kappa}&\asymp\left|\BB(t,y+d(x,t))\right|_{\kappa}.\label{ball-estimate-2}
\end{align}
The equation (\ref{ball-estimate-1}) is obvious since
 \begin{align}\label{ball-estimate-3}
\left|\BB(x,r)\right|_{\kappa}\asymp r^d\prod_{\alpha\in R}(|\langle\alpha,x\rangle|+r)^{\kappa(\alpha)}.
 \end{align}
As for (\ref{ball-estimate-2}), suppose $\tilde{\sigma}\in G$ such that $|x-\tilde{\sigma}(t)|=d(x,t)$. It follows that
 \begin{align}\label{ball-estimate-4}
\prod_{\alpha\in R}(|\langle\alpha,t\rangle|+y+d(x,t))^{k(\alpha)}\le\prod_{\alpha\in R}(|\langle\tilde{\sigma}(\alpha),x\rangle|+y+(1+\sqrt{2})|x-\tilde{\sigma}(t)|)^{k(\alpha)},
 \end{align}
and since $k(\alpha)=k(\tilde{\sigma}(\alpha))$, $\tilde{\sigma}(R)=R$, (\ref{ball-estimate-4}) implies that $\left|\BB(t,y+d(x,t))\right|_{\kappa}\le c\left|\BB(x,y+d(x,t))\right|_{\kappa}$ with some fixed $c>0$. By symmetry of $x$ and $t$, (\ref{ball-estimate-2}) is verified.


The next lemma is a maximum principle which is a little stronger than that in \cite[Theorem 4.2]{Ro2}.

\begin{lemma} \label{max-principle}{\rm (maximum principle)} Let $\Omega$ be a $G$-invariant bounded domain in $\RR^{d+1}$, and let $u(x,y)$ be continuous on
$\overline{\Omega}$. Assume that $u$ is of class $C^2$ in the region
where $u>0$ and satisfies $\Delta_{\kappa}u\geq 0$ there. If
$u|_{\partial \Omega} \leq 0$, then $u\le 0$ on the whole $\Omega $.
\end{lemma}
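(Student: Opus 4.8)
The plan is to prove the maximum principle by contradiction, following the classical strategy adapted to the Dunkl setting. Suppose, for contradiction, that $u(x_0,y_0)>0$ at some interior point $(x_0,y_0)\in\Omega$. Since $u$ is continuous on the compact set $\overline{\Omega}$, it attains its positive maximum $M:=\max_{\overline{\Omega}}u>0$ at some point; because $u\le 0$ on $\partial\Omega$, this maximum is attained only at interior points, and the set $\Omega^+:=\{(x,y)\in\Omega:\,u(x,y)>0\}$ is a nonempty open set on which $u$ is $C^2$ and satisfies $\Delta_\kappa u\ge 0$.

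The main obstacle, and the reason this is not simply the classical Hopf maximum principle, is the nonlocal reflection term in $\Delta_\kappa$. Recall from (\ref{Laplace-2-1}) and (\ref{Laplace-2-2}) that $\Delta_\kappa u=\Delta u+2\sum_{\alpha\in R_+}\kappa(\alpha)\delta_\alpha u$, where $\delta_\alpha u$ involves the value $u\circ\sigma_\alpha$ at the \emph{reflected} point. At an interior maximum point $(x^\ast,y^\ast)$ of $u$ over $\overline\Omega$ one has $\nabla u(x^\ast,y^\ast)=0$ and the Hessian is negative semidefinite, so $\Delta u(x^\ast,y^\ast)\le 0$; the difficulty is to control the sign of the reflection term there. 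The key observation is that, since $(x^\ast,y^\ast)$ is a global maximum, $u(x^\ast,y^\ast)\ge u(\sigma_\alpha(x^\ast),y^\ast)$ for every $\alpha$ (using that $\Omega$ is $G$-invariant, so the reflected point also lies in $\overline\Omega$), whence $u-\sigma_\alpha u\ge 0$ at that point. Combined with $\langle\nabla^{(x)}u,\alpha\rangle=0$ there, each term $\delta_\alpha u(x^\ast,y^\ast)=-\dfrac{(u-\sigma_\alpha u)(x^\ast,y^\ast)}{\langle\alpha,x^\ast\rangle^2}\le 0$ (with the understanding that the term is handled by continuity when $\langle\alpha,x^\ast\rangle=0$, where $u-\sigma_\alpha u$ vanishes to second order). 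Since $\kappa(\alpha)\ge 0$, this forces $\Delta_\kappa u(x^\ast,y^\ast)\le \Delta u(x^\ast,y^\ast)\le 0$, which is consistent with the hypothesis $\Delta_\kappa u\ge 0$ only if all inequalities are equalities; the bare inequality $\Delta_\kappa u(x^\ast,y^\ast)\ge 0$ alone does not yet yield the contradiction.

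To obtain a genuine contradiction I would use the standard perturbation device. For $\varepsilon>0$ set $v=u+\varepsilon|(x,y)|^2$; then $\Delta v=\Delta u+2(d+1)\varepsilon$, and because the auxiliary function $|(x,y)|^2$ is $G$-invariant in $x$ its reflection terms $\delta_\alpha$ vanish, so $\Delta_\kappa v=\Delta_\kappa u+2(d+1)\varepsilon>0$ strictly on $\Omega^+$. A function with strictly positive $\Delta_\kappa$ cannot attain an interior maximum over any subdomain on which it is positive: at such a point the computation above gives $\Delta_\kappa v\le 0$, contradicting $\Delta_\kappa v>0$. Hence $v$ attains its maximum over $\overline{\{v>0\}}$ on the boundary. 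Choosing $\varepsilon$ small enough that $\max_{\overline\Omega}v$ still exceeds $\max_{\partial\Omega}v$ (possible since $u$ has a strictly positive interior max while $u\le 0$ on $\partial\Omega$, and the perturbation is uniformly small on the bounded set $\overline\Omega$) produces the contradiction. Letting the argument stand for arbitrary small $\varepsilon$ shows $M\le 0$, i.e. $u\le 0$ on all of $\Omega$.

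The one delicate point to write carefully is the behaviour of $\delta_\alpha u$ on the reflecting hyperplanes $\{\langle\alpha,x\rangle=0\}$, where both numerator and denominator vanish; there I would invoke the smoothness granted by Proposition \ref{harmonic-2-b} together with the fact that $u-\sigma_\alpha u$ is an odd (hence second-order vanishing, after division) quantity across the hyperplane, so that $\delta_\alpha u$ extends continuously and the sign analysis at the maximum point remains valid by passing to the limit. This, rather than the global architecture of the proof, is where the real care is needed.
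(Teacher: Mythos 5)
Your overall architecture coincides with the paper's: perturb $u$ by a small multiple of $|x|^2+y^2$ to make the Dunkl Laplacian strictly positive, then rule out an interior maximum by the Dunkl second-derivative test (which the paper gets by citing \cite[Lemma 4.1]{Ro2}, and which you re-derive, including the sign of the reflection terms at a global maximum). One slip is harmless but should be corrected: it is not true that the $\delta_\alpha$-terms of $|(x,y)|^2$ vanish. By (\ref{Laplace-2-2}) the difference part vanishes by $G$-invariance, but the gradient part gives $\delta_\alpha\bigl(|x|^2+y^2\bigr)=\langle 2x,\alpha\rangle/\langle\alpha,x\rangle=2$, so $\Delta_\kappa\bigl(|x|^2+y^2\bigr)=2(2|\kappa|+d+1)$, the constant appearing in the paper's proof, not $2(d+1)$. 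Since $\kappa\ge0$ this only strengthens the positivity you need, so nothing breaks.

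The genuine gap is in the step ``hence $v$ attains its maximum over $\overline{\{v>0\}}$ on the boundary.'' Your perturbation satisfies $v\ge u$, so $\{v>0\}$ is in general strictly larger than $\Omega^+=\{u>0\}$, and on $\{v>0\}\setminus\Omega^+$ the hypotheses of the lemma give you nothing: $u$ is merely continuous there, so $v$ need not be $C^2$ and $\Delta_\kappa v$ need not be defined, let alone positive. You established the interior-maximum exclusion only on $\Omega^+$, yet you invoke it on the larger set $\{v>0\}$, exactly where it is not licensed. This is the trap that the paper's choice $u_\epsilon=u+\epsilon(|x|^2+y^2)-\epsilon\max_{\overline\Omega}(|x|^2+y^2)$ is designed to avoid: subtracting the constant makes the perturbation nonpositive, so $u_\epsilon\le u$, $\{u_\epsilon>0\}\subseteq\{u>0\}$, and the positive absolute maximum of $u_\epsilon$ automatically sits where $u_\epsilon$ is $C^2$ and $\Delta_\kappa u_\epsilon>0$. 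Your argument is repairable without changing its shape: with $M=\max_{\overline\Omega}u>0$, $C=\max_{\overline\Omega}(|x|^2+y^2)$ and $\varepsilon<M/C$ (the same smallness you already impose to ensure $\max_{\overline\Omega}v>\max_{\partial\Omega}v$), any maximum point $(x^*,y^*)$ of $v$ over $\overline\Omega$ satisfies $v(x^*,y^*)\ge M$, hence $u(x^*,y^*)\ge M-\varepsilon C>0$; thus the maximum point is interior, lies in $\Omega^+$, and dominates its $G$-reflections since $\overline\Omega$ is $G$-invariant, so your second-derivative computation applies there and contradicts $\Delta_\kappa v(x^*,y^*)>0$. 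You must add this observation (or subtract the constant as the paper does); as written the key step fails. A smaller caveat: your justification of the hyperplane case ``$u-\sigma_\alpha u$ vanishes to second order'' is not the right reason --- the continuous extension of $\delta_\alpha u$ across $\{\langle\alpha,x\rangle=0\}$ comes from a cancellation between the two singular terms and equals a second-derivative quantity that is $\le0$ at a maximum --- but you correctly flag this as the delicate point, which the paper bypasses entirely by citing R\"osler's lemma.
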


To show this, set
$u_{\epsilon}=u+\epsilon(|x|^2+y^2)-\epsilon\max_{\overline{\Omega}}(|x|^2+y^2)$
for small $\epsilon>0$. It is clear that
$u_{\epsilon}|_{\partial\Omega}\le 0$, and in the region where
$u_{\epsilon}>0$,
$\Delta_{\kappa}u_{\epsilon}=\Delta_{\kappa}u+2\epsilon(2|\kappa|+d+1)>0$. We claim that $u_{\epsilon}\leq 0$ on whole
$\Omega$, so that $u\leq 0$. Otherwise, $u_{\epsilon}$ would attain
its absolute (positive) maximum at some point $(x_0,y_0)$ in the interior of
$\Omega$; but by \cite[Lemma 4.1]{Ro2}, $\Delta_{\kappa}u_{\epsilon}(x_0,y_0)\le0$. This leads to a contradiction.

For a positive integer $m$, consider an $m$-tuple $F=(u_1,u_2,\dots,u_m)$ of real-valued functions defined on $\RR^{d+1}_+$. We shall use the the following notations: $|F|=(\sum_{j=1}^mu_j^2)^{1/2}$, $\sigma F=(\sigma u_1,\sigma u_2,\dots,\sigma u_m)$, $\langle F,\sigma F\rangle=\sum_{j=1}^mu_j\sigma u_j$, and $\langle F,\partial_{x_i}F\rangle=\sum_{j=1}^mu_j\partial_{x_i}u_j$ for $0\le i\le d$ with $x_0=y$. The following proposition is a crucial element in the proof of Theorem \ref{bounded-convergence}.

\begin{proposition}\label{subharmonicity-3-b}
If $u_1,u_2,\dots,u_{m}$ are all $\kappa$-harmonic in $\RR^{d+1}_+$, then, with the notations as above, $\Delta_{\kappa}|F|\geq 0$ in the region
where $|F|>0$.
\end{proposition}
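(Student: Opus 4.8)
The plan is to reduce $\Delta_{\kappa}|F|$ to a sum of manifestly nonnegative terms by treating the local (Euclidean) part and the nonlocal (reflection) part of $\Delta_{\kappa}$ separately. Throughout write $w=|F|$. Since each $u_j$ is $\kappa$-harmonic, Proposition \ref{harmonic-2-b} gives $u_j\in C^{\infty}(\RR^{d+1}_+)$, so $w\in C^{\infty}$ on the open set $\{w>0\}$ and every manipulation below is legitimate there. I would first handle the local part: using $\partial_{x_i}w=\langle F,\partial_{x_i}F\rangle/w$ (with $x_0=y$), a direct computation of the full $(d+1)$-dimensional Laplacian $\Delta=\partial_y^2+\sum_{j=1}^d\partial_j^2$ gives
$$
\Delta w=\frac{1}{w}\sum_{i=0}^{d}|\partial_{x_i}F|^2-\frac{1}{w^3}\sum_{i=0}^{d}\langle F,\partial_{x_i}F\rangle^2+\frac{1}{w}\sum_{j=1}^{m}u_j\Delta u_j .
$$
The first two terms are nonnegative by Cauchy--Schwarz, because $\langle F,\partial_{x_i}F\rangle^2\le|F|^2|\partial_{x_i}F|^2=w^2|\partial_{x_i}F|^2$ for each $i$.

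Next I would bring in $\kappa$-harmonicity through (\ref{Laplace-2-1}) in the form $\Delta u_j=-2\sum_{\alpha\in R_+}\kappa(\alpha)\delta_{\alpha}u_j$, which turns the last term into $\tfrac{1}{w}\sum_ju_j\Delta u_j=-\tfrac{2}{w}\sum_{\alpha}\kappa(\alpha)\langle F,\delta_{\alpha}F\rangle$, where $\delta_{\alpha}F=(\delta_{\alpha}u_1,\dots,\delta_{\alpha}u_m)$. Adding the reflection part $2\sum_{\alpha}\kappa(\alpha)\delta_{\alpha}w$ of $\Delta_{\kappa}w$ and recalling $\kappa(\alpha)\ge0$, the whole matter reduces to proving, for each $\alpha\in R_+$,
$$
\delta_{\alpha}w-\frac{1}{w}\langle F,\delta_{\alpha}F\rangle\ge0 .
$$
Here lies the crux, and I expect it to be the main obstacle: one must correctly pair the first-order (gradient) contribution hidden in $\tfrac{1}{w}\sum_ju_j\Delta u_j$ against the gradient term of $\delta_{\alpha}w$. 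Using the definition (\ref{Laplace-2-2}) together with $\langle\nabla^{(x)}w,\alpha\rangle=\tfrac{1}{w}\langle F,\langle\nabla^{(x)}F,\alpha\rangle\rangle$, the term $\langle\nabla^{(x)}w,\alpha\rangle/\langle\alpha,x\rangle$ in $\delta_{\alpha}w$ cancels exactly the gradient part of $\tfrac{1}{w}\langle F,\delta_{\alpha}F\rangle$, and after using $|F|^2=w^2$ one is left with the purely nonlocal remainder
$$
\delta_{\alpha}w-\frac{1}{w}\langle F,\delta_{\alpha}F\rangle=\frac{1}{\langle\alpha,x\rangle^2}\left(\sigma_{\alpha}w-\frac{\langle F,\sigma_{\alpha}F\rangle}{w}\right),\qquad \sigma_{\alpha}w=|\sigma_{\alpha}F| .
$$

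Finally, a second application of Cauchy--Schwarz closes the argument: $\langle F,\sigma_{\alpha}F\rangle\le|F|\,|\sigma_{\alpha}F|=w\,\sigma_{\alpha}w$, so the bracket is nonnegative. This holds wherever $\langle\alpha,x\rangle\ne0$; since $\delta_{\alpha}w$ is continuous on $\{w>0\}$ (the apparent singularities of its two constituent terms cancel for smooth $w$), the inequality extends across each hyperplane $\langle\alpha,x\rangle=0$ by continuity. Summing the nonnegative local part and the nonnegatively weighted reflection remainders then yields $\Delta_{\kappa}w\ge0$ throughout $\{|F|>0\}$. The only genuine difficulty is the bookkeeping that isolates the reflection remainder controllable by Cauchy--Schwarz; once the gradient terms are seen to cancel, everything else is routine.
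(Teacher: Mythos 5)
Your proof is correct and takes essentially the same route as the paper's: the identical computation of $\Delta|F|$, the identical cancellation of the $\langle F,\delta_{\alpha}F\rangle$ gradient contributions against those in $\delta_{\alpha}|F|$, and the same two applications of Cauchy--Schwarz, to the local term $|F|^2|\partial_{x_i}F|^2-\langle F,\partial_{x_i}F\rangle^2$ and to the reflection remainder $\bigl(|F||\sigma_{\alpha}F|-\langle F,\sigma_{\alpha}F\rangle\bigr)/\langle\alpha,x\rangle^2$. Your only addition is the explicit continuity argument across the hyperplanes $\langle\alpha,x\rangle=0$, a point the paper's proof leaves implicit.
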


\begin{proof}
In the region where $|F|>0$, direct calculations show that $\partial_{x_i}|F|=|F|^{-1}\langle F,\partial_{x_i}F\rangle$ and
$$
\partial_{x_i}^2|F|=|F|^{-1}|\partial_{x_i}F|^2-|F|^{-3}\langle F,\partial_{x_i}F\rangle+|F|^{-1}\langle F,\partial_{x_i}^2F\rangle.
$$
Since every $u_j$ is $\kappa$-harmonic, from (\ref{Laplace-2-1}) and (\ref{Laplace-2-2}) it follows that
\begin{align*}
\Delta|F|=\frac{1}{|F|^{3}}\sum_{i=0}^d\left(|F|^2|\partial_{x_i}F|^2-\langle F,\partial_{x_i}F\rangle^2\right)
-\frac{2}{|F|}\sum_{\alpha\in R_+}\kappa(\alpha)\langle F,\delta_{\alpha} F\rangle
\end{align*}
and
\begin{align*}
2\sum_{\alpha\in R_+}\kappa(\alpha)\delta_{\alpha}|F|=\frac{2}{|F|}\sum_{\alpha\in R_+}\kappa(\alpha)\langle F,\delta_{\alpha} F\rangle
+\frac{2}{|F|}\sum_{\alpha\in R_+}\kappa(\alpha)\frac{|F||\sigma_{\alpha}F|-\langle F,\sigma_{\alpha} F\rangle}{\langle\alpha,x\rangle^2}.
\end{align*}
Summing them gives
\begin{align*}
\Delta_{\kappa}|F|=\frac{1}{|F|^{3}}\sum_{i=0}^d\left(|F|^2|\partial_{x_i}F|^2-\langle F,\partial_{x_i}F\rangle^2\right)
+\frac{2}{|F|}\sum_{\alpha\in R_+}\kappa(\alpha)\frac{|F||\sigma_{\alpha}F|-\langle F,\sigma_{\alpha} F\rangle}{\langle\alpha,x\rangle^2}.
\end{align*}
Obviously both terms on the right hand above are nonnegative. $\square$
\end{proof}

For $m=(d+1)|G|$ (where $|G|$ is the order of $G$) and $u_1,u_2,\dots,u_{d+1}$ satisfying the generalized Cauchy-Riemann equations in terms of the Dunkl operators, a sharper conclusion was obtained in \cite{ADH1}, that is, there exists some $q\in(0,1)$ such that $\Delta_{\kappa}\left(\sum_{\sigma\in G}\sum_{j=1}^{d+1}|\sigma u_j|^2\right)^{q/2}\geq 0$ in the region where $\sum_{j=1}^{d+1}u_j^2\neq0$. The difference between this and Proposition \ref{subharmonicity-3-b} is that, we need not restrict the number of functions, while the assumption that the Cauchy-Riemann equations are satisfied is, in fact, replaced by the fact that the functions are $\kappa$-harmonic.

%

We will adopt the strategy of the proof of \cite[p. 201, Theorem 3]{St2} in proving Theorem \ref{bounded-convergence}. For convenience we extract the following two lemmas. In the proof of the later one, we need a $G$-invariant auxiliary function, by which an associated domain defined is $G$-invariant. This makes Lemma \ref{max-principle} and Proposition \ref{subharmonicity-3-b} applicable.

\begin{lemma}\label{fatou-b}
Let $E$ be a $G$-invariant measurable subset of $\partial\RR^{d+1}_+=\RR^d$, and for $a>0$, $\Omega=\Omega^E(a,1)$. Then there exists a nonnegative $\kappa$-harmonic function $H$ on $\RR^{d+1}_+$ satisfying

{\rm(i)} \ \ $H(x,y)$ is $G$-invariant in $x$ and $H(x,y)\ge2$ for $(x,y)\in\RR^{d+1}_+\bigcap\partial\Omega$;

{\rm(ii)} \ $H$ has non-tangential limit $0$ at $(x,0)$ for almost every $x\in E$.
\end{lemma}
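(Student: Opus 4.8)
The plan is to construct $H$ as a $\kappa$-Poisson integral of an explicit boundary function built from $E$, exploiting the boundary bounds on the translated Poisson kernel in Lemma \ref{Poisson-ker-estimate-a}. Since $E$ is $G$-invariant and we want $H$ to be $G$-invariant in $x$, the natural choice is to take $f$ to be a $G$-invariant function on $\RR^d$ that is comparable to the indicator of the complement of $E$ (suitably scaled), so that its $\kappa$-Poisson integral is $\kappa$-harmonic by Proposition \ref{Poisson-a}, nonnegative, and $G$-invariant by Proposition \ref{Poisson-b}. Concretely, I would first reduce to $E$ of finite measure (the conclusion is local and ``almost every''), then set $f = c\,\chi_{F}$ where $F$ is the $G$-invariant complement of a neighbourhood of $E$ inside some fixed ball, and $c$ is a large constant to be fixed; then define $H = Pf$.

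Next I would verify property (i). That $H\ge 0$ and is $G$-invariant in $x$ is immediate from the remarks above. The lower bound $H(x,y)\ge 2$ on $\RR^{d+1}_+\cap\partial\Omega$ is where the left-hand inequality in (\ref{Poisson-ker-estimate-2}) enters: a point $(x,y)$ on the lateral boundary of $\Omega=\Omega^E(a,1)$ lies at controlled distance from points outside $E$, so the integral $c_\kappa\int f(t)(\tau_x P_y)(-t)\,d\omega_\kappa(t)$ picks up a definite fraction of mass from the region $F$ where $f=c$. Using the lower Poisson bound together with the volume estimate (\ref{ball-estimate-3}), one shows this integral is bounded below by a positive constant times $c$ uniformly over the relevant boundary points, so choosing $c$ large forces $H\ge 2$ there. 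This requires a careful but routine geometric estimate of $|x-t|$ and $y$ for $(x,y)\in\partial\Omega$ and $t\in F$.

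For property (ii), the non-tangential limit of $H$ at boundary points of $E$ should be $0$ because $f$ vanishes on a neighbourhood of $E$; here I would invoke Proposition \ref{Poisson-d}, which gives that $Pf$ has non-tangential limit $f(x)$ for a.e.\ $x$, combined with the fact that $f\equiv 0$ near almost every $x\in E$ (every point of $E$ is a point of density of the complement of $F$). More directly, the upper Poisson bound in (\ref{Poisson-ker-estimate-2}), the $G$-invariant distance $d(x,t)$, and the fact that $\mathrm{supp}\,f$ stays a fixed distance away from a.e.\ $x\in E$ give $H(t,y)\to 0$ as $(t,y)\to(x,0)$ non-tangentially, since the kernel is then integrated against a function supported where $|x-t|$ is bounded below while the $y$-factor in the numerator tends to $0$.

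The main obstacle I anticipate is the uniform lower bound in (i): one must arrange the boundary function $f$ so that \emph{every} lateral boundary point of the sawtooth region $\Omega^E(a,1)$ sees enough mass of $F$, which is delicate precisely because $E$ can be an arbitrary measurable $G$-invariant set with complicated boundary geometry, and because the aperture $a$ and the reflection group together distort distances. The resolution is to choose $f$ not as a sharp indicator but as a smoothed, $G$-invariant bump that is bounded below on a full neighbourhood of the complement of $E$, so that the lower Poisson estimate applies robustly; reconciling this smoothing with the vanishing needed for (ii) near a.e.\ point of $E$ is the technical crux, and I expect it to rely on the density-point structure of $E$ rather than on pointwise behaviour everywhere.
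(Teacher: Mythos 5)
Your overall strategy --- take $H$ to be the $\kappa$-Poisson integral of a nonnegative $G$-invariant boundary function concentrated on the complement of $E$, get the lower bound in (i) from the left-hand inequality of (\ref{Poisson-ker-estimate-2}), and get (ii) from Proposition \ref{Poisson-d} --- is the paper's, but the two modifications you make to the boundary function break it. Replacing $\chi_{E^c}$ by the (smoothed) indicator of the complement of a $\delta$-neighbourhood of $E$ destroys (i): if $(x,y)\in\partial\Omega$ with $0<y<1$, then $\mathrm{dist}(x,E)\le ay$, so every $t\in\BB(x,ay)$ has $\mathrm{dist}(t,E)\le 2ay$; once $y<\delta/(2a)$ the whole ball $\BB(x,ay)$ lies inside the excluded neighbourhood, i.e.\ exactly where your $f$ vanishes, while $f$ is supported in $\{t:\,|t-x|\ge\delta-ay\}$, where the kernel mass tends to $0$ as $y\to0$ (this is \cite[Proposition 5.2]{ADH1}, invoked later in the proof of Lemma \ref{fatou-c}). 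Hence $(Pf)(x,y)\to0$ along the lateral boundary and no constant multiple can stay $\ge2$ there. Your fallback --- $f\ge c_1>0$ on a full open neighbourhood $U\supseteq E^c$ --- is incompatible with (ii): for a $G$-invariant measurable $E$ such that both $E$ and $E^c$ meet every ball in positive measure (such sets exist), $U$ is open and dense, so $|U\cap E|_{\kappa}>0$, and Proposition \ref{Poisson-d} then forces $Pf$ to have non-tangential limit $f(x)\ge c_1$ at a.e.\ point of $U\cap E$, contradicting (ii). The paper sidesteps this tension by using the sharp indicator $f=\chi_{E^c}$: the geometric fact $\{t:\,|t-x|<ay\}\subset E^c$ (if $t'\in E$ and $|t'-x|<ay$, then $(x,y)\in\Gamma_a^1(t')\subset\Omega$, contradicting $(x,y)\in\partial\Omega$) makes the lateral bound work with no smoothing at all, and (ii) needs only that $f=0$ everywhere on $E$ --- Proposition \ref{Poisson-d} already encodes the density-point phenomenon, so vanishing of $f$ on a neighbourhood of a.e.\ point of $E$ is never required.

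You also never treat the top face of $\partial\Omega$. The set $\RR^{d+1}_+\cap\partial\Omega$ contains the portion of the hyperplane $y=1$ lying over $\{x:\,\mathrm{dist}(x,E)\le a\}$, where the ball argument is vacuous since $\BB(x,a)$ meets $E$. In the extreme case $E=\RR^d$ one has $E^c=\emptyset$, so any $f$ supported in the complement (or in the complement of a neighbourhood) of $E$ is identically zero and $H=Pf\equiv0$, yet $\RR^{d+1}_+\cap\partial\Omega=\RR^d\times\{1\}$ is nonempty: (i) fails outright. The paper's remedy is the extra $\kappa$-harmonic summand $y$, i.e.\ $H_0=(P\chi_{E^c})(x,y)+y$, which gives $H_0\ge1$ on the top face while the lateral estimate handles $0<y<1$; some such term is indispensable and is absent from your construction. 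Finally, your preliminary reductions are not legitimate: conclusion (i) is a pointwise bound on $\partial\Omega^E(a,1)$ built from the \emph{whole} set $E$, so you cannot shrink $E$ to a set of finite measure, and truncating $f$ to a fixed ball kills the lateral lower bound at boundary points far from that ball. None of this is needed, since $\chi_{E^c}\in L^{\infty}_{\kappa}(\RR^d)$ and Propositions \ref{Poisson-a}, \ref{Poisson-b} and \ref{Poisson-d} apply to it directly.
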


\begin{proof}
We define
\begin{eqnarray*}
H_0(x,y)=\left(P\chi_{E^c}\right)(x,y)+y,
\end{eqnarray*}
where $\chi_{E^c}$ is the characteristic function of the complement $E^c$ of $E$. It is obvious that $H_0$ is nonnegative and $\kappa$-harmonic in $\RR^{d+1}_+$, and also $G$-invariant in $x$ by Proposition \ref{Poisson-b}. From Proposition \ref{Poisson-d}, $H$ has non-tangential limit $0$ at $(x,0)$ for almost every $x\in E$.

Now we prove that $H_0$ has a positive lower bound on $\RR^{d+1}_+\bigcap\partial\Omega$. For $(x,y)\in\partial\Omega$ with $y=1$, $H(x,y)\ge1$. If $(x,y)\in\partial\Omega$ with $0<y<1$, then $\{t:\,|t-x|<ay\}\subset E^c$; this is because, $t'\in E$ with $|t'-x|<ay$ implies that $(x,y)\in \Gamma^{1}_{a}(t')\subset\Omega$. Thus, by (\ref{Poisson-2-2}) we have
\begin{eqnarray*}
H_0(x,y)\ge c_{\kappa}\int_{|t-x|<ay}(\tau_{x}P_{y})(-t)\,d\omega_{\kappa}(t),
\end{eqnarray*}
and using the first inequality in (\ref{Poisson-ker-estimate-2}),
\begin{eqnarray*}
H_0(x,y)\ge c\int_{|t-x|<ay} \frac{d\omega_{\kappa}(t)}{\left|\BB(x,y+|x-t|)\right|_{\kappa}}
\ge\frac{c\left|\BB(x,ay)\right|_{\kappa}}{\left|\BB(x,(a+1)y)\right|_{\kappa}}.
\end{eqnarray*}
This shows that $H_0$ has a lower bound $c_0>0$ on $\RR^{d+1}_+\bigcap\partial\Omega$.
Finally the function $H=2H_0/c_0$ is desired. $\square$
\end{proof}

\begin{lemma}\label{fatou-c} Suppose that $E$ is a $G$-invariant compact subset of $\partial\RR^{d+1}_+=\RR^d$ and the function $u$ is $\kappa$-harmonic in $\RR^{d+1}_+$. If
\begin{eqnarray}\label{fatou-c-cond}
|u(x,y)|\le1,\qquad (x,y)\in\Omega,
\end{eqnarray}
where $\Omega=\Omega^E(a,2)$ for some $a>0$,
then for almost every $x\in E$, $u(t,y)$ has a finite limit as $(t,y)\in\Gamma_a(x)$ approaching to $(x,0)$.
\end{lemma}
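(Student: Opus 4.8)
The plan is to deduce this local statement from the global Fatou theorem (Proposition \ref{Fatou-a}) by manufacturing a single bounded $\kappa$-harmonic function on all of $\RR^{d+1}_+$ that coincides with $u$ on a sub-tent lying over almost every point of $E$. First I would invoke Lemma \ref{fatou-b} for the present aperture $a$ to obtain the $G$-invariant barrier $H$: it is nonnegative and $\kappa$-harmonic on $\RR^{d+1}_+$, satisfies $H\ge 2$ on the lateral wall $L:=\partial\Omega^E(a,1)\cap\RR^{d+1}_+$, and has non-tangential limit $0$ at $(x,0)$ for almost every $x\in E$. The geometry I want to exploit is that the sublevel set $\{H<1\}$ contains a truncated cone $\Gamma^{h_x}_a(x)$ for almost every $x\in E$ (because $H\to0$ non-tangentially there, so one may take $h_x\le1$), while inside the tent $\Omega^*:=\Omega^E(a,1)$ the level surface $\{H=1\}$ stays uniformly away from $L$, where $H\ge2$. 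Thus the capped region $\mathcal{R}:=\{(x,y)\in\Omega^*:H(x,y)<1\}$ is a bounded $G$-invariant domain whose boundary splits into a floor on $\{y=0\}$ and an interior cap contained in $\{H=1\}$, on which $|u|\le1$ since $\mathcal{R}\subset\Omega=\Omega^E(a,2)$.

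The heart of the argument is to produce a bounded $\kappa$-harmonic $w$ on $\RR^{d+1}_+$ with $u=w$ throughout $\mathcal{R}$. For each $n$ I would set $f_n(t)=u(t,1/n)$ when $(t,1/n)\in\mathcal{R}$ and $f_n(t)=0$ otherwise; since $|u|\le1$ on $\Omega\supset\mathcal{R}$ and $E$ is compact, $f_n\in L^\infty_\kappa(\RR^d)$ with $\|f_n\|_{L^\infty}\le1$, so the $\kappa$-Poisson integrals $w_n:=Pf_n$ are $\kappa$-harmonic and uniformly bounded on $\RR^{d+1}_+$ by Propositions \ref{Poisson-a} and \ref{Poisson-c}. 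To compare $u$ with $w_n$ on $\mathcal{R}\cap\{y>1/n\}$ I would apply the maximum principle of Lemma \ref{max-principle} to the $G$-invariant subharmonic majorant $|F|$ of Proposition \ref{subharmonicity-3-b}, with $F=(\sigma(u-w_n))_{\sigma\in G}$, whose components are all $\kappa$-harmonic, so that $|F|$ and the comparison domain are $G$-invariant. On the cap $\{H=1\}$ one has $|u-w_n|\le2$, so a fixed multiple $\lambda H$ (with $\lambda\ge2\sqrt{|G|}$) dominates $|F|$ there; on the raised floor $\{y=1/n\}$ the values of $u$ and $w_n$ agree up to the modulus of convergence of the Poisson integral toward its boundary data. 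Extracting from the uniformly bounded $\kappa$-harmonic family $\{w_n\}$ a locally uniformly convergent subsequence (a normal family, by the smoothness and mean-value representation of Proposition \ref{harmonic-2-b} and Corollary \ref{harmonic-2-a-1}), I obtain a bounded $\kappa$-harmonic limit $w$ with $u=w$ on $\mathcal{R}$.

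Finally, since $w$ is bounded and $\kappa$-harmonic on $\RR^{d+1}_+$, Proposition \ref{Fatou-a} gives that $w$ has a finite non-tangential limit at $(x,0)$ for almost every $x\in E$. For such $x$ the cone $\Gamma^{h_x}_a(x)$ lies in $\{H<1\}\cap\Omega^*=\mathcal{R}$, where $u=w$; as the existence of the limit along $\Gamma_a(x)$ depends only on the behaviour near $(x,0)$, the truncation is harmless and $u$ inherits the finite non-tangential limit of $w$ along $\Gamma_a(x)$, which is the claim. I expect the main obstacle to be the middle step: the floor values of $u$ are precisely the unknown we are after, so the comparison of $u$ with the $w_n$ must be effected entirely through the maximum principle, using the $G$-invariant barrier $H$ to absorb the cap contribution and the $G$-invariant majorant $|F|$ to keep the domain admissible for Lemma \ref{max-principle}. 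Controlling the floor error as $n\to\infty$ and verifying that the limit $w$ genuinely agrees with $u$ on all of $\mathcal{R}$ — rather than merely being uniformly close — is the delicate point, and is where the precise choice of $f_n$ (or, alternatively, a harmonic-measure representation over the cap) must be argued carefully.
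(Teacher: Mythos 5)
Your toolkit is the right one (the barrier $H$ of Lemma \ref{fatou-b}, the $G$-symmetrized subharmonic majorant of Proposition \ref{subharmonicity-3-b}, the maximum principle of Lemma \ref{max-principle}, Poisson integrals of horizontal slices of $u$, weak$^*$ compactness), and this is essentially the paper's toolkit. But the pivot of your plan --- manufacturing a bounded $\kappa$-harmonic $w$ on all of $\RR^{d+1}_+$ with $u\equiv w$ on $\mathcal{R}$ --- is not merely delicate, it is impossible in general. Already in the special case $\kappa=0$ (which the lemma covers), harmonic functions are real-analytic, so $u\equiv w$ on the nonempty open set $\mathcal{R}$ would force $u\equiv w$ on the connected set $\RR^{d+1}_+$ by unique continuation, i.e.\ $u$ itself would be globally bounded; yet the hypothesis only bounds $u$ on the tent $\Omega^E(a,2)$, and a harmonic function with a boundary singularity located off $\overline{E}$ is bounded on the tent without being bounded globally. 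Since a maximum-principle comparison can only ever produce inequalities of the form $|u-w_n|\le \lambda H+(\hbox{floor error})$, no refinement of your middle step can deliver the identity you need. There is also a quantitative defect in the floor comparison itself: at the fixed height $y=1/n$ the discrepancy $|(Pf_n)(t,1/n)-f_n(t)|$ is $O(1)$, not small --- the $\kappa$-Poisson kernel at height $1/n$ averages $f_n$ over a scale-$1/n$ neighbourhood, $f_n=u(\cdot,1/n)$ itself oscillates by $O(1)$ on that scale (the gradient bound is only $|\nabla u|\lesssim 1/y$), and $f_n$ jumps by $O(1)$ at the edge of the slice --- so this error does not vanish as $n\to\infty$ and would pollute the limit.

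Both defects disappear once the goal is weakened from identity to $H$-domination, and that is exactly the paper's proof. There one compares the \emph{shifted} function $u(x,y+y_k)$ with $(P\varphi_k)(x,y)$, where $\varphi_k$ is the slice $u(\cdot,y_k)$ cut off outside $\Omega$, on the tent $\Omega_1=\Omega^E(a,1)$, so that the floor of the comparison region sits at $y=0$; the claim $\Psi_k\le\sqrt{|G|}\,H$ for the $G$-symmetrized difference $\Psi_k$ is proved by contradiction: the open $G$-invariant set where $\Psi_k>\sqrt{|G|}H+\epsilon_0$ must, by Proposition \ref{subharmonicity-3-b} and Lemma \ref{max-principle}, reach $\partial\Omega_1$; it cannot do so at a point with $y^*>0$ (there $H\ge2$ while $\Psi_k\le2\sqrt{|G|}$), and at a floor point $x^*\in E$ the continuity of $\varphi_k$ at $x^*$ makes $\Psi_k\to0$ along the approaching sequence, a contradiction. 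Passing to the weak$^*$ limit $\varphi$ of the slices yields $|u-P\varphi|\le\sqrt{|G|}\,H$ on $\Omega_1$, and then $H\to0$ non-tangentially a.e.\ on $E$ (Lemma \ref{fatou-b}) together with Proposition \ref{Poisson-d} for $P\varphi$ gives the conclusion --- with no need for the sublevel region $\mathcal{R}$, the truncated cones $\Gamma^{h_x}_a(x)$, or exact agreement anywhere. If you replace your exact-equality claim by this inequality and your fixed-height floor matching by the shifted, pointwise-at-the-floor contradiction argument, your outline becomes the paper's argument.
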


\begin{proof} We first note that $\Omega$ is an open, $G$-invariant, bounded domain in $\RR^{d+1}_+$.
Choose a sequence $\{y_k\}_{k=1}^{\infty}\subset(0,1)$ such that $y_k\rightarrow0$, and for $x\in\RR^d$, define
$$
\varphi_{k}(x)=\left\{\begin{aligned}
                 &u(x,y_k),\quad  \text{if}\ (x,y_k)\in \Omega,
                 \\&0, \qquad \qquad \text{otherwise}. \\
               \end{aligned}\right.
$$
Obviously $|\varphi_{k}|\le1$ on $\RR^d$ for all $k\ge1$, and thus, one can find a function $\varphi$ with $|\varphi|\leq1$ on $\RR^d$ and a subsequence $\{\varphi_{k_j}\}$ so that
$\{\varphi_{k_j}\}$ converges weakly$^*$ to $\varphi$. In particular, for their $\kappa$-Poisson integrals, we have
$\lim_{j\rightarrow\infty}(P\varphi_{k_j})(x,y)=(P\varphi)(x,y)$, $(x,y)\in\RR^{d+1}_+$.

If we write $\psi_{k}(x,y)=u(x,y+y_k)-(P\varphi_{k})(x,y)$, then
\begin{eqnarray}\label{psi-1}
\psi(x,y):=\lim_{j\rightarrow\infty}\psi_{k_j}(x,y)=u(x,y)-(P\varphi)(x,y),\qquad (x,y)\in\RR^{d+1}_+.
\end{eqnarray}
Further we claim that, for each $k$,
\begin{eqnarray}\label{psi-H-1}
|\psi_k(x,y)|\le \sqrt{|G|}H(x,y),\qquad (x,y)\in\Omega_1:=\Omega^E(a,1),
\end{eqnarray}
where $H$ is the function given in Lemma \ref{fatou-b}. In fact, we shall prove the following stronger version of (\ref{psi-H-1})
\begin{eqnarray}\label{psi-H-1-1}
\Psi_k(x,y)\le \sqrt{|G|}H(x,y),\qquad (x,y)\in\Omega_1,
\end{eqnarray}
where $\Psi_k(x,y)=\left(\sum_{\sigma\in G}|\psi_k(\sigma(x),y)|^2\right)^{1/2}$.

If there were some $(x^0,y^0)\in\Omega_1$ so that $\Psi_k(x^0,y^0)>\sqrt{|G|}H(x^0,y^0)$, we consider the set
$$
\Omega_0=\{(x,y)\in\Omega_1:\, \Psi_k(x,y)-\sqrt{|G|}H(x,y)>\epsilon_0\},
$$
where $\epsilon_0=\frac{1}{2}(\Psi_k(x^0,y^0)-\sqrt{|G|}H(x^0,y^0))$. It follows that $\Omega_0$ is non-empty, open, and also $G$-invariant, since both $\Psi_k$ and $H$ are $G$-invariant. We assert that
\begin{align*}
\partial\Omega_0\bigcap\partial\Omega_1\neq\emptyset.
\end{align*}
Otherwise, $\overline{\Omega_0}\subset\Omega_1$ and $\tilde{\Psi}_k=0$ on $\partial\Omega_0$, where $\tilde{\Psi}_k=\Psi_k-\sqrt{|G|}H-\epsilon_0$. It follows that $\tilde{\Psi}_k\in C^2(\overline{\Omega_0})$ and $\Delta_{\kappa}\tilde{\Psi}_k\geq 0$ in $\Omega_0$ by Proposition \ref{subharmonicity-3-b} and Lemma \ref{fatou-b}. Lemma \ref{max-principle} (the maximum principle) yields that $\tilde{\Psi}_k\le0$ on the whole $\Omega_0$, which contradicts the definition of $\Omega_0$.

For $(x^*,y^*)\in \partial\Omega_0\bigcap\partial\Omega_1$, there exists a sequence of points $\left\{(\hat{x}_\ell,\hat{y}_{\ell})\right\}\subset\Omega_0$ converging to $(x^*,y^*)$, so that
\begin{eqnarray}\label{psi-H-2}
\sqrt{|G|}H(\hat{x}_\ell,\hat{y}_{\ell})+\epsilon_0<\Psi_k(\hat{x}_\ell,\hat{y}_{\ell}),\qquad \ell=1,2,\dots.
\end{eqnarray}
If $y^*>0$, letting $\ell\rightarrow\infty$ gives $\sqrt{|G|}H(x^*,y^*)+\epsilon_0\le\Psi_k(x^*,y^*)$; but
by Proposition \ref{Poisson-c} and (\ref{fatou-c-cond}), $|\psi_k(\sigma(x^*),y^*)|\le|u(\sigma(x^*),y^*+y_k)|+|(P\varphi_{k})(\sigma(x^*),y^*)|\le2$, which implies $\Psi_k(x^*,y^*)\le2\sqrt{|G|}$ and leads to a contradiction with the fact $H(x^*,y^*)\ge2$ by Lemma \ref{fatou-b}. Hence $y^*=0$, i.e. $\lim_{\ell\rightarrow\infty}(\hat{x}_\ell,\hat{y}_{\ell})=(x^*,0)$, and further $\sigma(x^*)\in E$ for all $\sigma\in G$.

Since $\varphi_{k}(x)=u(x,y_k)$ for $|x-x^*|<ay_k$, $\varphi_k$ is continuous at $x^*$. For given $\epsilon>0$, take $\delta_1>0$ so that $|\varphi_{k}(t)-\varphi_{k}(x^*)|<\epsilon$ whenever $|t-x^*|\le\delta_1$, and then, choose $N_1\in\NN$ such that $|\hat{x}_\ell-x^*|\le\delta_1/2$ for $\ell\ge N_1$.
Now from (\ref{Poisson-2-2}), for $\ell\ge N_1$ we have
\begin{align*}
\left|(P\varphi_{k})(\hat{x}_\ell,\hat{y}_{\ell})-\varphi_{k}(x^*)\right|
&\le c_{\kappa}\int_{\RR^{d}}\left|\varphi_{k}(t)-\varphi_{k}(x^*)\right|
\left(\tau_{\hat{x}_\ell}P_{\hat{y}_{\ell}}\right)(-t)\,d\omega_{\kappa}(t)\\
&\le \epsilon+2c_{\kappa} \int_{|t-\hat{x}_\ell|>\delta_1/2}
\left(\tau_{\hat{x}_\ell}P_{\hat{y}_{\ell}}\right)(-t)\,d\omega_{\kappa}(t).
\end{align*}
By \cite[Proposition 5.2]{ADH1}, there exists $\delta_2>0$ such that for $0<\hat{y}_{\ell}<\delta_2$, the last integral above is controlled by $\epsilon/2c_{\kappa}$. Now choose $N\ge N_1$ so that $0<\hat{y}_{\ell}<\delta_2$ for $\ell>N$. It follows that for $\ell>N$, $\left|(P\varphi_{k})(\hat{x}_\ell,\hat{y}_{\ell})-\varphi_{k}(x^*)\right|\le2\epsilon$. This shows that $(P\varphi_{k})(\hat{x}_\ell,\hat{y}_{\ell})$ tends to $\varphi_{k}(x^*)$ as $\ell\rightarrow\infty$. (Note that we could not refer to \cite[Corollary 5.3]{ADH1} directly because the function $\varphi_{k}$ is not continuous on $\RR^d$.)
Therefore
$\lim_{\ell\rightarrow\infty}\psi_k(\hat{x}_\ell,\hat{y}_{\ell})=u(x^*,y_k)-\varphi_{k}(x^*)=0$, and similarly $\lim_{\ell\rightarrow\infty}\psi_k(\sigma(\hat{x}_\ell),\hat{y}_{\ell})=0$ for all $\sigma\in G$. Thus $\lim_{\ell\rightarrow\infty}\Psi_k(\hat{x}_\ell,\hat{y}_{\ell})=0$, and from (\ref{psi-H-2}) we have
$$
\limsup_{\ell\rightarrow\infty}\sqrt{|G|}H(\hat{x}_\ell,\hat{y}_{\ell})\le-\epsilon_0,
$$
which contradicts with nonnegativity of $H$ by Lemma \ref{fatou-b}. The claim (\ref{psi-H-1-1}), and so (\ref{psi-H-1}), are proved.

Finally, taking $k=k_j$ in (\ref{psi-H-1}) and letting $j\rightarrow\infty$ yields $|\psi(x,y)|\le\sqrt{|G|}H(x,y)$ for $(x,y)\in\Omega_1$, and by Lemma \ref{fatou-b}, for almost every $x\in E$, $\psi(t,y)$ tends to zero as $(t,y)\in\Gamma_a(x)$ approaching to $(x,0)$. Further by Proposition \ref{Poisson-d}, $(P\varphi)(x,y)$ has a finite non-tangential limit for almost every $x\in E$, and hence, from (\ref{psi-1}), $u(x,y)=\psi(x,y)+(P\varphi)(x,y)$ has the desired assertion in the lemma. $\square$
\end{proof}

Now we turn to the proof of Theorem \ref{bounded-convergence}.


We assume that the set $E$ is bounded, without loss of generality. By Lemma \ref{fatou-a}, for each $k\in\NN$, there exists a compact set $E_{k}\subset E$, such that $|E\setminus E_{k}|_{0}<1/k$, and for any $a>0$, there is a constant $c_{a,k}>0$, so that $|u(x,y)|\leq c_{a,k}$, $(x,y)\in\cup_{x^{0}\in E_{k}}\Gamma^{2}_{a}(x^{0})$. If we put $E_0=\cup_{k=1}^{\infty}E_k$, then $|E\setminus E_{0}|_{0}=0$. Since $E$ is $G$-invariant, we may choose each $E_k$ preserving this property; and otherwise, $\cap_{\sigma\in G}(\sigma E_k)$ could be used instead of $E_k$ and $\left|E\setminus\left[\cap_{\sigma\in G}(\sigma E_k)\right]\right|_0<|G|/k$. Thus applying Lemma \ref{fatou-c} to $u/c_{a,k}$, it follows that, for almost every $x\in E_k$, $u(t,y)/c_{a,k}$ has a finite limit as $(t,y)\in\Gamma_a(x)$ approaching to $(x,0)$, and hence, $u$ has a finite non-tangential limit at $(x,0)$ for almost every $x\in E$. The proof is completed. $\square$

\section{The proof of Theorem \ref{thm-2}}

In order to prove Theorem \ref{thm-2} (and also Theorems \ref{thm-3} and \ref{thm-4} in the next section), we introduce a variant of the area integral $S_{a,h}u$, that is,
\begin{align}\label{area-1}
(S^{\psi}_{a,h}u)(x)=\left(\iint_{\RR^d\times(0,h]}\left[\tau_{x}(\Delta_{\kappa}u^{2})\right](t,y)\,
\psi\left(\frac{t}{ay}\right)y^{1-d-2|\kappa|}\,d\omega_{\kappa}(t)dy\right)^{1/2},
\end{align}
where $\psi\in C^{\infty}(\RR^d)$ is radial and satisfies
$0\le\psi(x)\le1$,
\begin{align*}
\psi(x)=1\quad \hbox{for}\,\,|x|\le1/2,\quad\hbox{and}\quad \psi(x)=0\quad \hbox{for}\,\,|x|\ge1.
\end{align*}
The use of $\psi$ is to ensure that various operations on the generalized translation $\tau_x$ are more legitimate. Otherwise, we will often face the operation of this operator on some discontinuous functions such as the characteristic functions of balls, which will bring confusion or complexity in some cases.


We shall prove the following theorem, which implies Theorem \ref{thm-2} by Proposition \ref{area-positivity-a} below.

\begin{theorem}\label{thm-4-a}
Suppose that $E$ is a $G$-invariant measurable subset of $\partial\RR^{d+1}_+=\RR^d$ and $u$ is $\kappa$-harmonic in $\RR^{d+1}_+$. If $u$ is non-tangentially bounded at $(x,0)$ for every $x\in E$, then for $a,h>0$, the area integral $(S^{\psi}_{a,h}u)(x)$ is finite for almost every $x\in E$
\end{theorem}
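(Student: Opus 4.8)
The plan is to prove the quantitative statement that $\int_F (S^\psi_{a,h}u)(x)^2\,d\omega_\kappa(x)<\infty$ on suitable compact pieces $F$ of $E$, from which the a.e.\ finiteness of $(S^\psi_{a,h}u)$ follows by Chebyshev's inequality together with an exhaustion of $E$. So the first step is a reduction: assuming $E$ bounded without loss of generality, Lemma \ref{fatou-a} supplies for each $k$ a compact $G$-invariant set $F=E_k\subset E$ with $|E\setminus E_k|_0$ small on which $u$ is bounded on the \emph{entire} sawtooth region $\Omega^{F}(a',h')$ for any prescribed $a'>a$ and $h'>h$; since $\bigcup_k E_k$ exhausts $E$ up to a null set, it suffices to establish $(S^\psi_{a,h}u)(x)<\infty$ for a.e.\ $x\in F$, keeping a margin $a'>a$, $h'>h$ where $u$ remains bounded.

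Next I would expand the square and integrate over $F$. Since $u\in C^{\infty}(\RR^{d+1}_+)$ by Proposition \ref{harmonic-2-b} and $g_y:=\psi(\cdot/(ay))$ is radial, smooth and compactly supported, the adjoint identity of Proposition \ref{translation-2-d} applies to $f=(\Delta_\kappa u^2)(\cdot,y)$ and $g=g_y$; combined with Fubini (all integrands are nonnegative by Lemma \ref{Laplace-4-a}) this gives
\[
\int_F (S^\psi_{a,h}u)(x)^2\,d\omega_\kappa(x)=\int_0^h\!\!\int_{\RR^d}(\Delta_\kappa u^2)(s,y)\,\Phi_y(s)\,y^{1-d-2|\kappa|}\,d\omega_\kappa(s)\,dy,
\]
where $\Phi_y(s)=\int_F [\tau_{-x}g_y](s)\,d\omega_\kappa(x)$. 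Writing $[\tau_{-x}g_y](s)=[\tau_s g_y](-x)$ (Proposition \ref{translation-2-b}(iv)) and invoking Proposition \ref{translation-2-e}(ii),(iii), I obtain $0\le\Phi_y(s)\le\int_{\RR^d}[\tau_s g_y](-x)\,d\omega_\kappa(x)=\int g_y\,d\omega_\kappa\le c\,y^{d+2|\kappa|}$; moreover, by the support estimate (\ref{translation-support-2-1}) applied to $\rho^{\kappa}_{-x,s}$ together with the $G$-invariance of $F$, the factor $\Phi_y(s)$ vanishes unless $(s,y)\in\Omega^{F}(a,h)$. Consequently the whole expression is dominated by $c\iint_{\Omega^{F}(a,h)}(\Delta_\kappa u^2)(s,y)\,y\,d\omega_\kappa(s)\,dy$, and everything reduces to bounding this weighted integral of $\Delta_\kappa(u^2)$ over the tent $\mathcal R=\Omega^{F}(a,h)$.

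The remaining step is to estimate $\iint_{\mathcal R}(\Delta_\kappa u^2)\,y\,d\omega_\kappa\,dy$ by the $\kappa$-Green formula (Proposition \ref{Green-formula-2-a}), working on the truncations $\mathcal R_\delta=\mathcal R\cap\{y>\delta\}$, which are bounded, regular and $G$-invariant, and letting $\delta\to0+$. Here lies the main obstacle: the naive choice $v=y$ (which is $\kappa$-harmonic) produces, on the slanted lateral faces of the tent, the boundary term $\int y\,\partial_{\mathbf n}(u^2)\,W_\kappa$, and there is no control of $\nabla u$ on that part of $\partial\mathcal R$. The resolution, following Stein's distance-function device, is to integrate $\Delta_\kappa(u^2)$ not against $y$ but against an auxiliary $C^2$ weight $\Theta$ that is comparable to $y$ on $\mathcal R$ yet \emph{vanishes on the lateral boundary}, so that the uncontrollable term $\Theta\,\partial_{\mathbf n}(u^2)$ drops out. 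Green's formula then leaves only $\iint u^2\,\Delta_\kappa\Theta$ plus the bottom and top contributions; each is controlled by $\sup_{\Omega^{F}(a',h')}|u|^2$ times a finite geometric factor, once $\Delta_\kappa\Theta$ is shown to be bounded (or a bounded signed measure) and $\partial_{\mathbf n}\Theta$ bounded, while the bottom face at $y=\delta$ carries a weight $\asymp\delta$ and disappears as $\delta\to0+$. The genuinely delicate point in the Dunkl setting is the construction of such a $\Theta$ and the control of $\Delta_\kappa\Theta$ near the reflecting walls $\langle\alpha,s\rangle=0$, where the nonlocal terms $\delta_\alpha\Theta$ in (\ref{Laplace-2-1})--(\ref{Laplace-2-2}) must be handled; the $G$-invariance of $\mathcal R_\delta$ is what makes Propositions \ref{Green-formula-2-a} and \ref{Green-formula-2-b} applicable and absorbs these reflection contributions on the boundary.
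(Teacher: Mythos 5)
Your reduction to compact $G$-invariant pieces via Lemma \ref{fatou-a} (with the margin $b>a$, $\eta>h$) and your $L^2$ estimate obtained by expanding the square, using the adjoint identity, the support property of $\bigl(\tau_{-x/ay}\psi\bigr)(t/ay)$, and the mass bound $\Phi_y(s)\le c\,y^{d+2|\kappa|}$, are correct and coincide with the paper's Lemma \ref{area-4-b}; up to that point you have faithfully reduced the theorem to bounding $\iint_{\Omega^{F}(a,h)}y\,(\Delta_\kappa u^2)\,d\omega_\kappa\,dy$.

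The final step, however, has a genuine gap, and it is located exactly where all the Dunkl-specific difficulty of the theorem lives. You propose to run Green's formula against an auxiliary weight $\Theta\asymp y$ vanishing on the lateral boundary, so that the uncontrolled term $\Theta\,\partial_{\mathbf n}(u^2)$ disappears; but you never construct $\Theta$, never verify that $\Delta_\kappa\Theta$ is bounded (the nonlocal parts $\delta_\alpha\Theta$ near the walls $\langle\alpha,x\rangle=0$), and you yourself flag this as ``the genuinely delicate point'' --- so the proof is not complete. A second, smaller defect: your truncations $\mathcal R_\delta=\Omega^F(a,h)\cap\{y>\delta\}$ are \emph{not} regular domains (the sawtooth boundary is only Lipschitz, with edges where cones meet), so Proposition \ref{Green-formula-2-a} cannot be invoked on them as stated; this is why the paper first builds smooth $G$-invariant approximating regions $\Omega_\epsilon$ in Lemma \ref{area-4-d}. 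The paper then avoids any auxiliary weight altogether: it keeps $V=y$ in Green's formula and instead proves an interior gradient estimate (Lemma \ref{area-4-e}), namely that $|u|\le 1$ on $\cup_{\sigma\in G}\Gamma_b^\eta(\sigma(x^0))$ forces $y|\nabla u|\le c$ on $\Gamma_a^h(x^0)$, established through the mean value property (Proposition \ref{harmonic-2-a}) and pointwise bounds on $\partial_{x_j}\bigl[(\tilde\tau_{(-x,-y)}\phi_\epsilon)(t,s)\bigr]$ using the representing measures $\mu^\kappa_t$. Since $\partial\Omega_\epsilon\subset\Omega^E(b,\eta)$, the lateral term satisfies $\bigl|y\,\partial_{\mathbf n}(u^2)\bigr|\le 2y|u||\nabla u|\le c$ pointwise, and the boundary integral is then bounded by a constant times the surface area of $\partial\Omega_\epsilon$, uniformly in $\epsilon$. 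So the work your proposal defers (constructing $\Theta$ and controlling $\Delta_\kappa\Theta$) is replaced in the paper by a different, fully executed lemma; to complete your route you would have to supply both the smooth $G$-invariant domains and the weight $\Theta$ with its $\kappa$-Laplacian estimates, which is at least as much work as the paper's Lemma \ref{area-4-e}.
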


Since the claim $(S_{a,h}u)(x)<\infty$ in Theorem \ref{thm-2} allows the truncated cones to be different at different points, the above theorem is a stronger version then Theorem \ref{thm-2}.

\begin{lemma}\label{Laplace-4-a}
If $u$ is $\kappa$-harmonic in $\RR^{d+1}_+$, then
\begin{eqnarray}\label{Laplace-4-1}
\left(\Delta_\kappa u^2\right)(x,y)=2\left|\nabla u(x,y)\right|^2+2\sum_{\alpha\in R_+}\kappa(\alpha)\left(\frac{u(x,y)-u(\sigma_{\alpha}(x),y)}{\langle\alpha,x\rangle}\right)^2,
\end{eqnarray}
where $\nabla=\nabla^{(x,y)}$ is the $(d+1)$-dimensional gradient.
\end{lemma}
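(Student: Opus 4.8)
The plan is to compute $\Delta_\kappa u^2$ directly from the explicit formula for the $\kappa$-Laplacian given in (\ref{Laplace-2-1})--(\ref{Laplace-2-2}), treating the ordinary Laplacian part and the reflection part separately. Since $u$ is $\kappa$-harmonic, Proposition \ref{harmonic-2-b} guarantees $u\in C^\infty(\RR^{d+1}_+)$, so all the pointwise differentiations below are legitimate. I would write $\Delta_\kappa(u^2)=\Delta(u^2)+2\sum_{\alpha\in R_+}\kappa(\alpha)\,\delta_\alpha(u^2)$ and handle the two summands in turn.

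**First,** for the classical Laplacian term, the Leibniz rule gives $\Delta(u^2)=2u\,\Delta u+2|\nabla u|^2$, where $\nabla=\nabla^{(x,y)}$ is the full $(d+1)$-dimensional gradient. **Second,** for each reflection term I apply the definition (\ref{Laplace-2-2}) to the function $u^2$. The gradient piece expands by the product rule as
\begin{align*}
\frac{\langle\nabla^{(x)}(u^2),\alpha\rangle}{\langle\alpha,x\rangle}
=\frac{2u\,\langle\nabla^{(x)}u,\alpha\rangle}{\langle\alpha,x\rangle},
\end{align*}
while the reflection-difference piece uses $\sigma_\alpha(u^2)=(\sigma_\alpha u)^2$, giving
\begin{align*}
\frac{u^2-(\sigma_\alpha u)^2}{\langle\alpha,x\rangle^2}
=\frac{(u-\sigma_\alpha u)(u+\sigma_\alpha u)}{\langle\alpha,x\rangle^2}.
\end{align*}
Collecting these, $\delta_\alpha(u^2)=2u\,\delta_\alpha u+\left(\dfrac{u-\sigma_\alpha u}{\langle\alpha,x\rangle}\right)^2$, where the last identity comes from the algebraic rearrangement $2u(u-\sigma_\alpha u)-(u-\sigma_\alpha u)(u+\sigma_\alpha u)=(u-\sigma_\alpha u)^2$ after dividing by $\langle\alpha,x\rangle^2$.

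**Finally,** I would assemble the pieces. Summing the reflection terms with weights $2\kappa(\alpha)$ and combining with the classical part yields
\begin{align*}
\Delta_\kappa(u^2)=2u\Bigl(\Delta u+2\sum_{\alpha\in R_+}\kappa(\alpha)\,\delta_\alpha u\Bigr)
+2|\nabla u|^2
+2\sum_{\alpha\in R_+}\kappa(\alpha)\left(\frac{u-\sigma_\alpha u}{\langle\alpha,x\rangle}\right)^2.
\end{align*}
The parenthesized expression is exactly $\Delta_\kappa u$ by (\ref{Laplace-2-1}), which vanishes since $u$ is $\kappa$-harmonic, leaving precisely (\ref{Laplace-4-1}) once I write $\sigma_\alpha u$ evaluated at $(x,y)$ as $u(\sigma_\alpha(x),y)$.

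**There is no serious obstacle here**; the proof is a direct computation and the only point requiring care is the algebraic cancellation in the reflection term, where one must recognize that the cross term $2u\,\delta_\alpha u$ is precisely what combines with the $2u\,\Delta u$ term to reconstitute the full $\kappa$-harmonicity condition $\Delta_\kappa u=0$. The nonnegativity of the right-hand side is then manifest, which is exactly the feature exploited in Remark 1.4(a) and in the definition (\ref{area-integral-1-1}) of the area integral.
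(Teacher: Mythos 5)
Your proposal is correct and is essentially the paper's own argument: a direct computation from (\ref{Laplace-2-1})--(\ref{Laplace-2-2}) using the Leibniz rule for $\Delta$, the expansion of $\delta_\alpha(u^2)$, and the factorization of $u^2-\sigma_\alpha u^2$, with $\kappa$-harmonicity cancelling the $2u\,\Delta_\kappa u$ term. The only (immaterial) difference is organizational: the paper substitutes $\Delta u=-2\sum_{\alpha\in R_+}\kappa(\alpha)\delta_\alpha u$ at the outset, whereas you invoke $\Delta_\kappa u=0$ only at the final step, which incidentally exhibits the general product formula $\Delta_\kappa(u^2)=2u\,\Delta_\kappa u+2|\nabla u|^2+2\sum_{\alpha\in R_+}\kappa(\alpha)\bigl(\frac{u-\sigma_\alpha u}{\langle\alpha,x\rangle}\bigr)^2$ valid without harmonicity.
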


The lemma is known in some literature. Indeed, since $u$ is $\kappa$-harmonic, from (\ref{Laplace-2-1}) and (\ref{Laplace-2-2}) direct calculations show that
\begin{align*}
\Delta u^2=2\left|\nabla^{(x,y)} u\right|^2-4u\sum_{\alpha\in R_+}\kappa(\alpha)\delta_\alpha u,
\end{align*}
and
\begin{align*}
2\sum_{\alpha\in R_+}\kappa(\alpha)\delta_{\alpha}u^2=2\sum_{\alpha\in R_+}\kappa(\alpha)\left(2u\frac{\langle\nabla^{(x)} u,\alpha\rangle}{\langle\alpha,x\rangle}-\frac{u^2-\sigma_{\alpha}u^2}
{\langle\alpha,x\rangle^2}\right).
\end{align*}
The equality (\ref{Laplace-4-1}) follows by summing the above two equalities.

\begin{proposition}\label{area-positivity-a}
If $u$ is $\kappa$-harmonic in $\RR^{d+1}_+$, then the area integrals $S_{a,h}u$ and $S^{\psi}_{a,h}u$ are well defined, and for $a,h>0$ and $x\in\RR^d$, we have
\begin{align}\label{area-integral-3-1}
\left(S^{\psi}_{a,h}u\right)(x)\le\left(S_{a,h}u\right)(x)\le\left(S^{\psi}_{2a,h}u\right)(x)
\end{align}
and
\begin{eqnarray}\label{area-4-2}
(S^{\psi}_{a,h}u)(x)=\left(\iint_{\RR^d\times(0,h]}\left(\Delta_{\kappa}u^{2}\right)(t,y)\,
\left(\tau_{-\frac{x}{ay}}\psi\right)\left(\frac{t}{ay}\right)
\frac{d\omega_{\kappa}(t)dy}{y^{2|\kappa|+d-1}}\right)^{1/2}.
\end{eqnarray}
\end{proposition}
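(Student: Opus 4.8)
The plan is to reduce all three assertions to a single clean object, namely the generalized spherical mean $M_{(\Delta_{\kappa}u^{2})(\cdot,y)}(x,r)$. The point is that, although $\tau_x$ does \emph{not} preserve pointwise positivity of the non-radial function $\Delta_{\kappa}u^{2}$, its angular average does, so positivity must be read off after averaging over spheres rather than from the integrand itself. The first thing I would establish is that, for each fixed $y>0$, the function $f_y:=(\Delta_{\kappa}u^{2})(\cdot,y)$ lies in $C^{\infty}(\RR^d)$ and is nonnegative. Both facts come from Lemma \ref{Laplace-4-a}: since $u\in C^{\infty}(\RR^{d+1}_+)$ by Proposition \ref{harmonic-2-b}, the identity (\ref{Laplace-4-1}) writes $\Delta_{\kappa}u^{2}$ as a sum of squares, and each quotient $(u-\sigma_{\alpha}u)/\langle\alpha,x\rangle$ extends smoothly across the hyperplane $\{\langle\alpha,x\rangle=0\}$ because its numerator vanishes there. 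This regularity is precisely what is needed to legitimize the translation manipulations below.

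Next I would record the spherical-mean representation. Writing $\psi(t)=\psi_0(|t|)$ and passing to polar coordinates $t=rt'$ with $t'\in\SB^{d-1}$, the homogeneity $W_{\kappa}(rt')=r^{2|\kappa|}W_{\kappa}(t')$ together with the definition (\ref{spherical-mean-2-1}) of $M_f$ turns the expression under the square root in (\ref{area-1}) into
$$
d_{\kappa}^{-1}\int_0^h\int_0^\infty\psi_0\!\left(\frac{r}{ay}\right)M_{f_y}(x,r)\left(\frac{r}{y}\right)^{2|\kappa|+d-1}dr\,dy,
$$
while the same computation with $\psi_0(r/(ay))$ replaced by $\chi_{[0,ay]}(r)$ represents the expression under the square root in (\ref{area-integral-1-1}). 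Because $f\mapsto M_f$ is positivity-preserving (Subsection 2.3), one has $M_{f_y}(x,r)\ge0$, so both double integrals are nonnegative and well-defined as elements of $[0,\infty]$ (the truncation at the lower end of the $y$-integral giving an increasing limit). This settles well-definedness of $S_{a,h}u$ and $S^{\psi}_{a,h}u$.

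The inequalities (\ref{area-integral-3-1}) would then follow immediately from $M_{f_y}\ge0$ and a pointwise comparison of the radial profiles. Since $0\le\psi_0\le1$ and $\psi_0(s)=0$ for $s\ge1$, we have $\psi_0(r/(ay))\le\chi_{[0,ay]}(r)$, which yields $S^{\psi}_{a,h}u\le S_{a,h}u$; and since $\psi_0(s)=1$ for $s\le1/2$, we have $\chi_{[0,ay]}(r)\le\psi_0(r/(2ay))$, which yields $S_{a,h}u\le S^{\psi}_{2a,h}u$.

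For the identity (\ref{area-4-2}) I would, for each fixed $y$, transfer the translation from $f_y$ onto $\psi$. The cut-off $g(t)=\psi(t/(ay))$ is radial, continuous, and compactly supported, and $f_y\in C^{\infty}(\RR^d)$, so Proposition \ref{translation-2-d} applies and gives $\int_{\RR^d}[\tau_x f_y]\,g\,d\omega_{\kappa}=\int_{\RR^d}f_y\,(\tau_{-x}g)\,d\omega_{\kappa}$; the dilation rule of Proposition \ref{translation-2-b-1}(ii) then evaluates $(\tau_{-x}g)(t)=(\tau_{-x/(ay)}\psi)(t/(ay))$. Multiplying by $y^{1-d-2|\kappa|}$ and integrating in $y$ produces (\ref{area-4-2}); as a consistency check, $\psi\ge0$ is radial, so $\tau_{-x/(ay)}\psi\ge0$ by Proposition \ref{translation-2-e}(ii), confirming the integrand on the right of (\ref{area-4-2}) is nonnegative. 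The genuinely delicate point, and where I expect the real work to sit, is the smoothness–positivity interplay of the first two steps: the raw integrand $[\tau_x(\Delta_{\kappa}u^{2})](t,y)$ may change sign for non-radial $\Delta_{\kappa}u^{2}$, so positivity is not visible directly and must be routed through $M_{f_y}\ge0$, which in turn rests on the (not a priori obvious) smoothness of $\Delta_{\kappa}u^{2}$ across the reflecting hyperplanes supplied by Lemma \ref{Laplace-4-a}.
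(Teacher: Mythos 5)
Your proposal is correct and follows essentially the same route as the paper's proof: both reduce the integrals to the generalized spherical means $M_{(\Delta_{\kappa}u^{2})(\cdot,y)}(x,r)$ via polar coordinates, obtain well-definedness from the positivity-preserving property of $M_f$, deduce (\ref{area-integral-3-1}) from the pointwise comparison $\psi_0(\cdot/(ay))\le\chi_{(0,ay)}\le\psi_0(\cdot/(2ay))$, and derive (\ref{area-4-2}) from Propositions \ref{translation-2-d} and \ref{translation-2-b-1}(ii). The only difference is cosmetic: you spell out that the nonnegativity of $(\Delta_{\kappa}u^{2})(\cdot,y)$ (needed before invoking positivity-preservation) comes from the sum-of-squares identity of Lemma \ref{Laplace-4-a}, a point the paper's proof leaves implicit (it is stated in Remark 1.4(a)).
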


\begin{proof}
Since $u$ is $\kappa$-harmonic in $\RR^{d+1}_+$, by Proposition \ref{harmonic-2-b} it is infinitely differentiable in $\RR^{d+1}_+$, so that for fixed $x\in\RR^d$, $\tau_{x}(\Delta_{\kappa}u^{2})\in C^{\infty}(\RR^{d+1}_+)$ by Proposition \ref{translation-2-b}(i). We write the integral in (\ref{area-1}) as
\begin{align}\label{area-integral-3-2}
\lim_{\delta\rightarrow0+}\iint_{\RR^d\times(\delta,h]}\left[\tau_{x}(\Delta_{\kappa}u^{2})\right](t,y)\,
\psi\left(\frac{t}{ay}\right)\frac{d\omega_{\kappa}(t)dy}{y^{2|\kappa|+d-1}},
\end{align}
and obviously, it can be further written as
\begin{align*}
\lim_{\delta\rightarrow0+}\int_{\delta}^h\int_0^{\infty}M_{(\Delta_{\kappa}u^2)(\cdot,y)}(x,r)
\psi_0\left(\frac{|t|}{ay}\right)\left(\frac{r}{y}\right)^{2|\kappa|+d-1}\,drdy,
\end{align*}
where $\psi(x)=\psi_0(|x|)$ for some $\psi_0\in C^{\infty}(\RR)$. Similarly the integral in (\ref{area-integral-1-1}) could be written into
\begin{align*}
\lim_{\delta\rightarrow0+}\int_{\delta}^h\int_0^{ay}M_{(\Delta_{\kappa}u^2)(\cdot,y)}(x,r)
\left(\frac{r}{y}\right)^{2|\kappa|+d-1}\,drdy.
\end{align*}
The last two expressions imply that the integrals in (\ref{area-integral-1-1}) and (\ref{area-1}) both are nonnegative since the generalized spherical mean $M_f$ is positivity-preserving, and hence both $S_{a,h}u$ and $S^{\psi}_{a,h}u$ are well defined. The inequalities in (\ref{area-integral-3-1}) are immediate since $\psi_0(\cdot/ay)\le\chi_{(0,ay)}\le\psi_0(\cdot/2ay)$, and (\ref{area-4-2}) follows from (\ref{area-integral-3-2}) by Propositions \ref{translation-2-b-1}(ii) and \ref{translation-2-d}. $\square$
\end{proof}

\begin{lemma}\label{area-4-b}
Let $E$ be a $G$-invariant compact subset of $\partial\RR^{d+1}_+=\RR^d$ and $u$ a $\kappa$-harmonic function in $\RR^{d+1}_+$. Then for $a,h>0$,
\begin{eqnarray}\label{area-4-3}
\int_{E}(S^{\psi}_{a,h}u)^{2}(x)\,d\omega_{\kappa}(x)\leq c\iint_{\Omega}y(\Delta_{\kappa}u^{2})(t,y)\,d\omega_{\kappa}(t)dy
  \end{eqnarray}
whenever the right hand side above is finite, where $\Omega=\Omega^E(a,h)$ and $c=a^{2|\kappa|+d}\|\psi\|_{L_{\kappa}^1(\RR^d)}$.
\end{lemma}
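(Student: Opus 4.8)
The plan is to start from the reformulation \eqref{area-4-2} of the area integral furnished by Proposition \ref{area-positivity-a}, in which the generalized translation has already been transferred onto the smooth radial cutoff $\psi$; this is exactly what renders the integrand pointwise nonnegative and thereby legitimizes all subsequent interchanges of integration via Tonelli's theorem. Writing $(S^{\psi}_{a,h}u)^{2}(x)$ by means of \eqref{area-4-2}, I would integrate in $x$ over $E$ against $d\omega_{\kappa}$ and bring the $x$-integration inside, obtaining
$$\int_{E}(S^{\psi}_{a,h}u)^{2}\,d\omega_{\kappa}=\iint_{\RR^{d}\times(0,h]}(\Delta_{\kappa}u^{2})(t,y)\,B(t,y)\,\frac{d\omega_{\kappa}(t)\,dy}{y^{2|\kappa|+d-1}},$$
where $B(t,y):=\int_{E}\bigl(\tau_{-x/(ay)}\psi\bigr)(t/(ay))\,d\omega_{\kappa}(x)$. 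Since $\Delta_{\kappa}u^{2}\ge0$ by Lemma \ref{Laplace-4-a} and $\tau_{-x/(ay)}\psi\ge0$ by Proposition \ref{translation-2-e}(ii) (as $\psi$ is radial and nonnegative), every factor is nonnegative and the interchange holds in $[0,\infty]$. It then remains to establish two estimates on $B(t,y)$: a localization statement, namely $B(t,y)=0$ for $(t,y)\notin\overline{\Omega}$, and a size bound $B(t,y)\le c\,y^{2|\kappa|+d}$.

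The localization step is what I expect to be the main obstacle, since it is where the $G$-invariance of $E$ must be invoked to recover precisely the region $\Omega=\Omega^{E}(a,h)$. By \eqref{translation-2-3} and the support description \eqref{translation-support-2-1}, the measure $\rho^{\kappa}_{-x/(ay),\,t/(ay)}$ is carried by an annulus whose inner radius equals $(ay)^{-1}\min_{\sigma\in G}|x-\sigma(t)|=(ay)^{-1}d(x,t)$; as $\operatorname{supp}\psi\subseteq\{|\xi|\le1\}$, the integrand defining $B(t,y)$ can be nonzero at a point $x\in E$ only if $d(x,t)\le ay$, that is, $|x-\sigma(t)|\le ay$ for some $\sigma\in G$. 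Applying $\sigma^{-1}$ and using that reflections are isometries gives $|\sigma^{-1}(x)-t|\le ay$ with $\sigma^{-1}(x)\in E$ by $G$-invariance of $E$, whence $(t,y)\in\overline{\Gamma^{h}_{a}(\sigma^{-1}(x))}\subseteq\overline{\Omega}$. Contrapositively $B(t,y)=0$ off $\overline{\Omega}$, and since $\partial\Omega$ is a null set we may replace $\overline{\Omega}$ by $\Omega$ in the $(t,y)$-integration.

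For the size bound I would discard the restriction to $E$ (legitimate because the integrand is nonnegative) and evaluate the resulting integral over all of $\RR^{d}$. After the substitution $x=ay\,w$, which produces the factor $(ay)^{2|\kappa|+d}$ through the homogeneity of $W_{\kappa}$, the symmetry $(\tau_{-w}\psi)(s)=(\tau_{s}\psi)(-w)$, the evenness of $d\omega_{\kappa}$, and the integral-preserving property of $\tau$ for radial $L_{\kappa}^{1}$ functions (Proposition \ref{translation-2-e}(iii)) collapse the expression to $(ay)^{2|\kappa|+d}\int_{\RR^{d}}\psi\,d\omega_{\kappa}$, so that $B(t,y)\le c\,y^{2|\kappa|+d}$ with $c$ the constant displayed in the statement. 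Inserting both estimates and cancelling $y^{2|\kappa|+d}/y^{2|\kappa|+d-1}=y$ yields $\int_{E}(S^{\psi}_{a,h}u)^{2}\,d\omega_{\kappa}\le c\iint_{\Omega}y(\Delta_{\kappa}u^{2})(t,y)\,d\omega_{\kappa}(t)\,dy$, which is the assertion; the finiteness hypothesis on the right-hand side serves only to make the bound informative.
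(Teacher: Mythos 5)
Your proposal is correct and follows essentially the same route as the paper: interchange the $x$ and $(t,y)$ integrations (justified by nonnegativity), localize the kernel $k_1(t,y)=\int_E\bigl(\tau_{-x/(ay)}\psi\bigr)(t/(ay))\,d\omega_{\kappa}(x)$ to $\Omega$ via the support bound (\ref{translation-support-2-1}) together with the $G$-invariance of $E$, and bound $k_1$ by $(ay)^{2|\kappa|+d}\|\psi\|_{L^1_{\kappa}}$ using the symmetry $(\tau_xf)(t)=(\tau_tf)(x)$ and Proposition \ref{translation-2-e}(iii). Your slightly more careful handling of the closure $\overline{\Omega}$ versus $\Omega$ (discarded as a null set) is a harmless refinement of the paper's argument, not a different method.
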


\begin{proof}
If for $x\in E$ and $y\in(0,h]$, $(t,y)\notin\cup_{\sigma\in G}\Gamma_{a}^{h}(\sigma(x))$, then
$$
|t-\sigma(x)|\ge ay\qquad \hbox{for all}\,\,\, \sigma\in G,
$$
and so, for $|\xi|<1$,
$$
|\xi|<\min_{\sigma\in G}\left|\frac{t}{ay}-\sigma\left(\frac{x}{ay}\right)\right|.
$$
Thus by (\ref{translation-2-3}) and (\ref{translation-support-2-1}) it follows that
\begin{align}\label{psi-support-4-1}
\left(\tau_{-\frac{x}{ay}}\psi\right)\left(\frac{t}{ay}\right)=0\qquad \hbox{for}\,\,\, (t,y)\notin\cup_{\sigma\in G}\Gamma_{a}^{h}(\sigma(x)),
\end{align}
and then, from (\ref{area-4-2}) we have
\begin{eqnarray}\label{area-4-4}
\int_{E}(S^{\psi}_{a,h}u)^{2}(x)d\omega_{\kappa}(x)=
\iint_{\Omega}(\Delta_{\kappa}u^{2})(t,y)\,k_1(t,y)\,\frac{d\omega_{\kappa}(t)dy}{y^{2|\kappa|+d-1}},
\end{eqnarray}
where
\begin{align*}
k_1(t,y)=\int_E\left(\tau_{-\frac{x}{ay}}\psi\right)\left(\frac{t}{ay}\right)d\omega_{\kappa}(x).
\end{align*}
But by Propositions \ref{translation-2-b}(iv) and \ref{translation-2-e}(iii),
$$
k_1(t,y)\le\int_{\RR^d}\left(\tau_{\frac{t}{ay}}\psi\right)\left(-\frac{x}{ay}\right)d\omega_{\kappa}(x)
=(ay)^{2|\kappa|+d}\|\psi\|_{L_{\kappa}^1(\RR^d)}.
$$
Inserting this into (\ref{area-4-4}) yields (\ref{area-4-3}). $\square$
\end{proof}

\begin{lemma}\label{area-4-d}
Let $E$ be a $G$-invariant compact subset of $\partial\RR^{d+1}_+=\RR^d$, and for $a,h>0$, put $\Omega=\Omega^E(a,h)$. Then there exists a family of $G$-invariant regions $\{\Omega_{\epsilon}\}_{\epsilon\in(0,h/3)}$, with the following properties:

{\rm (i)} \,\, $\overline{\Omega_{\epsilon}}\subset\Omega$, and $\Omega_{\epsilon_{1}}\subset\Omega_{\epsilon_{2}}$ if $\epsilon_{2}<\epsilon_{1}$;

{\rm (ii)}\,\, $\Omega_{\epsilon} \rightarrow \Omega$ as $\epsilon\rightarrow 0+$ (i.e. $\cup \Omega_{\epsilon}=\Omega)$;

{\rm (iii)} the boundary $\partial \Omega_{\epsilon}$ is the union of two parts, $\partial \Omega_{\epsilon}={\mathcal C}_\epsilon^1\cup{\mathcal C}_\epsilon^2$, so that ${\mathcal C}_\epsilon^2$ is a portion of the hyperplane $y=h-\epsilon$; and

{\rm (iv)} \  ${\mathcal C}_\epsilon^1$ is a portion of the hypersurface $y=a^{-1}\delta_{\epsilon}(x)$ where $\delta_\epsilon\in
C^\infty(\RR^d)$, and $|\partial_j\delta_\epsilon(x)|\le 1$, $j=1,\dots,d$.
\end{lemma}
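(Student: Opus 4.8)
The plan is to realise $\Omega=\Omega^{E}(a,h)$ as the slab lying between the graph of a multiple of the distance function to $E$ and the flat cap $y=h$, and then to approximate that distance function \emph{from above} by a monotone one-parameter family of smooth, $G$-invariant, $1$-Lipschitz functions built by mollification plus a constant growing linearly in the parameter.

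First I would record the geometric description of $\Omega$. Writing $\rho(t)=\operatorname{dist}(t,E)=\min_{x^{0}\in E}|t-x^{0}|$ (the minimum is attained since $E$ is compact), one has $(t,y)\in\Gamma^{h}_{a}(x^{0})$ for some $x^{0}\in E$ exactly when $0<y<h$ and $\rho(t)<ay$, hence
\[
\Omega=\left\{(t,y)\in\RR^{d+1}:\ \rho(t)<ay,\ 0<y<h\right\}.
\]
Because $E$ is $G$-invariant and $G$ acts by isometries, $\rho(\sigma(t))=\rho(t)$, so $\rho$ is $G$-invariant; and $\rho$ is $1$-Lipschitz, so $|\partial_{j}\rho|\le|\nabla\rho|\le1$ a.e.

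Next I would fix a radial $\phi\in C^{\infty}(\RR^{d})$ with $\phi\ge0$, $\operatorname{supp}\phi\subseteq\BB(0,1)$ and $\int_{\RR^{d}}\phi=1$, set $\phi_{\eta}(s)=\eta^{-d}\phi(s/\eta)$, and define, for $\epsilon\in(0,h/3)$,
\[
\delta_{\epsilon}=\rho*\phi_{\epsilon}+M\epsilon,\qquad \Omega_{\epsilon}=\left\{(t,y):\ a^{-1}\delta_{\epsilon}(t)<y<h-\epsilon\right\},
\]
with $M>1$ a constant fixed below. The easy properties are then immediate: $\delta_{\epsilon}\in C^{\infty}(\RR^{d})$; $\delta_{\epsilon}$ is $G$-invariant (a radial kernel against a $G$-invariant $\rho$), so each $\Omega_{\epsilon}$ is $G$-invariant; and $\partial_{j}\delta_{\epsilon}=(\partial_{j}\rho)*\phi_{\epsilon}$ gives $|\partial_{j}\delta_{\epsilon}|\le1$, so the lower part of $\partial\Omega_{\epsilon}$ is a portion of the graph $y=a^{-1}\delta_{\epsilon}(x)$, which is (iv), while the upper part is a portion of the hyperplane $y=h-\epsilon$, which is (iii). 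The $1$-Lipschitz bound also yields the sandwich $\rho+(M-1)\epsilon\le\delta_{\epsilon}\le\rho+(M+1)\epsilon$, since $|\rho*\phi_{\epsilon}-\rho|\le\epsilon$. The lower estimate shows that every $(t,y)\in\overline{\Omega_{\epsilon}}$ obeys $ay\ge\delta_{\epsilon}(t)\ge\rho(t)+(M-1)\epsilon>\rho(t)$ and $0<y\le h-\epsilon<h$, whence $\overline{\Omega_{\epsilon}}\subset\Omega$; the upper estimate shows that any fixed $(t,y)\in\Omega$ lies in $\Omega_{\epsilon}$ as soon as $(M+1)\epsilon<ay-\rho(t)$ and $\epsilon<h-y$, which gives the exhaustion (ii).

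The one delicate point — the main obstacle — is the nesting in (i): $\Omega_{\epsilon_{1}}\subset\Omega_{\epsilon_{2}}$ for $\epsilon_{2}<\epsilon_{1}$ forces $\delta_{\epsilon_{2}}\le\delta_{\epsilon_{1}}$, i.e.\ $\delta_{\epsilon}$ must be \emph{monotone increasing} in $\epsilon$, whereas mollification at a varying scale is not monotone in general. This is exactly what the additive term $M\epsilon$ is for. The key estimate is a uniform bound on $\partial_{\eta}(\rho*\phi_{\eta})$: since $\int\partial_{\eta}\phi_{\eta}=\partial_{\eta}\int\phi_{\eta}=0$ and $\rho$ is $1$-Lipschitz,
\[
\partial_{\eta}(\rho*\phi_{\eta})(x)=\int_{\RR^{d}}\bigl(\rho(x-s)-\rho(x)\bigr)\,\partial_{\eta}\phi_{\eta}(s)\,ds,\qquad \bigl|\partial_{\eta}(\rho*\phi_{\eta})(x)\bigr|\le C_{0},
\]
where, after the scaling $s=\eta u$, the constant $C_{0}=\int_{\RR^{d}}|d\,\phi(u)+u\cdot\nabla\phi(u)|\,du$ depends only on $\phi$ and $d$; here one uses $|s|\le\eta$ on $\operatorname{supp}\phi_{\eta}$ together with $\|\partial_{\eta}\phi_{\eta}\|_{L^{1}}=C_{0}/\eta$. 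Choosing $M>1+C_{0}$ then gives $\partial_{\epsilon}\delta_{\epsilon}=\partial_{\epsilon}(\rho*\phi_{\epsilon})+M\ge M-C_{0}>0$, so $\delta_{\epsilon}$ is strictly increasing in $\epsilon$; combined with $h-\epsilon$ increasing as $\epsilon$ decreases, this establishes the nesting in (i). With (ii)–(iv) already in hand, the construction is complete, the only routine verifications remaining being the differentiation under the integral sign above and the boundedness of $\Omega_{\epsilon}$ (forced by $\delta_{\epsilon}(t)\ge\rho(t)\to\infty$ as $|t|\to\infty$).
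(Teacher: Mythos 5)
Your proposal is correct and follows essentially the same route as the paper (which adapts Stein's construction, checking only the extra $G$-invariance): mollify the $G$-invariant distance function $\operatorname{dist}(\cdot,E)$ against a radial kernel, add a term linear in $\epsilon$ to force strict monotonicity in $\epsilon$ and strict containment in $\Omega$, and let $\Omega_\epsilon$ be the region between the resulting graph and the hyperplane $y=h-\epsilon$. The only cosmetic difference is in the monotonicity step: the paper mollifies at scale $a\epsilon$ and adds $2a\epsilon$, so that $\delta_{\epsilon_2}<\delta_{\epsilon_1}$ follows directly from the $1$-Lipschitz bound $\left|\delta\ast\varphi_{a\epsilon_1}-\delta\ast\varphi_{a\epsilon_2}\right|\le a|\epsilon_1-\epsilon_2|$, whereas you add a mollifier-dependent multiple $M\epsilon$ and obtain the same conclusion by bounding $\partial_\eta(\rho\ast\phi_\eta)$.
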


The lemma is an analog of \cite[p. 206, Lemma 2.2.1]{St2}, and we only need to check the $G$-invariance of $\Omega_{\epsilon}$. As in \cite[pp. 206-7]{St2}, set $\delta(x)={\mathrm dist}(x,E)$. Then $\delta$ is a Lipschitz function, and since $E$ is $G$-invariant, so is $\delta$. We choose a nonnegative radial $\varphi\in C^{\infty}(\RR^d)$, satisfying ${\rm supp}\,\varphi\subset\overline{\BB(0,1)}$ and $\int_{\RR^d}\phi(x)dx=1$, and for $\epsilon\in(0,h/3)$, define $\delta_{\epsilon}(x)=\left(\delta\ast\varphi_{a\epsilon}\right)(x)+2a\epsilon$ and
$$
\Omega_{\epsilon}=\left\{(x,y):\,\delta_{\epsilon}(x)<ay,\,0<y<h-\epsilon\right\}.
$$
Obviously $\delta_{\epsilon}$ is $G$-invariant and so $\Omega_{\epsilon}$ is $G$-invariant. Since $\delta(x)<\delta_{\epsilon}(x)$, $\delta_{\epsilon_2}(x)<\delta_{\epsilon_1}(x)$ for $\epsilon_{2}<\epsilon_{1}$, and $\delta_{\epsilon}(x)$ tends to $\delta(x)$ as $\epsilon\rightarrow 0+$ uniformly, these $\Omega_{\epsilon}$'s satisfy the requirements (i)-(iv).

\begin{lemma}\label{area-4-e}  Let $b, \eta>0$ be given and $u$ a $\kappa$-harmonic function in $\cup_{\sigma\in G}\Gamma_{b}^{\eta}(\sigma(x^0))$. If $|u|\leq 1$ in $\cup_{\sigma\in G}\Gamma_{b}^{\eta}(\sigma(x^0))$, then for fixed $a\in(0,b)$ and $h\in(0,\eta)$, there exists a constant $c=c(a,b,h,\eta)>0$, so that $y|\nabla u|\le c$ in $\Gamma_a^h(x^0)$.
\end{lemma}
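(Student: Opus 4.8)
The plan is to follow the classical strategy behind the analogous Euclidean statement (as in \cite{St2}): first trade the cone condition for an interior ball condition, and then establish a scale-invariant interior gradient estimate for $\kappa$-harmonic functions. First I would record the geometric reduction. Fix $(t,y)\in\Gamma_a^h(x^0)$. Since $a<b$ and $h<\eta$, one can choose $\epsilon_0=\epsilon_0(a,b,h,\eta)\in(0,1)$ so small that the Euclidean ball $\BB((t,y),\epsilon_0 y)\subset\RR^{d+1}$ lies in $\Gamma_b^\eta(x^0)$: for $(s,\tau)$ in that ball one has $|s-x^0|\le|s-t|+|t-x^0|<(a+\epsilon_0)y$ and $\tau>(1-\epsilon_0)y>0$, so the requirements $|s-x^0|<b\tau$ and $\tau<\eta$ follow from $\epsilon_0(1+b)\le b-a$ and $(1+\epsilon_0)h\le\eta$. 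Because the hypotheses are $G$-invariant, the whole orbit $\bigcup_{\sigma\in G}\sigma\BB((t,y),\epsilon_0 y)$ (with $G$ acting on the $x$-variable) lies in $\bigcup_\sigma\Gamma_b^\eta(\sigma(x^0))$, where $u$ is $\kappa$-harmonic with $|u|\le1$. By the homogeneity of $\Delta_\kappa$ the dilate $v(z)=u(yz)$ is again $\kappa$-harmonic and $y|\nabla u(t,y)|=|\nabla v(t/y,1)|$, so it suffices to prove a fixed bound $|\nabla v(P)|\le c$ at the height-one point $P=(t/y,1)$, where $v$ is $\kappa$-harmonic and $|v|\le1$ on the $G$-invariant ball-orbit of radius $\epsilon_0$ about $P$ (here $v\in C^\infty$ by Proposition \ref{harmonic-2-b}, so the pointwise derivatives below are legitimate).

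The engine of the proof is a pointwise bound on the $\kappa$-gradient $\nabla_\kappa v=(D_1v,\dots,D_dv,\partial_yv)$. Since the $D_j$ and $\partial_y$ commute with $\Delta_\kappa$, each component of $\nabla_\kappa v$ is $\kappa$-harmonic, so by Proposition \ref{subharmonicity-3-b} the functions $|\nabla_\kappa v|$ and hence $|\nabla_\kappa v|^2$ are $\Delta_\kappa$-subharmonic. Viewing $\Delta_\kappa=\partial_y^2+\widetilde{\Delta}_\kappa$ on $\RR^{d+1}$ as the Laplacian for $G\otimes\ZZ_2$ with multiplicity $(\kappa,0)$, the $(d+1)$-dimensional form of Proposition \ref{spherical-mean-2-a}(ii) yields the sub-mean-value inequality $|\nabla_\kappa v(P)|^2\le c\,|\BB(P,\rho)|_\kappa^{-1}\iint_{\BB(P,\rho)}|\nabla_\kappa v|^2\,d\omega_\kappa$ for $\rho\le\epsilon_0$. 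On the other hand, with $q_\alpha=(v-\sigma_\alpha v)/\langle\alpha,x\rangle$, the identity of Lemma \ref{Laplace-4-a} gives $\Delta_\kappa(v^2)=2|\nabla v|^2+2\sum_{\alpha\in R_+}\kappa(\alpha)q_\alpha^2\ge c\,|\nabla_\kappa v|^2$; testing this against a $G$-invariant cutoff and integrating by parts through Proposition \ref{Green-formula-2-b} produces the Caccioppoli estimate $\iint_{\BB(P,\rho)}|\nabla_\kappa v|^2\,d\omega_\kappa\le c\rho^{-2}\iint_{\BB(P,2\rho)}v^2\,d\omega_\kappa\le c\rho^{-2}|\BB(P,2\rho)|_\kappa$. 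Combining the two and using the doubling of $d\omega_\kappa$ gives $|\nabla_\kappa v(P)|\le c/\rho\le c$, the desired bound for the $\kappa$-gradient.

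It remains to pass from $\nabla_\kappa v$ to the Euclidean $\nabla v$, and this is where I expect the real difficulty. From (\ref{Dunkl-operator-1}) one has $\partial_j v=D_jv-\sum_{\alpha\in R_+}\kappa(\alpha)\alpha_j q_\alpha$, so everything reduces to bounding the reflection terms $q_\alpha(P)$. Away from the wall $H_\alpha=\{\langle\alpha,\cdot\rangle=0\}$, where $|\langle\alpha,x\rangle|\ge c\rho$, the estimate $|q_\alpha(P)|\le2/|\langle\alpha,x\rangle|$ is immediate from $|v|\le1$; the obstacle is a point $P$ close to $H_\alpha$, where $q_\alpha$ is a difference quotient across the wall and this naive bound diverges. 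The structural fact I would exploit is that $v-\sigma_\alpha v$ is a bounded $\kappa$-harmonic function odd under $\sigma_\alpha$, hence vanishing on $H_\alpha$, and that after the even--odd folding across $H_\alpha$ the quotient $q_\alpha$ becomes harmonic for the multiplicity with $\kappa(\alpha)$ increased by one. Near $H_\alpha$ but away from the remaining walls this shifted operator is regular, and an interior estimate for it bounds $q_\alpha(P)$ by $c\sup|v|$; treating one wall at a time (only finitely many are relevant near any point) then yields $|\nabla v(P)|\le c$.

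The delicate point throughout is securing this control uniformly up to the reflecting hyperplanes. There the coefficients of $\Delta_\kappa$ are singular and the operator is genuinely nonlocal, and a direct attempt to extract $q_\alpha(P)$ from the (bounded) Dunkl derivatives by pointwise linear algebra merely reproduces an identity and does not close; likewise a perturbative comparison of $\Delta_\kappa$ with the ordinary Laplacian fails to absorb the reflection terms, since the relevant coefficient does not shrink with the scale. Consequently the whole passage from the $\kappa$-gradient to the Euclidean gradient must remain inside the Dunkl framework (subharmonicity, mean-value, and the shifted-multiplicity structure) adapted to the singular weight, and making this uniform across the wall is, I expect, the main technical hurdle of the proof.
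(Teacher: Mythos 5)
Your argument, as written, never actually reaches the Euclidean gradient, and the step where it should is the one you leave open. Parts (a) and (b) of your plan (the ball-in-cone reduction, scaling, and the subharmonicity/Caccioppoli bound for $|\nabla_\kappa v|$) are plausible in outline, modulo repairable technicalities: Green's formula (Proposition \ref{Green-formula-2-b}) and the generalized spherical/solid means are tied to $G$-invariant sets, so your Caccioppoli and sub-mean-value inequalities must be run on the orbit unions $\bigcup_{\sigma\in G}\BB(\sigma P,\rho)$ with $G$-invariant cutoffs, and the comparison of the generalized solid mean with a ball average needs the upper density bound for translated radial mollifiers from \cite{DH1}. The essential gap is part (c): to bound $\nabla v$ you must bound the reflection quotients $q_\alpha=(v-\sigma_\alpha v)/\langle\alpha,x\rangle$ near the walls, and your proposed mechanism --- that after even--odd folding $q_\alpha$ is harmonic for the multiplicity with $\kappa(\alpha)$ raised by one --- is never proved and is exactly the kind of intertwining-operator property that is \emph{not} available for a general reflection group. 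The paper itself can carry out such a shifted-multiplicity argument only for $G=Z_2^d$ (Lemma \ref{area-5-f}, which uses the explicit product form (\ref{intertwining-5-1}) of $V_\lambda$ and one-dimensional translations with parameter $\lambda_j+1$), and it remarks immediately afterwards that for general $G$ the required properties of the intertwining operator ``are not known yet''. So your route closes at best for $Z_2^d$, whereas Lemma \ref{area-4-e} is asserted, and proved in the paper, for arbitrary $G$.

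The paper's proof avoids the wall problem entirely by never splitting $\nabla u$ into a Dunkl gradient plus reflection terms: the derivative is thrown onto a translated mollifier rather than onto $u$. Applying Proposition \ref{harmonic-2-a} to $\tilde G=G\otimes\ZZ_2$ with $\tilde\kappa=(\kappa,0)$, one writes $u(x,y)=\int_{\RR^{d+1}}u(t,s)\,(\tilde{\tau}_{(-x,-y)}\phi_{\epsilon})(t,s)\,d\omega_{\kappa}(t)ds$ with a radial mollifier $\phi_\epsilon$ at scale $\epsilon=c_1y$ chosen so that the orbit of $\overline{\BB((x,y),\epsilon)}$ stays in the cone union. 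By the representing-measure formula (\ref{translation-2-4}), $(\tilde{\tau}_{(-x,-y)}\phi_{\epsilon})(t,s)=\int_{\RR^d}\phi_{0,\epsilon}\left(A(x,y,t,s,\xi)\right)d\mu^{\kappa}_{t}(\xi)$ with $A=\sqrt{|t|^2+|x|^2-2\langle x,\xi\rangle+(s-y)^2}$, and the support property (\ref{intertwining-support-1}) gives $A\ge|x-\xi|$, whence $\left|\partial_{x_j}\left[(\tilde{\tau}_{(-x,-y)}\phi_{\epsilon})(t,s)\right]\right|\le\epsilon^{-1}(\tilde{\tau}_{(-x,-y)}\tilde{\phi}_{\epsilon})(t,s)$ for $\tilde{\phi}(x,y)=|\phi_0'(\sqrt{|x|^2+y^2})|$; integrating and using Proposition \ref{translation-2-e}(iii) together with $|u|\le1$ yields $y|\partial_{x_j}u|\le c$ (and similarly for $\partial_y u$), uniformly in the smaller cone and for every reflection group. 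That single idea --- differentiating the translated radial kernel and controlling it by another translated radial kernel --- is what replaces, and renders unnecessary, the shifted-multiplicity analysis your proposal hinges on.
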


\begin{proof}
We apply Proposition \ref{harmonic-2-a} to the case when $\Omega=\cup_{\sigma\in G}\Gamma_{b}^{\eta}(\sigma(x^0))\subset\RR^{d+1}_+$, $\tilde{G}=G\otimes\ZZ_2$ with the multiplicity function $\tilde{\kappa}=(\kappa,0)$. In this case, for $(x,y)\in\Omega$, the associated generalized translation $\tilde{\tau}_{(x,y)}$ is given by $(\tilde{\tau}_{(x,y)}u)(t,s)=(\tau_xu)(t,y+s)$, where $\tau_x$ acts on the first argument as before. For $u$ is $\kappa$-harmonic in $\Omega$, one has $u\in C^{\infty}(\Omega)$ and
\begin{align}\label{spherical-mean-4-1}
u(x,y)=\tilde{M}_u((x,y),r)\quad \hbox{with}\,\,\,\tilde{M}_u((x,y),r)=\tilde{d}_{\kappa}\int_{\SB^{d}}(\tilde{\tau}_{(x,y)}u)(r(t',s'))W_{\kappa}(t')d(t',s')
\end{align}
provided $\overline{\BB((x,y),r)}\subset\Omega$ for $r>0$, where $d(t',s')$ denotes the area element on $\SB^{d}$, and $\tilde{d}_{\kappa}^{-1}=\int_{\SB^{d}}W_{\kappa}(t')d(t',s')$.

We take a nonnegative even $\phi_0\in C^{\infty}(\RR)$ with ${\rm supp}\,\phi_0\subseteq[-1,1]$ and decreasing on $[0,1]$, and define $\phi(x,y)=\phi_0(\sqrt{|x|^2+y^2})$ normalized so that $\int_{\RR^{d+1}}\phi(x,y)d\omega_{\kappa}(x)dy=1$, and $\phi_{\epsilon}(x,y)=\epsilon^{-2|\kappa|-d-1}\phi(\epsilon^{-1}x,\epsilon^{-1}y)$ for $\epsilon>0$. Thus for $(x,y)\in\Omega$, from (\ref{spherical-mean-4-1}) it follows that
\begin{align}\label{integral-mean-4-1}
u(x,y)=\int_{\RR^{d+1}}\left(\tilde{\tau}_{(x,y)}u\right)(t,s)\phi_{\epsilon}(t,s)\,d\omega_{\kappa}(t)ds
\end{align}
for $\epsilon>0$ so that $\overline{\BB((x,y),\epsilon)}\subset\Omega$, and moreover, by Proposition \ref{translation-2-d}(i),
\begin{align}\label{integral-mean-4-2}
u(x,y)=\int_{\RR^{d+1}}u(t,s)\left(\tilde{\tau}_{(-x,-y)}\phi_{\epsilon}\right)(t,s)\,d\omega_{\kappa}(t)ds.
\end{align}
Note that by (\ref{spherical-mean-2-2}), (\ref{spherical-mean-support-2-1}), and in view of (\ref{spherical-mean-4-1}),  the two integrals in (\ref{integral-mean-4-1}) and (\ref{integral-mean-4-2}) only use those values of $u$ in the set
\begin{align}\label{spherical-mean-support-4-1}
\bigcup_{\sigma\in G} \{(t,s)\in\RR^{d+1}:\,\,|(t,s)-(\sigma(x),y)|\le \epsilon\}.
\end{align}

Now for fixed $a\in(0,b)$ and $h\in(0,\eta)$ and for $(x,y)\in\Gamma_a^h(x^0)$, choose $\epsilon=c_1y$ for suitable $c_1>0$ depending only on $a,b,h$ and $\eta$, such that $\overline{\BB((x,y),\epsilon)}\subset\Gamma_b^{\eta}(x^0)$. Thus the set given in (\ref{spherical-mean-support-4-1}) is contained in $\Omega$, and for $j=1,2,\dots,d$, by the assumption we have
\begin{align}\label{integral-mean-4-3}
|(\partial_{x_j}u)(x,y)|\le\int_{\RR^{d+1}}
\left|\partial_{x_j}\left[\left(\tilde{\tau}_{(-x,-y)}\phi_{\epsilon}\right)(t,s)\right]\right|
d\omega_{\kappa}(t)ds.
\end{align}

From Proposition \ref{translation-2-b} and (\ref{translation-2-4}),
\begin{align}\label{translation-4-1}
\left(\tilde{\tau}_{(-x,-y)}\phi_{\epsilon}\right)(t,s)
=\int_{\RR^d}\phi_{0,\epsilon}\left(A(x,y,t,s,\xi)\right)\,d\mu^{\kappa}_{t}(\xi),
\end{align}
where $\phi_{0,\epsilon}(r)=\epsilon^{-2|\kappa|-d-1}\phi_0(\epsilon^{-1}r)$ and
$$
A(x,y,t,s,\xi)=\sqrt{|t|^2+|x|^2-2\langle x,\xi\rangle+(s-y)^2},
$$
and hence
\begin{align*}
\partial_{x_j}\left[\left(\tilde{\tau}_{(-x,-y)}\phi_{\epsilon}\right)(t,s)\right]
=\frac{1}{\epsilon^{2|\kappa|+d+2}}\int_{\RR^d}\frac{\phi'_0\left(\epsilon^{-1}A(x,y,t,s,\xi)\right)}
{A(x,y,t,s,\xi)}(x_j-\xi_j)\,d\mu^{\kappa}_{t}(\xi),
\end{align*}
But since $A(x,y,t,s,\xi)\ge\sqrt{|t|^2-|\xi|^2+|x-\xi|^2}\ge|x-\xi|$ by (\ref{intertwining-support-1}), one has \begin{align*}
\left|\partial_{x_j}\left[\left(\tilde{\tau}_{(-x,-y)}\phi_{\epsilon}\right)(t,s)\right]\right|
\le\epsilon^{-1}\left(\tilde{\tau}_{(-x,-y)}\tilde{\phi}_{\epsilon}\right)(t,s),
\end{align*}
where $\tilde{\phi}(x,y)=|\phi'_0(\sqrt{|x|^2+y^2})|$. Inserting this into (\ref{integral-mean-4-3}) and appealing to Proposition \ref{translation-2-e}(iii), we obtain
\begin{align*}
|(\partial_{x_j}u)(x,y)|\le\epsilon^{-1}\|\tilde{\phi}\|_{L^1(\RR^{d+1},d\omega_{\kappa}(t)ds)},
\end{align*}
and so, $y|(\partial_{x_j}u)(x,y)|\le c$ for $(x,y)\in\Gamma_a^h(x^0)$. Similarly, $y|(\partial_{y}u)(x,y)|\le c$ for $(x,y)\in\Gamma_a^h(x^0)$ and the lemma is proved. $\square$
\end{proof}

After preparing these lemmas, one may complete the proof of Theorem \ref{thm-4-a} along the way of \cite[pp. 208-9]{St2}.
\vskip .1in

{\it Proof of Theorem \ref{thm-4-a}}.

For given $a,h>0$, we fix $b>a$ and $\eta>h$. We may assume that $E$ is bounded, without loss of generality. By Lemma \ref{fatou-a}, for each $j\in\NN$, there exists a compact set $E_{j}\subset E$, such that $|E\setminus E_{j}|_{0}<1/j$, and there is a constant $c_{b,\eta,j}>0$, so that $|u(x,y)|\leq c_{b,\eta,j}$, $(x,y)\in\Omega^{E_j}(b,\eta)$. If we put $E_0=\bigcup_{j=1}^{\infty}E_j$, then $|E\setminus E_{0}|_{0}=0$. Since $E$ is $G$-invariant, we may choose each $E_j$ preserving this property as in the proof of Theorem \ref{bounded-convergence}.
Thus, the proof of the theorem would be completed once we prove that for a $G$-invariant compact set $E\subset\partial\RR_{+}^{d+1}=\RR^d$,
$$
\int_{E}(S^{\psi}_{a,h}u)^{2}(x)\,d\omega_{\kappa}(x)<\infty
$$
under the condition
\begin{eqnarray}\label{area-condition-1}
|u(x,y)|\leq 1\qquad \text{for}\,\,\,(x,y)\in\Omega^{E}(b,\eta);
\end{eqnarray}
and by Lemma \ref{area-4-b}, it suffices to show
\begin{eqnarray*}
\iint_{\Omega^E(a,h)}y(\Delta_{\kappa}u^{2})(t,y)\,d\omega_{\kappa}(t)dy<\infty.
\end{eqnarray*}
Moreover, for $\Omega=\Omega^E(a,h)$, there exists a family of $G$-invariant regions $\{\Omega_{\epsilon}\}_{\epsilon\in(0,h/3)}$ satisfying (i)-(iv) in Lemma \ref{area-4-d}, and hence, we only need to prove that, there exists some constant $c>0$ independent of $\epsilon\in(0,h/3)$, so that
\begin{eqnarray}\label{area-finite-2}
\iint_{\Omega_{\epsilon}}y(\Delta_{\kappa}u^{2})(t,y)\,d\omega_{\kappa}(t)dy\le c.
\end{eqnarray}
Taking integration over $\Omega_{\epsilon}$ instead of $\Omega$ is for legitimate use of Green's formula. In fact, for $U,V \in C^2(\overline{\Omega_{\epsilon}})$, the Green formula (\ref{Green-formula-2-1}) reads as
\begin{align*}
\iint_{\Omega_{\epsilon}}(V\Delta_{\kappa}U-U\Delta_{\kappa}V)\,d\omega_{\kappa}(x)dy
=\int_{\partial\Omega_{\epsilon}}\left(V\partial_{\mathbf{n}}U
-U\partial_{\mathbf{n}}V\right)W_{\kappa}(x)d\sigma(x,y).
\end{align*}
If $U=u^2$ and $V=y$, then
\begin{align}\label{Green-formula-2}
\iint_{\Omega_{\epsilon}}y(\Delta_{\kappa}u^2)(x,y)\,d\omega_{\kappa}(x)dy
=\int_{\partial\Omega_{\epsilon}}\left(y\frac{\partial u^2}{\partial \mathbf{n}}
-u^2\frac{\partial y}{\partial \mathbf{n}}\right)W_{\kappa}(x)d\sigma(x,y).
\end{align}

Since $\partial\Omega_{\epsilon}\subset\Omega^{E}(b,\eta)$, it follows from Lemma \ref{area-4-e}(i) and (\ref{area-condition-1}) that
\begin{eqnarray*}
\left|y\frac{\partial u^2}{\partial \mathbf{n}}\right|\leq 2y|u||\nabla u|\leq c\qquad \mathrm{on}\,\,\,\partial\Omega_{\epsilon};
\end{eqnarray*}
and since $\left|\partial y/\partial\mathbf{n}\right|\leq1$, we have $\left|u^2\partial y/\partial \mathbf{n}\right|\le1$ on $\partial\Omega_{\epsilon}$. Applying these estimates to the right hand side of
(\ref{Green-formula-2}) and on account of boundedness of $\Omega=\Omega^E(a,h)$, we get
\begin{align}\label{area-finite-3}
\iint_{\Omega_{\epsilon}}y(\Delta_{\kappa}u^2)(x,y)\,d\omega_{\kappa}(x)dy
\le c\int_{\partial\Omega_{\epsilon}}W_{\kappa}(x)d\sigma(x,y)
\le c'\int_{\partial\Omega_{\epsilon}}d\sigma(x,y).
\end{align}
Since $E$ is bounded, it follows from \cite[p. 209]{St2} that the area of $\partial\Omega_{\epsilon}$ is bounded by a constant independent of $\epsilon$. This proves (\ref{area-finite-2}) and end the proof of Theorem \ref{thm-4-a}. $\square$

\section{The proofs of Theorems \ref{thm-3} and \ref{thm-4}}

%
%

We shall prove the following analogs of Theorems \ref{thm-3} and \ref{thm-4} with $S^{\psi}_{a,h}u$ instead of $S_{a,h}u$. Theorems \ref{thm-3} and \ref{thm-4} follow immediately by Proposition \ref{area-positivity-a}.

\begin{theorem}\label{thm-6-3}
Suppose that $E$ is a $G$-invariant measurable subset of $\partial\RR^{d+1}_+=\RR^d$ and $u$ is $\kappa$-harmonic in $\RR^{d+1}_+$. If $u$ is $G$-invariant and for every $x\in E$,
its area integral $(S^{\psi}_{a,h}u)(x)$ is finite for some $a,h>0$, then $u$ is non-tangentially bounded at $(x,0)$ for almost every $x\in E$.
\end{theorem}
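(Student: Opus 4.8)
The plan is to adapt Stein's converse argument (the counterpart of the direct Theorem \ref{thm-4-a}) to the Dunkl setting, exploiting the $G$-invariance of $u$ to collapse the $\kappa$-Laplacian of $u^2$ to an honest gradient square. First I would localize exactly as in the proof of Theorem \ref{thm-4-a}: since the hypothesis holds for \emph{some} $a,h$ depending on $x$, I partition $E$ over rational $a,h$ and a bound $M\in\NN$ into the sets $\{x\in E:(S^\psi_{a,h}u)(x)\le M\}$, use the monotonicity of $S^\psi_{a,h}$ in $a,h$, and pass to $G$-invariant compact subsets (intersecting over $G$). Thus it suffices to treat a $G$-invariant compact $F\subseteq E$ on which $(S^\psi_{a,h}u)(x)\le M$ for fixed $a,h,M$. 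The crucial simplification is that, because $u$ is $G$-invariant, $\sigma_\alpha u=u$, so the last sum in Lemma \ref{Laplace-4-a} vanishes and $(\Delta_\kappa u^2)(x,y)=2|\nabla u(x,y)|^2$ is the ordinary $(d+1)$-dimensional gradient square; hence $S^\psi_{a,h}u$ is precisely a weighted Lusin integral of $\nabla u$.

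Second, I would integrate $(S^\psi_{a,h}u)^2$ over $F$ using the identity (\ref{area-4-2}) and Fubini, as in Lemma \ref{area-4-b}, to get
\[
\iint_{\Omega}(\Delta_\kappa u^2)(t,y)\,k(t,y)\,\frac{d\omega_\kappa(t)\,dy}{y^{2|\kappa|+d-1}}\le M^2|F|_\kappa<\infty,\qquad \Omega=\Omega^F(a,h),
\]
where $k(t,y)=\int_F(\tau_{-x/(ay)}\psi)(t/(ay))\,d\omega_\kappa(x)$. Whereas Lemma \ref{area-4-b} used the upper bound on $k$, here I would extract a \emph{lower} bound: at a.e. $x_0\in F$ ($\kappa$-density points), the nonnegativity and positive lower bound of the radial bump $\tau_{-x/(ay)}\psi$ force $k(t,y)\gtrsim y^{2|\kappa|+d}$ for $(t,y)$ ranging over an interior sawtooth over $x_0$. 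Combined with $\Delta_\kappa u^2=2|\nabla u|^2$, this yields finiteness of the weighted Dirichlet energy $\iint_{\Omega'}y\,|\nabla u|^2\,d\omega_\kappa(t)\,dy$ over a sub-sawtooth $\Omega'$ sitting over a positive-density portion of $F$.

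Third, the heart of the proof is to convert finite weighted energy into non-tangential boundedness via a Riesz-type decomposition of the $\kappa$-subharmonic function $u^2$ (subharmonic by Proposition \ref{subharmonicity-3-b}). On the $G$-invariant sawtooth $\Omega$, the distance weight $y$ is comparable to the Green function $G_\Omega(P_0,\cdot)$ with pole at a high vertex $P_0$ over a density point, so the finite energy gives finiteness of the Green potential $w(P)=\iint_\Omega G_\Omega(P,Q)\,(\Delta_\kappa u^2)(Q)\,d\omega_\kappa\,dy$. Then $v:=u^2+w$ is nonnegative and $\kappa$-harmonic on $\Omega$. Since a nonnegative $\kappa$-harmonic function on the (Lipschitz) sawtooth is non-tangentially bounded at a.e.\ flat boundary point, while the Green potential $w$ vanishes non-tangentially a.e., the difference $u^2=v-w$, and hence $|u|$, is non-tangentially bounded for a.e.\ $x\in F$. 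Concretely this is realized on the smooth $G$-invariant approximating sawtooths $\Omega_\epsilon$ of Lemma \ref{area-4-d} through Green's formula (Proposition \ref{Green-formula-2-a}), the maximum principle (Lemma \ref{max-principle}), and the interior gradient estimate (Lemma \ref{area-4-e}), letting $\epsilon\to0+$.

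I expect the main obstacle to be precisely this last step. The essential difficulty is that $u$ is a priori uncontrolled on the slanted part of $\partial\Omega$, which lies over $E^c$, so one cannot merely compare $u^2$ with its boundary data; the Riesz decomposition $u^2=v-w$ is exactly the device that bypasses the slanted boundary. Carrying it out rigorously requires establishing enough boundary potential theory for $\kappa$-harmonic functions on the sawtooth — size estimates for the Green function and harmonic measure, the majorant decomposition, the Fatou property of nonnegative $\kappa$-harmonic functions, and the non-tangential vanishing of Green potentials. Here the $G$-invariance of $u$ is indispensable: it keeps each $\Omega_\epsilon$ $G$-invariant, so that Lemma \ref{max-principle} and Proposition \ref{subharmonicity-3-b} apply, and it is what reduced $\Delta_\kappa u^2$ to $2|\nabla u|^2$ in the first place.
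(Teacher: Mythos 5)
Your first two steps coincide with the paper's: the localization over a countable family of $G$-invariant sets on which the area integral is uniformly bounded, and the lower-bound argument for $k(t,y)$ at $\kappa$-density points (this is exactly the paper's Lemma \ref{area-c}, resting on Lemma \ref{area-5-a} and the bump lower bound of Lemma \ref{phi-lower-bound-3-a}), which yields the finite weighted energy $\iint_{\Omega^{E}(a,h)}y(\Delta_{\kappa}u^{2})\,d\omega_{\kappa}\,dy<\infty$ over a sawtooth. The divergence --- and the gap --- is in your third step. The Riesz-decomposition route ($u^2=v-w$ with $w$ a Green potential of $\Delta_\kappa u^2$ on the sawtooth) requires: existence and size estimates of a Green function for $\Delta_\kappa$ on a Lipschitz sawtooth domain, including the comparison $G_\Omega(P_0,\cdot)\asymp y$; a Fatou/Hunt--Wheeden theorem for nonnegative $\kappa$-harmonic functions on such a domain; and a.e.\ non-tangential vanishing of Green potentials. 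None of this is available in the Dunkl setting: the paper itself flags boundary theory of $\kappa$-harmonic functions on reflection-symmetric Lipschitz domains as an open problem (Remark (e) of Section 1.4), and since $\Delta_\kappa$ is non-local (it couples a point with its entire $G$-orbit), building this potential theory is a substantial project rather than a routine transcription of the classical case. What you call ``the main obstacle'' is therefore an unfilled hole, and the proposal offers no means of filling it with the tools the paper provides.

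There is also a circularity in your concrete fallback: you invoke Lemma \ref{area-4-e} as ``the interior gradient estimate,'' but its hypothesis is $|u|\le 1$ on the cones --- precisely the non-tangential boundedness you are trying to prove. The correct ingredient is Lemma \ref{area-5-e}, which derives $y|\nabla_{\kappa}u|\le c$ from the hypothesis $(S^{\psi}_{b,\eta}u)(x^0)\le 1$; the $G$-invariance of $u$ then converts this into a bound on $y|\nabla u|$, which is how the paper obtains its condition (\ref{gradient-1}) (this is a second place where $G$-invariance is essential, beyond keeping $\Omega_\epsilon$ symmetric and reducing $\Delta_\kappa u^2$ to $2|\nabla u|^2$). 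With the energy bound and this gradient bound in hand, the paper completes the proof by a scheme that avoids all sawtooth potential theory: Green's formula on $\Omega_\epsilon$ gives the uniform bound $\int_{{\mathcal C}_\epsilon^1}u^2W_{\kappa}\,d\sigma\le c$ on the slanted boundary; the gradient bound together with the half-space Poisson kernel lower bound in (\ref{Poisson-ker-estimate-2}) yields the pointwise majorization $|u|\le c_3v_\epsilon+c$ on $\partial\Omega_\epsilon$, where $v_\epsilon$ is the $\kappa$-Poisson integral over all of $\RR^d$ of the slanted boundary values $f_\epsilon$; the maximum principle (Lemma \ref{max-principle}), applied to $|u|-c_3v_\epsilon-c$ via Proposition \ref{subharmonicity-3-b}, propagates this into $\Omega_\epsilon$; and a weak-$L^2_{\kappa}$ limit of the $f_\epsilon$ produces a single majorant $|u|\le c''v+c$ on the full sawtooth, after which the half-space result Proposition \ref{Poisson-d} concludes. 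If you replace your third step by this majorization argument --- in particular swapping Lemma \ref{area-4-e} for Lemma \ref{area-5-e} --- your outline becomes the paper's proof.
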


\begin{theorem}\label{thm-6-4}
Suppose $G=Z_{2}^{d}$ with a given multiplicity parameters $\lambda=(\lambda_1,\dots,\lambda_d)$, $E$ is a $G$-invariant measurable subset of $\RR^d$, and $u$ is $\lambda$-harmonic in $\RR^{d+1}_+$.
If  for every $x\in E$, the area integral $(S^{\psi}_{a,h}u)(x)$ is finite for some $a,h>0$, then $u$ is non-tangentially bounded at $(x,0)$ for almost every $x\in E$.
\end{theorem}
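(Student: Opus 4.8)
The plan is to reduce Theorem~\ref{thm-6-4} to the already established $G$-invariant case, Theorem~\ref{thm-6-3}, by exploiting the product structure of $G=Z_2^d$ together with the classical one-dimensional Dunkl--Bessel shift of the multiplicity. For $J\subseteq\{1,\dots,d\}$ write $p_J(x)=\prod_{j\in J}x_j$ and let $u_J$ be the component of $u$ that is odd in $x_j$ for $j\in J$ and even in $x_j$ for $j\notin J$, i.e. $u_J=2^{-d}\sum_{\sigma\in G}\chi_J(\sigma)\,\sigma u$ with $\chi_J$ the corresponding character of $G$; then $u=\sum_{J}u_J$, and since $\Delta_\lambda$ commutes with the $G$-action, each $u_J$ is $\lambda$-harmonic and lies in $C^\infty(\RR^{d+1}_+)$ by Proposition~\ref{harmonic-2-b}. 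Because $u_J$ vanishes to first order on each hyperplane $\{x_j=0\}$, $j\in J$, the quotient $v_J:=u_J/p_J$ is smooth and $G$-invariant, and a coordinatewise application of the one-dimensional relation $\mathcal{L}_{\lambda_j}(x_j g)=x_j\,\mathcal{L}_{\lambda_j+1}g$ (valid for even $g$) shows that $v_J$ is $\lambda^J$-harmonic, where $\lambda^J_j=\lambda_j+1$ for $j\in J$ and $\lambda^J_j=\lambda_j$ otherwise. The decisive structural fact is the weight identity $W_{\lambda^J}=p_J^2\,W_\lambda$, that is, $d\omega_{\lambda^J}=p_J^2\,d\omega_\lambda$.

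The heart of the argument is to transfer finiteness of the area integral from $u$ to every $v_J$. First I localize: by the hypothesis together with the usual exhaustion and inner regularity it suffices to treat a $G$-invariant compact $F\subseteq E$ on which $S^\psi_{a,h}u$ is uniformly bounded, so $\int_F(S^\psi_{a,h}u)^2\,d\omega_\lambda<\infty$. Converting this area bound into a bound on the region integral over the ($G$-invariant) sawtooth $\Omega=\Omega^{F}(a',h)$ requires a pointwise lower bound $k_1(t,y)\gtrsim (a'y)^{2|\lambda|+d}$ for the integrated cutoff $\int_F(\tau_{-x/(a'y)}\psi)(t/(a'y))\,d\omega_\lambda(x)$ at density points of $F$ (the companion of Lemma~\ref{area-4-b}), and yields $\iint_\Omega y\,(\Delta_\lambda u^2)\,d\omega_\lambda\,dy<\infty$. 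By Lemma~\ref{Laplace-4-a} this controls both $\iint_\Omega y|\nabla u|^2\,d\omega_\lambda$ and each $\iint_\Omega y\,(u-\sigma_j u)^2/x_j^2\,d\omega_\lambda$. Since $u_J$ is an average of the $G$-translates of $u$ (and, for $j\in J$, of the odd part $\tfrac12(u-\sigma_j u)$), convexity gives the pointwise bounds $|\nabla u_J|^2\le |G|^{-1}\sum_\sigma|\nabla(\sigma u)|^2$ and $u_J^2/x_j^2\le(4|G|)^{-1}\sum_\sigma(\sigma(u-\sigma_j u))^2/x_j^2$; the $G$-invariance of $\Omega$, of $d\omega_\lambda$ and of $x_j^{-2}$ then makes each summand's integral independent of $\sigma$, so that $\iint_\Omega y|\nabla u_J|^2\,d\omega_\lambda<\infty$ and $\iint_\Omega y\,u_J^2/x_j^2\,d\omega_\lambda<\infty$ for $j\in J$. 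Finally, writing $u_J=p_J v_J$ and using $d\omega_{\lambda^J}=p_J^2 d\omega_\lambda$ together with $\Delta_{\lambda^J}v_J^2=2|\nabla v_J|^2$ (as $v_J$ is $G$-invariant), one computes $p_J^2|\nabla v_J|^2=|\nabla u_J|^2-\sum_{k\in J}\partial_{x_k}(u_J^2)/x_k+u_J^2\sum_{k\in J}x_k^{-2}$, in which the middle term is dominated by $\tfrac12\big(u_J^2/x_k^2+|\partial_{x_k}u_J|^2\big)$ via the arithmetic--geometric inequality; hence $\iint_\Omega y\,(\Delta_{\lambda^J}v_J^2)\,d\omega_{\lambda^J}<\infty$. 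Lemma~\ref{area-4-b} applied with multiplicity $\lambda^J$ then gives $\int_F(S^\psi_{a',h}v_J)^2\,d\omega_{\lambda^J}<\infty$, whence $(S^\psi_{a',h}v_J)(x)<\infty$ for a.e. $x\in F$.

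Letting $F$ exhaust $E$, I obtain $(S^\psi v_J)(x)<\infty$ for a.e. $x\in E$ for each $J$; since $v_J$ is $G$-invariant and $\lambda^J$-harmonic, Theorem~\ref{thm-6-3} yields that $v_J$ is non-tangentially bounded at $(x,0)$ for a.e. $x\in E$. As $E$ may be assumed bounded and the truncated cones $\Gamma^h_a(x^0)$ are bounded, $p_J$ is bounded on each such cone, so $u_J=p_J v_J$ is non-tangentially bounded a.e. on $E$; summing over the finitely many $J$ gives the conclusion for $u$. The main obstacle is precisely this transfer step: reconciling the two multiplicities $\lambda$ and $\lambda^J$ through the identity $W_{\lambda^J}=p_J^2 W_\lambda$ while keeping every resulting term integrable. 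It rests on the area$\leftrightarrow$region equivalence, whose ``area $\Rightarrow$ region'' direction is the delicate point, since it needs the density-point lower bound for the translated cutoff $\tau_{-x/(a'y)}\psi$ --- the control of the generalized translation of a non-radial, indicator-type object is exactly what the paper's use of the smooth $\psi$ is designed to tame.
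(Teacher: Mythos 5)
Your proposal is correct, but it takes a genuinely different route from the paper's. The paper never decomposes $u$: it proves a new pointwise gradient estimate (Lemma \ref{area-5-f}), asserting that finiteness of $S^{\psi}_{b,\eta}u$ at $x^0$ and all its reflections forces $y|\nabla u|\le c$ in truncated cones; the proof of that lemma manipulates the explicit $Z_2^d$ intertwining operator (\ref{intertwining-5-1}), integrates by parts in $\theta_j$, and compares translations for the shifted parameter $\lambda_j+1$ with those for $\lambda_j$ via the positivity inequality (\ref{key-inequality-5-6}). With this lemma replacing Lemma \ref{area-5-e}, the entire harmonic-majorization argument of Theorem \ref{thm-6-3} (Green's formula on the sawtooth regions $\Omega_{\epsilon}$, the maximum principle, weak compactness of boundary traces, Proposition \ref{Poisson-d}) is re-run for the non-invariant $u$. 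You instead use Theorem \ref{thm-6-3} as a black box: isotypic decomposition $u=\sum_J u_J$, factorization $u_J=p_Jv_J$ with $v_J$ smooth, $G$-invariant and $\lambda^J$-harmonic (your one-dimensional shift identity for odd/even parts is correct), and a transfer of area-integral finiteness from $u$ to $v_J$ by passing through the region integral: Lemma \ref{area-c} converts area$(u)$ into region$(u)$; your convexity estimates, Lemma \ref{Laplace-4-a} and the weight identity $W_{\lambda^J}=p_J^2W_\lambda$ convert region$(u)$ into region$(v_J)$; and Lemma \ref{area-4-b}, applied with multiplicity $\lambda^J$, converts back to area$(v_J)$ a.e. It is worth noting that both arguments hinge on the very same $Z_2^d$-specific parameter shift $\lambda_j\mapsto\lambda_j+1$ --- the paper implements it on the translation operators, you implement it on the harmonic function itself --- which is exactly why neither extends to a general reflection group. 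Your route is shorter and more conceptual, since it never re-opens the majorization machinery; the paper's route produces the gradient bound of Lemma \ref{area-5-f}, which has independent interest. One caveat you should state explicitly: your transfer needs $\lambda_j>0$ for every $j\in J$, because the finiteness of $\iint_{\Omega}y\,u_J^2/x_j^2\,d\omega_{\lambda}\,dy$ is extracted from the term $\lambda_j\bigl((u-\sigma_ju)/x_j\bigr)^2$ in Lemma \ref{Laplace-4-a}. If some $\lambda_j=0$, that coordinate must simply be omitted from the decomposition and treated as an ordinary variable for the smaller effective reflection group; this is harmless, and the paper's Lemma \ref{area-5-f} carries the same implicit restriction, since its proof divides by $\lambda_j$.
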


We need several lemmas.

\begin{lemma}\label{area-5-a}
Let $E$ be a $G$-invariant, bounded and measurable set of $\RR^{d}$. Then for any $\epsilon>0$, there exists a $G$-invariant compact set $E_{\epsilon}$ satisfying

{\rm(i)}\ \ $E_{\epsilon}\subset E$, $|E-E_{\epsilon}|_{\kappa}<\epsilon$;

{\rm(ii)} \ for $\theta\in(0,1)$, there exists some $\delta>0$, such that for $x\in E_{\epsilon}$ and $0<r<\delta$,
\begin{eqnarray*}
\left|\BB(x,r)\cap E\right|_{\kappa} >\theta|\BB(x,r)|_{\kappa}.
\end{eqnarray*}
\end{lemma}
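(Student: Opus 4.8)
The plan is to derive the statement from the Lebesgue differentiation theorem for the measure $\omega_\kappa$, to upgrade pointwise density to \emph{uniform} density by Egorov's theorem, and finally to symmetrize so as to secure $G$-invariance. Throughout, for $x\in\RR^d$ and $r>0$ set
\[
g_r(x)=\frac{\left|\BB(x,r)\cap E\right|_{\kappa}}{\left|\BB(x,r)\right|_{\kappa}},
\]
so that assertion (ii) is exactly a uniform lower bound $g_r(x)>\theta$ on $E_\epsilon$ for all small $r$.

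First I would record that $\omega_\kappa$ is doubling: from (\ref{ball-estimate-3}) and the elementary inequality $|\langle\alpha,x\rangle|+2r\le 2(|\langle\alpha,x\rangle|+r)$ one gets $\left|\BB(x,2r)\right|_{\kappa}\le c\left|\BB(x,r)\right|_{\kappa}$ with $c$ depending only on $d$ and $|\kappa|$. Hence the Lebesgue differentiation theorem applies, and since $E$ is bounded (so $|E|_{\kappa}<\infty$), for $\omega_\kappa$-almost every $x\in E$ one has $g_r(x)\to1$ as $r\to0+$; let $E^{*}\subset E$ denote the set of such density points, so $|E\setminus E^{*}|_{\kappa}=0$. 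Because $\omega_\kappa$ is absolutely continuous with respect to Lebesgue measure, spheres are $\omega_\kappa$-null, whence $r\mapsto\left|\BB(x,r)\right|_{\kappa}$ and $r\mapsto\left|\BB(x,r)\cap E\right|_{\kappa}$ are continuous and $x\mapsto g_r(x)$ is continuous for each fixed $r$. Consequently the functions
\[
h_n(x)=\inf_{0<r\le1/n}g_r(x)\qquad(n\in\NN)
\]
may have their infimum taken over rational radii, which makes each $h_n$ measurable; the sequence $\{h_n\}$ increases to $\liminf_{r\to0+}g_r=1$ on $E^{*}$. Moreover, since $E$ and $\omega_\kappa$ are $G$-invariant and each $\sigma\in G$ is an isometry with $\sigma\BB(x,r)=\BB(\sigma(x),r)$ and $\sigma(\BB(x,r)\cap E)=\BB(\sigma(x),r)\cap E$, one checks $g_r(\sigma(x))=g_r(x)$, so every $h_n$ is $G$-invariant.

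Next, by Egorov's theorem together with inner regularity of $\omega_\kappa$, I would choose a compact set $K\subset E^{*}$ with $|E\setminus K|_{\kappa}<\epsilon/|G|$ on which $h_n\to1$ uniformly, and put $E_{\epsilon}=\bigcap_{\sigma\in G}\sigma(K)$. Then $E_{\epsilon}$ is compact, $G$-invariant, and contained in $E$, and
\[
|E\setminus E_{\epsilon}|_{\kappa}=\Bigl|\bigcup_{\sigma\in G}\sigma(E\setminus K)\Bigr|_{\kappa}\le\sum_{\sigma\in G}|E\setminus K|_{\kappa}=|G|\,|E\setminus K|_{\kappa}<\epsilon,
\]
which gives (i); and the uniform convergence of $\{h_n\}$ persists on the subset $E_{\epsilon}\subset K$. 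Finally, given $\theta\in(0,1)$, uniform convergence produces $N\in\NN$ with $h_N(x)>\theta$ for all $x\in E_{\epsilon}$; setting $\delta=1/N$ yields $g_r(x)>\theta$, that is $\left|\BB(x,r)\cap E\right|_{\kappa}>\theta\left|\BB(x,r)\right|_{\kappa}$, for every $x\in E_{\epsilon}$ and $0<r<\delta$, which is (ii).

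The main obstacle I anticipate is purely technical: the measurability of the $h_n$ and the legitimacy of invoking Egorov's theorem for a family indexed by the continuous parameter $r$. I would dispose of both at once by exploiting the absolute continuity of $\omega_\kappa$, which makes $g_r(x)$ continuous in $r$; this lets me replace the infimum over $(0,1/n]$ by a countable one and thereby recast the convergence $g_r\to1$ as the genuine sequential convergence $h_n\to1$ to which Egorov's theorem applies. The $G$-invariance, which would otherwise be delicate, is handled cheaply by the intersection $\bigcap_{\sigma\in G}\sigma(K)$ once one observes that $g_r$, and hence each $h_n$, is already $G$-invariant.
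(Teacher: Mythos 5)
Your proof is correct and takes essentially the same route as the paper: doubling of $\omega_\kappa$ plus the Lebesgue differentiation theorem give density one at almost every point of $E$, Egorov's theorem upgrades this to uniform density on a compact subset, and finite-group symmetrization secures $G$-invariance. The only differences are cosmetic: the paper symmetrizes by taking the union $\bigcup_{\sigma\in G}\sigma E_{\epsilon}$ (using, as you do, the $G$-invariance of the density ratio) instead of your intersection $\bigcap_{\sigma\in G}\sigma(K)$, and it invokes Egorov directly for the continuous parameter $r$, a reduction to a genuine sequence that you carry out explicitly via the functions $h_n$.
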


\begin{proof}
By (\ref{ball-estimate-3}), $d\omega_{\kappa}$ is a doubling measure on $\RR^d$, and if $f$ is locally integrable with respect to $d\omega_{\kappa}$, then by \cite[p. 13, Corollary]{St3}, $\lim_{r\rightarrow0}|\BB(x,r)|_{\kappa}^{-1}\int_{\BB(x,r)}fd\omega_{\kappa}=f(x)$ for almost every $x\in\RR^d$. Taking $f=\chi_E$, it follows that
$$
\lim_{r\rightarrow0+}\frac{\left|\BB(x,r)\cap E\right|_{\kappa}}{|\BB(x,r)|_{\kappa}}=1\qquad \textrm{for a.e.}\,\, x\in E.
$$
By Egorov's theorem, for given $\epsilon>0$ there exists $E_{\epsilon}\subset E$, $|E-E_{\epsilon}|_{\kappa}<\epsilon$, so that $\left|\BB(x,r)\cap E\right|_{\kappa}/|\BB(x,r)|_{\kappa}$ tends to $1$ uniformly for $x\in E_{\epsilon}$ as $r\rightarrow0+$. Clearly one may take $E_{\epsilon}$ to be a closed subset of $E$. Since $E$ is $G$-invariant, the uniform convergence above is true also on $\sigma E_{\epsilon}$ for each $\sigma\in G$, and so is on
$\cup_{\sigma\in G}(\sigma E_{\epsilon})$. Thus we may further take $E_{\epsilon}$ to be $G$-invariant, as desired. $\square$
\end{proof}

The next lemma gives an equivalent estimate of $(\tau_{-x}\phi)(t)$ for radial $\phi\in C(\RR^d)$ with $\phi(0)>0$ and for $t$ close to $x$.

\begin{lemma}\label{phi-lower-bound-3-a}
Assume that $\phi\in C(\RR^d)$ is radial. If $\phi(0)>0$, then there exist some $c_1,c_2,\delta_0>0$, all independent of $x$, such that
\begin{align}\label{phi-lower-bound-3-1}
\frac{c_1}{|\BB(x,1)|_{\kappa}}\le(\tau_{-x}\phi)(t)
\le \frac{c_2}{|\BB(x,1)|_{\kappa}}\qquad \hbox{for}\,\,\,|t-x|<\delta_0.
\end{align}
\end{lemma}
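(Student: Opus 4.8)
The plan is to start from the integral representation (\ref{translation-2-4}) for the translation of radial functions. Writing $\phi(x)=\phi_0(|x|)$ and applying (\ref{translation-2-4}) with $x$ replaced by $-x$ gives
\[
(\tau_{-x}\phi)(t)=\int_{\RR^d}\phi_0\big(A_{x,t}(\xi)\big)\,d\mu^{\kappa}_{-x}(\xi),\qquad A_{x,t}(\xi)=\sqrt{|x|^2+|t|^2+2\langle t,\xi\rangle},
\]
where, by (\ref{intertwining-support-1}), $\mu^{\kappa}_{-x}$ is a probability measure supported in the convex hull of $\{-\sigma(x):\sigma\in G\}$, so $|\xi|\le|x|$ on its support. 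A direct inspection shows $A_{x,t}(\xi)=|t-\sigma(x)|$ at the vertex $\xi=-\sigma(x)$; hence for $t$ close to $x$ the quantity $A_{x,t}$ is small only for $\xi$ in a thin ``cap'' about the vertex $-x$, where $A_{x,t}\approx|x-t|$. Since $\phi_0$ is continuous with $\phi_0(0)>0$, the size of $(\tau_{-x}\phi)(t)$ is governed by the $\mu^{\kappa}_{-x}$-mass of this cap, and the goal is to show that mass is $\asymp|\BB(x,1)|_{\kappa}^{-1}$. Throughout I would compare against the $\kappa$-Poisson kernel through the two-sided bounds in Lemma~\ref{Poisson-ker-estimate-a}, using the identity $(\tau_{-x}P_y)(t)=(\tau_xP_y)(-t)$ valid for radial $P_y$ (which follows from (\ref{translation-2-4}), as $\mu^{\kappa}_{x}$ transforms to $\mu^{\kappa}_{-x}$ under $\xi\mapsto-\xi$), and converting ball radii by the volume formula (\ref{ball-estimate-3}) and the doubling it implies. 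For the domination steps below I take $\phi\ge0$ with compact support, the case relevant to the applications.

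For the upper bound I would dominate $\phi$ by a multiple of $P_1$: since $\phi$ is bounded with compact support and $(P_1)_0>0$, there is a constant $C$ with $0\le\phi\le CP_1$ pointwise. Then $CP_1-\phi$ is radial and nonnegative, so by positivity preservation of $\tau_{-x}$ on radial functions (Proposition~\ref{translation-2-e}(ii)) we get $(\tau_{-x}\phi)(t)\le C(\tau_{-x}P_1)(t)=C(\tau_xP_1)(-t)$. Inserting this into the upper estimate of (\ref{Poisson-ker-estimate-2}) with $y=1$, and using $d(x,t)\le|x-t|<\delta_0<1$ together with doubling to replace $|\BB(x,1+d(x,t))|_{\kappa}$ by $|\BB(x,1)|_{\kappa}$, yields $(\tau_{-x}\phi)(t)\le c_2\,|\BB(x,1)|_{\kappa}^{-1}$ for $|t-x|<\delta_0$.

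For the lower bound I would first reduce to a single bump: pick $\rho_0>0$ with $\phi_0\ge\phi_0(0)/2$ on $[0,\rho_0]$ and a radial $g$ with $0\le g\le\phi$, $g_0\equiv\phi_0(0)/2$ on $[0,\rho_0/2]$, and $\operatorname{supp}g_0\subseteq[0,\rho_0]$. Positivity of $\tau_{-x}$ again gives $(\tau_{-x}\phi)(t)\ge(\tau_{-x}g)(t)\ge\tfrac{\phi_0(0)}2\,\mu^{\kappa}_{-x}\big(\{\xi:A_{x,t}(\xi)\le\rho_0\}\big)$, so everything reduces to the cap estimate
\[
\mu^{\kappa}_{-x}\big(\{\xi:A_{x,t}(\xi)\le\rho_0\}\big)\ \ge\ c\,|\BB(x,1)|_{\kappa}^{-1},\qquad |t-x|<\delta_0 .
\]
I expect this to be the main obstacle: the naive bound ``$\mu^{\kappa}_{-x}$ is a probability measure'' is off by precisely the factor $|\BB(x,1)|_{\kappa}$ and is useless, so one must genuinely exhibit the concentration of $\mu^{\kappa}_{-x}$ near the vertex $-x$.

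To prove the cap estimate I would play the two Poisson bounds off each other at different heights. First, from the upper bound in (\ref{Poisson-ker-estimate-2}) applied to $P_r$ (with $r\ge\rho_0>\delta_0>d(x,t)$) together with the elementary inequality $(P_r)_0\ge c\,r^{-2|\kappa|-d}$ on $\{A_{x,t}\le r\}$, one gets the cap \emph{upper} bound $\mu^{\kappa}_{-x}(\{A_{x,t}\le r\})\le c'\,r^{2|\kappa|+d}/|\BB(x,r)|_{\kappa}$ for all such $r$. A dyadic decomposition of $\{A_{x,t}>\rho_0\}$ then controls the Poisson tail: using $(P_y)_0(s)\le c_{d,\kappa}\,y\,s^{-2|\kappa|-d-1}$ and this cap upper bound ring by ring, one obtains $\int_{A_{x,t}>\rho_0}(P_y)_0(A_{x,t})\,d\mu^{\kappa}_{-x}\le C(\rho_0)\,y\,|\BB(x,1)|_{\kappa}^{-1}$, with $C(\rho_0)$ independent of $y,x,\delta_0$. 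On the other hand, the lower bound in (\ref{Poisson-ker-estimate-2}) with $y=\delta_0$ and $|t-x|<\delta_0$, after doubling, gives $(\tau_{-x}P_{\delta_0})(t)\ge c\,\delta_0^{-d}\,|\BB(x,1)|_{\kappa}^{-1}$. Thus the tail/whole ratio is $\lesssim C(\rho_0)\,\delta_0^{d+1}\to0$, so choosing $\delta_0$ small (depending only on $\rho_0$, hence on $\phi$, never on $x$) makes the tail at most half of $(\tau_{-x}P_{\delta_0})(t)$. Consequently $\int_{A_{x,t}\le\rho_0}(P_{\delta_0})_0\,d\mu^{\kappa}_{-x}\ge\tfrac12(\tau_{-x}P_{\delta_0})(t)$, and bounding the integrand there by $(P_{\delta_0})_0(0)=c_{d,\kappa}\,\delta_0^{-2|\kappa|-d}$ yields $\mu^{\kappa}_{-x}(\{A_{x,t}\le\rho_0\})\gtrsim\delta_0^{2|\kappa|}\,|\BB(x,1)|_{\kappa}^{-1}$. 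This is the required cap lower bound and closes the proof.
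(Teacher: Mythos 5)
Your proposal is correct, but it reaches the lower bound --- the substantive half of the lemma --- by a genuinely different route than the paper. The paper's proof is short because it imports the distribution inequality of \cite{JL1}, quoted as Lemma~\ref{representing-measure-5-a}: after the geometric observation that on the cap $\{\xi\in{\rm supp}\,\mu^{\kappa}_{x}:\,\langle x,\xi\rangle>|x|^2-r_0^2/8\}$ one has $|x-\xi|\le r_0/2$, hence the argument of $\phi_0$ stays below $r_0$ whenever $|t-x|\le r_0/2$, the cap's $\mu^{\kappa}_{x}$-mass is bounded below directly by that inequality, giving $c\,r_0^{2|\kappa|+d}/|\BB(x,r_0)|_{\kappa}\gtrsim 1/|\BB(x,1)|_{\kappa}$; the upper bound is then quoted from \cite[Corollary 3.5]{DH1}. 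You instead manufacture the cap-mass lower bound out of the two-sided Poisson estimates of Lemma~\ref{Poisson-ker-estimate-a} alone, by a bootstrap: the upper estimate applied to $P_r$ at every dyadic scale $r\ge\rho_0$ yields $\mu^{\kappa}_{-x}(\{A_{x,t}\le r\})\lesssim r^{2|\kappa|+d}/|\BB(x,r)|_{\kappa}$ (here $A_{x,t}(\xi)=\sqrt{|x|^2+|t|^2+2\langle t,\xi\rangle}$); summing these ring by ring controls the Poisson tail over $\{A_{x,t}>\rho_0\}$ by $C(\rho_0)\,y/|\BB(x,1)|_{\kappa}$; and playing this against the lower estimate at height $y=\delta_0$, which by (\ref{ball-estimate-3}) is $\gtrsim\delta_0^{-d}/|\BB(x,1)|_{\kappa}$, forces mass $\gtrsim\delta_0^{2|\kappa|}/|\BB(x,1)|_{\kappa}$ onto the cap once $\delta_0^{d+1}$ is small. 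I have checked the exponents, the doubling steps, and the choice of $\delta_0$ (there is no circularity: $C(\rho_0)$ is independent of $\delta_0$ provided $\delta_0\le\rho_0$), and the argument closes; likewise your upper bound (dominating $\phi\le CP_1$ and invoking the upper half of (\ref{Poisson-ker-estimate-2})) is a valid replacement for the paper's citation of \cite{DH1}. The trade-off: the paper's route is much shorter and rests on a sharper, scale-invariant concentration inequality for $\mu^{\kappa}_{x}$ that is of independent interest, while your route is self-contained given only facts already displayed in the paper ((\ref{translation-2-4}), (\ref{ball-estimate-3}), Lemma~\ref{Poisson-ker-estimate-a}), needs no input from \cite{JL1} or \cite{DH1}, and in effect re-derives a fixed-scale version of the \cite{JL1} inequality from the Poisson kernel bounds, at the cost of length and of the lossy constant $\delta_0^{2|\kappa|}$. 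Two minor points: your added hypotheses ($\phi\ge0$ with compact support) are harmless and essentially necessary --- the lemma is only applied to $\psi$, which satisfies them, and the paper's own proof also discards the integral off the cap, which already requires $\phi\ge0$, while any upper bound of this form needs boundedness and decay; and since your bump $g$ has its plateau only on $[0,\rho_0/2]$, the cap estimate should be run with radius $\rho_0/2$ rather than $\rho_0$, a purely cosmetic adjustment.
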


We note that the first inequality in (\ref{phi-lower-bound-3-1}) is a sharper form of \cite[(4.15)]{ADH1}. Its proof depends on a distribution inequality for the representing measures of Dunkl's intertwining operator proved in \cite{JL1}.

\begin{lemma}\label{representing-measure-5-a} {\rm(\cite{JL1})}
There exists a constant $c>0$ independent of $\delta>0$ and $x\in\RR^d$ such that
\begin{align*}
\int_{\langle x,\xi\rangle>|x|^2-\delta^2}\,d\mu^{\kappa}_{x}(\xi)\ge
\frac{c\delta^{2|\kappa|+d}}{\left|\BB(x,\delta\right|_{\kappa}}.
\end{align*}
\end{lemma}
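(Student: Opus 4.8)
The plan is to transfer the question to the diagonal behaviour of the $\kappa$-Poisson kernel, which is already controlled by (\ref{Poisson-ker-estimate-2}). First I would push the probability measure $\mu^{\kappa}_{x}$ forward under the map $\xi\mapsto s:=|x|^2-\langle x,\xi\rangle$ to a (probability) measure $\nu$ on $[0,\infty)$; this is legitimate because $s\ge0$ on $\mathrm{supp}\,\mu^{\kappa}_{x}$ by the Cauchy--Schwarz inequality together with (\ref{intertwining-support-1}) (which forces $|\xi|\le|x|$). Since $\{\xi:\langle x,\xi\rangle>|x|^2-\delta^2\}=\{s<\delta^2\}$, the asserted inequality becomes $\nu([0,\delta^2))\ge c\,g(\delta)$, where I abbreviate $g(\delta):=\delta^{2|\kappa|+d}/|\BB(x,\delta)|_{\kappa}$; by (\ref{ball-estimate-3}) one has $g(\delta)\asymp\prod_{\alpha\in R}\bigl(\delta/(|\langle\alpha,x\rangle|+\delta)\bigr)^{\kappa(\alpha)}$, and from this product form $g$ satisfies the homogeneity-type bounds $g(y)\le g(\lambda y)\le\lambda^{2|\kappa|}g(y)$ for $\lambda\ge1$ (with the reciprocal bounds for $\lambda\le1$), which I will use repeatedly.

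Next I would record the key identity. Because $P_y$ is radial and smooth, formula (\ref{translation-2-4}) evaluated at $t=-x$ gives
\[
(\tau_x P_y)(-x)=c_{d,\kappa}\int_0^\infty\frac{y}{(y^2+2s)^{\beta}}\,d\nu(s),\qquad \beta:=|\kappa|+\tfrac{d+1}{2}.
\]
On the other hand, the diagonal case of (\ref{Poisson-ker-estimate-2}) (put $t=x$, so that $|x-t|=d(x,t)=0$) yields $(\tau_x P_y)(-x)\asymp|\BB(x,y)|_{\kappa}^{-1}$. Multiplying through by $y^{2|\kappa|+d}$ and comparing with the previous display, these two facts combine into the single two-sided estimate
\[
\widetilde L(y):=\int_0^\infty\Bigl(\frac{y^2}{y^2+2s}\Bigr)^{\beta}\,d\nu(s)\asymp g(y),\qquad y>0,
\]
valid with constants independent of $x$ and $y$.

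The heart of the argument, and the main obstacle, is that a single scale does not suffice: the weight $(y^2/(y^2+2s))^{\beta}$ is bounded below by a fixed positive constant on the whole region $\{s\asymp y^2\}$, so $\widetilde L(y)$ does not by itself localize to $\{s<y^2\}$. To overcome this I would use a two-scale comparison. Splitting $\widetilde L(y)$ at $s=Ry^2$ and estimating the tail by comparison with the weight at the larger scale $y'=\sqrt R\,y$, the elementary inequality $y^2/(y^2+2s)\le(3/2R)\,y'^2/(y'^2+2s)$ valid for $s\ge Ry^2$ gives
\[
\int_{\{s\ge Ry^2\}}\Bigl(\frac{y^2}{y^2+2s}\Bigr)^{\beta}d\nu\le\Bigl(\tfrac{3}{2R}\Bigr)^{\beta}\widetilde L(\sqrt R\,y)\le c\,R^{|\kappa|-\beta}g(y)=c\,R^{-(d+1)/2}g(y),
\]
where I used $\widetilde L(\sqrt R\,y)\asymp g(\sqrt R\,y)\le R^{|\kappa|}g(y)$.

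Since $\beta>|\kappa|$ (indeed $\beta-|\kappa|=(d+1)/2>0$), choosing $R$ large makes this tail at most $\tfrac12\widetilde L(y)$; hence the head satisfies $\nu([0,Ry^2))\ge\int_{\{s<Ry^2\}}(y^2/(y^2+2s))^{\beta}d\nu\ge\tfrac12\widetilde L(y)\gtrsim g(y)$, the first inequality because the weight is $\le1$. Finally, taking $y=\delta/\sqrt R$ so that $Ry^2=\delta^2$, and invoking the lower homogeneity bound $g(\delta/\sqrt R)\ge R^{-|\kappa|}g(\delta)$, I obtain $\nu([0,\delta^2))\gtrsim g(\delta)$ with a constant depending only on the now-fixed $R$, which is exactly the claim. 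The only genuinely delicate point is calibrating $R$ in this two-scale step so that the decay $R^{-(d+1)/2}$ of the tail beats the growth $R^{|\kappa|}$ of $g$ under rescaling; everything else is the Poisson-kernel identity and the volume asymptotics (\ref{ball-estimate-3}) already at our disposal.
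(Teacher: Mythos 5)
Your proof is correct, but there is nothing in the paper to compare it against step by step: the paper does not prove this lemma at all, it imports it from \cite{JL1}, where it is obtained by a direct analysis of the representing measures $\mu^{\kappa}_{x}$. Your argument is therefore a genuinely different, self-contained derivation from results the paper already quotes, and it checks out. The pushforward to $\nu$ is legitimate because (\ref{intertwining-support-1}) places ${\rm supp}\,\mu^{\kappa}_{x}$ in the ball of radius $|x|$, so $s=|x|^2-\langle x,\xi\rangle\ge0$ by Cauchy--Schwarz; the identity $(\tau_xP_y)(-x)=c_{d,\kappa}\int_0^\infty y\,(y^2+2s)^{-\beta}\,d\nu(s)$ is exactly (\ref{translation-2-4}) at $t=-x$, where $|x|^2+|t|^2+2\langle t,\xi\rangle=2s$; the diagonal case $t=x$ of (\ref{Poisson-ker-estimate-2}) does give $(\tau_xP_y)(-x)\asymp|\BB(x,y)|_{\kappa}^{-1}$ since both $|x-t|$ and $d(x,t)$ vanish there; and the two-scale step is sound, since for $s\ge Ry^2$ one has $Ry^2+2s\le 3s\le\tfrac32(y^2+2s)$, which yields your factor $(3/(2R))^{\beta}$, and $\beta-|\kappa|=(d+1)/2>0$ guarantees that the tail's decay $R^{-(d+1)/2}$ beats the $R^{|\kappa|}$ growth of $g$ under rescaling, so one fixed large $R$ closes the argument with constants uniform in $x$ and $\delta$. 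The one structural point worth flagging is circularity: in this paper the lemma is used to prove Lemma \ref{phi-lower-bound-3-a}, which sharpens the lower bound of \cite[(4.15)]{ADH1}, while your argument consumes the lower bound in (\ref{Poisson-ker-estimate-2}); this is harmless because (\ref{Poisson-ker-estimate-1}) is established in \cite{ADH1,DH1} independently of (and prior to) \cite{JL1}, so no loop is created. What your route buys is that the purely diagonal Poisson-kernel estimate already encodes the distribution inequality for $\mu^{\kappa}_{x}$, making this ingredient of the paper self-contained modulo \cite{ADH1,DH1}; what it costs is that it inherits those deep kernel bounds as a black box, whereas \cite{JL1} works on the measures directly. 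A minor cosmetic caveat: the homogeneity bounds $g(y)\le g(\lambda y)\le\lambda^{2|\kappa|}g(y)$ hold only up to the uniform constants implicit in (\ref{ball-estimate-3}), but that is all your argument needs.
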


{\it Proof of Lemma \ref{phi-lower-bound-3-a}.}

By continuity there exists some $r_0>0$ so that $\phi(\xi)\ge\phi(0)/2$ whenever $|\xi|<r_0$. It follows from Proposition \ref{translation-2-b-2} and (\ref{translation-2-4}) that
\begin{align}\label{translation-5-0}
(\tau_{-x}\phi)(t)=(\tau_{x}\phi)(-t)=\int_{\RR^d}\phi_0(\sqrt{|x|^2+|t|^2-2\langle t,\xi\rangle})\,d\mu^{\kappa}_{x}(\xi),
\end{align}
where $\phi(x)=\phi_0(|x|)$ for some $\phi_0\in C(\RR)$.

If $\langle x,\xi\rangle>|x|^2-r_0^2/8$ for $\xi\in\hbox{supp}\,\mu^{\kappa}_{x}$, from (\ref{intertwining-support-1}) we have
\begin{align*}
|x-\xi|^2<|\xi|^2-|x|^2+r_0^2/4\le r_0^2/4,
\end{align*}
and so
\begin{align*}
|x|^2+|t|^2-2\langle t,\xi\rangle &=|x-t|^2-2\langle x-\xi,x-t\rangle+2|x|^2-2\langle x,\xi\rangle\\
&\le|x-t|^2+r_0|x-t|+r_0^2/4.
\end{align*}
Now we set $\delta_0=r_0/2$. It follows that $\sqrt{|x|^2+|t|^2-2\langle t,\xi\rangle}\le r_0$ whenever $|t-x|\le\delta_0$, and then, from (\ref{translation-5-0}),
\begin{align*}
(\tau_{-x}\phi)(t)
\ge\frac{\phi(0)}{2}\int_{\langle x,\xi\rangle>|x|^2-r_0^2/8}\,d\mu^{\kappa}_{x}(\xi).
\end{align*}
By Lemma \ref{representing-measure-5-a}, this proves the first inequality in (\ref{phi-lower-bound-3-1}) for $|t-x|\le\delta_0$.

The second inequality in (\ref{phi-lower-bound-3-1}) follows from \cite[Corollary 3.5]{DH1} since $|\BB(t,1)|_{\kappa}\asymp|\BB(x,1)|_{\kappa}$ when $|t-x|<\delta_0$. $\square$

\vskip .1in

\begin{lemma}\label{area-5-e}  Let $b, \eta>0$ be given and $u$ a $\kappa$-harmonic function on $\cup_{\sigma\in G}\Gamma_{b}^{\eta}(\sigma(x^0))$. If $(S^{\psi}_{b,\eta}u)(x^0)\le1$, then for fixed $a\in(0,b/2)$ and $h\in(0,\eta)$, there exists a constant $c=c(a,b,h,\eta)>0$, so that $y|\nabla_{\kappa} u|\le c$ in $\Gamma_a^h(x^0)$.
\end{lemma}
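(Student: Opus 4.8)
The plan is to reduce the gradient bound to a subharmonicity argument for $|\nabla_\kappa u|^2$ combined with a comparison between a solid Dunkl mean and the area integral. First I would observe that, since the Dunkl operators commute with one another and with $\partial_y$, each component of $\nabla_\kappa u=(D_1u,\dots,D_du,\partial_yu)$ is again $\kappa$-harmonic on the $G$-invariant domain $\bigcup_{\sigma\in G}\Gamma^{\eta}_b(\sigma(x^0))$ (here $u\in C^{\infty}$ there by Proposition \ref{harmonic-2-a}). Applying Lemma \ref{Laplace-4-a} to each component and summing, the function $v:=|\nabla_\kappa u|^2$ satisfies $\Delta_\kappa v\ge0$, i.e.\ $v$ is $\kappa$-subharmonic in the $\RR^{d+1}$ sense (for the group $\tilde{G}=G\otimes\ZZ_2$ and weight $\tilde{\kappa}=(\kappa,0)$, for which the operator $\widetilde{\Delta}_{\tilde{\kappa}}$ in (\ref{spherical-mean-2-4}) coincides with $\Delta_\kappa$). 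Expanding $D_ju=\partial_ju+\sum_{\alpha}\kappa(\alpha)\alpha_j\frac{u-\sigma_\alpha u}{\langle\alpha,x\rangle}$ and using Cauchy--Schwarz also gives the pointwise domination $v\le C\,(\Delta_\kappa u^2)$ with $C=C(R,\kappa)$, where $\Delta_\kappa u^2$ is precisely the nonnegative quantity of Lemma \ref{Laplace-4-a}.

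Next I would convert subharmonicity into a solid mean-value bound. Using the $\RR^{d+1}$-version of (\ref{spherical-mean-2-4}) for $v$, with $\widetilde{\Delta}_{\tilde{\kappa}}v\ge0$, and the positivity of the generalized spherical mean, one obtains $v(x,y)\le\widetilde{M}_v((x,y),r)$ whenever $\overline{\BB((x,y),r)}$ lies in the domain. Integrating this inequality in $r$ against the natural weight and invoking the $\RR^{d+1}$-analogue of Proposition \ref{spherical-mean-2-b}(ii) (with a radial bump in place of an indicator) together with Proposition \ref{translation-2-e}(iii), I would arrive at a solid sub-mean-value inequality
\begin{align*}
v(x,y)\le\frac{C}{|\BB((x,y),\rho)|_{\tilde{\kappa}}}\iint_{\bigcup_{\sigma\in G}\BB(\sigma(x,y),\rho)}(\Delta_\kappa u^2)(t,s)\,d\omega_\kappa(t)\,ds .
\end{align*}

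Then I would compare the right-hand side with $(S^{\psi}_{b,\eta}u)(x^0)\le1$. Fixing $(x,y)\in\Gamma^h_a(x^0)$ and choosing $\rho=c_1y$ small (depending only on $a,b,h,\eta$), I would arrange that $\bigcup_\sigma\BB(\sigma(x,y),c_1y)$ lies inside $\bigcup_\sigma\Gamma^{\eta}_b(\sigma(x^0))$ and, on each piece, inside the region where the weight $(\tau_{-x^0/(bs)}\psi)(t/(bs))$ in the representation (\ref{area-4-2}) is bounded below. That lower bound is supplied by Lemma \ref{phi-lower-bound-3-a} (since $\psi(0)=1$); after the homogeneity rescaling from (\ref{ball-estimate-3}) it is $\asymp (bs)^{2|\kappa|+d}/|\BB(x^0,bs)|_\kappa$, so on the ball the full area-integral density is $\asymp y/|\BB(x^0,y)|_\kappa$. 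Hence
\begin{align*}
\iint_{\bigcup_\sigma\BB(\sigma(x,y),c_1y)}(\Delta_\kappa u^2)\,d\omega_\kappa(t)\,ds\le\frac{C\,|\BB(x^0,y)|_\kappa}{y}\,(S^{\psi}_{b,\eta}u)(x^0)^2\le\frac{C\,|\BB(x^0,y)|_\kappa}{y}.
\end{align*}
Since $|\BB((x,y),c_1y)|_{\tilde{\kappa}}\asymp y\,|\BB(x^0,y)|_\kappa$ by (\ref{ball-estimate-3}), the factors $|\BB(x^0,y)|_\kappa$ cancel, and combining with the solid sub-mean-value bound gives $v(x,y)\le C/y^2$, i.e.\ $y|\nabla_\kappa u(x,y)|\le c$ throughout $\Gamma^h_a(x^0)$.

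The main obstacle is the geometric bookkeeping in this final comparison: one must ensure that $\BB((x,y),c_1y)$ and its $G$-translates sit inside the truncated cone and, crucially, inside the region where Lemma \ref{phi-lower-bound-3-a} yields a genuine lower bound for the translated $\psi$. This is exactly where the aperture hypothesis $a<b/2$ (with $c_1$ small and $h<\eta$) is used, and it must be tracked with the correct scaling so that the $x^0$-dependent volume $|\BB(x^0,y)|_\kappa$ cancels rather than collapsing to a bare power of $y$. A secondary technical point is the passage from the spherical to the solid sub-mean inequality in the Dunkl setting for the non-radial function $v$, where the support of the generalized translation forces the $G$-orbit of balls to appear and must be handled uniformly.
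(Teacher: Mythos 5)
Your opening reductions are sound and essentially coincide with the paper's: each entry of $\nabla_\kappa u$ is again $\kappa$-harmonic, $|\nabla_\kappa u|^2\le C\,\Delta_\kappa u^2$ pointwise by Cauchy--Schwarz, and the bound is to come from a mean-value inequality at scale $\rho\asymp y$ (the paper obtains it from (\ref{integral-mean-4-2}) applied to $D_ju$ plus Jensen's inequality rather than from subharmonicity of $|\nabla_\kappa u|^2$, an immaterial difference). The genuine gap is in your final comparison, where you decouple the two kernels: you bound the mean-value kernel $\bigl(\tilde{\tau}_{(-x,-y)}\phi_{\epsilon}\bigr)(t,s)$ above by $C/|\BB((x,y),\rho)|_{\tilde{\kappa}}$ and the area weight $\bigl(\tau_{-x^0/(bs)}\psi\bigr)(t/(bs))$ below via Lemma \ref{phi-lower-bound-3-a}. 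The mean-value inequality unavoidably integrates over the orbit union $\bigcup_{\sigma\in G}\BB(\sigma(x,y),\rho)$, but Lemma \ref{phi-lower-bound-3-a} gives the lower bound $\asymp (bs)^{2|\kappa|+d}/|\BB(x^0,bs)|_{\kappa}$ only for $|t-x^0|<\delta_0\,bs$, i.e.\ on the ball around $x^0$ itself, and near a reflected point $\sigma(x^0)$, $\sigma\neq e$, that lower bound is false. Already for $d=1$, $G=\ZZ_2$: by (\ref{tau-2}), if $|x^0|\gg y$ and $t$ is near $-x^0$, then $\bigl(\tau_{-x^0/(bs)}\psi\bigr)(t/(bs))$ has size $\asymp(bs/|x^0|)^{2\lambda+2}$, smaller than the size $\asymp(bs/|x^0|)^{2\lambda}$ attained near $x^0$ by the degenerating factor $(bs/|x^0|)^{2}$. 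Consequently your claimed estimate $\iint_{\cup_\sigma\BB(\sigma(x,y),c_1y)}\Delta_\kappa u^2\,d\omega_\kappa\,ds\le C\,y^{-1}|\BB(x^0,y)|_\kappa\,(S^{\psi}_{b,\eta}u)(x^0)^2$ cannot hold with $C$ uniform in $y$ and $x^0$: the contribution of the reflected balls is not controlled by the area integral at $x^0$ alone. Nor can you repair this by reflecting those balls back, since $\iint_{\BB(\sigma(x,y),\rho)}\Delta_\kappa u^2=\iint_{\BB((x,y),\rho)}\Delta_\kappa\bigl[(\sigma u)^2\bigr]$ is what $(S^{\psi}_{b,\eta}u)(\sigma(x^0))$ controls; assuming that for all $\sigma\in G$ is precisely the (strictly stronger) hypothesis of Lemma \ref{area-5-f}, not of the present lemma.

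The paper's proof avoids the decoupling entirely, and this is the idea missing from your sketch. By (\ref{translation-4-1}), both $\bigl(\tilde{\tau}_{(-x,-y)}\phi_{\epsilon}\bigr)(t,s)$ and $\bigl(\tau_{-x^0}\psi(\cdot/(bs))\bigr)(t)$ are integrals against the \emph{same} measure $\mu^{\kappa}_{t}$; choosing $\epsilon=c_2y$ with $0<c_2<(b-2a)/(b+2)$ (the only place where $a<b/2$ enters), one checks that on the set of $\xi$ where $\phi_{0,\epsilon}(A(x,y,t,s,\xi))\neq0$ one has $\sqrt{|t|^2+|x^0|^2-2\langle x^0,\xi\rangle}\le\frac{1}{2}bs$, so the $\psi_0$-integrand equals $1$ there; hence the integrands compare pointwise in $\xi$, and integration in $d\mu^{\kappa}_{t}$ yields (\ref{translation-4-2}). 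No lower bound for any translated bump is ever needed, and the comparison is automatically uniform near reflected points, where both kernels are small together. A secondary defect of your route: even on the central ball it would prove the lemma only for $a<\delta_0 b$, where $\delta_0$ is the constant of Lemma \ref{phi-lower-bound-3-a} (its proof guarantees only $\delta_0=1/4$ for this $\psi$), not for every $a\in(0,b/2)$ as asserted.
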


\begin{proof}
We use the notation in the proof of Lemma \ref{area-4-e}. Applying (\ref{integral-mean-4-2}) to $D_ju$ for $1\le j\le d$ instead of $u$ and noting that
$\int_{\RR^{d+1}}\left(\tilde{\tau}_{(-x,-y)}\phi_{\epsilon}\right)(t,s)d\omega_{\kappa}(t)ds=1$ by Proposition \ref{translation-2-e}(iii), we have
\begin{align}\label{integral-mean-4-4}
\left|(D_ju)(x,y)\right|^2\le\int_{\RR^{d+1}}\left|(D_ju)(t,s)\right|^2
\left(\tilde{\tau}_{(-x,-y)}\phi_{\epsilon}\right)(t,s)\,d\omega_{\kappa}(t)ds
\end{align}
for $\epsilon>0$ so that $\overline{\BB((x,y),\epsilon)}\subset\Omega:=\cup_{\sigma\in G}\Gamma_{b}^{\eta}(\sigma(x^0))$.

For fixed $a\in(0,b/2)$ and $h\in(0,\eta)$ and for $(x,y)\in\Gamma_a^h(x^0)$, take $\epsilon=c_2y$ for suitable $0<c_2<(b-2a)/(b+2)$, independent of $(x,y)$, such that $\overline{\BB((x,y),\epsilon)}\subset\Gamma_{b}^{\eta}(x^0)\subset\Omega$. We claim that, with the choice of $\epsilon$,
\begin{align}\label{translation-4-2}
\left(\tilde{\tau}_{(-x,-y)}\phi_{\epsilon}\right)(t,s)
\le \frac{cy^{-2}}{s^{2|\kappa|+d-1}}\left(\tau_{-\frac{x^0}{bs}}\psi\right)\left(\frac{t}{bs}\right).
\end{align}
Indeed, for $\xi\in\hbox{supp}\,\mu^{\kappa}_{t}$ satisfying $A(x,y,t,s,\xi)\le\epsilon$, one has $(1-c_2)y\le s\le(1+c_2)y\le\eta$, and
\begin{align*}
\sqrt{|t|^2+|x^0|^2-2\langle x^0,\xi\rangle}\le A(x,y,t,s,\xi)+|x-x^0|\le(c_2+a)y\le\frac{c_2+a}{1-c_2}s\le\frac{1}{2}bs.
\end{align*}
Thus for $\xi\in\hbox{supp}\,\mu^{\kappa}_{t}$,
\begin{align*}
\phi_{0}\left(\frac{1}{\epsilon}A(x,y,t,s,\xi)\right)\le c'\psi_0\left(\frac{1}{bs}\sqrt{|t|^2+|x^0|^2-2\langle x^0,\xi\rangle}\right)
\end{align*}
with $\psi_0$ such that $\psi(x)=\psi_0(|x|)$,
and so, from (\ref{translation-2-4}), (\ref{translation-4-1}) and Proposition \ref{translation-2-b-1}(ii),
\begin{align*}
\left(\tilde{\tau}_{(-x,-y)}\phi_{\epsilon}\right)(t,s)
\le \frac{c'}{\epsilon^{2|\kappa|+d+1}}\left(\tau_{-x^0}\psi\left(\frac{\cdot}{bs}\right)\right)(t)\le \frac{c''y^{-2}}{s^{2|\kappa|+d-1}}\left(\tau_{-\frac{x^0}{bs}}\psi\right)\left(\frac{t}{bs}\right),
\end{align*}
which verifies the claim (\ref{translation-4-2}).

Applying (\ref{translation-4-2}) to (\ref{integral-mean-4-4}) we obtain
\begin{align}\label{integral-mean-4-5}
y^2\left|(D_ju)(x,y)\right|^2\le c\int_{\RR^{d}\times(0,\eta]}\left|(D_ju)(t,s)\right|^2
\left(\tau_{-\frac{x^0}{bs}}\psi\right)\left(\frac{t}{bs}\right)
\frac{d\omega_{\kappa}(t)ds}{s^{2|\kappa|+d-1}}.
\end{align}
From (\ref{psi-support-4-1}), $\left(\tau_{-\frac{x^0}{bs}}\psi\right)\left(\frac{t}{bs}\right)=0$ for $(t,s)\notin\Omega$, and since $u$ is $\kappa$-harmonic in $\Omega$, (\ref{Dunkl-operator-1}) and (\ref{Laplace-4-1}) imply
$\left|(D_ju)(t,s)\right|^2\le c\left(\Delta_\kappa u^2\right)(t,s)$ for $(t,s)\in\Omega$. Thus from (\ref{integral-mean-4-5}), $y\left|(D_ju)(x,y)\right|\le c(S^{\psi}_{b,\eta}u)(x^0)\le c$ for $(x,y)\in\Gamma_a^h(x^0)$, and so does $y\left|(\partial_y u)(x,y)\right|$ similarly. The lemma is proved. $\square$
\end{proof}

\begin{lemma}\label{area-c}
Let $E$ be a $G$-invariant, bounded and measurable set of $\partial\RR^{d+1}_+=\RR^d$, $u$ a $\kappa$-harmonic function in $\RR^{d+1}_+$, and let $b, \eta>0$ be given. Then for any $\epsilon>0$, there exists a $G$-invariant compact set $E_{\epsilon}$ satisfying the following conditions:

{\rm(i)}\ \ $E_{\epsilon}\subset E$, $|E-E_{\epsilon}|_{\kappa}<\epsilon$;

{\rm(ii)} \ There are some $a\in(0,b)$, $h\in(0,\eta)$, and $c=c(\epsilon,a,b,h,\eta)>0$, such that
\begin{align}\label{key-inequality-5-1}
\iint_{\Omega^{E_{\epsilon}}(a,h)}y(\Delta_{\kappa}u^{2})(x,y)\,d\omega_{\kappa}(x)dy\le c\int_{E}(S^{\psi}_{b,\eta}u)^{2}(x)\,d\omega_{\kappa}(x),
\end{align}
whenever the right hand side above is finite.
\end{lemma}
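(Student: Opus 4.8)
The plan is to obtain (\ref{key-inequality-5-1}) as a converse to Lemma \ref{area-4-b}: I would expand $\int_E (S^\psi_{b,\eta}u)^2\,d\omega_\kappa$ by Fubini and then manufacture a pointwise lower bound for the resulting kernel over the tent $\Omega^{E_\epsilon}(a,h)$. First I would fix $\theta\in(0,1)$ (say $\theta=1/2$) and apply Lemma \ref{area-5-a} to produce a $G$-invariant compact set $E_\epsilon\subset E$ with $|E-E_\epsilon|_\kappa<\epsilon$ on which the density estimate $|\BB(x,r)\cap E|_\kappa>\theta|\BB(x,r)|_\kappa$ holds for all $x\in E_\epsilon$ and $0<r<\delta$; this settles (i). For (ii), since $\Delta_\kappa u^2\ge0$ by Lemma \ref{Laplace-4-a} and all integrands are nonnegative, the representation (\ref{area-4-2}) together with Tonelli's theorem gives
\begin{align*}
\int_E(S^\psi_{b,\eta}u)^2(x)\,d\omega_\kappa(x)
=\iint_{\RR^d\times(0,\eta]}(\Delta_\kappa u^2)(t,y)\,k_2(t,y)\,\frac{d\omega_\kappa(t)dy}{y^{2|\kappa|+d-1}},
\end{align*}
where $k_2(t,y)=\int_E\left(\tau_{-\frac{x}{by}}\psi\right)\left(\frac{t}{by}\right)d\omega_\kappa(x)$. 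Thus it suffices to prove a lower bound $k_2(t,y)\ge c\,y^{2|\kappa|+d}$ valid on $\Omega^{E_\epsilon}(a,h)$, for then restricting the nonnegative integrand to that tent and cancelling the powers of $y$ yields (\ref{key-inequality-5-1}).

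The heart of the argument is this lower bound. Given $(t,y)\in\Omega^{E_\epsilon}(a,h)$, I would fix a vertex $x^0\in E_\epsilon$ with $|t-x^0|<ay$ and restrict the integral defining $k_2$ to $x\in E\cap\BB(x^0,\beta y)$ for an auxiliary $\beta>0$. For such $x$ one has $|t-x|<(a+\beta)y$, so choosing the parameters so that $a+\beta<\delta_0 b$ (with $\delta_0$ the constant of Lemma \ref{phi-lower-bound-3-a} for the fixed radial $\psi$, noting $\psi(0)=1>0$) gives $\left|\frac{t}{by}-\frac{x}{by}\right|<\delta_0$, whence Lemma \ref{phi-lower-bound-3-a} applies to $\psi$ at the rescaled points and yields $\left(\tau_{-\frac{x}{by}}\psi\right)\left(\frac{t}{by}\right)\ge c_1/\left|\BB\!\left(\frac{x}{by},1\right)\right|_\kappa$. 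By the homogeneity of $d\omega_\kappa$ of degree $2|\kappa|+d$ this equals $c_1(by)^{2|\kappa|+d}/|\BB(x,by)|_\kappa$, and since $|x-x^0|<\beta y$ the doubling property (\ref{ball-estimate-3}) gives $|\BB(x,by)|_\kappa\asymp|\BB(x^0,by)|_\kappa$. Integrating and invoking the density estimate of Lemma \ref{area-5-a} together with doubling once more (to compare $|\BB(x^0,\beta y)|_\kappa$ with $|\BB(x^0,by)|_\kappa$) produces
\begin{align*}
k_2(t,y)\ge c\,(by)^{2|\kappa|+d}\,\frac{|E\cap\BB(x^0,\beta y)|_\kappa}{|\BB(x^0,by)|_\kappa}
\ge c'\,(by)^{2|\kappa|+d}\,\frac{\theta|\BB(x^0,\beta y)|_\kappa}{|\BB(x^0,by)|_\kappa}\ge c''\,y^{2|\kappa|+d},
\end{align*}
as required. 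To make the density estimate available at scale $\beta y$ one chooses $h$ so small that $\beta h<\delta$, and $a,\beta$ so small that $a+\beta<\delta_0 b$; thus $a\in(0,b)$ and $h\in(0,\eta)$ can indeed be arranged, with the final constant depending on $\epsilon,a,b,h,\eta$ (and the doubling constant) but on nothing else.

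I expect the main obstacle to be the bookkeeping of scales in this lower bound for $k_2$: one must simultaneously keep $x$ close enough to the cone vertex $x^0$ that the rescaled separation $|t-x|/(by)$ stays below the threshold $\delta_0$ of Lemma \ref{phi-lower-bound-3-a}, yet keep the radius $\beta y$ within the range $(0,\delta)$ where the density of $E$ along $E_\epsilon$ is uniformly close to $1$, all while the doubling estimate (\ref{ball-estimate-3}) converts the various balls $\BB(x/(by),1)$, $\BB(x,by)$, $\BB(x^0,by)$ and $\BB(x^0,\beta y)$ into one another up to harmless constants. Once the parameters $a,\beta,h$ are fixed compatibly, the remaining steps are routine.
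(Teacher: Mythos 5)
Your proposal is correct and follows essentially the same route as the paper's own proof: both expand $\int_E(S^{\psi}_{b,\eta}u)^2\,d\omega_{\kappa}$ via (\ref{area-4-2}) and Tonelli, reduce matters to the kernel lower bound $k_2(t,y)\ge c\,y^{2|\kappa|+d}$ on $\Omega^{E_{\epsilon}}(a,h)$, and obtain it by combining Lemma \ref{area-5-a} (density of $E$ along a $G$-invariant compact $E_{\epsilon}$), Lemma \ref{phi-lower-bound-3-a} applied to $\psi$ at the rescaled points, the homogeneity/doubling estimate (\ref{ball-estimate-3}), and integration over a small ball $\BB(x^0,\beta y)\cap E$ about the cone vertex. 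The only cosmetic differences are your auxiliary parameter $\beta$ in place of the paper's $\gamma=\min\{(\delta_0 b-a)/2,\delta_1/h\}$, and your use of exact homogeneity of $W_{\kappa}$ where the paper invokes (\ref{ball-estimate-3}); the parameter bookkeeping you flag as the main obstacle is precisely what the paper's proof carries out.
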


\begin{proof}
First applying Lemma \ref{phi-lower-bound-3-a} to $\psi$, there exist some $c_0>0$, $\delta_0\in(0,1)$, both independent of $x$, such that
\begin{align}\label{psi-lower-bound-3-1}
(\tau_{-x}\psi)(t)
\ge \frac{c_0}{|\BB(x,1)|_{\kappa}}\qquad \hbox{for}\,\,|t-x|<\delta_0.
\end{align}
For given $\epsilon>0$, by Lemma \ref{area-5-a} there there exist a $G$-invariant compact subset $E_{\epsilon}$ of $E$ and a number $\delta_1>0$, satisfying $|E-E_{\epsilon}|_{\kappa}<\epsilon$ and
\begin{eqnarray}\label{fatou-a-2}
\left|\BB(x,r)\cap E\right|_{\kappa} >\frac{1}{2}|\BB(x,r)|_{\kappa}\qquad \hbox{for}\,\,x\in E_{\epsilon}\,\,\hbox{and}\,\,r\in(0,\delta_1).
\end{eqnarray}

Now we fix $a<\delta_0 b$ and $h<\eta$. Since $(t,y)\in\cup_{\sigma\in G}\Gamma_{b}^{\eta}(\sigma(x))$ is equivalent to $(x,y)\in\cup_{\sigma\in G}\Gamma_{b}^{\eta}(\sigma(t))$, from (\ref{area-4-2}) and (\ref{psi-support-4-1}) and by Propositions \ref{translation-2-b-1}(ii), we have
\begin{align}\label{area-inequality-1}
\int_{E}(S^{\psi}_{b,\eta}u)^{2}(x)\,d\omega_{\kappa}(x)
&=\int_{E}\iint_{\RR^d\times(0,\eta]}\left(\Delta_{\kappa}u^{2}\right)(t,y)\,
\left(\tau_{-\frac{x}{by}}\psi\right)\left(\frac{t}{by}\right)
\frac{d\omega_{\kappa}(t)dy}{y^{2|\kappa|+d-1}}\,d\omega_{\kappa}(x)\nonumber\\
 &\ge\iint_{\Omega^{E_{\epsilon}}(a,h)}(\Delta_{\kappa}u^{2})(t,y)\,
k_2(t,y)\,\frac{d\omega_{\kappa}(t)dy}{y^{2|\kappa|+d-1}},
\end{align}
where
\begin{eqnarray}\label{k-2-1}
k_2(t,y)=\int_{E}\left(\tau_{-\frac{x}{by}}\psi\right)\left(\frac{t}{by}\right)
\chi_{\cup_{\sigma\in G}\Gamma_{b}^{\eta}(\sigma(t))}(x,y)d\omega_{\kappa}(x).
\end{eqnarray}

For given $(t,y)\in\Omega^{E_{\epsilon}}(a,h)$, there exists some ${\bar x}\in E_{\epsilon}$ such that $(t,y)\in\Gamma_{a}^{h}({\bar x})$. Thus, when
\begin{eqnarray*}
|x-\bar{x}|<\gamma y,\qquad \gamma=\min\{(\delta_0 b-a)/2,\delta_1/h\},
\end{eqnarray*}
we have $|x-t|<a'y$ with $a'=(a+\delta_0 b)/2$, which certainly implies that $(x,y)\in\Gamma_{b}^{\eta}(t)$.

Since for $(x,y)\in\Gamma_{a'}^{h}(t)$, $|\frac{t}{by}-\frac{x}{by}|<a'y/by<\delta_0$, by (\ref{ball-estimate-3}) and (\ref{psi-lower-bound-3-1}) we have
\begin{eqnarray}\label{chi-1}
\left(\tau_{-\frac{x}{by}}\psi\right)\left(\frac{t}{by}\right)
\ge \frac{c\,y^{2|\kappa|+d}}{\left|\BB(x,by)\right|_{\kappa}}
\ge \frac{c\,y^{2|\kappa|+d}}{\left|\BB(\bar{x},\gamma y)\right|_{\kappa}}
\end{eqnarray}
whenever $|x-\bar{x}|<\gamma y$. Thus from (\ref{k-2-1}) and (\ref{chi-1}) it follows that
\begin{align*}
k_2(t,y)&\ge\int_{\BB(\bar{x},\gamma y)\cap E}\left(\tau_{-\frac{x}{by}}\psi\right)\left(\frac{t}{by}\right)
d\omega_{\kappa}(x)\\
&\ge\frac{c\,y^{2|\kappa|+d}}{\left|\BB(\bar{x},\gamma y)\right|_{\kappa}}\left|\BB(\bar{x},\gamma y)\cap E\right|_{\kappa}.
\end{align*}
Since for $(t,y)\in\Omega^{E_{\epsilon}}(a,h)$, $\gamma y<\gamma h\le\delta_1$, we use (\ref{fatou-a-2}) to get $k_2(t,y)\ge(c/2)y^{2|\kappa|+d}$. Inserting this into (\ref{area-inequality-1}) proves (\ref{key-inequality-5-1}) immediately. The proof of the lemma is completed. $\square$
\end{proof}

\vskip .1in

{\it Proof of Theorem \ref{thm-6-3}}.

Again, we assume that $E$ is bounded without loss of generality. By the assumption, $E=\bigcup E_j$, where $E_j=\cap_{\sigma\in G}\left\{x\in E:\,(S^{\psi}_{j^{-1},j^{-1}}u)(\sigma(x))\le j\right\}$. Note that for each $j$, $E_j$ is $G$-invariant, and by Lemma \ref{area-c}, there exists a sequence of $G$-invariant compact subsets $E_{j,k}$ ($k=1,2,\cdots$) of $E_j$ such that $|E_j\setminus E_{j,k}|_{\lambda}<1/k$, and there are some $a\in(0,1/j)$, $h\in(0,1/j)$, and $c=c(a,h,j,k)>0$, such that
\begin{eqnarray*}
\iint_{\Omega^{E_{j,k}}(a,h)}y(\Delta_{\kappa}u^{2})(x,y)\,d\omega_{\kappa}(x)dy\le c\int_{E_j}(S^{\psi}_{j^{-1},j^{-1}}u)^{2}(x)\,d\omega_{\kappa}(x).
  \end{eqnarray*}
For $(S^{\psi}_{j^{-1},j^{-1}}u)(x)\le j$ ($x\in E_j$) and $E_j$ is a bounded set, the left hand side above is finite; and further, by Lemma \ref{area-5-e} (with $0<a<1/2j$) there exists some $c_1=c_1(a,h,j,k)>0$ such that $y|\nabla_{\kappa}u(x,y)|\le c_1$ for $(x,y)\in\Omega^{E_{j}}(a,h)$. Since $u$ is $G$-invariant, this is identical to $y|\nabla u(x,y)|\le c_1$ for $(x,y)\in\Omega^{E_{j}}(a,h)$.

Since $|E_j\setminus \bigcup_{k=1}^{\infty}E_{j,k}|_{\kappa}=0$, the proof of the theorem would be completed once we prove that, for a $G$-invariant compact set $E\subset\partial\RR^{d+1}_+=\RR^d$, $u$ is non-tangentially bounded at $(x,0)$ for almost every $x\in E$ under the conditions
\begin{eqnarray}\label{area-integral-c-6}
\iint_{\Omega^{E}(a,h)}y(\Delta_{\kappa}u^{2})(x,y)\,d\omega_{\kappa}(x)dy\le 1
  \end{eqnarray}
for some $a,h>0$, and
\begin{eqnarray}\label{gradient-1}
y|\nabla u(x,y)|\le 1\qquad {\mathrm for}\,\,\,(x,y)\in\Omega^{E}(a,h).
  \end{eqnarray}
Note that the conditions (\ref{area-integral-c-6}) and (\ref{gradient-1}) come from the normalization of the conclusions deduced in the previous paragraph.

We fix $a_1\in(0,a)$ and $h_1\in(0,h)$, and work with $\Omega_{\epsilon}$ ($\epsilon\in(0,h_1/3)$) given by Lemma \ref{area-4-d} associated with $\Omega=\Omega^{E}(a_1,h_1)$. By Green's formula (\ref{Green-formula-2}), (\ref{area-integral-c-6}) implies
\begin{eqnarray}\label{area-integral-c-7}
\int_{\partial\Omega_{\epsilon}}\left(y\frac{\partial u^2}{\partial \mathbf{n}}
-u^2\frac{\partial y}{\partial \mathbf{n}}\right)W_{\kappa}(x)d\sigma(x,y)\le 1.
\end{eqnarray}
From Lemma \ref{area-4-d}, the boundary $\partial\Omega_{\epsilon}$ of $\Omega_{\epsilon}$ consists of two parts, $\partial \Omega_{\epsilon}={\mathcal C}_\epsilon^1\cup{\mathcal C}_\epsilon^2$, where ${\mathcal C}_\epsilon^1$ is a portion of the smooth hypersurface $y=a_1^{-1}\delta_{\epsilon}(x)$ with $|\partial_j\delta_\epsilon(x)|\le 1$, $j=1,\dots,d$, and ${\mathcal C}_\epsilon^2$ is a portion of the hyperplane $y=h_1-\epsilon$.

We extract the term $\int_{{\mathcal C}_\epsilon^1}u^2\frac{\partial y}{\partial \mathbf{n}}W_{\kappa}(x)d\sigma(x,y)$ from (\ref{area-integral-c-7}), so that
\begin{eqnarray*}
-\int_{{\mathcal C}_\epsilon^1}u^2\frac{\partial y}{\partial \mathbf{n}}W_{\kappa}(x)d\sigma(x,y)\le \int_{{\mathcal C}_\epsilon^2}u^2\frac{\partial y}{\partial \mathbf{n}}W_{\kappa}(x)d\sigma(x,y)
-\int_{\partial\Omega_{\epsilon}}y\frac{\partial u^2}{\partial \mathbf{n}}W_{\kappa}(x)d\sigma(x,y)+1.
\end{eqnarray*}
Since $E$ is bounded and $2h_1/3\le y\le h_1$ for $(x,y)\in{\mathcal C}_\epsilon^2$, the first term on the right hand side above is bounded by a constant independent of $\epsilon$, and so is the contribution coming from $(x,y)\in{\mathcal C}_\epsilon^2$ in the second term. Thus we get
\begin{eqnarray}\label{area-integral-c-8}
-\int_{{\mathcal C}_\epsilon^1}u^2\frac{\partial y}{\partial \mathbf{n}}W_{\kappa}(x)d\sigma(x,y)\le
-\int_{{\mathcal C}_\epsilon^1}y\frac{\partial u^2}{\partial \mathbf{n}}W_{\kappa}(x)d\sigma(x,y)+c.
\end{eqnarray}
Since the hypersurface ${\mathcal C}_\epsilon^1$ is determined by the equation $\delta_{\epsilon}(x)-a_1 y=0$ and so the direction ${\mathbf n}$ is given by $(\nabla^{(x)}\delta_{\epsilon},-a_1)/\sqrt{|\nabla^{(x)}\delta_{\epsilon}|^2+a_1^2}$, we have $\partial y/\partial {\mathbf n}=-a_1/\sqrt{|\nabla^{(x)}\delta_{\epsilon}|^2+a_1^2}\le -a_1/\sqrt{d+a_1^2}$; and since $\Omega_{\epsilon}\subset\Omega^{E}(a_1,h_1)$ is at a positive distance from the $x$-space $\RR^d$, (\ref{gradient-1}) implies that $y\left|\partial u^2/\partial \mathbf{n}\right|=2y|u|\left|\partial u/\partial \mathbf{n}\right|\le2y|u||\nabla u|\le 2|u|$ on ${\mathcal C}_\epsilon^1$. Applying these estimates to (\ref{area-integral-c-8}) gives
\begin{align*}
\frac{a_1}{\sqrt{d+a_1^2}}\int_{{\mathcal C}_\epsilon^1}u^2\,W_{\kappa}(x)d\sigma(x,y)
&\le 2\int_{{\mathcal C}_\epsilon^1}|u|\,W_{\kappa}(x)d\sigma(x,y)+c\\
&\le c'\left(\int_{{\mathcal C}_\epsilon^1}u^2\,W_{\kappa}(x)d\sigma(x,y)\right)^{1/2}+c,
\end{align*}
where the last inequality is due to the fact that $\int_{{\mathcal C}_\epsilon^1}W_{\kappa}(x)d\sigma(x,y)$ is bounded by a constant independent of $\epsilon$ as in (\ref{area-finite-3}). This certainly implies that
\begin{align}\label{u-integrability-1}
\int_{{\mathcal C}_\epsilon^1}u^2(x,y)W_{\kappa}(x)d\sigma(x,y)\le c,
\end{align}
where $c>0$ is independent of $\epsilon$.

If we define $f_{\epsilon}(x)=|u(x,a_1^{-1}\delta_{\epsilon}(x))|$ for $x$ satisfying $(x,y)\in{\mathcal C}_\epsilon^1$ with some $y>0$, and $f_{\epsilon}(x)=0$ otherwise, then from (\ref{u-integrability-1}), one has, for $\epsilon\in(0,h_1/3)$,
\begin{align}\label{u-integrability-2}
\int_{\RR^d}|f_{\epsilon}(x)|^2\,d\omega_{\kappa}(x)\le
\int_{{\mathcal C}_\epsilon^1}u^2(x,y)W_{\kappa}(x)d\sigma(x,y)\le c.
\end{align}

Now for $(x,y)\in{\mathcal C}_\epsilon^1\subset\Omega^{E}(a_1,h_1)$ with $0<y<h_1/2$, one can choose a constant $c\in(0,1)$ independent of $(x,y)$, so that the ball $\BB((x,y),cy)\subset\Omega^{E}(a,h)$. It follows that, for $(t,s)\in \BB((x,y),cy)$,
$|u(x,y)-u(t,s)|\le cy\sup|\nabla u|$, where the supremum is taken over the line segment joining $(x,y)$ and $(t,s)$, and by (\ref{gradient-1}),
\begin{align}\label{key-inequality-5-2}
|u(x,y)|\le |u(t,s)|+c,\qquad (t,s)\in \BB((x,y),cy).
\end{align}

Since $|\partial_j\delta_\epsilon(x)|\le 1$ ($j=1,\dots,d$), for $(x,y),(t,s)\in{\mathcal C}_\epsilon^1$ one has
$$
|(t,s)-(x,y)|=\sqrt{|t-x|^2+a_1^{-2}|\delta_{\epsilon}(t)-\delta_{\epsilon}(x)|^2}\le \tilde{c}|t-x|
$$
with $\tilde{c}=\sqrt{1+a_1^{-2}d}$, so that
\begin{align}\label{surface-1}
\int_{{\mathcal C}_\epsilon^1\cap\BB((x,y),cy)}W_{\kappa}(t)d\sigma(t,s)
\ge\int_{\BB(x,c_1y)}W_{\kappa}(t)dt
=|\BB(x,c_1y)|_{\kappa},
\end{align}
where $c_1=c/\tilde{c}<c$. In addition, it follows from (\ref{Poisson-ker-estimate-2}) that
\begin{align}\label{Poisson-ker-estimate-3}
(\tau_x P_y)(-t)\ge c'/\left|\BB(x,(c+1)y)\right|_{\kappa},\qquad (t,s)\in \BB((x,y),cy).
\end{align}
We then take the surface integration to (\ref{key-inequality-5-2}) over ${\mathcal C}_\epsilon^1\cap\BB((x,y),cy)$ and apply (\ref{surface-1}) and (\ref{Poisson-ker-estimate-3}), to get
\begin{align}\label{harmonic-majorization-1}
|u(x,y)|&\le c_2\int_{{\mathcal C}_\epsilon^1\cap\BB((x,y),cy)}|u(t,s)|(\tau_xP_y)(-t)
W_{\kappa}(t)d\sigma(t,s)+c\nonumber\\
&\le c_3 v_{\epsilon}(x,y)+c
\end{align}
for $(x,y)\in{\mathcal C}_\epsilon^1$ with $0<y<h_1/2$,
where $v_{\epsilon}(x,y)$ is the $\kappa$-Poisson integral of  $f_{\epsilon}(x)$. Further, since $u$ has a bound on $\{(x,y)\in\partial \Omega_{\epsilon}:\, h_1/2\le y\le h_1-\epsilon\}$ independent of $\epsilon$, we could choose the constant $c$ suitably large, so that (\ref{harmonic-majorization-1}) is true for all $(x,y)\in\partial \Omega_{\epsilon}$.

Considering the function $U(x,y)=|u(x,y)|-c_3v_{\epsilon}(x,y)-c$, by Lemma \ref{subharmonicity-3-b} we have
$\Delta_{\kappa}U(x,y)=\Delta_{\kappa}|u(x,y)|\geq 0$
in the region where $U(x,y)>0$ which implies $|u(x,y)|>0$. Since $U|_{\partial \Omega_{\epsilon}} \leq 0$ from (\ref{harmonic-majorization-1}), by the maximum principle (Lemma \ref{max-principle}) we assert that (\ref{harmonic-majorization-1}) holds on the whole $\Omega_{\epsilon}$.

Finally, since $\{f_{\epsilon}:\, \epsilon\in(0,h_1/3)\}$ is a bounded set in $L_{\kappa}^{2}(\RR^d)$ by (\ref{u-integrability-2}), there exists a sequence $\{f_{\epsilon_{k}}\}_{k=1}^{\infty}$, so that $f_{\varepsilon_{k}}$ converges weakly to a
function $f\in L_{\kappa}^{2}(\RR^d)$ as $k\rightarrow\infty$; and in particular, if $v(x,y)$ denotes the $\kappa$-Poisson integral of $f$, then $v_{\epsilon_k}(x,y)$ converges pointwise to $v(x,y)$ for $(x,y)\in\RR^{d+1}_+$. Thus, since by Lemma \ref{area-4-d}, $\Omega_{\epsilon_k}$ approaches increasingly to $\Omega^{E}(a_1,h_1)$, we conclude from (\ref{harmonic-majorization-1}) that
\begin{align*}
|u(x,y)|\le c''v(x,y)+c, \qquad (x,y)\in \Omega^{E}(a_1,h_1).
\end{align*}
By Proposition \ref{Poisson-d}, $v(x,y)$ is non-tangentially bounded at $(x,0)$ for almost every $x\in\RR^d$, and hence, the same is true for $u(x,y)$ and for almost every $x\in E$. The proof of Theorem \ref{thm-6-3} is completed. $\square$
\vskip .1in

In order to prove Theorem \ref{thm-6-4}, we need the following lemma for $G=Z_2^d$, that is an analog of Lemma \ref{area-5-e}.

\begin{lemma}\label{area-5-f}  Suppose $G=Z_{2}^{d}$ with a given multiplicity parameters $\lambda=(\lambda_1,\dots,\lambda_d)$. Let $b, \eta>0$ be given and $u$ a $\lambda$-harmonic function on $\Omega=\cup_{\sigma\in G}\Gamma_{b}^{\eta}(\sigma(x^0))$. If $(S^{\psi}_{b,\eta}u)(\sigma(x^0))\le1$ for all $\sigma\in G$, then for fixed $a\in(0,b/2)$ and $h\in(0,\eta)$, there exists a constant $c=c(a,b,h,\eta)>0$, so that $y|\nabla u|\le c$ in $\Gamma_a^h(x^0)$.
\end{lemma}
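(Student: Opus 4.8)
The plan is to mirror the proof of Lemma \ref{area-5-e}, reducing the bound on the ordinary gradient to a bound on the ``reflection quotients'' and then rerunning the same mean-value machinery for a function attached to a shifted multiplicity. First I would apply Lemma \ref{area-5-e} at each of the vertices $\sigma(x^0)$, $\sigma\in G=Z_2^d$: since $(S^\psi_{b,\eta}u)(\sigma(x^0))\le1$ for all $\sigma$ and $u$ is $\lambda$-harmonic on $\Omega=\cup_{\sigma\in G}\Gamma^\eta_b(\sigma(x^0))$, this gives a constant $c$ with $y|\nabla_\kappa u|\le c$ on $\cup_{\sigma\in G}\Gamma^h_a(\sigma(x^0))$; in particular $y|\partial_y u|\le c$ and $y|D_ju|\le c$ on $\Gamma^h_a(x^0)$ for each $j$. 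Using the explicit form $D_j=\partial_j+\frac{\lambda_j}{x_j}(1-\sigma_j)$ for $Z_2^d$ one has $\partial_j u=D_ju-\frac{\lambda_j}{x_j}\big(u-\sigma_j u\big)$, so everything reduces to proving $y\,\big|\frac{u(x,y)-u(\sigma_j(x),y)}{x_j}\big|\le c$ on $\Gamma^h_a(x^0)$, that is, to bounding $y|h_j|$ with $h_j:=\frac{u-\sigma_j u}{2x_j}$.

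The key structural fact — and the reason the $G$-invariance of $u$ is dispensable for $Z_2^d$ — is that $h_j$ is harmonic for the shifted multiplicity $\lambda'=\lambda+e_j$ (i.e. $\lambda'_j=\lambda_j+1$, $\lambda'_k=\lambda_k$ for $k\ne j$). Indeed $u-\sigma_j u$ is $C^\infty$ and odd in $x_j$, so $h_j$ is $C^\infty$ and even in $x_j$; a direct computation gives $D_j^2(x_j g)=x_j\big(\partial_j^2+\frac{2(\lambda_j+1)}{x_j}\partial_j\big)g$ for $g$ even in $x_j$, while $D_k^2$ ($k\ne j$) and $\partial_y^2$ commute with multiplication by $x_j$; hence $0=\Delta_\lambda(u-\sigma_j u)=2x_j\,\Delta_{\lambda'}h_j$, so $\Delta_{\lambda'}h_j=0$. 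Consequently $h_j$ enjoys the mean-value property of Proposition \ref{harmonic-2-a} and Corollary \ref{harmonic-2-a-1} relative to $\lambda'$, and reproducing the derivation of (\ref{integral-mean-4-2})--(\ref{integral-mean-4-4}) in the $\lambda'$-setting (with the translation $\tilde\tau^{\lambda'}$ and a mollifier normalised for $d\omega_{\lambda'}$) gives, by Jensen's inequality,
\begin{align*}
|h_j(x,y)|^2\le\int_{\RR^{d+1}}|h_j(t,s)|^2\,\big(\tilde\tau^{\lambda'}_{(-x,-y)}\phi_\epsilon\big)(t,s)\,d\omega_{\lambda'}(t)\,ds .
\end{align*}

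Then I would run the estimate of Lemma \ref{area-5-e} verbatim in the shifted setting: choose $\epsilon=c_2y$, invoke the $\lambda'$-analogue of the kernel bound (\ref{translation-4-2}) (proved by the same argument, now for $\lambda'$), and use the pointwise domination $|h_j|^2\le\frac{1}{4\lambda_j}\Delta_\lambda u^2$ read off directly from (\ref{Laplace-4-1}) of Lemma \ref{Laplace-4-a}. Since $W_{\lambda'}(t)=t_j^2W_\lambda(t)$ and $|\lambda'|=|\lambda|+1$, this produces
\begin{align*}
y^2|h_j(x,y)|^2\le c\iint_{\RR^d\times(0,\eta]}(\Delta_\lambda u^2)(t,s)\,\big(\tau^{\lambda'}_{-x^0/bs}\psi\big)(t/bs)\,\frac{t_j^2}{s^2}\,\frac{d\omega_\lambda(t)\,ds}{s^{2|\lambda|+d-1}} .
\end{align*}
The proof is finished once this is dominated by $c\sum_{\sigma\in G}(S^\psi_{b,\eta}u)^2(\sigma(x^0))\le c$, i.e. once one proves the kernel comparison
\begin{align*}
\big(\tau^{\lambda'}_{-x^0/bs}\psi\big)(t/bs)\,\frac{t_j^2}{s^2}\le c\sum_{\sigma\in G}\big(\tau^{\lambda}_{-\sigma(x^0)/b's}\psi\big)(t/b's).
\end{align*}

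I expect this last comparison to be the main obstacle. The danger is that on the support of the left-hand side $t_j$ has size $|x^0_j|$, so $t_j^2/s^2$ is unbounded when $x^0_j\ne0$ and $s\to0$. The resolution, and the point where the product structure of $Z_2^d$ is decisive, is the volume identity $|\BB(z,r)|_{\lambda'}\asymp(|z_j|+r)^2\,|\BB(z,r)|_{\lambda}$ read off from (\ref{ball-estimate-3}): combining it with the two-sided pointwise bounds on the translated kernel — the lower bound of Lemma \ref{phi-lower-bound-3-a}, valid near every reflection $\sigma(x^0)$ by $G$-equivariance of $\tau^\lambda$ applied to the radial $\psi$, together with the matching upper bound — the extra factor $(|x^0_j|+bs)^2$ in $|\BB(x^0,bs)|_{\lambda'}$ cancels $t_j^2/s^2$ up to the constant $b^2$. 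The remaining care is routine but must be carried out honestly: justifying the $\lambda'$-versions of the mean-value representation and of (\ref{translation-4-2}) (both inherited from Proposition \ref{harmonic-2-a} and the kernel estimates already used for $\lambda$, now applied with $\lambda+e_j$), checking that the choice $\epsilon=c_2y$ keeps the relevant supports inside $\Omega$, and summing over the finitely many $j$ to pass from $y|\partial_j u|\le c$ to $y|\nabla u|\le c$ on $\Gamma^h_a(x^0)$.
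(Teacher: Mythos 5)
Your structural setup is sound, and in fact elegant: the reduction via Lemma \ref{area-5-e} and $\partial_j u=D_ju-\frac{\lambda_j}{x_j}(u-\sigma_ju)$ is exactly the paper's first step, and your observation that $h_j=(u-\sigma_ju)/(2x_j)$ is harmonic for the shifted multiplicity $\lambda'=\lambda+e_j$ is correct (it is a cleaner way of packaging what the paper extracts by integrating by parts in $\theta_j$ inside the intertwining integral). The genuine gap is in the step you yourself flag as the main obstacle, the kernel comparison
\begin{align*}
\left(\tau^{\lambda'}_{-x^0/bs}\psi\right)\left(\frac{t}{bs}\right)\frac{t_j^2}{s^2}\le c\sum_{\sigma\in G}\left(\tau^{\lambda}_{-\sigma(x^0)/b's}\psi\right)\left(\frac{t}{b's}\right),
\end{align*}
and specifically in your proposed proof of it. The lower bound of Lemma \ref{phi-lower-bound-3-a} holds only for $|t-\sigma(x^0)|<\delta_0\,b's$, where $\delta_0=r_0/2<1/2$ is strictly less than the support radius; the left-hand side, however, lives on the whole set $\{t:\,d(x^0,t)\le bs\}$. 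On the annulus $\delta_0 b's\le d(x^0,t)\le bs$ you have no lower bound for the right-hand side (both kernels vanish continuously at the edges of their supports, and nothing in the paper controls their ratio there), so your volume-identity argument forces $b'\ge b/\delta_0>b$. That is fatal: the area integral is monotone increasing in the aperture, so the hypothesis $(S^{\psi}_{b,\eta}u)(\sigma(x^0))\le1$ gives no control at all on $(S^{\psi}_{b',\eta}u)(\sigma(x^0))$ for $b'>b$, and your final domination by $c\sum_\sigma(S^{\psi}_{b,\eta}u)^2(\sigma(x^0))$ does not follow. Trying instead to shrink the aperture in the $\lambda'$-analogue of (\ref{translation-4-2}) only works when $2a<\delta_0 b$, i.e. it proves the lemma for $a<\delta_0b/2<b/4$, not on the stated range $a\in(0,b/2)$.

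The fix is to absorb the factor $t_j^2$ \emph{before} it reaches the translated position, at the level of the intertwining integrand, where the support constraint keeps it bounded by the scale; this is precisely the mechanism of the paper's proof. Writing both kernels with (\ref{intertwining-5-1}) and (\ref{translation-2-4}), the integrand is supported where $|x^0|^2+|t|^2-2\sum_i x^0_it_i\theta_i\le(bs)^2$, and the identity $(x^0_j)^2+t_j^2-2x^0_jt_j\theta_j=(x^0_j-t_j\theta_j)^2+t_j^2(1-\theta_j^2)$ then gives $t_j^2(1-\theta_j^2)\le(bs)^2$ there; since $dm_{\lambda_j+1}(\theta_j)=\frac{c_{\lambda_j+1}}{c_{\lambda_j}}(1-\theta_j^2)\,dm_{\lambda_j}(\theta_j)$, the density ratio supplies exactly the factor $(1-\theta_j^2)$ needed to cancel $t_j^2/s^2$ up to $b^2$, with the \emph{same} aperture $b$ and no reflection sum or ball-volume estimates at all. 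This integrand-level cancellation is what the paper's inequality (\ref{key-inequality-5-6}) and the passage (\ref{key-inequality-5-5})--(\ref{key-inequality-5-7}) implement: there the weight $t_j^2$ coming from $d\omega_{\lambda^+}=t_j^2\,d\omega_{\lambda}$ is absorbed into the mollifier while $|t_j|\le\epsilon$, i.e.\ before translation. With that replacement your argument closes and yields the full range $a\in(0,b/2)$; as proposed, it does not.
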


\begin{proof}
By Lemma \ref{area-5-e}, it suffices to show that, for $j=1,2,\dots,d$,
\begin{align}\label{key-inequality-5-3}
y\left|\frac{u(x,y)-u(\sigma_j(x),y)}{x_j}\right|\le c,\qquad (x,y)\in\Gamma_a^h(x^0),
\end{align}
where $\sigma_j(x)=(x_1,\dots, -x_j,\dots,x_d)$.

We continue to use the notation in the proof of Lemma \ref{area-4-e}. From (\ref{integral-mean-4-2}) we have
\begin{align}\label{key-equality-5-1}
&u(x,y)-u(\sigma_j(x),y)\nonumber\\
=&\int_{\RR^{d+1}}u(t,s)\left[\left(\tilde{\tau}_{(-x,-y)}\phi_{\epsilon}\right)(t,s)
-\left(\tilde{\tau}_{(-x,-y)}\phi_{\epsilon}\right)(\sigma_j(t),s)\right]\,d\omega_{\lambda}(t)ds\nonumber\\
=&\frac{1}{2}\int_{\RR^{d+1}}(u(t,s)-u(\sigma_j(t),s)\left[\left(\tilde{\tau}_{(-x,-y)}\phi_{\epsilon}\right)(t,s)
-\left(\tilde{\tau}_{(-x,-y)}\phi_{\epsilon}\right)(\sigma_j(t),s)\right]\,d\omega_{\lambda}(t)ds.
\end{align}
Recall that the intertwining operator $V_{\lambda}$ for $G=Z_{2}^{d}$ has the form (cf. \cite{Du3,Xu1})
\begin{align}\label{intertwining-5-1}
(V_{\lambda}f)(x)=\int_{[-1,1]^d}f(x_1\theta_1,\dots,x_d\theta_d)\,dm_{\lambda}(\theta),
\end{align}
where $dm_{\lambda}(\theta)=dm_{\lambda_1}(\theta_1)\dots dm_{\lambda_d}(\theta_d)$, and for $1\le i\le d$, $dm_{\lambda_i}(\theta_i)=c_{\lambda_i}(1+\theta_i)(1-\theta_i^2)^{\lambda_i-1}d\theta_i$,  $c_{\lambda_i}=\Gamma(\lambda_i+1/2)/\Gamma(\lambda_i)\Gamma(1/2)$.
In view of (\ref{translation-2-4}) we have
\begin{align*}
&\left(\tilde{\tau}_{(-x,-y)}\phi_{\epsilon}\right)(t,s)
-\left(\tilde{\tau}_{(-x,-y)}\phi_{\epsilon}\right)(\sigma_j(t),s)\\
&=\frac{1}{\epsilon^{2|\lambda|+d+1}} \int_{[-1,1]^d}\left[
\phi_0\left(\frac{1}{\epsilon}B(x,y,t,s,\theta)\right) -\phi_0\left(\frac{1}{\epsilon}B(x,y,\sigma_j(t),s,\theta)\right)\right]
\,dm_{\lambda}(\theta),
\end{align*}
where
$$
B(x,y,t,s,\theta)=\sqrt{|x|^2+|t|^2-2(x_1t_1\theta_1+\dots+x_dt_d\theta_d)+(s-y)^2}.
$$
The change of variables $\theta_j\mapsto-\theta_j$ to the second integrand above yields
\begin{align*}
&\left(\tilde{\tau}_{(-x,-y)}\phi_{\epsilon}\right)(t,s)
-\left(\tilde{\tau}_{(-x,-y)}\phi_{\epsilon}\right)(\sigma_j(t),s)\\
&=\frac{2c_{\lambda_j}}{\epsilon^{2|\lambda|+d+1}} \int_{[-1,1]^{d-1}}
\int_{-1}^1\phi_0\left(\frac{1}{\epsilon}B(x,y,t,s,\theta)\right)\theta_j(1-\theta_j^2)^{\lambda_j-1}d\theta_j
\,d\widetilde{m_{\lambda}}(\theta),
\end{align*}
where $d\widetilde{m_{\lambda}}(\theta)=\prod_{k\neq j}dm_{\lambda_k}(\theta_k)$. Now we take integration by parts for $\theta_j$ to get
\begin{align}\label{key-inequality-5-4}
&\left|\left(\tilde{\tau}_{(-x,-y)}\phi_{\epsilon}\right)(t,s)
-\left(\tilde{\tau}_{(-x,-y)}\phi_{\epsilon}\right)(\sigma_j(t),s)\right|\nonumber\\
&\le\frac{c_{\lambda_j}\lambda_j^{-1}|x_jt_j|}{\epsilon^{2|\lambda|+d+2}} \int_{[-1,1]^{d-1}}
\int_{-1}^1\frac{\left|\phi'_0\left(\epsilon^{-1}B(x,y,t,s,\theta)\right)\right|}{B(x,y,t,s,\theta)}
(1-\theta_j^2)^{\lambda_j}d\theta_j
\,d\widetilde{m_{\lambda}}(\theta),
\end{align}

We consider the generalized translation $\tau^+_x$ in $\RR^d$ associated the parameters
$$
\lambda^+=(\lambda_1,\dots,\lambda_{j-1},\lambda_j+1,\lambda_{j+1},\dots,\lambda_d).
$$
As before, the associated translation $\tilde{\tau}^+_{(x,y)}$ for $(x,y)\in\RR^{d+1}$ is given by $(\tilde{\tau}^+_{(x,y)}u)(t,s)=(\tau^+_xu)(t,y+s)$, where $\tau^+_x$ acts on the first argument. Note that $\tilde{\phi}_0(r):=|\phi'_0(r)/r|$ is in $C^{\infty}(\RR)$ and even. If we set $\tilde{\phi}(x,y)=\tilde{\phi}_0(\sqrt{|x|^2+y^2})$ and $\tilde{\phi}_{\epsilon}(x,y)=\epsilon^{-2|\lambda^+|-d-1}\tilde{\phi}(\epsilon^{-1}x,\epsilon^{-1}y)$, then from (\ref{translation-2-4}), (\ref{intertwining-5-1}) and (\ref{key-inequality-5-4}),
\begin{align}\label{key-equality-5-0}
&\left|\left(\tilde{\tau}_{(-x,-y)}\phi_{\epsilon}\right)(t,s)
-\left(\tilde{\tau}_{(-x,-y)}\phi_{\epsilon}\right)(\sigma_j(t),s)\right|\nonumber\\
&\le\frac{|x_jt_j|}{2\lambda_j+1}\left[\left(\tilde{\tau}^+_{(-x,-y)}\tilde{\phi}_{\epsilon}\right)(t,s)
+\left(\tilde{\tau}^+_{(-\sigma_j(x),-y)}\tilde{\phi}_{\epsilon}\right)(t,s)\right].
\end{align}

In what follows we set
$$
\Psi_j(x,y)=\left|\frac{u(x,y)-u(\sigma_j(x),y)}{x_j}\right|.
$$
Inserting (\ref{key-equality-5-0}) into (\ref{key-equality-5-1}) gives
\begin{align*}
(4\lambda_j+2)\Psi_j(x,y)
\le \int_{\RR^{d+1}}\Psi_j(t,s)
\left[\left(\tilde{\tau}^+_{(-x,-y)}\tilde{\phi}_{\epsilon}\right)(t,s)
+\left(\tilde{\tau}^+_{(-\sigma_j(x),-y)}\tilde{\phi}_{\epsilon}\right)(t,s)\right]\,d\omega_{\lambda^+}(t)ds,
\end{align*}
and by H\"older's inequality and Proposition \ref{translation-2-e}(iii),
\begin{align*}
\Psi_j(x,y)^2\le c\int_{\RR^{d+1}}\Psi_j(t,s)^2
\left[\left(\tilde{\tau}^+_{(-x,-y)}\tilde{\phi}_{\epsilon}\right)(t,s)
+\left(\tilde{\tau}^+_{(-\sigma_j(x),-y)}\tilde{\phi}_{\epsilon}\right)(t,s)\right]\,d\omega_{\lambda^+}(t)ds,
\end{align*}
where $c=\|\tilde{\phi}\|_{L^1(\RR^{d+1},d\omega_{\lambda^+}(t)ds)}/2(2\lambda_j+1)^2$. Furthermore, by Proposition \ref{translation-2-d}(i),
\begin{align*}
\Psi_j(x,y)^2
\le c\int_{\RR^{d+1}}
\left[\left(\tilde{\tau}^+_{(x,y)}+\tilde{\tau}^+_{(\sigma_j(x),y)}\right)\Psi_j^2\right](t,s)
\tilde{\phi}_{\epsilon}(t,s)
\,d\omega_{\lambda^+}(t)ds,
\end{align*}
and since
$$
t_j^2\tilde{\phi}_{\epsilon}(t,s)
\le\epsilon^{-2|\lambda|-d-1}\Phi(\epsilon^{-1}t,\epsilon^{-1}s),
\qquad \Phi(t,s)=\sqrt{|t|^2+s^2}|\phi'_0(\sqrt{|t|^2+s^2})|,
$$
it follows that
\begin{align}\label{key-inequality-5-5}
\epsilon^{2|\lambda|+d+1}\Psi_j(x,y)^2
\le c\int_{\RR^{d+1}}
\left[\left(\tilde{\tau}^+_{(x,y)}+\tilde{\tau}^+_{(\sigma_j(x),y)}\right)\Psi_j^2\right](t,s)
\Phi\left(\frac{t}{\epsilon},\frac{s}{\epsilon}\right)
\,d\omega_{\lambda}(t)ds.
\end{align}

We note that the translation operation $\tilde{\tau}_{(x,y)}$ can be iteratively written into
$\tilde{\tau}_{(x,y)}=\tau_{x_1}\dots\tau_{x_d}\tau_y$,
where $\tau_y$ is the usual translation, and $\tau_{x_i}$ ($1\le i\le d$) denotes the one-dimensional generalized translation in the $x_i$ variable associated to the parameter $\lambda_i$, which is given by
(cf. \cite{LZ,Ro1})
\begin{eqnarray}\label{tau-2}
(\tau_{x_i}f)(t_i)=\int_{-1}^1\left(f_e(\tilde{B}_{x_i,t_i}(\theta_i))+ f_o(\tilde{B}_{x_i,t_i}(\theta_i))\frac{x_i+t_i}{\tilde{B}_{x_i,t_i}(\theta_i)}\right)dm_{\lambda_i}(\theta_i),
\end{eqnarray}
where $f$ is a function defined on $\RR^1$, $f_e$ and $f_o$ are its even part and odd part respectively, and $\tilde{B}_{x_i,t_i}(\theta_i)=\sqrt{x_i^2+t_i^2+2x_it_i\theta_i}$. If we use $\tau^+_{x_j}$ to denote the generalized translation in the $x_j$ variable associated to the parameter $\lambda_j+1$, then $\tilde{\tau}^+_{(x,y)}=\tau_{x_1}\dots\tau_{x_{j-1}}\tau^+_{x_j}\tau_{x_{j+1}}\dots\tau_{x_d}\tau_y$.
But from (\ref{tau-2}), one has
\begin{align*}
&\left[(\tau^+_{x_j}+\tau^+_{-x_j})f\right](t_j)
=c_{\lambda_j+1}\int_{-1}^1\left[f(\tilde{B}_{x_j,t_j}(\theta_j))
\left(1+\frac{t_j+x_j\theta_j}{\tilde{B}_{x_j,t_j}(\theta_j)}\right)\right.\\
&\qquad\qquad\qquad\qquad\qquad\qquad\qquad\left. +f(-\tilde{B}_{x_j,t_j}(\theta_j))\left(1-\frac{t_j+x_j\theta_j}{\tilde{B}_{x_j,t_j}(\theta_j)}\right)
\right] (1-\theta_j^2)^{\lambda_j}d\theta_j,
\end{align*}
which implies that $\tau^+_{x_j}+\tau^+_{-x_j}$ is a positive operator, and if $f$ is nonnegative, then
\begin{align}\label{key-inequality-5-6}
&\left[(\tau^+_{x_j}+\tau^+_{-x_j})f\right](t_j)\le\frac{2\lambda_j+1}{2\lambda_j}
\left[(\tau_{x_j}+\tau_{-x_j})f\right](t_j).
\end{align}

Now we rewrite (\ref{key-inequality-5-5}) as
\begin{align*}
\epsilon^{2|\lambda|+d+1}\Psi_j(x,y)^2
\le c\int_{\RR^{d+1}}
\left[\left(\tau_{x_j}^+ +\tau_{-x_j}^+\right)\Psi_j^2\right](t,s)
K_{\epsilon}(x,y,t,s)
\,d\omega_{\lambda}(t)ds,
\end{align*}
where $K_{\epsilon}(x,y,t,s)=\left[\tau_{-x_1}\dots\tau_{-x_{j-1}}\tau_{-x_{j+1}}\dots\tau_{-x_d}\tau_{-y}
\left(\Phi\left(\epsilon^{-1}\cdot\right)\right)\right](t,s)$, and applying (\ref{key-inequality-5-6}), we get
\begin{align}\label{key-inequality-5-7}
&\epsilon^{2|\lambda|+d+1}\Psi_j(x,y)^2
\le \frac{c(2\lambda_j+1)}{2\lambda_j}\int_{\RR^{d+1}}
\Psi_j(t,s)^2 \nonumber\\ &\qquad\qquad\times\left[\left(\tilde{\tau}_{(-x,-y)}\left(\Phi\left(\epsilon^{-1}\cdot\right)\right)\right)(t,s)
+\left(\tilde{\tau}_{(-\sigma_j(x),-y)}\left(\Phi\left(\epsilon^{-1}\cdot\right)\right)\right)(t,s)\right]
\,d\omega_{\lambda}(t)ds,
\end{align}

As in the proof of Lemma \ref{area-5-e}, for fixed $a\in(0,b/2)$ and $h\in(0,\eta)$ and for $(x,y)\in\Gamma_a^h(x^0)$, take $\epsilon=c_2y$ for suitable $0<c_2<(b-2a)/(b+2)$, independent of $(x,y)$, such that $\overline{\BB((x,y),\epsilon)}\subset\Gamma_{b}^{\eta}(x^0)\subset\Omega$. Similarly to (\ref{translation-4-2}) we have
\begin{align}
\left(\tilde{\tau}_{(-x,-y)}\left(\Phi\left(\epsilon^{-1}\cdot\right)\right)\right)(t,s)
&\le \frac{cy^{2|\lambda|+d-1}}{s^{2|\lambda|+d-1}}\left(\tau_{-\frac{x^0}{bs}}\psi\right)\left(\frac{t}{bs}\right),
\label{translation-5-1}\\
\left(\tilde{\tau}_{(-\sigma_j(x),-y)}\left(\Phi\left(\epsilon^{-1}\cdot\right)\right)\right)(t,s)
&\le \frac{cy^{2|\lambda|+d-1}}{s^{2|\lambda|+d-1}}\left(\tau_{-\frac{\sigma_j(x^0)}{bs}}\psi\right)\left(\frac{t}{bs}\right),
\label{translation-5-2}
\end{align}
and for $s>\eta$, both the quantities on the left hand side above vanish.

Applying (\ref{translation-5-1}) and (\ref{translation-5-2}) to (\ref{key-inequality-5-7}) we obtain
\begin{align}\label{key-inequality-5-8}
y^2\Psi_j(x,y)^2\le c\int_{\RR^{d}\times(0,\eta]}\Psi_j(t,s)^2
\left[\left(\tau_{-\frac{x^0}{bs}}\psi\right)\left(\frac{t}{bs}\right)
+\left(\tau_{-\frac{\sigma_j(x^0)}{bs}}\psi\right)\left(\frac{t}{bs}\right)\right]
\frac{d\omega_{\lambda}(t)ds}{s^{2|\lambda|+d-1}}.
\end{align}
From (\ref{psi-support-4-1}), $\left(\tau_{-\frac{x^0}{bs}}\psi\right)\left(\frac{t}{bs}\right)
=\left(\tau_{-\frac{\sigma_j(x^0)}{bs}}\psi\right)\left(\frac{t}{bs}\right)=0$ for $(t,s)\notin\Omega$, and since $u$ is $\lambda$-harmonic in $\Omega$, from (\ref{Laplace-4-1}) it follows that
$\Psi_j(t,s)^2\le (2\lambda_j)^{-1}\left(\Delta_\kappa u^2\right)(t,s)$ for $(t,s)\in\Omega$. Thus from (\ref{area-4-2}) and (\ref{key-inequality-5-8}),
$$
y^2\Psi_j(x,y)^2\le c\left[(S^{\psi}_{b,\eta}u)^2(x^0)+(S^{\psi}_{b,\eta}u)^2(\sigma_j(x^0))\right]\le 2c
$$
for $(x,y)\in\Gamma_a^h(x^0)$. Thus (\ref{key-inequality-5-3}) is proved and the proof of the lemma is finished. $\square$
\end{proof}

\vskip .1in

We remark that for general groups $G$, it is uncertain whether the analog to Lemma \ref{area-5-f} holds, because further properties of the intertwining operator are needed in its proof, which are not known yet.

Now one may complete the proof of Theorem \ref{thm-6-4} by following the line of that of Theorem \ref{thm-6-3}. It should be noted that, in the current situation ($G=Z_2^d$), by Lemma \ref{area-5-f} the condition (\ref{gradient-1}) is true
without $G$-invariance of $u$ under the assumptions of Theorem \ref{thm-6-4}.

{\small

\section*{Declarations}

\subsection*{Funding}

The research is supported by the National Natural Science Foundation of China (No. 12071295).

\subsection*{Conflicts of interest/Competing interests}

  None of the authors have any conflicts of interest/competing interests in the manuscript.

\subsection*{Availability of data and materials}

Not applicable.

\subsection*{Code availability}

Not applicable.

\subsection*{Author's contributions}

    All authors contributed equally and significantly in conducting this research work and writing this paper. All authors read and approved the final manuscript.

}

\end{document}